\documentclass{amsart}
\usepackage[utf8]{inputenc}
\usepackage{amsmath}
\usepackage{amsfonts}
\usepackage{amssymb}
\usepackage{float}
\usepackage{tikz}
\usetikzlibrary{intersections}
\usepackage[colorlinks=true]{hyperref}
\hypersetup{urlcolor=blue, citecolor=red}
\usepackage{hyperref}

\numberwithin{equation}{section}

\newtheorem{theorem}{Theorem}[section]
\newtheorem{corollary}[theorem]{Corollary}

\newtheorem{lemma}[theorem]{Lemma}
\newtheorem{proposition}[theorem]{Proposition}

\theoremstyle{definition}
\newtheorem{definition}[theorem]{Definition}
\newtheorem{remark}[theorem]{Remark}

\newcommand{\R}{\mathbb{R}}
\newcommand{\Rd}{\mathbb{R}^{d}}
\newcommand{\Rn}{\mathbb{R}^{d}}
\newcommand{\N}{\mathbb{N}}
\newcommand{\C}{\mathbb{C}}
\newcommand{\T}{\mathbb{T}}
\newcommand{\Z}{\mathbb{Z}}

\newcommand {\A}{\mathcal{A}}

\newcommand {\bmo}{\mathrm{bmo}}
\newcommand {\BMO}{{\mathrm{BMO}}}

\newcommand{\F}{\mathcal{F}}
\newcommand {\HT}{\mathcal{H}}
\newcommand {\Hp}{\mathcal{H}^{p}_{FIO}(\R^d)}

\newcommand {\Hps}{\mathcal{H}^{s,p}_{FIO}(\R^d)}

\newcommand {\Hpsm}{\mathcal{H}^{s-1,p}_{FIO}(\R^d)}

\newcommand {\ind}{{\mathbf{1}}}
\newcommand {\La}{\mathcal{L}}
\newcommand {\loc}{\mathrm{loc}}
\newcommand {\ud}{\mathrm{d}}

\newcommand {\veps}{\varepsilon}

\newcommand {\ka}{\kappa}
\newcommand {\la}{\lambda}
\newcommand {\ph}{\varphi}
\newcommand {\Sp}{S^{*}\Rd}
\newcommand {\Sw}{\mathcal{S}}
\newcommand {\supp}{\mathrm{supp}}
\newcommand {\Tp}{T^{*}\Rd}
\newcommand {\w}{\omega}
\newcommand {\wh}{\widehat}

\newcommand {\rb}{\rangle}
\newcommand {\lb}{\langle}
\newcommand {\cs}{\mathbf{c}}
\newcommand {\sn}{\mathbf{s}}

\newcommand {\vanish}[1]{\relax}

\DeclareFontFamily{U}{mathx}{}
\DeclareFontShape{U}{mathx}{m}{n}{<-> mathx10}{}
\DeclareSymbolFont{mathx}{U}{mathx}{m}{n}
\DeclareMathAccent{\widehat}{0}{mathx}{"70}
\DeclareMathAccent{\widecheck}{0}{mathx}{"71}

\title{Local smoothing for rough wave equations}

\author{Jan~Rozendaal}
\address[J.~Rozendaal]{Institute of Mathematics, Polish Academy of Sciences\\
\'{S}niadeckich 8\\
00-656 Warsaw\\
Poland}
\email{jrozendaal@impan.pl}

\author[Robert~Schippa]{Robert Schippa*}
\address[R.~Schippa]{University of Bonn, Mathematical Institute, Endenicher Allee 60, 53115 Bonn, Germany}
\email{rschippa@buni-bonn.de}

\keywords{Rough wave equations, local smoothing, Hardy spaces for Fourier integral operators, decoupling estimates}

\makeatletter
\@namedef{subjclassname@2020}{%
  \textup{2020} Mathematics Subject Classification}
\makeatother

\subjclass[2020]{Primary 42B35. Secondary 35R05, 35L05, 42B37.}

\thanks{*Corresponding author.}

\begin{document}

\begin{abstract}
We prove local smoothing estimates for wave equations with $C^{r}$ coefficients, relying on bilinear restriction estimates associated with rough wave propagation. In two and three dimensions, for $C^{1,1}$ coefficients we recover the sharp $L^p$-$L^q$ local smoothing bounds previously established for smooth coefficients. These are implied by stronger $L^{p}$-$L^{q}$ decoupling inequalities with endpoint derivative loss. As a corollary, we obtain non-trivial $L^{p}$-$L^{p}$ local smoothing estimates as well.
\end{abstract}
\maketitle

\section{Introduction}

\subsection{Wave equations with smooth coefficients}

Sharp fixed-time $L^p$ estimates for solutions to the Euclidean wave equation
\[
\begin{cases}
\partial_t^2 u= \Delta u,& (t,x) \in \R \times \R^d, \\
u(0)= u_0,& \partial_{t}u(0) = u_{1},
\end{cases}
\]
were obtained by Peral \cite{Peral1980} and Miyachi \cite{Miyachi1980}. For $1<p<\infty$ and $t\in\R$, they read
\begin{equation}\label{eq:fixedtimeintro}
\| u(t) \|_{L^p(\R^d)} \lesssim_t \| u_0 \|_{W^{2s(p),p}(\Rd)}+\|u_{1}\|_{W^{2s(p)-1,p}(\R^d)}.
\end{equation}
Here and throughout, we write $W^{\alpha,p}(\Rd):=(1-\Delta)^{-\alpha/2}L^{p}(\Rd)$ and 
\begin{equation}\label{eq:sp}
s(p):=\frac{d-1}{2}\Big|\frac{1}{2}-\frac{1}{p}\Big|,
\end{equation}
for $1\leq p\leq \infty$ and $\alpha\in\R$.

The phenomenon of local smoothing concerns improvements over the fixed-time bounds, obtained by averaging in time. Phrased in terms of the Euclidean half-wave propagators, Sogge's local smoothing conjecture from \cite{Sogge1991} states that
\begin{equation}
\label{eq:LocalSmoothingDiagonal}
\Big(\int_{0}^{1}\|e^{it \sqrt{-\Delta}} f\|_{L^{p}(\R^d)}^{p}\ud t\Big)^{1/p} \lesssim \| f \|_{W^{\alpha,p}(\R^d)}
\end{equation}
for $2<p<\infty$ and $\alpha > \max( 2s(p) - \frac{1}{p}, 0 \big)$. 
The critical index is $p_{c}:= \frac{2d}{d-1}$, in the sense that the conjecture is equivalent to the statement that \eqref{eq:LocalSmoothingDiagonal} holds for $p=p_{c}$ with $\alpha > 0$. A positive resolution of this conjecture would imply the Bochner--Riesz and restriction conjectures, making it a notoriously difficult problem which has only been solved for $d=2$ in \cite{GuthWangZhang2020}.

In \cite{SchlagSogge1997}, Schlag and Sogge considered more general local smoothing estimates:
\begin{equation}
\label{eq:LocalSmoothingOffDiagonal}
\Big(\int_{0}^{1}\|e^{it \sqrt{-\Delta}} f\|_{L^{q}(\R^d)}^{q}\ud t\Big)^{1/q}\lesssim \| f \|_{W^{\alpha,p}(\R^d)}
\end{equation}
for $2\leq p\leq q<\infty$. In this context, 
we note that \eqref{eq:LocalSmoothingOffDiagonal} follows for $1\leq p\leq q\leq \infty$ with $q'\geq p$ by interpolating the fixed-time estimates with the classical dispersive estimate, the latter corresponding to the case where $p=1$ and $q=\infty$. 
Clearly, $q\geq p$ is necessary for \eqref{eq:LocalSmoothingOffDiagonal} to hold, due to translation invariance. 

For the remaining values of $p$ and $q$, i.e.~those for which $2\leq p\leq q\leq \infty$ and $q'<p$, it is known that necessarily
\begin{equation}
\label{eq:NecessaryConditions}
\alpha \geq \max\big(\tfrac{d+1}{2} \big( \tfrac{1}{p} - \tfrac{1}{q} \big), \tfrac{d-1}{2} - \tfrac{d+1}{q} + \tfrac{1}{p}\big).
\end{equation}
The first condition on the right-hand side of \eqref{eq:NecessaryConditions} arises from the anisotropic Knapp example, that is, a single wave packet. This dominates in the upper triangle in Figure \ref{fig:LocalSmoothing}. The second condition in \eqref{eq:NecessaryConditions} is due to the focusing example - a radially symmetric bump function focusing at the origin. This dominates in the lower triangle in Figure \ref{fig:LocalSmoothing}. 

\begin{figure}[ht!]
\label{fig:LocalSmoothing}
\begin{tikzpicture}[scale=4]
  \draw[->] (0,0) -- (1.05,0) node[below] {$1/p$};
  \draw[->] (0,0) -- (0,0.6) node[left]  {$1/q$};

  \foreach \x in {0,1/3,1/2,1} {
    \draw (\x,0) -- (\x,-0.015);
  }
  \foreach \y in {0,1/3,1/2,1} {
    \draw (0,\y) -- (-0.015,\y);
  }

  \node[below] at (1/3,0) {$\frac{1}{p_{c}}$};
  \node[below] at (1/2,0) {$\frac12$};
  \node[left] at (1,0)   {$1$};

  \node[left]  at (0,1/3) {$\frac{1}{p_{c}}$};
  \node[left]  at (0,1/2) {$\frac12$};

  \coordinate (A) at (1/2,1/2);
  \coordinate (B) at (1/3,1/3);
  \coordinate (C) at (0,0);
  \coordinate (D) at (1,0);
  \coordinate (E) at (5/12,1/4);
  \coordinate (F) at (0.1,0);

  \draw[thick] (A) -- (B) -- (C) -- (D);

  \draw[thick] (A) -- (D);
   \draw[gray] (B) -- (D);

  \foreach \P in {A,C,D} {
    \fill (\P) circle (0.3pt);
  }
  \fill[gray]  (B) circle (0.3pt);
 
  \node[above right] at (A) {$(\tfrac12,\tfrac12)$};
  \node[above left] at (B) {$(\tfrac{1}{p_{c}},\tfrac{1}{p_{c}})$};
  \node[below left]  at (C) {$(0,0)$};
  \node[above right] at (D) {$(1,0)$};
  \node[above right] at (E) {$\alpha \geq \frac{d+1}{2}\big( \frac{1}{p} - \frac{1}{q} \big) $};
  \node[above right] at (F) {$\alpha \geq \frac{d-1}{2} - \frac{d+1}{q} + \frac{1}{p}$};
\end{tikzpicture}
\caption{$L^p$-$L^q$ local smoothing estimates. The conjectured endpoint for the local smoothing conjecture and the \emph{Schlag--Sogge line}, interpolating with the dispersive estimate, are colored in gray.}
\end{figure}

The index $p_{c}$ is again critical for \eqref{eq:LocalSmoothingOffDiagonal}, in the sense that proving the conjecture for $p=q=p_{c}$ would yield sharp bounds for all $p$ and $q$. Moreover, the estimate at $p=q=p_c$ fails for $\alpha= 0$, by an example of Wolff related to the Kakeya example.

Schlag and Sogge considered estimates on the boundary between the upper and lower triangles in Figure \ref{fig:LocalSmoothing}, i.e.~on the line between $\big( \frac{1}{p_{c}}, \frac{1}{p_{c}} \big)$ and $(1,0)$ in $[0,1]^{2}$, given by all $(\frac{1}{p},\frac{1}{q})$ such that $\frac{d+1}{q} = (d-1) \big( 1 - \frac{1}{p} \big)$ and $\frac{1}{q} \leq \frac{d-1}{2d}$. Here the conjecture arising from \eqref{eq:NecessaryConditions} is that \eqref{eq:LocalSmoothingOffDiagonal} holds for $\alpha > \frac{d}{q}-\frac{d-1}{2}$. Notably, the 
point $(\frac{1}{2},\frac{d-1}{2(d+1)})$ lies on this line and here \eqref{eq:LocalSmoothingOffDiagonal} corresponds to a Strichartz estimate. 

In \cite{TaoVargas2000} (see also \cite{TaoVargasVega1998}) Tao and Vargas showed that sharp versions of \eqref{eq:LocalSmoothingOffDiagonal} can be obtained for $2\leq p<q\leq \infty$ using bilinear techniques, thereby improving on the bounds in \cite{SchlagSogge1997}.  
They relied on the sharp bilinear restriction estimate for the cone due to Wolff \cite{Wolff2001} (see  \cite{Tao01} for the endpoint). In terms of half-wave propagators and with $q_{b}:= \frac{2(d+3)}{d+1}$ and $\alpha>d-\frac{2(d+1)   }{q_{b}}$, the latter result says that
\begin{equation*}
\Big(\int_{0}^{1}\| e^{it \sqrt{- \Delta}} f e^{it \sqrt{-\Delta}}f_2 \|_{L^{q_{b}/2}(\R^d)}^{q_{b}/2}\ud t\Big)^{2/q_{b}} \lesssim
N^{\alpha}
\| f\|_{L^{2}(\Rd)} \| g\|_{L^{2}(\Rd)}
\end{equation*}
for all $N\geq1$ and $f,g\in L^{2}(\Rd)$ such that $\supp(\wh{f}\,)$ and $\supp(\wh{g})$ are contained in $\{\xi\in\Rd\mid |\xi|\eqsim N\}$ and have angular separation approximately $1$. 

The estimates discussed so far concern the Euclidean wave equation. On the other hand, starting with the seminal work by Seeger, Sogge and Stein in \cite{SeSoSt91} on fixed-time bounds, it has been shown that many results for the Euclidean wave equation extend to the smooth variable-coefficient setting. In the context of this article, we mention the contribution \cite{Lee2006} of S.~Lee, which extends the Tao--Vargas argument.\cite[Corollary 1.5]{Lee2006} yields sharp bounds as in \cite{TaoVargas2000}, concerning \eqref{eq:LocalSmoothingOffDiagonal} but with $\Delta$ replaced by a variable-coefficient Laplacian with smooth coefficients. The proof relies on a bilinear estimate for oscillatory integral operators under transversality assumptions on the propagation of wave packets.

\subsection{Wave equations with rough coefficients}

In this article, we obtain $L^p$-$L^q$ local smoothing estimates for wave equations with $C^{r}$ coefficients, $r\geq 2$. 

We consider the initial value problem
\begin{equation}\label{eq:RoughWaveEquation}
\begin{gathered}
\partial_t^2 u(t,x) = \sum_{i,j=1}^{d}\partial_{x_{i}}(g_{ij}(x) \partial_{x_{j}} u(t,x)), \\
u(0,x) = u_0(x),\qquad\partial_{t}u(0,x) = u_1(x),
\end{gathered}
\end{equation}
for $t\in \R$ and $x=(x_{1},\ldots,x_{d})\in \R^d$. The coefficients $(g_{ij})_{i,j=1}^{d}$ are real-valued, bounded and uniformly elliptic, in the sense that there exists a $c>0$ such that
\[
\sum_{i,j=1}^{d}g_{ij}(x)\xi_{i}\xi_{j}\geq c|\xi|^{2}
\]
for all $x,\xi\in\Rd$. The $(g_{ij})_{i,j=1}^{d}$ are assumed to be time-independent for simplicity.  
Crucially, we do not suppose that the coefficients are smooth; we merely require that $(g_{ij})_{i,j=1}^{d}\subseteq C^{r}(\Rd)$ for some $r\geq 2$, with bounded $C^{r}(\Rd)$ norm. 

Set
\begin{equation}\label{eq:Q}
\mathfrak{Q}:=\Big(\frac{(d-1)(d+3)}{2(d^{2}+2d-1)},\frac{(d-1)(d+1)}{2(d^{2}+2d-1)}\Big)\in[0,1]^{2}.
\end{equation}
Let $\mathcal{T}_{l}$ be the closed triangle in $[0,1]^{2}$ with vertices $(0,0)$, $(1,0)$ and $\mathfrak{Q}$, and let $\mathcal{T}_{u}$ be the closed triangle in $[0,1]^{2}$ with vertices $(\frac{1}{2},\frac{1}{2})$, $(1,0)$ and $\mathfrak{Q}$. Write
\begin{equation}\label{eq:alphapq}
\alpha(p,q):=\begin{cases}
\frac{d-1}{2}-\frac{d+1}{q}+\frac{1}{p}&\text{if }(\frac{1}{p},\frac{1}{q})\in \mathcal{T}_{l},\\
\frac{d+1}{2}(\frac{1}{p}-\frac{1}{q})&\text{if }(\frac{1}{p},\frac{1}{q})\in \mathcal{T}_{u},
\end{cases}
\end{equation}
and note that $\alpha(p,q)=\max\big(\tfrac{d+1}{2} \big( \tfrac{1}{p} - \tfrac{1}{q} \big), \tfrac{d-1}{2} - \tfrac{d+1}{q} + \tfrac{1}{p}\big)$ as in \eqref{eq:NecessaryConditions}.

\begin{figure}[ht!]
\label{fig:roughsmoothing}
\begin{tikzpicture}[scale=4]
	
	\def\d{3} 
	
	\coordinate (P) at ({(\d-1)/(2*\d)},{(\d-1)/(2*\d)});
	\coordinate (Q) at ({1/2},{(\d+1)/(2*(\d+3))});
	\coordinate (R) at ({(\d-1)*(\d+3)/(2*(\d*\d+2*\d-1))},
	{(\d-1)*(\d+1)/(2*(\d*\d+2*\d-1))});
	
	\coordinate (A) at (0,0);
	\coordinate (B) at (0.5,0.5);
	\coordinate (C) at (1,0);
	
	\fill[orange!20] (A) -- (R) -- (C) -- cycle;
	\fill[blue!15] (R) -- (B) -- (C) -- cycle;
	
	\draw[->] (-0.1,0) -- (1.15,0) node[right] {$1/p$};
	\draw[->] (0,-0.1) -- (0,0.65) node[above] {$1/q$};
	
	\draw[thick] (A) -- (B) -- (C) -- cycle;
	
	\draw[gray, thick] (P) -- (C);
	
	\draw[red!45, dashed, thick] (A) -- (Q);
	\draw[red!45, dashed, thick] (A) -- (R);
	\draw[gray, dashed, thick] (B) -- (0.5,0);
	\node at (barycentric cs:A=1,R=1,C=1) {$\mathcal{T}_l$};
	\node at (barycentric cs:R=1,B=1,C=1) {$\mathcal{T}_u$};
	
	\fill (A) circle (0.4pt) node[below left] {$(0,0)$};
	\fill (B) circle (0.4pt) node[above] {$\left(\frac12,\frac12\right)$};
	\fill (C) circle (0.4pt) node[below] {$(1,0)$};
	
	\fill[gray] (P) circle (0.4pt)
	node[above left] {$\left(\frac{1}{p_{c}},\frac{1}{p_{c}}\right)$};
	
	\fill[red!45] (Q) circle (0.4pt)
	node[above right] {$\mathcal{E} = \left(\frac12,\frac{1}{q_{b}}\right)$};
	
	\fill[red!45] (R) circle (0.4pt)
	node[above left] {$\mathfrak{Q}$};
	\draw[dashed, thick] (B) -- (R);
	
	\draw (0.5,0.015) -- (0.5,-0.015) node[below] {$\frac12$};
	\draw (0.015,0.5) -- (-0.015,0.5) node[left] {$\frac12$};
	
\end{tikzpicture}
\caption{ For the rough evolution we obtain estimates in the shaded regions for $d=2,3$, $2 < p \leq \infty$ by bilinear arguments. Estimates for $1 \leq p \leq 2$ follow from interpolating with known bounds; see below.}
\end{figure}

Our main result, involving the exponent $s(p)$ from \eqref{eq:sp}, is as follows. 

\begin{theorem}\label{thm:mainLpintro}
Let $2<p<q<\infty$ be such that $(\frac{1}{p},\frac{1}{q})\in\mathcal{T}_{l}\cup\mathcal{T}_{u}$, and let $\alpha \geq \alpha(p,q)$. For $d\geq 4$, assume additionally that 
$r>\max(2s(p)+1,\alpha(p,q))$. Suppose that one of the following conditions holds:
\begin{enumerate}
\item\label{it:mainLpintro1} $\alpha>\alpha(p,q)$; 
\item\label{it:mainLpintro2} $(\frac{1}{p},\frac{1}{q})$ lies in the interior of $\mathcal{T}_{l}$. 
\end{enumerate} 
Then there exists a $C\geq0$ such that, for all $u_{0}\in W^{\alpha,p}(\R^d)$ and $u_{1}\in W^{\alpha-1,p}(\R^d)$, the solution $u$ to \eqref{eq:RoughWaveEquation} satisfies 
\begin{equation}\label{eq:mainLpintro}
\|u\|_{L^{q}([0,1]\times \R^d)}\leq C\big(\|u_{0}\|_{W^{\alpha,p}(\R^d)}+\|u_{1}\|_{W^{\alpha-1,p}(\R^d)}).
\end{equation}
\end{theorem}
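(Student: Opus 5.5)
We would follow the Tao--Vargas--Lee bilinear strategy, adapted to rough coefficients via the Hardy spaces for Fourier integral operators $\Hps$ and a wave packet parametrix.

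\textbf{Reductions.} First we reduce \eqref{eq:mainLpintro} to an estimate for half-wave type operators. Write the solution as $u(t)=\cos(t\sqrt{L})u_0+\frac{\sin(t\sqrt{L})}{\sqrt L}u_1$, with $L=-\sum\partial_i g_{ij}\partial_j$. Since $u_1\in W^{\alpha-1,p}$, it suffices to bound $e^{\pm it\sqrt{L}}$ (modulo low frequencies, which are handled by Sobolev embedding and fixed-time bounds) from $W^{\alpha,p}$ to $L^q([0,1]\times\Rd)$. Next we perform a Littlewood--Paley decomposition $f=\sum_N P_Nf$ and treat each dyadic frequency $N$.

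\textbf{Parametrix.} For $C^r$ coefficients, we smooth the coefficients at scale $N^{-1/2}$, as in Smith's and Tataru's parametrix constructions. The propagator at frequency $N$ is then a wave packet parametrix plus an error, where the error is controlled through the $\Hp$-boundedness of the rough propagator; this is where $r>2s(p)+1$ enters when $d\geq4$. On the frequency-localized piece we then need the estimate
\[
\|e^{it\sqrt{L}}P_Nf\|_{L^q([0,1]\times\Rd)}\lesssim N^{\alpha(p,q)+\veps}\|f\|_{L^p}.
\]

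\textbf{Bilinear-to-linear step.} This is done as in Tao--Vargas. Split the frequency support into angular sectors of aperture $2^{-k}$ and perform a Whitney decomposition of $|u|^2=\sum_k\sum_{\theta\sim\theta'}u_\theta\bar u_{\theta'}$.
\begin{enumerate}
\item For each transverse pair, rescale by $2^{-k}$ using parabolic rescaling, i.e.\ a Lorentz-type rescaling adapted to the variable-coefficient flow, which is available after the coefficients have been smoothed at the appropriate scale.
\item Apply the bilinear restriction estimate for rough wave propagation at exponent $q_b/2$ (assumed established earlier in the paper).
\item Interpolate this with trivial $L^2$ and $L^\infty$ bounds and with the fixed-time $L^p$ bounds of Seeger--Sogge--Stein type, or with local smoothing in $\Hp$, to obtain a bilinear $L^p\times L^p\to L^{q/2}$ bound with a gain $2^{-k\delta}$ for $(\frac1p,\frac1q)$ in the relevant region.
\item Sum over $k$. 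Off-diagonal decoupling, via orthogonality of the angular pieces, handles the diagonal terms through an induction on scales, or through a Kakeya/square function argument in the spirit of Lee.
\end{enumerate}

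\textbf{Endpoints.} This step yields the $\veps$-loss estimates in the interior, i.e.\ case (1). We then interpolate the bounds at $\mathfrak{Q}$ (with $\veps$ loss) with the vertices $(1,0)$ (dispersive estimate) and $(\frac12,\frac12)$ (energy estimate) to cover $\mathcal{T}_l\cup\mathcal{T}_u$. For case (2), an interior point of $\mathcal{T}_l$ lies strictly inside, so we can interpolate the dyadic estimates between a point with $\veps$ loss and the sharp endpoint $(1,0)$. This yields a dyadic estimate with a genuine negative power $N^{\alpha(p,q)-\delta}\cdot N^{\delta}$ in a form that allows Bourgain's summation trick, giving the restricted weak type endpoint with no loss. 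Real interpolation between two such points then upgrades this to strong type.

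\textbf{Main obstacle.} The hardest step is obtaining the rescaled bilinear estimate uniformly for rough coefficients. Parabolic rescaling of a $C^{1,1}$ metric by angle $2^{-k}$ degrades the regularity and the transversality constants, so the argument must be organized to exploit the fact that, after frequency truncation at scale $N^{-1/2}$, the Hamilton flow is $C^{1,1}$-close to a smooth one on the time scales used. Controlling the resulting errors is what restricts $d\leq3$, or forces $r$ large when $d\geq4$.
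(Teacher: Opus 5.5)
There is a genuine gap at the step where you absorb the rough part of the operator. You aim for $\|e^{it\sqrt{L}}P_Nf\|_{L^q}\lesssim N^{\alpha(p,q)+\veps}\|f\|_{L^p}$. You propose to get it from linear and bilinear local smoothing for the paradifferentially smoothed flow $S_N$ with $L^p$ data, interpolating the $L^2$ bilinear estimate with Seeger--Sogge--Stein type bounds, and to treat the rough part $L_{2,N}$ as a Duhamel error. This does not close.

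\emph{Why it fails.} For $C^{1,1}$ coefficients, $B^{-1}L_{2,N}$ has order zero, so there is no spare smoothing. The Duhamel term needs local smoothing for $S_N$ applied to data that you control only through the rough solution $u(\tau)$ at fixed times. In $L^p$ the rough flow loses $2s(p)$ derivatives: $\|u(\tau)\|_{W^{\alpha,p}}\lesssim\|u_0\|_{W^{\alpha+2s(p),p}}$. If you instead use invariance, $\|u(\tau)\|_{\HT^{\alpha-s(p),p}_{FIO}(\Rd)}\lesssim\|u_0\|_{W^{\alpha,p}(\Rd)}$, then feeding this into an $L^p$-based estimate for $S_N$ goes through $\HT^{\alpha-s(p),p}_{FIO}(\Rd)\subseteq W^{\alpha-2s(p),p}(\Rd)$ and costs the same $2s(p)$. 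Either way you need $\alpha\geq\alpha(p,q)+2s(p)$. Bounding the Duhamel term crudely by fixed-time $\Hp$ bounds plus Sobolev embedding throws away the local smoothing gain altogether.

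\emph{What is missing.} You need the strictly stronger estimate $\|S_Nf\|_{L^q([0,1]\times\Rd)}\lesssim N^{\alpha(p,q)-s(p)}\|f\|_{\Hp}$ for frequency-$N$ data. With it, the whole iteration runs in the invariant scale $\HT^{s,p}_{FIO}(\Rd)$ (Proposition~\ref{prop:reduction}), and $W^{\alpha,p}(\Rd)\subseteq\HT^{\alpha-s(p),p}_{FIO}(\Rd)$ is used only once, at the end. This requires bilinear estimates with $\Hp$ data (Theorem~\ref{thm:BilinearEstimates}). These come from interpolating the $L^2$ bilinear estimate at $q_{b}$, where $\HT^{2}_{FIO}(\Rd)=L^2(\Rd)$, with a $q=\infty$ bound in $\HT^{\infty}_{FIO}(\Rd)$. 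That bound is good enough only because of the favorable $\Hp$ norm of the Knapp example (Lemma~\ref{lem:Knapp}) and an $\ell^p$ bound for the decomposition into $N^{-1/2}$-sectors. The resulting bounds are sharp in $N$ on the line from $(\frac12,\frac{1}{q_{b}})$ to $(0,0)$, and summable in $\nu$ below its intersection $\mathfrak{Q}$ with the Schlag--Sogge line; this is where $\mathcal{T}_{l}\cup\mathcal{T}_{u}$ comes from. Relatedly, the dimensional restriction comes from Lemma~\ref{lem:pseudoHpFIO}, which is needed to put $L_{2,N}$ into the Duhamel term on $\Hp$. It does not come from rescaling the bilinear estimate, which holds for $C^{1,1}$ coefficients in every dimension.

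Case (2) has a further gap. Bourgain's summation trick (Lemma~\ref{lem:interabstract}) reproduces the interpolated dyadic exponent, so it needs dyadic bounds with no $N^{\veps}$ loss at two points on a line through the target. Interpolating an $N^{\veps}$-lossy bound with a sharp one, as in your $(1,0)$ step, leaves a loss of order $\veps$ at the target. The paper gets loss-free dyadic bounds at interior points of $\mathcal{T}_{l}$ near $\mathfrak{Q}$ in two steps. First, an $\veps$-removal for the $L^2$ bilinear estimate when $q>q_{b}$, via Tao's sparse-set argument (Proposition~\ref{prop:SharpBilinearEstimate}). Second, the remaining $\nu^{-\delta}$ losses, including the one from almost orthogonality, are absorbed by the positive power of $\nu$ available strictly below the Schlag--Sogge line. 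Note also that covering $\mathcal{T}_{l}$ by interpolation needs the vertex $(0,0)$, not only $\mathfrak{Q}$, $(1,0)$ and $(\frac12,\frac12)$.
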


For $r\in \N$, it suffices to assume in Theorem \ref{thm:mainLpintro} that $(g_{ij})_{i,j=1}^{d}\subseteq C^{r-1,1}(\Rd)$, and in this case one may allow $r>2s(p)+1$ with $r=\alpha(p,q)$ (see Remark \ref{rem:sqinfty}). In the special case where $r=2$, the condition $2s(p)+1<r$ leads to the interval $2<p<\frac{2(d-1)}{d-3}$. For $r>\frac{d+1}{2}$, one has $r>\max(2s(p)+1,\alpha(p,q))$ for all $2<p<q<\infty$.

In \cite{HassellRozendaal2023}, the sharp fixed-time estimates in \eqref{eq:fixedtimeintro} and \cite{SeSoSt91} were extended to coefficients as above, under the same condition $r>2s(p)+1$ on $1<p<\infty$ as in Theorem \ref{thm:mainLpintro} (one then automatically has $r>\alpha(p,p)$). See also \cite{LiRoSo25a} for the endpoint cases $p=1$ and $p=\infty$. In particular, combined with standard Sobolev embeddings, those results show that a unique solution
\[
u\in L^{\infty}([0,1];W^{\alpha-2s(p)-d(\frac{1}{p}-\frac{1}{q}),q}(\Rd))
\]
to \eqref{eq:RoughWaveEquation} exists under the assumptions of Theorem \ref{thm:mainLpintro}. The latter result essentially yields additional spatial regularity of order $\frac{1}{q}$ for $(\frac{1}{p},\frac{1}{q})\in \mathcal{T}_{l}$, and of order $s(p)+s(q)$ for $(\frac{1}{p},\frac{1}{q})\in \mathcal{T}_{u}$, at the cost of decreased time regularity.

For $1\leq p\leq 2$, sharp estimates as in \eqref{eq:mainLpintro} follow from existing results.
Indeed, loosely speaking, one can extend the sharp bounds from the  smooth setting by 
interpolating between the fixed-time estimates from \cite{HassellRozendaal2023,LiRoSo25a}, the dispersive estimate from \cite{Geba-Tataru05}, the Strichartz estimate from \cite{Tataru2001} for $p=2$ and $q=\frac{2(d+1)}{d-1}$, and the trivial bound for $p=2$ and $q=\infty$, which follows from a Sobolev embedding. 
Moreover, if $q=\infty$ then the fixed-time estimates and Sobolev embeddings also yield sharp bounds for $2<p\leq \infty$. Theorem \ref{thm:mainLpintro} further enlarges the set of $(\frac{1}{p},\frac{1}{q})\in[0,1]^{2}$ for which the bounds from the smooth setting extend to rough coefficients, with only the non-shaded region below the diagonal in Figure \ref{fig:roughsmoothing} now remaining open.

In fact, as is shown in Corollary \ref{cor:extrabutweak}, by combining finite speed of propagation and H\"{o}lder's inequality, one can obtain from Theorem \ref{thm:mainLpintro} nontrivial local smoothing for all $(\frac{1}{p},\frac{1}{q})$ with $2<p\leq q<\infty$, and in particular for $p=q$. For simplicity, here we only state the result in the special case where $p=q=p_{c}=\frac{2d}{d-1}$, in which case the coefficients are merely required to have $C^{1,1}(\Rd)$ regularity.

\begin{corollary}\label{cor:extrabutweakintro}
Let $r=2$, $p=\frac{2d}{d-1}$ and $\alpha>2s(p)-\frac{1}{p^{2}}$. Then there exists a $C\geq0$ such that, for all $u_{0}\in W^{\alpha,p}(\R^d)$ and $u_{1}\in W^{\alpha-1,p}(\R^d)$, the solution $u$ to \eqref{eq:RoughWaveEquation} satisfies 
\[
\|u\|_{L^{p}([0,1]\times \R^d)}\leq C\big(\|u_{0}\|_{W^{\alpha,p}(\R^d)}+\|u_{1}\|_{W^{\alpha-1,p}(\R^d)}).
\]
\end{corollary}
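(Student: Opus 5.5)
The plan is to deduce the corollary from Theorem \ref{thm:mainLpintro} by localizing in space with finite speed of propagation, and then using H\"{o}lder's inequality twice on unit cubes. Once on the solution side, to pass from $L^{q}$ to $L^{p}$. Once on the data side, to pass from $L^{p}$ to $L^{\tilde p}$ with $\tilde p\leq p$. The reason both moves are needed is geometric. The point $(\frac{1}{p_{c}},\frac{1}{p_{c}})$ and the points directly below it lie in the non-shaded region of Figure \ref{fig:roughsmoothing}, to the left of the segment joining $(\frac12,\frac12)$ and $\mathfrak{Q}$. So we must move both down, replacing $p$ by some $q>p$ for the solution, and to the right, replacing $p$ by some $\tilde p<p$ for the data, until we reach $\mathcal{T}_{u}$.

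\emph{Localization.} Let $c$ bound the propagation speed, determined by the ellipticity and $L^{\infty}$ bounds of $(g_{ij})$. Cover $\R^{d}$ by unit cubes $Q_{k}$ with bounded overlap. For each $k$ let $\tilde\chi_{k}\in C^{\infty}_{c}$ equal $1$ on the $(c+1)$-neighbourhood of $Q_{k}$, with uniformly bounded derivatives and bounded overlap of supports. By finite speed of propagation for \eqref{eq:RoughWaveEquation} (valid for $C^{1,1}$ coefficients via energy estimates), $u=u_{k}$ on $[0,1]\times Q_{k}$, where $u_{k}$ solves \eqref{eq:RoughWaveEquation} with data $(\tilde\chi_{k}u_{0},\tilde\chi_{k}u_{1})$. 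Choose a pair $(\tilde p,q)$ with $2<\tilde p\leq p\leq q$ and $(\frac{1}{\tilde p},\frac1q)\in\mathcal{T}_{u}$. Then H\"{o}lder on $[0,1]\times Q_{k}$ and Theorem \ref{thm:mainLpintro}, case \eqref{it:mainLpintro1}, give
\[
\|u\|_{L^{p}([0,1]\times Q_{k})}\lesssim\|u_{k}\|_{L^{q}([0,1]\times\R^{d})}\lesssim \|\tilde\chi_{k}u_{0}\|_{W^{\alpha,\tilde p}}+\|\tilde\chi_{k}u_{1}\|_{W^{\alpha-1,\tilde p}}
\]
for every $\alpha>\alpha(\tilde p,q)$. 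Here $r=2$ is admissible for $d\geq4$: near $p_{c}$ one has $2s(\tilde p)+1\approx \frac{d-1}{2d}+1<2$, and $\alpha(\tilde p,q)<1$.

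\emph{Summation.} We must show $\sum_{k}\|\tilde\chi_{k}f\|_{W^{\alpha,\tilde p}}^{p}\lesssim\|f\|_{W^{\alpha,p}}^{p}$. The plan is to write $\|\tilde\chi_{k}f\|_{W^{\alpha,\tilde p}}\lesssim\|\tilde\chi_{k}'(1-\Delta)^{\alpha/2}f\|_{L^{\tilde p}}+\text{tail}$, with $\tilde\chi_{k}'$ a slightly larger cutoff. The error comes from the commutator $[(1-\Delta)^{\alpha/2},\tilde\chi_{k}]$ and from the off-diagonal part of the kernel of $(1-\Delta)^{\alpha/2}$. Both decay rapidly away from the support and have order $\alpha-1$, so they are summable in $k$. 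Then H\"{o}lder on the unit-size support gives $L^{\tilde p}\hookrightarrow L^{p}$ locally, and bounded overlap gives the $\ell^{p}$ sum. This yields $\|u\|_{L^{p}([0,1]\times\R^{d})}\lesssim \|u_{0}\|_{W^{\alpha,p}}+\|u_{1}\|_{W^{\alpha-1,p}}$ for all $\alpha>\alpha(\tilde p,q)$. I expect this step to be routine but somewhat technical.

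\emph{Optimization, the main point.} It remains to minimize $\alpha(\tilde p,q)=\frac{d+1}{2}(\frac1{\tilde p}-\frac1q)$ over $\frac1{\tilde p}\geq\frac1p$, $\frac1q\leq\frac1p$, $(\frac1{\tilde p},\frac1q)\in\mathcal{T}_{u}$. This is a linear function on a polygon. Its minimum should occur on the segment from $(\frac12,\frac12)$ to $\mathfrak{Q}$, at a vertex where either $\frac1{\tilde p}=\frac1p$ or $\frac1q=\frac1p$. Parametrize that segment as $\frac1q=\frac12-m(\frac12-\frac1{\tilde p})$, where $m$ is its slope (for instance $m=3$ when $d=3$). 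Then evaluate at the two candidate points and compare. Since $2s(p_{c})=\frac{d-1}{2d}=\frac1{p}$ here, the target $2s(p)-\frac1{p^{2}}$ equals $\frac1p(1-\frac1p)$. I expect the optimal choice to reproduce exactly this value, up to the equality case that forces the strict inequality $\alpha>2s(p)-\frac1{p^{2}}$. If a direct vertex computation does not give it, I would also allow interpolation with the $q=\infty$ or fixed-time bounds before applying H\"{o}lder. Verifying this arithmetic, and checking that the optimal pair still satisfies $2<\tilde p$ and the $d\geq4$ regularity condition with $r=2$, is the step I expect to require the most care.
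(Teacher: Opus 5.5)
Your strategy is essentially the paper's (localize by finite speed of propagation, apply Theorem~\ref{thm:mainLpintro} at a point with a smaller data exponent, then use a local embedding and the localization principle for $W^{\alpha,p}(\R^d)$, which the paper quotes from Triebel rather than proving by commutator estimates), and it works. The optimization you leave open is settled without any downward move, contrary to your claim: the vertex with $\frac{1}{\tilde p}=\frac1p$ does not exist, because $\frac1p=\frac12-\frac{1}{2d}$ is smaller than the first coordinate $\frac12-\frac{1}{d^{2}+2d-1}$ of $\mathfrak{Q}$; the optimum is instead $q=p$ and $\frac{1}{\tilde p}=\frac12-\frac{1}{2d^{2}}$, on the edge from $\mathfrak{Q}$ to $(\frac12,\frac12)$, where $\alpha(\tilde p,p)=\frac{d+1}{2}\cdot\frac{d-1}{2d^{2}}=\frac1p\big(1-\frac1p\big)=2s(p)-\frac{1}{p^{2}}$ and $2s(\tilde p)+1<2$ — exactly the paper's choice $\tilde p=p_{q}$ with $q=p$.
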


\subsection{Hardy spaces for Fourier integral operators}

Theorem \ref{thm:mainLpintro} falls into a line of research concerning rough wave equations that goes back at least to Smith's work in \cite{Smith1998}. He obtained Strichartz estimates for wave equations with $C^{1,1}$ coefficients in dimensions $d=2,3$, by relying on a combination of paradifferential calculus, wave packet transforms and Hamiltonian flows. This approach was later used in modified forms to derive properties of rough wave equations, such as Strichartz estimates in all dimensions \cite{Tataru00,Tataru2001,Tataru02}, spectral cluster estimates \cite{Smith06} and propagation of singularities \cite{Smith14}. Moreover, the assumption of $C^{1,1}$ regularity is typically sharp in these results, cf.~\cite{SmithSogge1994,SmithTataru2002}.

A common thread in these works is the use of a paradifferential smoothing procedure, to split a rough differential operator into a sum of a smooth pseudodifferential operator, and a rough pseudodifferential operator of lower differential order. The latter can then be treated as an inhomogeneous term in the equation. Although the smooth pseudodifferential operator is not homogeneous and therefore not directly amenable to the theory of Fourier integral operators, which constitutes a key tool in the smooth setting, one can still construct a parametrix for the smooth part of the equation using wave packet transforms.

However, in the results mentioned above the relevant parametrix is $L^{2}$ based, and a problem arises when dealing with initial data in Sobolev spaces over $L^{p}$. Namely, due to Duhamel's principle, when treating the rough lower-order part of the relevant operator as an inhomogeneous term, the solution operator associated with the smooth equation is applied to this rough term. Already in case of the Euclidean wave equation, this inevitably leads to an additional loss of derivatives outside of $L^{2}$, cf.~\eqref{eq:fixedtimeintro}. As a result, in $L^{p}(\Rd)$ for $p\neq 2$ one cannot iterate to get rid of the error term and close the loop.

In \cite{HassellRozendaal2023,LiRoSo25a}, this issue was resolved by working with a different space of initial data, relying on another fundamental contribution due to Smith. In \cite{Smith98a}, he introduced a space $\HT^{1}_{FIO}(\Rd)$ which is invariant under Fourier integral operators and which satisfies Sobolev embeddings that allow one to recover the fixed-time results from \cite{SeSoSt91} for smooth wave equations. In \cite{HassellPortalRozendaal2020}, Smith's construction was extended to a scale $(\Hp)_{1\leq p\leq \infty}$ of Hardy spaces for Fourier integral operators, invariant under Fourier integral operators and satisfying the embeddings
\begin{equation}\label{eq:Sobolevintro}
W^{s(p),p}(\Rd)\subseteq \Hp\subseteq W^{-s(p),p}(\Rd)
\end{equation}
for $1<p<\infty$, with the natural modifications involving the local Hardy space $\HT^{1}(\Rd)$ and $\bmo(\Rd)$ for $p=1$ and $p=\infty$, respectively. 
This invariance under the solution operators to smooth wave equations is the crucial tool which allows one to use an iterative procedure on $\Hp$ to deal with the Duhamel term arising from a rough wave equation, after which \eqref{eq:Sobolevintro} relates the results to $W^{s,p}(\Rd)$.

In this article we follow a similar approach as in \cite{HassellRozendaal2023,LiRoSo25a}, by proving the following theorem involving the Sobolev spaces $\HT^{s,p}_{FIO}(\Rd):=(1-\Delta)^{-s/2}\Hp$ over the Hardy spaces for Fourier integral operators from Definition \ref{def:HpFIO}. Once combined with \eqref{eq:Sobolevintro}, this result directly yields Theorem \ref{thm:mainLpintro}.

\begin{theorem}\label{thm:mainHpFIOintro}
Under the conditions of Theorem \ref{thm:mainLpintro}, there exists a $C\geq0$ such that, for all $u_{0}\in \HT^{\alpha-s(p),p}_{FIO}(\R^d)$ and $u_{1}\in \HT^{\alpha-s(p)-1,p}_{FIO}(\R^d)$, the solution $u$ to \eqref{eq:RoughWaveEquation} satisfies 
\[
\|u\|_{L^{q}([0,1]\times \R^d)}\leq C\big(\|u_{0}\|_{\HT^{\alpha-s(p),p}_{FIO}(\R^d)}+\|u_{1}\|_{\HT^{\alpha-s(p)-1,p}_{FIO}(\R^d)}).
\]
\end{theorem}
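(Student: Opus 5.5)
We will follow the strategy of \cite{HassellRozendaal2023,LiRoSo25a}: reduce the rough equation to a smooth one plus a lower-order error, and prove the local smoothing bound for the smooth part. The error term is then absorbed using the invariance of $\Hp$ under smooth wave evolution.

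\textbf{Step 1: frequency localisation and paradifferential smoothing.} We first perform a Littlewood--Paley decomposition of the initial data. For a dyadic frequency $N$, we split the operator $L=\sum\partial_i g_{ij}\partial_j$ as $L=L_N+R_N$. Here $L_N$ has coefficients mollified at scale $N^{-1/2}$, and $R_N$ is rough. Since $g_{ij}\in C^r$ with $r\ge2$, the remainder $R_N$ has order $2-r/2\le1$ on frequencies $\eqsim N$, with the usual $S^{\cdot}_{1,1/2}$-type symbol bounds. We then factor $\partial_t^2-L_N$ into half-wave operators $\partial_t\mp i\sqrt{-L_N}$ modulo acceptable errors.

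\textbf{Step 2: parametrix for the smooth part.} We construct the half-wave propagators for $L_N$ on time intervals of length $1$ via wave packet transforms adapted to the $N^{-1/2}$ scale. Along the bicharacteristic (Hamiltonian) flow of the symbol $\sqrt{g(x)\xi\cdot\xi}$, a $C^{1,1}$-type metric still yields bi-Lipschitz, smooth-in-packets transport.

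\textbf{Step 3: bilinear estimate and Tao--Vargas induction.} This is the core step. We prove a bilinear restriction estimate for two frequency-$N$ waves with transversal directions. By Wolff/Lee-type wave packet induction on scales, it should yield the $L^{q_b/2}$ bound with loss $N^{\alpha}$, $\alpha>d-2(d+1)/q_b$, mirroring \cite{Lee2006}. We expect this to be the main obstacle, because the rough flow only has limited regularity. The transversality of packet tubes and the control of curvature along the flow must be checked uniformly in $N$. In $d=2,3$ this should work with $r=2$, since the tubes stay $N^{-1/2}$-close to curved bicharacteristics over unit time. For $d\ge4$ the extra condition on $r$ enters here.

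\textbf{Step 4: from bilinear to linear.} Using the Whitney-type bilinear-to-linear reduction of Tao--Vargas, we obtain linear estimates from $L^2$ into $L^q$ near $\mathcal{E}$. Interpolation with the fixed-time bounds, Sobolev embedding, and the dispersive estimate \cite{Geba-Tataru05} then covers $\mathcal{T}_l\cup\mathcal{T}_u$. This uses the $\Hp$ formulation: decomposing $\Hp$ data into wave packets at frequency $N$ gives $\ell^p$ control of $L^2$ pieces, which is exactly what the $L^p$-based decoupling/bilinear arguments consume. We would phrase this as an $L^p$-$L^q$ decoupling inequality for the smooth parametrix with endpoint derivative loss. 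Summing dyadic pieces gives the non-endpoint cases directly. Case \eqref{it:mainLpintro2}, the endpoint $\alpha=\alpha(p,q)$ in the interior of $\mathcal{T}_l$, comes from Bourgain's summation trick by interpolating nearby estimates with $\epsilon$-gains and losses.

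\textbf{Step 5: closing the loop with Duhamel.} We write $u=u_{\mathrm{smooth}}+\int_0^t S_N(t-s)R_Nu(s)\,\ud s$, where $S_N$ is the smooth propagator. The $\HT^{s,p}_{FIO}$ well-posedness for the rough equation from \cite{HassellRozendaal2023} gives $R_Nu\in L^\infty_t\HT^{\alpha-s(p)-1,p}_{FIO}$, since $R_N$ costs at most one derivative and $r>2s(p)+1$. Applying the local smoothing estimate for $S_N$ to this inhomogeneous term via Minkowski in $s$ then bounds the Duhamel part by the same right-hand side.
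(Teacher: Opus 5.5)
Your architecture is the paper's: paradifferential smoothing at scale $N^{-1/2}$, absorbing the rough part in a Duhamel term via $\HT^{p}_{FIO}$-invariance, wave packets along the Hamiltonian flow, a bilinear restriction estimate, the Tao--Vargas Whitney decomposition and Bourgain's summation trick. However, Step~4 has a genuine gap: it has no mechanism producing the sharp bound at or near $\mathfrak{Q}$, a point with $p>2$. At $\mathcal{E}$ the $L^{2}$ bilinear bound carries the factor $\nu^{d-1-\frac{2(d+1)}{q_b}}=\nu^{-4/(d+3)}$. So the Whitney sum is dominated by $\nu\eqsim N^{-1/2}$ and returns only the Knapp loss $N^{\frac{d+1}{2(d+3)}}$ for $L^{2}$ data, which already follows from energy and Strichartz estimates. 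Since $\alpha(p,q)$ is a maximum of two affine functions, interpolating the bounds you list (at $p=2$, on $q=\infty$, fixed-time) is lossless for $p\geq2$ only on the triangle with vertices $(0,0)$, $(\frac12,0)$, $(\frac12,\frac{d-1}{2(d+1)})$ and on the segment $\{\frac1p=\frac12\}$. But $\mathfrak{Q}$, a vertex of both $\mathcal{T}_{l}$ and $\mathcal{T}_{u}$, lies strictly above the line through $(0,0)$ and $(\frac12,\frac{d-1}{2(d+1)})$. The missing idea is to interpolate at the bilinear level, for each fixed $\nu$, between the $L^{2}$ bilinear estimate and a $q=\infty$ bound for $\HT^{\infty}_{FIO}$ data. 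That bound is $\|S_{N}f\|_{L^{\infty}([0,1]\times\R^{d})}\lesssim N^{\frac{d-1}{4}}\nu^{d-1}\|f\|_{\HT^{\infty}_{FIO}(\R^{d})}$ for $\widehat{f}$ supported in a $\nu$-cap at frequency $N$. It follows by summing Proposition~\ref{prop:bilinearinfty}, which rests on the Knapp normalization in Lemma~\ref{lem:Knapp}, over $N^{-1/2}$-sectors. The result is Theorem~\ref{thm:BilinearEstimates}, with factor $\nu^{2(d-1-\frac{d+1}{q}-\frac{d-1}{p})}$, summable over Whitney scales exactly strictly below the Schlag--Sogge line. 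The line through $\mathcal{E}$ and $(0,0)$ crosses the Schlag--Sogge line at $\mathfrak{Q}$, which is where $\mathfrak{Q}$ comes from. Interpolating instead with an $L^{\infty}$-data bound, as Tao--Vargas and Lee do, only yields $W^{\alpha(p,q),p}$-to-$L^{q}$ bounds, and these do not suffice to close your Step~5 in $\HT^{p}_{FIO}$.

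There are three further points. First, Tao--Vargas reduce $\|B_{\nu}(f,g)\|_{L^{q/2}}$ to individual pairs $\w_{1}\sim_{\nu}\w_{2}$ using the Fourier support of the products, which variable-coefficient flows do not preserve. The paper recovers this reduction in Proposition~\ref{prop:AlmostOrthogonalityPhaseSpace} by localizing the output in space at scale $\nu$ and using the bi-Lipschitz projected flow $\hat{\chi}_{t}$ and kernel bounds. This costs $\nu^{-\delta}$, which is affordable only because the $\nu$-exponent above is strictly positive. Second, for case~(2) of Theorem~\ref{thm:mainLpintro}, Bourgain's trick needs dyadic bounds with exactly the critical power of $N$ at nearby points. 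The $N^{\veps}$ loss in your Step~3 survives interpolation, so you need the $\veps$-removal for $q>q_{b}$ from Appendix~\ref{sec:noeps}; after interpolation this gives $\veps$-free bilinear bounds for $q>\frac{d+3}{d+1}p$. Third, the condition on $r$ for $d\geq4$ does not enter in the bilinear estimate. That estimate holds for $C^{1,1}$ coefficients in every dimension after rescaling to small $\dot{C}^{1,1}$ norm to exclude caustics (Lemma~\ref{lem:absencecaustics}). The condition comes from your Step~5, where Lemma~\ref{lem:pseudoHpFIO} and Proposition~\ref{prop:roughfixed} require $r>2s(p)+1$ and $\alpha(p,q)-s(p)<r-s(p)$. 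Also in Step~5, the smoothing bound for $S_{N}$ applies only to inputs at frequency $\eqsim N$, so the Duhamel argument must be run for $\psi_{N}(D)u$, as in Proposition~\ref{prop:reduction}. Its forcing is Fourier supported in $\{N/4\leq|\xi|\leq4N\}$, up to an order-zero commutator handled by a Sobolev embedding.
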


\subsection{Overview of the proof}\label{subsec:proof}
\subsubsection{Paradifferential decomposition}
To prove Theorem \ref{thm:mainHpFIOintro}, we first apply the paradifferential smoothing procedure to reduce to the half-wave equation associated with a smooth pseudodifferential operator, for frequency-localized initial data. This step is where the invariance of the Hardy spaces for Fourier integral operators comes into play, and it is the reason for the restriction $r>\max(2s(p)+1,\alpha(p,q))$ in Theorems \ref{thm:mainLpintro} and \ref{thm:mainHpFIOintro}, which is only relevant for $d\geq 4$.
To absorb the rough part of the equation into the Duhamel term, we have to deal with rough pseudodifferential operators on the Hardy spaces for Fourier integral operators. Their mapping properties were considered in \cite{Rozendaal23a,Rozendaal22,LiRoSo25a}; the relevant result for this article is Lemma \ref{lem:pseudoHpFIO} (see also Remark \ref{rem:sqinfty}).

Once one is allowed to work with a smooth evolution $S_{N}$ 
and with initial data frequency-localized in an annulus of height $N>0$, the proof globally follows the argument pioneered in 
the Euclidean setting, cf.~\cite{TaoVargas2000}, albeit with several twists. 
\subsubsection{Whitney decomposition and almost orthogonality}
An angular Whitney decomposition from \cite{TaoVargas2000}  reduces the problem to obtaining estimates for suitable bilinear operators $B_{\nu}$, at dyadic scales 
$N^{-1/2}\leq \nu\leq 1$, the components of which involve frequency localization to cones of aperture $\nu$ that are separated by angle $\nu$.  

An almost orthogonality statement shows that it suffices to bound  
terms of the form $\|(S_{N}f)(S_{N}g)\|_{L^{q/2}([0,1]\times\Rd)}$, under the same frequency support conditions. 
In the constant-coefficient case, due to the preservation of Fourier support, this statement is straightforward. Here we require an additional spatial localization, after which the almost orthogonality can be inferred from kernel bounds.

\subsubsection{Bilinear estimates in $\mathcal{H}_{FIO}^p$-spaces}
We rely on two ingredients to bound the terms $\|(S_{N}f)(S_{N}g)\|_{L^{q/2}([0,1]\times\Rd)}$. One is the extension by the second author and Tataru 
of Wolff's bilinear restriction estimate, in \cite{SchippaTataru2025}. This result yields, for $q=q_{b}=\frac{2(d+3)}{d+1}$, an essentially sharp bound in terms of the $L^{2}(\Rd)$ norms of $f$ and $g$. 
While the linear version of this estimate fails even for the Euclidean wave equation, angular separation of the Fourier supports yields cancellation and improved integrability. In fact, the bilinear estimate in \cite{SchippaTataru2025} concerns evolutions associated with homogeneous symbols, and  
we need to modify some of the arguments. An $N^{\veps}$ loss occurs here, which we show can be removed for $q>q_{b}$. Crucially, the resulting expression is not summable in $\nu$. 

The second ingredient is an estimate for $q=\infty$ in terms of the $\HT^{\infty}_{FIO}(\Rd)$ norms of $f$ and $g$, 
relying only on the fact that these functions have Fourier support in a cone of aperture $\nu\geq N^{-1/2}$. Here we again use 
kernel bounds from \cite{GebaTataru2007}. This estimate is summable in $\nu$ but too weak in terms of the dependence on $N$.

For $(\frac{1}{p},\frac{1}{q})$ on the line between $(\frac{1}{2},\frac{1}{q_{b}})$ and $(0,0)$, we can interpolate these two ingredients to obtain a bound 
in terms of the $\Hp$ norms of $f$ and $g$. This line intersects the Schlag--Sogge line at the point $\mathfrak{Q}$, 
below which the expression becomes summable in $\nu$ and sharp in $N$. To the right of the first line one can remove the $N^{\veps}$ loss, and it is then straightforward to combine the pieces 
and prove Theorem \ref{thm:mainHpFIOintro}.

\subsection{Fourier decoupling}

Some of our refined decoupling results appear to be new even for 
the Euclidean wave equation.

In \cite{BourgainDemeter2015}, Bourgain and Demeter obtained essentially sharp 
$\ell^{2}$ decoupling bounds for the paraboloid.
Via the Pramanik--Seeger argument from \cite{Pramanik-Seeger07}, their $\ell^{2}$ decoupling estimates imply $\ell^{p}L^{p}$-$L^{p}$ decoupling bounds for the Euclidean wave equation, which in turn yield $W^{\alpha,p}(\Rd)$-$L^{p}([0,1]\times\Rd)$ local smoothing. 

It was observed in \cite{Rozendaal22b} that $\ell^{p}L^{p}$-$L^{p}$ decoupling bounds for the Euclidean wave equation are in fact equivalent to $\HT^{\alpha,p}_{FIO}(\Rd)$-$L^{p}([0,1]\times\Rd)$ local smoothing estimates. This equivalence was extended in \cite{LiRoSoYa24} to the variable-coefficient $\ell^{p}$ decoupling inequalities from \cite{BeltranHickmanSogge2020}, and similar connections hold between related function spaces and other decoupling inequalities, cf.~\cite{RozendaalSchippa2023,HaPoRoYu26,Schippa22}. 

On the other hand, the Tao--Vargas approach to $W^{\alpha,p}(\Rd)$-$L^{q}([0,1]\times\Rd)$ local smoothing upon which we rely does not appear to connect to decoupling theory. Moreover, there are also $W^{\alpha,p}(\Rd)$-$L^{p}([0,1]\times\Rd)$ local smoothing estimates which do not follow directly from bounds involving $\Hp$, most notably at the critical index $p=p_{c}$ (see e.g.~\cite[Section 5]{Rozendaal22b}). This is particularly relevant because, to implement the strategy described above of dealing with the rough Duhamel term, we require bounds for the paradifferentially smoothed evolution involving $\HT^{\alpha-s(p),p}_{FIO}(\Rd)$ initial data; it would not suffice to obtain the weaker $W^{\alpha,p}(\Rd)$-$L^{q}([0,1]\times\Rd)$ estimates in Theorem \ref{thm:mainLpintro} for the smooth equation.

Fortunately, as is evident from Theorem \ref{thm:mainHpFIOintro}, one can indeed derive $\HT^{\alpha-s(p),p}_{FIO}(\Rd)$-$L^{q}([0,1]\times\Rd)$ local smoothing estimates which improve upon the classical bounds from \cite{TaoVargas2000,Lee2006} involving $W^{\alpha,p}(\Rd)$ initial data. This is in turn possible because the bilinear restriction estimate with initial data in $W^{\alpha,p}(\Rd)$ can be upgraded to a bilinear bound involving $\HT^{\alpha-s(p),p}_{FIO}(\Rd)$ initial data (see Theorem \ref{thm:BilinearEstimates}). And, cf.~the proof sketch above, to do so one merely needs to rely on the improved fixed-time bounds for $p=\infty$. In fact, another property of the Hardy spaces for Fourier integral operators plays a hidden role here; the behavior of the $\Hp$ norm of the Knapp example is sufficiently beneficial for $p>2$ that it allows for the kind of improvements we are after (see Lemma \ref{lem:Knapp}). Even in the case of the Euclidean wave equation, similar results cannot hold for $q<\frac{2(d+1)}{d-1}$ when relying on an $\ell^{2}$ decoupling inequality, again due to the Knapp example (see Remark \ref{rem:ell2decoupling}).

Finally, the arguments from \cite{Rozendaal22b,LiRoSoYa24} show that the bounds in Theorem \ref{thm:mainHpFIOintro} are equivalent in a quantitative sense to $\ell^{p}L^{p}$-$L^{q}$ decoupling inequalities. Here it is relevant to note that, for $(\frac{1}{p},\frac{1}{q})$ in the interior of $\mathcal{T}_{l}$, the bounds are sharp up to the endpoint index, whereas decoupling estimates typically involve at least some loss. 

\subsection{Organization}

In Section \ref{sec:Preliminaries} we introduce the relevant function spaces, and we collect some basics on rough symbol classes and pseudodifferential operators. 

With these in hand, in Section \ref{sec:BilinearSmoothing} we prove Theorem \ref{thm:mainHpFIOintro}. 
In Section \ref{subsec:main} we first state a slightly more precise version of Theorem \ref{thm:mainHpFIOintro}, and we prove Corollary \ref{cor:extrabutweakintro}. Section \ref{subsec:reduction} then contains various reduction steps, most notably the paradifferential smoothing. Section \ref{subsec:wavedecomp} extends 
tools from \cite{SchippaTataru2025} to the present setting, and Section \ref{subsec:bilinear} is dedicated to our bilinear estimates. Section \ref{subsec:orthogonal} details the angular Whitney decomposition and contains the proof of the almost orthogonal decomposition. 
In Section \ref{subsec:conclude} these ingredients are combined to conclude the proof of Theorem \ref{thm:mainHpFIOintro}. 

Appendix \ref{sec:kernel} collects kernel bounds that play a key role at several points in the main proof.  Appendix \ref{sec:inter} contains two interpolation results, used to deal with the endpoint cases in Theorems \ref{thm:mainLpintro} and \ref{thm:mainHpFIOintro}. Appendix \ref{sec:noeps} contains the $\veps$ removal argument for the $L^{2}$-based bilinear restriction estimate from \cite{SchippaTataru2025}, and Appendix \ref{sec:smooth} concerns improvements of Theorem \ref{thm:mainHpFIOintro} for the Euclidean wave equation.

\subsection{Notation}
The natural numbers are $\N=\{1,2,\ldots\}$, and $\N_{0}:=\N\cup\{0\}$. 
For $\mathbb{X} \in \{ \N, \N_0,-\N\}$
, we set $2^{\mathbb{X}}:=\{2^{k}\mid k\in\mathbb{X}\}$. For $x \in \R$, we write $x_+ := \max(x,0)$. Throughout this article, we fix $d\in\N$ with $d\geq2$. 

Space-time variables are denoted by $(t,x) \in \R\times \R^d$, and the dual variables are $(\tau,\xi) \in \R\times \R^d$. The open ball in $\Rd$ of radius $r>0$ around $x\in\Rd$ is denoted by $B_{d}(x,r)$, and $S^{d-1}$ is the unit sphere in $\Rd$. The cotangent bundle of $\Rd$ is $T^{*}\Rd=\R^{d}\times\Rd$, and the cosphere bundle is $\Sp=\Rd\times S^{d-1}$.

For $\xi\in\Rd$ we write $\lb\xi\rb:=(1+|\xi|^{2})^{1/2}$, and $\hat{\xi}:=|\xi|^{-1}\xi$ if $\xi\neq 0$. We use multi-index notation, where $\partial_{\xi}=(\partial_{\xi_{1}},\ldots,\partial_{\xi_{d}})$ and 
$\partial^{\alpha}_{\xi}=\partial^{\alpha_{1}}_{\xi_{1}}\ldots\partial^{\alpha_{d}}_{\xi_{d}}$
for $\xi=(\xi_{1},\ldots,\xi_{d})\in\Rd$ and $\alpha=(\alpha_{1},\ldots,\alpha_{d})\in\N_{0}^{d}$. We write $D_j :=- i \partial_{x_j}$ for $1\leq j\leq d$, and $D_t := - i \partial_t$. Moreover, $e_{d}:=(0,\ldots,0,1)$ is the $d$-th basis vector of $\Rn$.

The Fourier transform of a tempered distribution $f\in\Sw'(\Rd)$ is denoted by $\F f$ or $\wh{f}$, and its inverse Fourier transform by $\F^{-1}f$ or 
$\widecheck{f}$. If $f\in L^{1}(\Rd)$ then $\F f(\xi)=\int_{\Rd}e^{-i\lb x,\xi\rb}f(x)\ud x$ for $\xi\in\Rd$. Also, $\ph(D)$ is the Fourier multiplier with symbol $\ph$.

The H\"{o}lder conjugate of $p\in[1,\infty]$ is $p'$, and the indicator function of a subset $\Omega$ of a set $\Omega'$ is denoted by $\ind_{\Omega}$. The space of bounded operators between Banach spaces $X$ and $Y$ is $\La(X,Y)$.

We write $f(s)\lesssim g(s)$ to indicate that $f(s)\leq Cg(s)$ for all $s$ and a constant $C>0$ independent of $s$, and $f(s)\lesssim_{\alpha} g(s)$ means that $C$ depends on a parameter $\alpha$. The same convention applies to $f(s)\gtrsim g(s)$ and $g(s)\eqsim f(s)$.

\section{Preliminaries}\label{sec:Preliminaries}
In this section we collect background on the relevant function spaces and classes of pseudodifferential symbols, as will be needed in later sections.

\subsection{Function spaces}\label{subsec:spaces}

Let 
$\psi_1 \in C^\infty_c(\R^d)$ 
be such that $\psi_1 \equiv 1$ on $B_d(0,1)$ and $\psi_1 \equiv 0$ outside $B_d(0,2)$. For $N\in 2^{\N}$ and $\xi\in\Rd$, set $\psi_N(\xi) := \psi_1(\xi/N) - \psi_1(\xi/(N/2))$. Then $\psi_{N}(\xi)=0$ unless 
$N/2<|\xi|<2N$. We have for all $\xi\in\Rd$
\begin{equation}\label{eq:LittlePaley}
\sum_{N \in 2^{\N_0}} \psi_N(\xi) =1.
\end{equation}
For brevity we let $\psi = \psi_1$.

Recall that, for $p,q\in[1,\infty]$ and $s\in\R$, the (inhomogeneous) Besov space $B^{s}_{p,q}(\Rd)$ consists of all $f\in\Sw'(\Rd)$ such that 
\begin{equation*}
    \| f \|_{B^s_{p,q}(\R^d)}:= \Big(\sum_{N\in 2^{\N_0}} N^{sq} \|\psi_{N}(D) f \|_{L^p(\R^d)}^q  \Big)^{1/q}<\infty,
\end{equation*}
with the usual modification for $q = \infty$. Moreover, we write $C^{s}_{*}(\Rd):=B^{s}_{\infty,\infty}(\Rd)$. 

Next, $H^1(\R^d)$ denotes the classical Hardy space, comprised of integrable functions with mean zero which admit the well-known atomic decomposition. Its dual is $\BMO(\Rd)$. Let $\mathcal{H}^1(\R^d)$ be the local Hardy space from \cite{Goldberg1979}, consisting of $f \in \mathcal{S}'(\R^d)$ such that $\psi(D)f \in L^1(\R^d)$ and $(1-\psi(D))f \in H^1(\R^d)$, endowed with the norm
\begin{equation*}
    \| f \|_{\mathcal{H}^1(\R^d)} := \|\psi(D) f \|_{L^1(\R^d)} + \|(1-\psi(D))f \|_{H^1(\R^d)}.
\end{equation*}
In turn, the dual of $\HT^{1}(\Rn)$ is the space $\bmo(\Rd)$, which consists of all $f \in \mathcal{S}'(\R^d)$ such that $\psi(D)f \in L^{\infty}(\R^d)$ and $(1-\psi(D))f \in \BMO(\R^d)$, with the norm
\begin{equation*}
    \| f \|_{\bmo(\R^d)} := \| \psi(D)f \|_{L^\infty(\R^d)} + \| (1-\psi(D))f \|_{\BMO(\R^d)}.
\end{equation*}
Throughout, for notational convenience, we will write \begin{equation}\label{eq:localHardy}
\HT^{s,p}(\Rn):=
\begin{cases}
W^{s,p}(\Rn)=\lb D\rb^{-s}L^{p}(\Rn)&\text{for }p\in(1,\infty),\\
\lb D\rb^{-s}\HT^{1}(\Rn)&\text{for }p=1,\\
\lb D\rb^{-s}\bmo(\Rn)&\text{for }p=\infty,
\end{cases}
\end{equation}
for $s\in\R$. Here $\langle D \rangle^s$ is the Bessel potential of order $s\in\R$, which is defined as a Fourier multiplier by $\F(\langle D \rangle^s f)(\xi) = \langle \xi \rangle^s \F f (\xi)$ for $\xi\in\Rd$.

To relate our main result in Section \ref{subsec:main} to Theorem \ref{thm:mainLpintro}, and to interpret some more subtle aspects of our results, e.g.~in Remark \ref{rem:sqinfty}, it is relevant to note that $\HT^{s,\infty}(\Rd)\subsetneq C^{s}_{*}(\Rd)$ for all $s>0$, that $C^{s}(\Rd)\subsetneq C^{s-1,1}(\Rd)\subsetneq \HT^{s,\infty}(\Rd)$ for $s\in\N$, and that $C^{s}_{*}(\Rd)=C^{s}(\Rd)$ for $s\in (0,\infty)\setminus \N$.

Next, fix a family $(\ph_{\w})_{\w\in S^{d-1}}\subseteq C^{\infty}(\Rn)$ of non-negative functions with the following properties:
\begin{enumerate}
\item\label{it:phiproperties1} For all $\w\in S^{d-1}$ and $\xi\neq0$, it holds $\ph_{\w}(\xi)=0$ if $|\xi|<\frac{1}{4}$ or $|\hat{\xi}-\w|>|\xi|^{-1/2}$.
\item\label{it:phiproperties2} For all $\alpha\in\N_{0}^{d}$ and $\beta\in\N_{0}$, there exists a $C_{\alpha,\beta}\geq0$ such that, for all $\w\in S^{d-1}$ and $\xi\neq0$, one has  
$|\lb \w,\partial_{\xi}\rb^{\beta}\partial^{\alpha}_{\xi}\ph_{\w}(\xi)|\leq C_{\alpha,\beta}|\xi|^{\frac{d-1}{4}-\frac{|\alpha|}{2}-\beta}$.
\item\label{it:phiproperties3}
The map $(\xi,\w)\mapsto \ph_{\w}(\xi)$ is measurable on $\Rn\times S^{d-1}$, and $\int_{S^{d-1}}\ph_{\w}(\xi)^{2}\ud\w=1$ for all $\xi\in\Rn$ with $|\xi|\geq 1$.
\end{enumerate}

We can now define the Hardy spaces for Fourier integral operators.

\begin{definition}\label{def:HpFIO}
Let $p\in[1,\infty)$. Then $\Hp$ consists of those $f\in\Sw'(\Rn)$ such that $\psi(D/2)f\in L^{p}(\Rd)$, $\ph_{\w}(D)f\in \HT^{p}(\Rd)$ for almost all $\w\in S^{d-1}$, and $(\int_{S^{d-1}}\|\ph_{\w}(D)f\|_{\HT^{p}(\Rd)}^{p}\ud \w)^{1/p}<\infty$, endowed with the norm
\[
\|f\|_{\Hp}:=\|\psi(D/2)f\|_{L^{p}(\Rd)}+\Big(\int_{S^{d-1}}\|\ph_{\w}(D)f\|_{\HT^{p}(\Rd)}^{p}\ud \w\Big)^{1/p}.
\] 
Moreover, $\HT^{\infty}_{FIO}(\Rd):=\HT^{1}_{FIO}(\Rd)^{*}$, and $\HT^{s,p}_{FIO}(\Rd):=\lb D\rb^{-s}\Hp$ for all $p\in[1,\infty]$ and $s\in\R$.
\end{definition}

We recall some key properties of these spaces. Firstly,
\begin{equation}\label{eq:Sobolev}
\mathcal{H}^{s+s(p),p}(\R^d) \hookrightarrow \mathcal{H}^{s,p}_{\text{FIO}}(\R^d) \hookrightarrow \mathcal{H}^{s-s(p),p}(\R^d)
\end{equation}
for all $p\in[1,\infty]$ and $s\in\R$, by \cite[Theorem~7.4]{HassellPortalRozendaal2020}.  The Schwartz functions are contained in $\Hps$, and they are dense if $p<\infty$, by \cite[Proposition 6.6]{HassellPortalRozendaal2020}. The spaces form a complex interpolation scale in the natural manner, by \cite[Proposition 6.7]{HassellPortalRozendaal2020},  
and they behave as expected under duality:
$(\Hps)^{*}=\HT^{-s,p'}_{FIO}(\Rn)$ 
for all $p\in[1,\infty)$ and $s\in\R$, cf.~\cite[Proposition 6.8]{HassellPortalRozendaal2020}.

Finally, an important role will be played by the following result from \cite[Proposition 6.4]{FaLiRoSo23} (see also \cite[Lemma 2.2]{Rozendaal22b}) on the $\Hp$ norm of the Knapp example.

\begin{lemma}\label{lem:Knapp}
Let $p\in[1,\infty]$ and $c\geq 1$. Then there exists a $C\geq 1$ such that the following holds. Let $N\geq 1$, $\w\in S^{d-1}$ and $f\in\Hp$ be such that
\[
\supp(\wh{f}\,)\subseteq\{\xi\in\Rd\mid c^{-1}N\leq |\xi|\leq cN, |\hat{\xi}-\w|\leq cN^{-1/2}\}.
\]
Then
\[
C^{-1}\|f\|_{L^{p}(\Rd)}\leq N^{\frac{d-1}{2}(\frac{1}{p}-\frac{1}{2})}\|f\|_{\Hp}\leq C\|f\|_{L^{p}(\Rd)}.
\]
\end{lemma}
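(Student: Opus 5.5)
The plan is to compare both norms through the single frequency-localized piece $\ph_{\nu}(D)f$. Only directions $\nu\in S^{d-1}$ with $|\nu-\w|\lesssim N^{-1/2}$ contribute, and for such $\nu$ the piece $\ph_{\nu}(D)f$ acts on $f$ essentially as a smooth symbol of size $N^{\frac{d-1}{4}}$ adapted to the same $N\times (N^{1/2})^{d-1}$ box. I will treat $p<\infty$ directly and obtain $p=\infty$ by duality, using $(\HT^{1}_{FIO})^{*}=\HT^{\infty}_{FIO}$ together with the fact that the dual of a frequency-localized Knapp piece can be tested against functions localized the same way.

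\emph{Upper bound.} Since $N\geq1$ and $\supp\wh f$ lies in $|\xi|\geq c^{-1}N$, the term $\psi(D/2)f$ vanishes for $N$ large; for bounded $N$ it is trivially controlled by $\|f\|_{L^p}$. For the main term, $\ph_{\nu}(D)f\neq0$ only when $|\nu-\w|\leq C' N^{-1/2}$, which is a set of measure $\eqsim N^{-\frac{d-1}{2}}$ on the sphere. Choose a smooth bump $\chi$ adapted to the enlarged box $\{|\xi|\eqsim N,\ |\hat\xi-\w|\lesssim N^{-1/2}\}$ with $\chi\equiv1$ on $\supp\wh f$. Then $\ph_\nu(D)f=(\ph_\nu\chi)(D)f$. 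By property \eqref{it:phiproperties2} the symbol $N^{-\frac{d-1}{4}}\ph_{\nu}\chi$ satisfies anisotropic derivative bounds: $N^{-1}$ per radial derivative and $N^{-1/2}$ per angular derivative. Its kernel is therefore an $L^{1}$-normalized bump on the dual box, so $\|\ph_{\nu}(D)f\|_{L^{p}}\lesssim N^{\frac{d-1}{4}}\|f\|_{L^{p}}$. Because the Fourier support avoids the origin, the $\HT^{1}$ norm is comparable to the $L^{1}$ norm for $p=1$ by a standard argument. Integrating over $\nu$ gives
\[
\|f\|_{\Hp}\lesssim \big(N^{-\frac{d-1}{2}}N^{\frac{d-1}{4}p}\big)^{1/p}\|f\|_{L^p}=N^{\frac{d-1}{2}(\frac12-\frac1p)}\|f\|_{L^p},
\]
which is exactly the right-hand inequality.

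\emph{Lower bound.} Write $f=\int_{S^{d-1}}\ph_{\nu}(D)^{2}f\,\ud\nu$ using property \eqref{it:phiproperties3}, which applies since $|\xi|\geq1$ on $\supp\wh f$ after handling bounded $N$ separately. Only $\nu$ in the $N^{-1/2}$-cap contribute. Applying the same kernel bound to the outer $\ph_\nu(D)$ and then Minkowski and Hölder on a set of measure $N^{-\frac{d-1}{2}}$ gives
\[
\|f\|_{L^p}\lesssim N^{\frac{d-1}{4}}N^{-\frac{d-1}{2}(1-\frac1p)}\Big(\int\|\ph_\nu(D)f\|_{L^p}^p\ud\nu\Big)^{1/p}=N^{\frac{d-1}{2}(\frac1p-\frac12)}\|f\|_{\Hp},
\]
which is the left-hand inequality. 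For $p=1$ we use $\|\cdot\|_{L^1}\le\|\cdot\|_{\HT^1}$.

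\emph{Main obstacle.} The delicate step is the uniform kernel estimate for $\ph_\nu\chi$. It requires the anisotropic symbol bounds with constants independent of $\nu$ and $N$, and one must check that integrating by parts along $\w$ and in the orthogonal directions yields integrable decay on the scale $N^{-1}\times (N^{-1/2})^{d-1}$. The endpoint $p=\infty$ also needs care. There the definition is by duality, so I would pair $f$ with $g\in\HT^1_{FIO}$, replace $g$ by $\chi(D)g$, and apply the $p=1$ case to $\chi(D)g$. The same duality argument controls $\bmo$ versus $L^\infty$ for frequency-localized functions.
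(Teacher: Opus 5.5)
Your argument is correct. The paper does not prove this lemma itself; it cites \cite[Proposition 6.4]{FaLiRoSo23} (see also \cite[Lemma 2.2]{Rozendaal22b}). What you give is a self-contained derivation straight from Definition \ref{def:HpFIO}, along the standard lines:
\begin{enumerate}
\item only directions in a $CN^{-1/2}$-cap contribute;
\item $(\ph_{\nu}\chi)(D)$ has kernel of $L^{1}$ norm $\lesssim N^{\frac{d-1}{4}}$ by property \eqref{it:phiproperties2};
\item the reproducing formula from \eqref{it:phiproperties3}, plus H\"{o}lder over the cap, gives the reverse inequality.
\end{enumerate}
In step 2, the factor $\chi$ also obeys the anisotropic bounds in direction $\nu$ because $|\nu-\w|\lesssim N^{-1/2}$. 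The exponents work out correctly in both directions.

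Two steps in your sketch should be spelled out.

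\textbf{Bounded $N$.} Here \eqref{it:phiproperties3} need not apply on $\supp(\wh{f}\,)$. The lower bound then follows, for instance, from \eqref{eq:Sobolev}, since $\lb D\rb^{\pm s(p)}$ is harmless on functions with Fourier support in a fixed compact set.

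\textbf{The case $p=\infty$.} The two inequalities need different test classes.
\begin{itemize}
\item For $\|f\|_{\HT^{\infty}_{FIO}(\Rd)}\lesssim N^{\frac{d-1}{4}}\|f\|_{L^{\infty}(\Rd)}$, you pair with $g\in\HT^{1}_{FIO}(\Rd)$ and apply the $p=1$ \emph{lower} bound to $\chi(D)g$. You then also need $\|\chi(D)g\|_{\HT^{1}_{FIO}(\Rd)}\lesssim\|g\|_{\HT^{1}_{FIO}(\Rd)}$ uniformly in $N$ and $\w$. This holds because $\chi(D)$ commutes with $\psi(D/2)$ and $\ph_{\nu}(D)$ and has a uniformly integrable kernel. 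It is therefore bounded on $L^{1}(\Rd)$ and on the translation-invariant space $\HT^{1}(\Rd)$.
\item For $\|f\|_{L^{\infty}(\Rd)}\lesssim N^{-\frac{d-1}{4}}\|f\|_{\HT^{\infty}_{FIO}(\Rd)}$, you pair with $h\in L^{1}(\Rd)$ and apply the $p=1$ \emph{upper} bound to $\chi(D)h$. If the pairing is bilinear, take $\chi$ adapted to the reflected box.
\end{itemize}
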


\subsection{Symbol classes and quantization}\label{subsec:symbols}

For $m\in\R$ and $\rho,\delta\in[0,1]$, H\"{o}rmander's class $S^{m}_{\rho,\delta}$ consists of all $a\in C^{\infty}(\R^{2d})$ such that 
\[
\sup_{(x,\xi)\in\R^{2d}}\lb \xi\rb^{-m+|\alpha|\rho-|\beta|\delta}|\partial_{x}^{\beta}\partial_{\xi}^{\alpha}a(x,\xi)|<\infty
\]
for all $\alpha,\beta\in\N_{0}^{d}$. 
We will be interested in rough versions of these symbols,  
and in elements of $S^{m}_{1,\delta}$ which have additional smoothness, in the following sense.

\begin{definition}\label{def:symbolrough}
Let $r>0$, $m\in\R$ and $\delta\in[0,1]$. Then $C^{r}_{*}S^{m}_{1,\delta}$ consists of all $a:\R^{2n}\to\C$ such that the following properties hold for all $\alpha\in \N_{0}^{d}$:
\begin{enumerate}
\item\label{it:symbolrough1} $a(x,\cdot)\in C^{\infty}(\Rn)$ for all $x\in\Rn$, and 

$\sup_{(x,\xi)\in\R^{2d}}\lb\xi\rb^{-m+|\alpha|}|\partial_{\xi}^{\alpha}a(x,\xi)|<\infty$; 
\item\label{it:symbolrough2} $\partial_{\xi}^{\alpha}a(\cdot,\xi)\in C^{r}_{*}(\Rn)$ for all $\xi\in\Rn$,  
and
$\sup_{\xi\in\Rd}\lb\xi\rb^{-m+|\alpha|-\delta r}\|\partial_{\xi}^{\alpha}a(\cdot,\xi)\|_{C^{r}_{*}(\Rd)}\linebreak<\infty$.
\end{enumerate} 
Moreover, $C^{1,1}S^{m}_{1,\delta}$ consists of all $a\in C^{2}_{*}S^{m}_{1,\delta}$ such that, for all $\alpha\in \N_{0}^{d}$ with $|\alpha|=2$,
\[
\sup_{(x,\xi)\in\R^{2d}}\lb\xi\rb^{-m+2-2\delta}|\partial_{\xi}^{\alpha}a(x,\xi)|\linebreak<\infty.
\]
Finally, for $r\in\N$, the collection $\A^{r}S^{m}_{1,\delta}$ consists of those $a\in S^{m}_{1,\delta}$ such that 
\[
\sup_{\xi\in\Rd}\lb\xi\rb^{-m+|\alpha|-\delta(s-r)_{+}}\|\partial_{\xi}^{\alpha}a(\cdot,\xi)\|_{C^{s}_{*}(\Rd)}<\infty
\] 
for all $\alpha\in\N_{0}^{d}$ and $s\geq0$, and, if $s\in\N_{0}$, then 
\[
\sup_{x,\xi\in\Rd}\lb\xi\rb^{-m+|\alpha|-\delta(s-r)_{+}}|\partial_{x}^{\beta}\partial_{\xi}^{\alpha}a(x,\xi)|<\infty
\]
for all $\beta\in\N_{0}^{d}$  such that $|\beta|=s$.
\end{definition}

An $a\in C^{r}_{*}S^{m}_{1,\delta}$ is \emph{homogeneous of degree $m\in\R$ for $|\xi|\geq 1$} if 
$a(x,\la\xi)=\la^{m} a(x,\xi)$ for all $\lambda\geq 1$ and $(x,\xi)\in\R^{2d}$ with $|\xi|\geq1$. 
However, the symbols which we work with merely satisfy the following weaker version of homogeneity. 

\begin{definition}\label{def:asymphom}
Let  
$m\in\R$, $\delta\in[0,1]$ and $b\in S^{m}_{1,\delta}$, and let $a\in C^{1,1} S^{m}_{1,0}$ be homogeneous of degree $m$ for $|\xi|\geq1$. Then $b\in S^{m}_{1,\delta}$ is \emph{asymptotically homogeneous of degree $m$ with limit} $a$ if the following conditions hold:
\begin{subequations}\label{eq:asymphom}
\begin{align*}
& [(x,\xi)\mapsto (\xi\cdot \partial_{\xi}-m)b(x,\xi)]\in S^{m-1}_{1,\delta};\\
& a-b\in C^{1,1} S^{m-1}_{1,\delta}.
\end{align*}
\end{subequations}
\end{definition}

In Section \ref{subsec:reduction} we will apply a symbol smoothing procedure to $a\in C^{r}_{*}S^{m}_{1,\delta}$, defined using the Littlewood--Paley decomposition from \eqref{eq:LittlePaley}. For $(x,\xi)\in\R^{2d}$, set
\[
a^{\sharp}(x,\xi):=\sum_{M\in 2^{\N_{0}}}\psi(M^{-1/2}D)a(x)\psi_{M}(\xi)
\]
and
\begin{equation}\label{eq:flat}
a^{\flat}(x,\xi):=a(x)-a^{\sharp}(x,\xi)=\sum_{M\in 2^{\N_{0}}}(1-\psi)(M^{-1/2}D)a(x)\psi_{M}(\xi).
\end{equation}
The following is a special case of \cite[Lemma 3.8]{LiRoSo25a}. 

\begin{lemma}\label{lem:smoothing}
Let $r>0$, $m\in\R$ and $a\in C^{r}_{*}S^{m}_{1,0}$. Then $a^{\flat}\in C^{r}_{*}S^{m-r/2}_{1,1/2}$. Moreover, if $a\in C^{1,1}S^{m}_{1,0}$, then $a^{\sharp}\in \A^{2}S^{m}_{1,1/2}$, and if $a$ is additionally homogeneous of degree $m$ for $|\xi|\geq 1$, 
then $a^{\sharp}$ is asymptotically homogeneous of degree $m$ with limit $a$. If $a$ is elliptic, then $a^{\sharp}$ is elliptic as well.
\end{lemma}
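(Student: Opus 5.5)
The statement is the special case of \cite[Lemma 3.8]{LiRoSo25a}, but I would verify its four claims directly from the Littlewood--Paley structure of $a^{\sharp}$ and $a^{\flat}$.

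\emph{The rough remainder $a^{\flat}$.} Fix $\xi$ with $|\xi|\eqsim M$. Only the terms with $M'\eqsim M$ contribute, so $a^{\flat}(x,\xi)$ is a finite sum of $(1-\psi)(M'^{-1/2}D)a(\cdot,\xi)\psi_{M'}(\xi)$. Here $(1-\psi)(M^{-1/2}D)$ keeps only $x$-frequencies $\gtrsim M^{1/2}$. The Bernstein-type bound
\[
\|(1-\psi)(\lambda^{-1}D)h\|_{C^{s}_{*}}\lesssim \lambda^{s-r}\|h\|_{C^{r}_{*}},\qquad 0\le s\le r,
\]
applied with $\lambda=M^{1/2}$ and $s=0$ gives the size gain $M^{-r/2}$. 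Applied with $s=r$ it shows that the $C^{r}_{*}$ norm grows at most like $M^{0}=M^{(m-r/2)+r/2-m}$, which is exactly the weight $\lb\xi\rb^{\delta r}$ with $\delta=1/2$ and order $m-r/2$. For $\xi$-derivatives, $\partial_\xi^\alpha$ falls either on $a$, where it commutes with the $x$-multiplier and brings $\lb\xi\rb^{-|\alpha|}$, or on $\psi_{M}$, which also gives $M^{-|\alpha|}$. This proves $a^{\flat}\in C^{r}_{*}S^{m-r/2}_{1,1/2}$.

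\emph{The smooth part $a^{\sharp}$.} Now $a^{\sharp}$ is smooth in $x$, with $x$-frequencies $\lesssim M^{1/2}$. For $s\le 2$, its $C^{s}_{*}$ norms are bounded uniformly, since $\psi(M^{-1/2}D)$ is bounded on $C^{s}_{*}$. For $s>2$, Bernstein gives a factor $M^{(s-2)/2}=\lb\xi\rb^{\delta(s-2)_{+}}$. For integer $s$, the pointwise derivative bounds with $|\beta|=s$ follow similarly: for $|\beta|\le 1$ use the $C^{1,1}$ assumption directly; for $|\beta|=2$ use the Lipschitz bound of first derivatives, since $\psi(M^{-1/2}D)$ commutes with $\partial_x$ and $\partial_x^2\psi(M^{-1/2}D)\partial_x a$ is controlled by $\|\nabla\partial_x a\|_{L^\infty}$, which exists because $C^{1,1}=W^{2,\infty}$. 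Orders above $2$ then follow by Bernstein. The $\xi$-derivatives behave as before. Hence $a^{\sharp}\in\A^{2}S^{m}_{1,1/2}$.

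\emph{Asymptotic homogeneity and ellipticity.} Write
\[
(\xi\cdot\partial_\xi-m)a^{\sharp}=\sum_M \psi(M^{-1/2}D)\big[(\xi\cdot\partial_\xi-m)a\big]\psi_M+\sum_M\psi(M^{-1/2}D)a\,(\xi\cdot\partial_\xi\psi_M).
\]
The first sum vanishes for $|\xi|\ge 2$ by homogeneity; the finitely many low frequencies are harmless. In the second sum, $\sum_M \xi\cdot\partial_\xi\psi_M=0$ for $|\xi|\geq 2$, so it can be rewritten as
\[
\sum_M\big[\psi(M^{-1/2}D)-\psi(|\xi|^{-1/2}D)\big]a\;\xi\cdot\partial_\xi\psi_M.
\]
The difference operator localizes $a$ to $x$-frequencies $\eqsim M^{1/2}$, and since $a\in C^{1,1}$ this gives a gain $M^{-1}$; the resulting sum lies in $S^{m-1}_{1,1/2}$ by the argument above. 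Next, $a-a^{\sharp}=a^{\flat}$ with $r=2$, and the extra $C^{1,1}S^{m-1}_{1,1/2}$ condition on second $\xi$-derivatives holds because $\partial_\xi^\alpha a^{\flat}$ has size $\lb\xi\rb^{m-1-|\alpha|}$, which is better than required. Ellipticity follows because $|a^{\sharp}-a|=|a^{\flat}|\lesssim\lb\xi\rb^{m-1}$, so $|a^{\sharp}|\gtrsim\lb\xi\rb^{m}$ for large $\xi$; for bounded $\xi$ one uses that $\psi(M^{-1/2}D)a$ is an average of $a$ against a positive-mass kernel, adjusting the finitely many low-frequency terms if needed.

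The main obstacle I expect is the commutator step in the asymptotic homogeneity: the cutoff $\psi(M^{-1/2}D)$ depends on the dyadic scale rather than on $|\xi|$, and getting exactly one order of gain there from only $C^{1,1}$ regularity needs care.
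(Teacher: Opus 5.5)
The paper gives no argument for this lemma; it only cites it as a special case of \cite[Lemma 3.8]{LiRoSo25a}. Your direct Littlewood--Paley verification is therefore a self-contained alternative, and its main lines are sound: the gain $M^{-r/2}$ for $a^{\flat}$, the Bernstein bounds placing $a^{\sharp}$ in $\A^{2}S^{m}_{1,1/2}$, and the computation of $(\xi\cdot\partial_{\xi}-m)a^{\sharp}$.

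Two small corrections. First, the case $s=0$ of your Bernstein bound controls only the $C^{0}_{*}=B^{0}_{\infty,\infty}$ norm, not $L^\infty$. The pointwise size bound comes from summing $\sum_{K\gtrsim M^{1/2}}K^{-r}\|h\|_{C^{r}_{*}}$, which uses $r>0$. Second, the step you flag as the main obstacle is harmless. On the support of $\xi\cdot\partial_{\xi}\psi_{M}$ one has $|\xi|\eqsim M$, so $\psi(M^{-1/2}\eta)-\psi(|\xi|^{-1/2}\eta)$ lives where $|\eta|\eqsim M^{1/2}$, and the $C^{2}_{*}$ bound alone gives the factor $M^{-1}$. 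A $\xi$-derivative falling on $\psi(|\xi|^{-1/2}\eta)$ produces $|\xi|^{-1}$ times a multiplier with the same kind of support, so the $S^{m-1}_{1,1/2}$ estimates propagate.

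\textbf{The bounded-$\xi$ ellipticity argument is wrong.} The kernel $\F^{-1}\psi$ has mass one but is not nonnegative. It cannot be: for $0<|h|<1$ we have $\psi(h)=\psi(0)$, hence $\int(1-\cos(h\cdot y))\,\F^{-1}\psi(y)\,\ud y=0$. If $\F^{-1}\psi\geq 0$, this would force $\F^{-1}\psi=0$ almost everywhere. So $\psi(M^{-1/2}D)a$ is not an average of $a$, and for small $M$ ellipticity can genuinely fail. Take $a(x,\xi)=g(x)$ with $g\geq 1$ smooth but very large on a set where $\F^{-1}\psi(-\,\cdot\,)<0$; then $a^{\sharp}(0,\xi)=\psi(D)g(0)<0$ for $|\xi|<1$. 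Nor can you ``adjust'' low-frequency terms, since $a^{\sharp}$ is defined by a formula. The lemma has to be read with ellipticity as a condition for $|\xi|$ large. This matches how the paper uses it: the formula in Lemma \ref{lem:Ldecomp} is only for $|\xi|\geq N_{0}/2$, and ellipticity of $\psi(N^{-1/2}D)g_{ij}$ is only claimed for $N\geq N_{0}$. With that reading, your bound $|a^{\flat}|\lesssim\lb\xi\rb^{m-r/2}$ already suffices for any $r>0$, and the bounded-$\xi$ remark should be dropped.

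\textbf{The $C^{1,1}$ check for $a-a^{\sharp}$ targets a vacuous condition.} For $a-a^{\sharp}\in C^{1,1}S^{m-1}_{1,1/2}$ you verify the extra condition on second $\xi$-derivatives as printed. Read that way, it is already implied by the $C^{2}_{*}$ conditions and has no content. In the $a^{\sharp}$ part you yourself use $C^{1,1}$ as boundedness of second $x$-derivatives. That reading is correct and necessary: with only $C^{2}_{*}$ regularity, $\partial_{x}^{2}\psi(M^{-1/2}D)a$ can grow like $\log M$. With the same meaning, what must be checked is $|\partial_{x}^{\beta}\partial_{\xi}^{\alpha}a^{\flat}(x,\xi)|\lesssim\lb\xi\rb^{m-|\alpha|}$ for $|\beta|=2$. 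This follows at once from $\partial_{x}^{\beta}a^{\flat}=\sum_{M}(1-\psi)(M^{-1/2}D)\partial_{x}^{\beta}a\,\psi_{M}$ and the uniform $L^{\infty}$-boundedness of $\psi(M^{-1/2}D)$.
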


We define the standard quantization of an $a\in C^{r}_{*}S^{m}_{1,\delta}$ by
\begin{equation*}
    a(x,D) f(x) := \frac{1}{(2\pi)^d} \int_{\R^{2d}} e^{i \langle x-y,\xi \rangle} a(x,\xi) f(y) \ud y\ud \xi,
\end{equation*}
for $r>0$, $m\in\R$, $\delta\in[0,1]$, $f\in\Sw(\Rd)$ and $x\in\Rd$. 
Finally, the following is a special case of \cite[Theorem 3.19]{LiRoSo25a}. 

\begin{lemma}\label{lem:pseudoHpFIO}
Let $r>0$, $m\in\R$, $p\in[1,\infty]$ and $a\in C^{r}_{*}S^{m}_{1,1/2}$. For $\veps\in(0,\frac{r}{2}]$, set
\[
\sigma:=\begin{cases}
0&\text{if }r>4s(p),\\
2s(p)-\frac{r}{2}+\veps&\text{if }0<r\leq 4s(p).
\end{cases}
\]
Then $a(x,D):\HT^{s+\sigma+m,p}_{FIO}(\Rn)\to \Hps$ for all $-\tfrac{r}{2}+s(p)-\sigma<s<r-s(p)$. 
\end{lemma}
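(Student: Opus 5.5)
The quickest route is to note that the statement is a special case of \cite[Theorem 3.19]{LiRoSo25a}: one checks that $C^{r}_{*}S^{m}_{1,1/2}$ is the symbol class treated there, and that our choice of $\sigma$ and range of $s$ are what that theorem gives with $\delta=\frac12$. Below I outline how I would prove it directly, to confirm the parameters.

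\textbf{Reductions.} Composing with $\lb D\rb^{-m}$, which is an isomorphism $\HT^{t+m,p}_{FIO}(\Rd)\to\HT^{t,p}_{FIO}(\Rd)$, reduces matters to $m=0$. The low-frequency part $a(x,\xi)\psi(\xi)$ is handled separately and easily.

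\textbf{Decomposition of the symbol.} Decompose $a$ dyadically in both variables,
\[
a(x,\xi)=\sum_{M}\sum_{K}\psi_{K}(D_{x})a(x,\xi)\psi_{M}(\xi),
\]
where $\psi_{K}(D_{x})$ acts on the $x$ variable (with $\psi_1$ at the bottom), and sort the terms into three groups.

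\emph{Regime $K\lesssim M^{1/2}$.} These terms sum to a genuine $S^{0}_{1,1/2}$ symbol. Its $x$-derivatives of order at most $r$ are controlled by $\lb\xi\rb^{|\beta|/2}$, and higher derivatives only cost $M^{1/2}$ per derivative because $K\lesssim M^{1/2}$. Operators with symbols in $S^{0}_{1,1/2}$ are bounded on every $\Hps$, since the $\HT^{p}_{FIO}$ spaces are invariant under such operators \cite{HassellPortalRozendaal2020}. This part therefore costs nothing and imposes no restriction on $s$.

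\emph{Regime $M^{1/2}\lesssim K\lesssim M$.} Here the $C^{r}_{*}$ bound of Definition~\ref{def:symbolrough} gives
\[
\|\psi_{K}(D_x)\partial^{\alpha}_{\xi}a\|_{L^{\infty}}\lesssim K^{-r}M^{r/2-|\alpha|}.
\]
The output still has frequency comparable to $M$, but the symbol is too rough in $x$ for the $\HT^{p}_{FIO}$ calculus. I would therefore pass through \eqref{eq:Sobolev}: map $\HT^{s+\sigma,p}_{FIO}\subseteq\HT^{s+\sigma-s(p),p}$, act on $\HT^{\cdot,p}$ using the classical bound for rough symbols, and return via $\HT^{s+s(p),p}\subseteq\Hps$. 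This loses $2s(p)$ derivatives in total. The gain is $(K/M^{1/2})^{-r}$, which sums over $K$ to a gain of $M^{-r/2}\cdot M^{r/2}\cdot(\text{geometric})$. More carefully, one interpolates between this crude estimate, which costs $M^{2s(p)}(M^{1/2}/K)^{r}$, and the lossless $S^{0}_{1,1/2}$ estimate. The resulting total loss is $M^{(2s(p)-r/2)_{+}+\veps}$, which is exactly $\sigma$; the $\veps$ absorbs the logarithmic sum at the borderline. When $r>4s(p)$ the gain beats the loss and $\sigma=0$.

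\emph{Regime $K\gg M$.} These are paraproduct terms: the output frequency is about $K$ and the coefficient is small, of size $K^{-r}M^{r/2}$. Estimating them again through \eqref{eq:Sobolev} gives summability precisely when $s<r-s(p)$. The lower restriction $s>-\frac{r}{2}+s(p)-\sigma$ comes from the dual situation, where low-frequency input is sent to high frequency. Equivalently, it follows by applying the same analysis to the adjoint on $\HT^{-s,p'}_{FIO}$, using the duality of the $\Hps$ scale; note $s(p')=s(p)$.

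\textbf{Main obstacle.} I expect the intermediate regime $M^{1/2}\lesssim K\lesssim M$ to be the hardest step. The difficulty is getting the exact exponent $2s(p)-\frac{r}{2}+\veps$ rather than a cruder $2s(p)$. I would first try complex interpolation in $p$ between $p=2$, where $\HT^{2}_{FIO}=L^{2}$ and there is no loss, and $p=1,\infty$, combined with the wave-packet, that is $\ph_{\w}(D)$, description of the norm from Definition~\ref{def:HpFIO}.
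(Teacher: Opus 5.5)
Your proposal matches the paper: the paper gives no proof of this lemma beyond noting that it is a special case of \cite[Theorem 3.19]{LiRoSo25a}, which is exactly your main route. Your direct sketch is a reasonable heuristic for the exponents but is not needed, and it is loose in places: for $K\eqsim M$ the output frequency need not be comparable to $M$, and it is precisely these high--high$\to$low interactions that force the lower bound $s>-\frac{r}{2}+s(p)-\sigma$, rather than the adjoint argument you invoke.
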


When applying this lemma, we will often use that the operator norm of $a(x,D)$  only depends on $a$ through a fixed finite number of its $C^{r}_{*}S^{m}_{1,1/2}$ seminorms.

\section{Local smoothing}\label{sec:BilinearSmoothing}

This section is devoted to Theorem \ref{thm:mainHpFIOintro}. We first state a more general version in Theorem \ref{thm:mainrough}; the proof takes up the rest of the section. 

\subsection{Main results}\label{subsec:main}

Let $(g_{ij})_{i,j=1}^{d}\subseteq C^{1,1}(\R^d)$ be a uniformly elliptic collection of bounded real-valued functions, with bounded first and second derivatives. We also assume that $(g_{ij})_{i,j=1}^{d}\subseteq C^{r}_{*}(\R^d)$ for some $r\geq 2$. Note that $C^{1,1}(\R^d)\subseteq C^{2}_{*}(\R^d)$, so this condition is only relevant for $r>2$. Let
\begin{equation}\label{eq:defL}
Lf(x):=\sum_{i,j=1}^{d}D_{i}(g_{ij}(x)D_{j}f)(x)
\end{equation}
be the associated divergence-form operator, defined for suitable $f$ and $x\in\Rd$. 

The following result is obtained by combining \cite[Theorem 4.7 and Corollary 4.13]{LiRoSo25a}. It contains sharp fixed-time bounds for the wave equation associated with $L$.

\begin{proposition}\label{prop:roughfixed}
There exist unique collections $(U_{0}(t))_{t\in\R},(U_{1}(t))_{t\in\R}$ such that, for all $1\leq p<\infty$ and $s\in\R$ with $2s(p)+1<r$ and $-r+s(p)+1< s< r-s(p)$, and for all $u_{0}\in\Hps$, $u_{1}\in \Hpsm$ and $F\in L^{1}_{\loc}(\R;\Hpsm)$, the following properties hold:
\begin{enumerate}
\item\label{it:roughfixed1} $U_{k}(t):\HT^{s-k,p}_{FIO}(\R^d)\to\Hps$ is bounded for all $t\in\R$ and $k\in\{0,1\}$, and for all $t_{0}>0$ one has $\sup_{|t|\leq t_{0}}\|U_{k}(t)\|_{\La(\HT^{s-k,p}_{FIO}(\R^d),\Hps)}<\infty$; 
\item\label{it:roughfixed2} $[t\mapsto U_{0}(t)u_{0}],[t\mapsto U_{1}(t)u_{1}]\in C^{l}(\R;\HT^{s-l,p}_{FIO}(\R^d))$ for $l\in\{0,1,2\}$;
\item\label{it:roughfixed3} Set 
\begin{equation}\label{eq:defsol}
u(t):=U_{0}(t)u_{0}+U_{1}(t)u_{1}-\int_{0}^{t}U_{1}(t-\tau)F(\tau)\ud \tau
\end{equation}
for $t\in\R$. Then 
\begin{equation}\label{eq:regularityu}
u\in C(\R;\Hps)\cap C^{1}(\R;\Hpsm)\cap W^{2,1}_{\loc}(\R;\HT^{s-2,p}_{FIO}(\R^d)),
\end{equation}
$u(0)=u_{0}$, $\partial_{t}u(0)=u_{1}$, and $(D_{t}^{2}-L)u(t)=F(t)$ 
for almost all $t\in \R$.
\end{enumerate}
For $p=\infty$, under the above assumptions on $s$ and $r$, \eqref{it:roughfixed1} remains true, and \eqref{it:roughfixed2} and \eqref{it:roughfixed3} hold in the weak-star sense. 
\end{proposition}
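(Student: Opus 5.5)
This statement is quoted from \cite[Theorem 4.7 and Corollary 4.13]{LiRoSo25a}, so strictly speaking it suffices to check that the hypotheses there match ours. I would nevertheless organize the argument in the following way, which is the scheme of \cite{HassellRozendaal2023,LiRoSo25a}.

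\emph{Step 1: first-order reduction.} Write $L=a(x,D)+\text{lower order}$, where $a(x,\xi)=\sum_{i,j}g_{ij}(x)\xi_i\xi_j\in C^{r}_{*}S^{2}_{1,0}$ is elliptic. I would factor the principal part through a homogeneous square root $\sqrt{a}\in C^{1,1}S^{1}_{1,0}$, modified for $|\xi|\leq 1$. Apply the symbol smoothing of Lemma \ref{lem:smoothing}. This gives $\sqrt{a}=b^{\sharp}+b^{\flat}$, where $b^{\sharp}\in\A^{2}S^{1}_{1,1/2}$ is elliptic and asymptotically homogeneous with limit $\sqrt{a}$, and $b^{\flat}\in C^{r}_{*}S^{1-r/2}_{1,1/2}$.

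\emph{Step 2: parametrix for the smooth part.} Construct a parametrix for $D_t\mp b^{\sharp}(x,D)$ using wave packet transforms and Hamiltonian flows, in the spirit of Smith. The asymptotic homogeneity is what makes the bicharacteristic flow behave like a canonical transformation at frequency scale $N$. The key output is that the resulting half-wave groups are bounded on $\Hps$ for all $s$, uniformly for $|t|\leq t_0$. This uses the invariance of $\Hp$ under such operators, established via the wave packet characterization of $\Hp$.

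\emph{Step 3: absorbing the rough remainder.} The remainder $L-(b^{\sharp})^{2}$ consists of $b^{\flat}$-type terms composed with first-order operators, together with commutators. It has order at most $2-r/2$ on the $x$-rough side. By Lemma \ref{lem:pseudoHpFIO}, the condition $2s(p)+1<r$ together with the range $-r+s(p)+1<s<r-s(p)$ ensures that this remainder maps $\Hps$ into $\Hpsm$ with a gain. We then write the Duhamel formula and solve by a Volterra iteration, or a Picard argument over short time intervals, in $C([0,t_0];\Hps)$. This produces $U_0,U_1$ and the bound in \eqref{it:roughfixed1}.

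\emph{Step 4: regularity and uniqueness.} Properties \eqref{it:roughfixed2} and \eqref{it:roughfixed3} follow from the Duhamel representation together with the mapping property $L:\Hps\to\HT^{s-2,p}_{FIO}(\R^d)$, which is again given by Lemma \ref{lem:pseudoHpFIO}. Uniqueness follows from an energy argument in $L^2=\HT^{0,2}_{FIO}$ combined with density of Schwartz functions. The case $p=\infty$ follows by duality from $p=1$, which is why \eqref{it:roughfixed2} and \eqref{it:roughfixed3} only hold in the weak-star sense there.

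\emph{Main obstacle.} The main obstacle is Step 2, namely the $\Hp$-boundedness of the parametrix for a non-homogeneous symbol of type $(1,1/2)$. Step 3 is where the exact numerology of the constraints on $r$ and $s$ enters.
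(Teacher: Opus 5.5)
Your opening sentence matches the paper's proof exactly: Proposition \ref{prop:roughfixed} is imported from \cite[Theorem 4.7 and Corollary 4.13]{LiRoSo25a}. The architecture of your reconstruction also matches what is visible in Lemmas \ref{lem:Ldecomp}, \ref{lem:fixedtimesmooth} and Proposition \ref{prop:reduction}: an $\Hp$-bounded half-wave group for an asymptotically homogeneous $\A^{2}S^{1}_{1,1/2}$ symbol, Duhamel absorption, uniqueness via $H^{1}\times L^{2}$ energy plus density, and duality for $p=\infty$.

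Step 3, however, does not work as stated. Lemma \ref{lem:pseudoHpFIO} only gives $a(x,D):\HT^{s'+\sigma+m,p}_{FIO}(\Rd)\to\HT^{s',p}_{FIO}(\Rd)$ for target indices $-\frac{r}{2}+s(p)-\sigma<s'<r-s(p)$. So in general it cannot by itself absorb the rough remainder on the whole interval $-r+s(p)+1<s<r-s(p)$. Your non-divergence splitting makes this worse in two ways.
\begin{itemize}
\item A rough piece such as $b^{\flat}(x,D)b^{\sharp}(x,D)$ has its output at level $s-1$. This forces $s>1-\frac{r}{2}+s(p)-\sigma$, which already for $r=2$ excludes $-1+s(p)<s\leq s(p)$.
\item The term $\sum_{i,j}(D_{i}g_{ij})D_{j}$ that you set aside as ``lower order'' has only $C^{r-1}_{*}$ coefficients. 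Lemma \ref{lem:pseudoHpFIO} therefore bounds it only with a loss $\sigma=2s(p)-\frac{r-1}{2}+\veps>0$ whenever $r-1\leq 4s(p)$ (e.g.\ $r=2$, $d=3$, $p\geq 4$). In that case it cannot be absorbed unless it is smoothed as well.
\end{itemize}

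The route behind Lemma \ref{lem:Ldecomp} avoids both problems: keep the divergence form and smooth the coefficients, $L=\sum_{i,j}D_{i}g^{\sharp}_{ij}(x,D)D_{j}+\sum_{i,j}D_{i}g^{\flat}_{ij}(x,D)D_{j}$.
\begin{itemize}
\item The first part factors exactly as $(b(x,D)+ic)^{2}+e(x,D)$, with $e\in\A^{1}S^{1}_{1,1/2}$ and $B=b(x,D)+ic$ an isomorphism $\Hps\to\Hpsm$. This is what makes $B^{-1}\sn_{t}$ gain a derivative in the Duhamel formula.
\item In the second part the rough operator is sandwiched between Fourier multipliers. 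It only has to map $\HT^{s-1,p}_{FIO}(\Rd)\to\Hps$, so no rough composition calculus (such as comparing $a(x,D)$ with $\sqrt{a}(x,D)^{2}$) is needed.
\end{itemize}
Even then, Lemma \ref{lem:pseudoHpFIO} handles the remainder under $r>2s(p)+1$ only for $s$ bounded below ($0\leq s<r-s(p)$ in Lemma \ref{lem:Ldecomp}\eqref{it:Ldecomp3}). The low-regularity part of the range comes from duality. This uses three facts: the formal adjoint of $L$ is again of the form \eqref{eq:defL}, $s(p')=s(p)$, and the interval $(1-r+s(p),r-s(p))$ is invariant under $s\mapsto 1-s$ (cf.\ Remark \ref{rem:Sobolevreg}). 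So duality is needed not only for $p=\infty$ but for part of the $s$-range.

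For the same reason, in Step 4 the bound $L:\Hps\to\HT^{s-2,p}_{FIO}(\Rd)$ should come from this splitting. It should not come from Lemma \ref{lem:pseudoHpFIO} applied directly to multiplication by $g_{ij}$: as a $C^{r}_{*}S^{0}_{1,1/2}$ symbol, $g_{ij}$ loses $\sigma>0$ derivatives there when $r\leq 4s(p)$.
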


\begin{remark}\label{rem:unique}
In fact, we will use that a stronger uniqueness statement is contained in the proofs of \cite[Theorem 4.7 and Corollary 4.13]{LiRoSo25a}. Namely, for given $p$, $s$, $u_{0}$, $u_{1}$ and $F$ as in Proposition \ref{prop:roughfixed}, there is a unique $u$ as in \eqref{eq:regularityu} satisfying $u(0)=u_{0}$, $\partial_{t}u(0)=u_{1}$, and $(D_{t}^{2}-L)u(t)=F(t)$ in $\HT^{s-2,p}_{FIO}(\R^d)$ for almost all $t\in \R$.
\end{remark}

The following theorem is the main result of this section. It implies Theorem \ref{thm:mainHpFIOintro} and, by using \eqref{eq:Sobolev}, 
also Theorem \ref{thm:mainLpintro}. Recall the definition of $\mathfrak{Q}$ from \eqref{eq:Q}, of the closed triangles $\mathcal{T}_{l}$ and $\mathcal{T}_{u}$, and of $\alpha(p,q)$, in \eqref{eq:alphapq}.

\begin{theorem}\label{thm:mainrough}
Let $2\leq p<q\leq \infty$ be such that $(\frac{1}{p},\frac{1}{q})\in \mathcal{T}_{l}\cup \mathcal{T}_{u}$, and let $\alpha\geq \alpha(p,q)-s(p)$. Suppose that $r>\max(2s(p)+1,\alpha(p,q))$, and that one of the following conditions holds:
\begin{enumerate}
\item\label{it:mainrough1} $\alpha>\alpha(p,q)-s(p)$; 
\item\label{it:mainrough2} $(\frac{1}{p},\frac{1}{q})$ lies in the interior of $\mathcal{T}_{l}$. 
\end{enumerate} 
Then there exists a $C\geq0$ such that, for all $u_{0}\in \HT^{\alpha,p}_{FIO}(\R^d)$, $u_{1}\in \HT^{\alpha-1,p}_{FIO}(\R^d)$ and $F\in L^{1}_{\loc}(\R;\HT^{\alpha-1,p}_{FIO}(\R^d))$, the solution $u$ from \eqref{eq:defsol} satisfies $u\in L^{q}([0,1]\times\R^d)$ and
\[
\|u\|_{L^{q}([0,1]\times \R^d)}\leq C\big(\|u_{0}\|_{\HT^{\alpha,p}_{FIO}(\R^d)}+\|u_{1}\|_{\HT^{\alpha-1,p}_{FIO}(\R^d)}+\|F\|_{L^{1}([0,1];\HT^{\alpha-1,p}_{FIO}(\R^d))}\big).
\]
\end{theorem}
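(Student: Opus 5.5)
Following the overview in Section \ref{subsec:proof}, I would argue in five steps.

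\emph{Reductions.} By Proposition \ref{prop:roughfixed} and Duhamel's formula \eqref{eq:defsol}, together with Minkowski's inequality in $\tau$, the inhomogeneous term reduces to the homogeneous bound. Writing $U_{1}(t)$ through half-wave propagators also reduces the problem to a single bound
\[
\|U(\cdot)f\|_{L^{q}([0,1]\times\Rd)}\lesssim\|f\|_{\HT^{\alpha,p}_{FIO}(\Rd)},
\]
where $U(t)$ solves the first-order equation $(D_{t}-A)u=0$. Here $A$ is a square root of $L$ at high frequency, with symbol $a(x,\xi)=(\sum_{i,j} g_{ij}\xi_i\xi_j)^{1/2}\in C^{r}_{*}S^{1}_{1,0}$. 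Low frequencies are handled by Sobolev embedding and \eqref{eq:Sobolev}. Next decompose $f=\sum_N\psi_N(D)f$. For the piece at frequency $N$, apply the symbol smoothing of Lemma \ref{lem:smoothing} at scale $N$, i.e.\ $a=a^\sharp+a^\flat$, with $a^\flat\in C^r_*S^{1-r/2}_{1,1/2}$. Denote by $S_N$ the evolution generated by $a^\sharp(x,D)$, which is asymptotically homogeneous with limit $a$. By Duhamel,
\[
U(t)f_N=S_N(t)f_N+i\int_0^tS_N(t-s)\,a^\flat(x,D)\,U(s)f_N\,\ud s .
\]
Lemma \ref{lem:pseudoHpFIO} shows that $a^\flat(x,D)$ gains $r/2-1$ derivatives on $\HT^{s,p}_{FIO}$, up to the loss $\sigma$. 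The condition $r>\max(2s(p)+1,\alpha(p,q))$ is exactly what lets this gain cover the local smoothing exponent, combined with the fixed-time bounds of Proposition \ref{prop:roughfixed}. So the Duhamel term is controlled once we prove, uniformly in $N$,
\[
\|S_N(\cdot)f_N\|_{L^q([0,1]\times\Rd)}\lesssim N^{\alpha(p,q)-s(p)+\veps}\|f_N\|_{\Hp}.
\]
The $\veps$ is removed in case \eqref{it:mainrough2}.

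\emph{Bilinear reduction.} Use the Tao--Vargas angular Whitney decomposition to write
\[
|S_Nf|^2\le\sum_{N^{-1/2}\le\nu\le1}\sum_{\theta\sim\theta'}|S_Nf_\theta\,S_Nf_{\theta'}|+\text{diagonal},
\]
where $\theta,\theta'$ are caps of aperture $\nu$ at angular distance about $\nu$. The diagonal term at $\nu=N^{-1/2}$ consists of single Knapp pieces. These are estimated directly with Lemma \ref{lem:Knapp}, fixed-time bounds and Bernstein's inequality. For the off-diagonal terms, localize spatially to balls of radius about $1$, using finite speed of propagation and the kernel bounds of Appendix \ref{sec:kernel}. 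This gives almost orthogonality in $L^{q/2}$:
\[
\Big\|\sum_{\theta\sim\theta'}S_Nf_\theta S_Nf_{\theta'}\Big\|_{L^{q/2}}\lesssim\Big(\sum\|S_Nf_\theta S_Nf_{\theta'}\|_{L^{q/2}}^{q/2}\Big)^{2/q},
\]
up to Schwartz tails.

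\emph{Bilinear estimates.} For each $\nu$ I would prove two bounds.
\begin{enumerate}
\item An $L^2$ estimate at $q=q_b$. Adapt the bilinear restriction theorem of \cite{SchippaTataru2025} to the non-homogeneous symbol $a^\sharp$ via the wave packet parametrix and rescaling of $\nu$-separated caps. This gives a bound $\nu^{-c}N^{\frac{d-1}{2}-\frac{d+1}{q_b}+\veps}\|f_\theta\|_2\|f_{\theta'}\|_2$, with the $\veps$ removable for $q>q_b$ (Appendix \ref{sec:noeps}).
\item An $L^\infty$ estimate in terms of $\HT^\infty_{FIO}$ norms. Use the kernel bounds for $S_N$ on caps of aperture $\nu\ge N^{-1/2}$ together with Lemma \ref{lem:Knapp}. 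This gives a bound summable in $\nu$ but weak in $N$.
\end{enumerate}
Complex interpolation of $\Hp$ (bilinear, on the segment from $(\frac12,\frac1{q_b})$ to $(0,0)$) yields the bound with $\Hp$ norms. Below $\mathfrak{Q}$ the $\nu$ exponent becomes positive, so the sum over $\nu$ is a geometric series at the sharp $N$-power. Finally, $\ell^q$ summation over caps is converted back to $\|f\|_{\Hp}$ using the $\ell^p$ structure of the $\Hp$ norm across directions, since $p\le q$.

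\emph{Extending to the triangles and the main obstacle.} To cover $\mathcal{T}_l\cup\mathcal{T}_u$, interpolate the line results with the trivial $(\frac12,\frac12)$ bound, the $q=\infty$ Sobolev bound at $(0,0)$, and the endpoint $(1,0)$ obtained from fixed-time estimates. At endpoints where only an $\veps$-loss or restricted weak-type bound holds, use the interpolation results of Appendix \ref{sec:inter} to sum over $N$ without loss in the interior of $\mathcal{T}_l$. I expect the main obstacle to be the bilinear $L^2$ estimate for the merely asymptotically homogeneous, $S_{1,1/2}$-type smoothed evolution. The wave packets then live only on time scales adapted to $N^{1/2}$-regularity of the Hamilton flow, and the transversality argument of \cite{SchippaTataru2025} must be rerun uniformly in $N$ and $\nu$. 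The almost orthogonality without Fourier-support preservation is a secondary difficulty.
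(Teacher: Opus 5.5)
Your bilinear core matches the paper: the Whitney decomposition, spatially localized almost orthogonality, interpolation of the $L^{2}$ bilinear restriction bound at $q_{b}$ against an $\HT^{\infty}_{FIO}$--$L^{\infty}$ kernel bound, geometric summation in $\nu$ below $\mathfrak{Q}$, and Appendix~\ref{sec:inter} at the endpoint. The reduction step, however, has a genuine gap.

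First, for rough coefficients $L\neq a(x,D)^{2}$. To pass to a rough first-order group you would need a composition calculus for rough symbols on $\HT^{s,p}_{FIO}$ and a well-posed rough half-wave group there, and you supply neither. The paper never factors the rough operator. It factors only the smoothed part, $L_{1,N}=(b_{N}(x,D)+ic)^{2}+e_{N}(x,D)$, and keeps $L_{2,N}$ as a source in the second-order equation.

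Second, and more seriously, your Duhamel source $a^{\flat}(x,D)U(s)f_{N}$ is not frequency localized near $N$. So the dyadic bound for $S_{N}$ to which you reduce does not apply to it. If you fall back on fixed-time bounds plus Sobolev embedding, you lose exactly $\frac{1}{q}$ of a derivative relative to $\alpha(p,q)-s(p)$ in $\mathcal{T}_{l}$. The gain $\frac{r}{2}-1$ cannot pay for this when $r=2$, which is the main $C^{1,1}$ case.

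The missing idea is to localize the solution rather than the data. Take $v=\psi_{N}(D)u$ with coefficients truncated to $\psi(N^{-1/2}D)g_{ij}$. Then the source $H(\tau)=[\psi_{N}(D),L_{N}]u(\tau)+\psi_{N}(D)e_{N}(x,D)u(\tau)$ has Fourier support in $\{N/4\leq|\xi|\leq 4N\}$, and $\|H(\tau)\|_{\HT^{\alpha-1,p}_{FIO}(\Rd)}\lesssim\|u(\tau)\|_{\HT^{\alpha,p}_{FIO}(\Rd)}$. One then applies \eqref{eq:reductionbound} itself to $B^{-1}\sn_{t-\tau}H(\tau)$, gaining a derivative from $B^{-1}$ (Proposition~\ref{prop:reduction}). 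This is also what yields the uniform sharp dyadic bounds for $\psi_{N}(D)U_{j}$ that Lemma~\ref{lem:interabstract} needs in case~\eqref{it:mainrough2}.

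Accordingly, no gain ``covers the local smoothing exponent.'' The condition $r>2s(p)+1$ gives Proposition~\ref{prop:roughfixed} and boundedness of $L_{2,N}:\HT^{\alpha,p}_{FIO}(\Rd)\to\HT^{\alpha-1,p}_{FIO}(\Rd)$. The condition $r>\alpha(p,q)$ is just the threshold $\alpha<r-s(p)$ from Lemma~\ref{lem:pseudoHpFIO}. (Alternatively, you could take one $N$-independent smoothed group, prove its full non-dyadic estimate including the endpoint, and only then run Duhamel. That is a different argument from the one you wrote.)

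Two later steps also fail as stated.

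\emph{Almost orthogonality.} An $\ell^{q/2}$ sum is false in general once $q>4$; for $d=2$, $\mathfrak{Q}$ has $q=\frac{14}{3}$. The correct exponent, as in \cite[Lemma 7.1]{TaoVargas2000} and Proposition~\ref{prop:AlmostOrthogonalityPhaseSpace}, is $(q/2)^{*}=\min(q/2,(q/2)')$. The spatial localization must be to cubes of side $\nu$ rather than unit balls, at a cost of $\nu^{-\delta}$. The reason is that the direction into which the flow carries a given cap varies Lipschitz-continuously with the base point, so it is pinned down to accuracy $\nu$ only on $\nu$-cubes. Returning to $\|f\|_{\Hp}$ then requires $2(q/2)^{*}\geq p$, which holds near $\mathfrak{Q}$; $p\leq q$ is not the relevant condition.

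\emph{Interpolation inputs.} Your inputs do not fill $\mathcal{T}_{l}\cup\mathcal{T}_{u}$ for $p\geq2$. The point $(1,0)$ lies outside this range, and no bound with exponent $\alpha(1,\infty)$ into $L^{\infty}([0,1]\times\Rd)$ holds there; it already fails at $t=0$. What is indispensable is the Strichartz estimate at $(\frac12,\frac{d-1}{2(d+1)})$ from \cite{Tataru2001}, together with the whole edge $\frac{1}{q}=0$. (With an $N^{\veps}$ loss, your own bilinear bounds at $p=2$ would also supply the Strichartz point.) That point is a vertex of both $\mathcal{T}_{u}\cap\{p\geq2\}$ and $\mathcal{T}_{l}\cap\{p\geq2\}$, and $\alpha(p,q)$ has a convex kink there. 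So interpolating $(\frac12,\frac12)$, $\mathfrak{Q}$ and the edge $\frac{1}{q}=0$ never reaches it sharply.

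Minor points: at $q_{b}$ the $L^{2}$ bilinear bound carries $N^{\frac{d-1}{d+3}+\veps}\nu^{-\frac{4}{d+3}}$, not the $N$-power you wrote. The wave packet step also needs the small-$\dot{C}^{1,1}$ rescaling of Lemma~\ref{lem:absencecaustics}.
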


\begin{remark}\label{rem:dyadicestimates}
The endpoint $\alpha=\alpha(p,\infty)-s(p)=\frac{d-1}{2}+\frac{1}{p}-s(p)$ is also allowed for all $p\geq 2$ and $q=\infty$ if one replaces $L^{\infty}([0,1]\times\Rd)$ by $L^{\infty}([0,1];\HT^{\infty}(\Rd))$; this follows directly from Proposition \ref{prop:roughfixed} and a Sobolev embedding.
\end{remark}

\begin{remark}\label{rem:sqinfty}
If $(g_{ij})_{i,j=1}^{d}\subseteq \HT^{r,\infty}(\Rd)$ and $q<\infty$ in Theorem \ref{thm:mainrough}, then the conclusion also holds under the assumption that $r>2s(p)+1$ and $r\geq \alpha+s(p)$. This follows from the proof, and in particular from the proof of Proposition \ref{prop:reduction}, upon noting that Lemma \ref{lem:pseudoHpFIO} holds for $s=r-s(p)$ if the coefficients of the pseudodifferential symbol have $\HT^{r,\infty}(\Rd)$ regularity in space (see \cite[Theorem 3.19]{LiRoSo25a}).
\end{remark}

\begin{remark}\label{rem:Sobolevreg}
One can also obtain estimates of the form
\[
\|u\|_{L^{q}([0,1];W^{s,q}(\Rd))}\lesssim\|u_{0}\|_{\HT^{s+\alpha,p}_{FIO}(\R^d)}+\|u_{1}\|_{\HT^{s+\alpha-1,p}_{FIO}(\R^d)}+\|F\|_{L^{1}([0,1];\HT^{s+\alpha-1,p}_{FIO}(\R^d))}
\]
in Theorem \ref{thm:mainrough}, for $s$ in an $r$-dependent interval. To this end, one mainly has to extend Proposition \ref{prop:reduction} below, for which one can use the same template, with Lemma \ref{lem:pseudoHpFIO} leading to the restriction on $s$, $p$ and $q$. See also the duality arguments in \cite{LiRoSo25a} for the case where $s<0$.
For simplicity of the presentation, we will not consider such Sobolev estimates.
\end{remark}

\begin{remark}\label{rem:lowerorder}
One could allow lower-order terms in \eqref{eq:defL}, as long as they have enough regularity to be absorbed in the Duhamel term in the proof of Proposition \ref{prop:reduction}. The same $C^{r}_{*}(\Rd)$ regularity as the principal terms suffices, but less is also possible. For simplicity of the presentation, we will not consider lower-order terms.
\end{remark}

By additionally relying on finite speed of propagation, we can also derive from Theorem \ref{thm:mainrough} nontrivial local smoothing outside of $\mathcal{T}_{l}\cup \mathcal{T}_{u}$. Set
\[
p_{q}:=
\begin{cases}
\frac{2dq}{2+(d-1)q}&\text{for }2\leq q\leq \frac{2(d^{2}+2d-1)}{d^{2}-1},\\
\frac{(d+1)q}{d+3}&\text{for }\frac{2(d^{2}+2d-1)}{d^{2}-1}\leq q\leq \infty.
\end{cases}
\]
Note that $2\leq p_{q}\leq p$ is such that $(\frac{1}{p_{q}},\frac{1}{q})$ lies either on the line between $\mathfrak{Q}$ and $(0,0)$, or on the line between $\mathfrak{Q}$ and $(\frac{1}{2},\frac{1}{2})$, with the latter case given by $\frac{1}{q}-\frac{1}{2}=d(\frac{1}{p_{q}}-\frac{1}{2})$. Note also that $\alpha(p_{q},q)=
2s(q)(1-\frac{d-1}{2d})$ for $2\leq q\leq \frac{2(d^{2}+2d-1)}{d^{2}-1}$, and that $\alpha(p_{q},q)=2s(q)-\frac{d-1}{q(d+1)}$ for $\frac{2(d^{2}+2d-1)}{d^{2}-1}\leq q\leq \infty$.

The following result implies implies Corollary \ref{cor:extrabutweakintro}.

\begin{corollary}\label{cor:extrabutweak}
Let $2\leq p\leq q\leq \infty$ be such that $(\frac{1}{p},\frac{1}{q})\notin \mathcal{T}_{l}\cup \mathcal{T}_{u}$, and let $\alpha>\alpha(p_{q},q)$. Suppose that $r>\max(2s(p_{q})+1,\alpha(p_{q},q))$. Then there exists a $C\geq0$ such that, for all $u_{0}\in W^{\alpha,p}(\R^d)$, $u_{1}\in W^{\alpha-1,p}(\R^d)$ and $F\in L^{1}_{\loc}(\R;W^{\alpha-1,p}(\R^d))$, the solution $u$ from \eqref{eq:defsol} satisfies $u\in L^{q}([0,1]\times\R^d)$ and
\[
\|u\|_{L^{q}([0,1]\times \R^d)}\leq C\big(\|u_{0}\|_{W^{\alpha,p}(\R^d)}+\|u_{1}\|_{W^{\alpha-1,p}(\R^d)}+\|F\|_{L^{1}([0,1];W^{\alpha-1,p}(\R^d))}\big).
\]
\end{corollary}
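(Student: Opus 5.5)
Given $p\le q$ with $(\frac1p,\frac1q)\notin\mathcal{T}_l\cup\mathcal{T}_u$, we have $p\geq p_q$. The plan is to apply Theorem \ref{thm:mainrough} at the pair $(p_q,q)$, and to pass from $W^{\alpha,p}$ data to the $p_q$-level using finite speed of propagation and H\"older's inequality on compact sets.

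The first step is to check that $(\frac{1}{p_q},\frac1q)$ lies on the boundary of $\mathcal{T}_l\cup\mathcal{T}_u$: it sits on the segment from $(0,0)$ to $\mathfrak{Q}$, or on the segment from $\mathfrak{Q}$ to $(\frac12,\frac12)$. When $p_q<q$, Theorem \ref{thm:mainrough}\eqref{it:mainrough1} applies with regularity $\alpha-s(p_q)>\alpha(p_q,q)-s(p_q)$, given the hypothesis on $r$. Combined with \eqref{eq:Sobolev}, this yields
\[
\|u\|_{L^q([0,1]\times\Rd)}\lesssim \|u_0\|_{W^{\alpha,p_q}}+\|u_1\|_{W^{\alpha-1,p_q}}+\|F\|_{L^1W^{\alpha-1,p_q}}.
\]
The degenerate case $p_q=q=2$ is just energy or Sobolev and will be handled separately; it is trivial.

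The second step is localization. Fix a partition of unity $\sum_{k\in\Z^d}\chi(\cdot-k)=1$ with $\chi\in C_c^\infty(B_d(0,2))$. Since the coefficients are $C^{1,1}$, the wave equation has finite speed of propagation with some speed $c_0$; this follows from the energy method, using the uniqueness in Remark \ref{rem:unique}. Hence $u$ restricted to $[0,1]\times B_d(k,1)$ depends only on the data restricted to $B_d(k,R)$, with $R=2+c_0$. Take a cutoff $\tilde\chi_k\equiv1$ on $B_d(k,R)$ supported in $B_d(k,2R)$. Writing $u_k$ for the solution with data $(\tilde\chi_k u_0,\tilde\chi_k u_1,\tilde\chi_k F)$, we get
\[
\|u\|_{L^q([0,1]\times\Rd)}^q\le \sum_k\|u_k\|^q_{L^q([0,1]\times B_d(k,1))}.
\]
Applying the $p_q$-estimate to each $u_k$ and then H\"older's inequality on the compact support, we obtain $\|\tilde\chi_k u_0\|_{W^{\alpha,p_q}}\lesssim\|\tilde\chi_k u_0\|_{W^{\alpha,p}}$, since $p\ge p_q$. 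For fractional $\alpha$ this H\"older step requires some care: I would write $\tilde\chi_k u_0=\tilde\chi_k\lb D\rb^{-\alpha}g$ and commute, so that the error terms are smoothing operators with rapidly decaying kernels.

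Finally, sum over $k$. Since $q\ge p$, we have $\ell^p\subseteq\ell^q$, so
\[
\Big(\sum_k\|\tilde\chi_k u_0\|_{W^{\alpha,p}}^q\Big)^{1/q}\le\Big(\sum_k\|\tilde\chi_k u_0\|_{W^{\alpha,p}}^p\Big)^{1/p}\lesssim\|u_0\|_{W^{\alpha,p}}.
\]
The last inequality is the standard localization property of Sobolev (Bessel potential) spaces: the bounded overlap of the cutoffs together with the commutator estimates above. The $F$ term is treated in the same way, using Minkowski's inequality in time.

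I expect the main obstacle to be rigorously justifying finite speed of propagation for the solution $u$ from \eqref{eq:defsol} in this rough $\mathcal{H}_{FIO}$ framework, together with the localization-uniqueness argument. My plan is to prove it for smooth, compactly supported data via $L^2$ energy estimates, which are valid for Lipschitz coefficients, and then extend by density using Proposition \ref{prop:roughfixed}.
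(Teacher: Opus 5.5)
Your proposal is correct and follows essentially the paper's route. Both arguments apply Theorem~\ref{thm:mainrough} at $(p_q,q)$ together with \eqref{eq:Sobolev}, then pass from $W^{\alpha,p_q}$ to $W^{\alpha,p}$ for compactly supported data (the paper cites Triebel for this H\"older-type step). They then glue the pieces using finite speed of propagation, the localization principle for $W^{s,p}(\Rd)$, and $\ell^{p}\subseteq\ell^{q}$.

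The differences are cosmetic. You localize the output and solve with enlarged cutoffs of the data, whereas the paper decomposes $f=\sum_{\kappa'}\theta_{\kappa'}f$. You also prove finite speed yourself rather than citing it. One small slip: the balls $B_d(k,1)$, $k\in\Z^d$, do not cover $\Rd$ once $d\geq 5$, so use radius $\sqrt{d}$ or unit cubes instead.
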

\begin{proof}
We show that $U_{j}:W^{\alpha-j,p}(\Rd)\to L^{q}([0,1]\times\Rd)$, for $j\in\{0,1\}$. It suffices to prove this in the special case where $q=\frac{2(d^{2}+2d-1)}{d^{2}-1}$, but this makes no difference for the argument.

First consider $f\in W^{\alpha-j,p}(\Rd)$ such that $\supp(f)\subseteq B$ for some ball $B\subseteq \Rd$ of radius $2$.   
Hence Theorem \ref{thm:mainrough} yields
\begin{equation}\label{eq:localsmoothcompact}
\|U_{j}f\|_{L^{q}([0,1]\times\Rd}\lesssim \|f\|_{\HT^{\alpha-s(p_{q})-j,p_{q}}_{FIO}(\Rd)}\lesssim \|f\|_{W^{\alpha-j,p_{q}}(\Rd)}\lesssim \|f\|_{W^{\alpha-j,p}(\Rd)},
\end{equation}
where we applied \eqref{eq:Sobolev} in the second step and finally \cite[Lemma 3.3.1]{Triebel10}.

To deal with a general $f\in W^{\alpha-j,p}(\Rd)$, recall from \cite[Proposition 7.2]{Triebel92} that 
\begin{equation}\label{eq:localization}
\|f\|_{W^{\alpha-j,p}(\Rd)}\eqsim \Big(\sum_{\kappa\in \Z^{d}}\|\theta_{\kappa}f\|_{W^{\alpha-j,p}(\Rd)}^{p}\Big)^{1/p},
\end{equation}
Here $\theta_{\kappa}f$ denotes multiplication of $f$ by $\theta_{\kappa}\in C^{\infty}_{c}(\Rd)$, and the latter is given by $\theta_{\kappa}(x):=\theta(x-\kappa)$ for $x\in\Rd$ and $\kappa\in \Z^{d}$, where $\theta\in C^{\infty}_{c}(\Rd)$ satisfies $\supp(\theta)\subseteq B_{d}(0,2)$, $0\leq \theta\leq 1$ and  $\sum_{\kappa\in \Z^{d}}\theta(x-\kappa)=1$ for all $x\in\Rd$. 

Note that, due to finite speed of propagation (see e.g.~\cite[Remark 1.8]{Colombini-Metivier08}), there exists a compact set $K\subseteq \Z^{d}$, independent of $f$, such that $\theta_{\kappa}U_{j}(t)(\theta_{\kappa'}f)=0$ for all $0\leq t\leq 1$ and $\kappa,\kappa'\in\Z^{d}$ such that $\kappa-\kappa'\notin K$. 
Hence we can apply H\"{o}lder's inequality and \eqref{eq:localsmoothcompact} to write
\begin{align*}
\|U_{j}f\|_{L^{q}([0,1]\times\Rd)} &\eqsim \Big(\sum_{\kappa\in \Z^{d}}\|\theta_{\kappa}U_{j}f\|_{L^{q}([0,1]\times\Rd)}^{q}\Big)^{1/q}\\  
&\lesssim  \Big(\sum_{\kappa,\kappa'\in \Z^{d}}\ind_{K}(\kappa-\kappa')\|\theta_{\kappa}U_{j}(\theta_{\kappa'}f)\|_{L^{q}([0,1]\times\Rd)}^{q}\Big)^{1/q}\\
&\lesssim \Big(\sum_{\kappa'\in \Z^{d}}\|\theta_{\kappa'}f\|_{W^{\alpha-j,p}(\Rd)}^{q}\Big)^{1/q} \eqsim \|f\|_{{W^{\alpha-j,p}(\Rd)}}.
\end{align*}
On the last line we also used the embedding $\ell^{p}\subseteq \ell^{q}$. This concludes the proof.
\end{proof}

\begin{remark}\label{rem:nontrivialsmoothing}
One can also obtain a version of Corollary \ref{cor:extrabutweak} where $W^{\alpha,p}(\Rd)$ is replaced by $\HT^{\alpha,p}_{FIO}(\Rd)$, for $\alpha>s(q)$. For $(\frac{1}{p},\frac{1}{q})\notin \mathcal{T}_{l}\cup \mathcal{T}_{u}$, this result neither implies Corollary \ref{cor:extrabutweak} nor follows from it. Moreover, while this version is nontrivial for all $p<q$, it is weaker than Proposition \ref{prop:roughfixed} for $p=q$.

To prove such a version of Corollary \ref{cor:extrabutweak}, one proceeds in the same way as above, using a natural replacement of the final inequality in \eqref{eq:localsmoothcompact} and applying \cite[Theorem 3.6]{LiRoSoYa24} instead of \eqref{eq:localization}.
\end{remark}

\subsection{Reduction steps}\label{subsec:reduction}

In this subsection we reduce Theorem \ref{thm:mainrough} to frequency-localized estimates for a collection of smooth first-order equations. 

In fact, we first show that it suffices to deal with coefficients with small $\dot{C}^{1,1}$ norm. This will allow for sufficient control of the Hamiltonian flow, as required in \cite{SchippaTataru2025}.  
More precisely, small $\dot{C}^{1,1}$-norm of the coefficients ensures the absence of caustics for $t \in [0,1]$, i.e., the validity of dispersive effects up to $t=1$ (cf.~\cite{Geba-Tataru05}).

\begin{lemma}\label{lem:absencecaustics}
Suppose that there exist $\veps_{0},C_{0}>0$ such that the conclusion of Theorem \ref{thm:mainrough} holds with a uniform constant $C\geq0$ for all $(g_{ij})_{1\leq i,j\leq d}$ as above, satisfying $\max_{1\leq i,j\leq d}\|g_{ij}\|_{C^{r}_{*}(\Rd)}\leq C_{0}$, $\sum_{i,j=1}^{d}g_{ij}(x)\xi_{i}\xi_{j}\geq \veps_{0}|\xi|^{2}$ for all $x,\xi\in\Rd$, and
\begin{equation}
\label{eq:AbsenceCaustics}
\max_{1\leq i,j\leq n}\| \partial_x g_{ij} \|_{L^{\infty}(\Rd)} + \| \partial^2_{x} g_{ij} \|_{L^{\infty}(\Rd)} \leq \tfrac{1}{4}\epsilon_0.
\end{equation}
Then the conclusion of Theorem \ref{thm:mainrough} holds without the additional assumption \eqref{eq:AbsenceCaustics}.
\end{lemma}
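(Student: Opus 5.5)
The plan is to combine a scaling argument with finite speed of propagation and a localization in space, so that small pieces of the general problem reduce to the small-derivative case.

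\emph{Step 1: rescaling.} For $\la\geq 1$ set $g^{\la}_{ij}(x):=g_{ij}(x/\la)$. Then $\|\partial_x g^{\la}\|_{L^\infty}+\|\partial_x^2 g^{\la}\|_{L^\infty}\lesssim \la^{-1}$, and the $C^{r}_{*}$ norms stay bounded uniformly in $\la\geq1$. The ellipticity constant is also unchanged. Choosing $\la$ large, depending only on the $C^{1,1}$ norm of $g$ and on $\veps_0:=c$ (with $C_0$ a bound for the $C^r_*$ norms), the rescaled coefficients satisfy \eqref{eq:AbsenceCaustics}. If $u$ solves $(D_t^2-L)u=F$, then $u^{\la}(t,x):=u(t/\la,x/\la)$ solves the equation with coefficients $g^{\la}$ and forcing $\la^{-2}F(t/\la,x/\la)$.

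\emph{Step 2: undoing the scaling.} The hypothesis applied to $u^{\la}$ controls $u^\la$ only on $[0,1]$, i.e. $u$ on $[0,\la^{-1}]$. We use it on each interval $[k/\la,(k+1)/\la]$, $0\leq k<\la$. For that we restart the evolution at time $k/\la$, with data $(u(k/\la),\partial_tu(k/\la))$, which is allowed by the uniqueness from Remark \ref{rem:unique}. The data are bounded in $\HT^{\alpha,p}_{FIO}\times\HT^{\alpha-1,p}_{FIO}$ by Proposition \ref{prop:roughfixed}, provided $\alpha$ lies in the admissible range there. This follows from $r>\max(2s(p)+1,\alpha(p,q))$ and $\alpha\geq\alpha(p,q)-s(p)$; if $\alpha$ is too large, we first reduce to smaller $\alpha$ by commuting with $\lb D\rb$. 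Alternatively, one can note that the time-translated problem is the same equation, since the coefficients are time-independent. It then remains to check that the dilation $f\mapsto f(\cdot/\la)$ is bounded on $\HT^{s,p}_{FIO}(\Rd)$ with a $\la$-dependent constant, and likewise on $L^q([0,1]\times\Rd)$ up to powers of $\la$. Since $\la$ is fixed, depending only on the coefficient bounds, the constants are harmless.

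\emph{Main obstacle.} Boundedness of the dilations on $\Hp$ is the delicate point, because the wave-packet structure is anisotropic and dilation mixes frequency scales. I would prove it by writing the dilation as $\la^{d/p}\ph(\la D)$-type conjugation. For $\la$ fixed, low frequencies are trivially fine. At high frequencies, dilation maps the packet decomposition at frequency $N$ to one at frequency $N/\la$, with angular width $N^{-1/2}$ versus $(N/\la)^{-1/2}$. These differ by the fixed factor $\la^{1/2}$, so one can compare the norms via a bounded number of rotations and sums of $\ph_\w$, using the equivalence of norms for different choices of the family $(\ph_\w)_{\w\in S^{d-1}}$ from \cite{HassellPortalRozendaal2020}. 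A simpler alternative is to observe that $f\mapsto f(\cdot/\la)$ is a Fourier integral operator (linear canonical transformation) of order zero up to the factor $\lb D\rb$ adjustments, and invoke invariance of $\Hp$ under FIOs; I would try this first. Finally, the forcing term is handled identically by Duhamel's formula and Minkowski's inequality in time.
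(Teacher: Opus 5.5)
Your proposal is correct and is essentially the paper's proof. Rescale by a fixed factor so that \eqref{eq:AbsenceCaustics} holds, split the rescaled time interval into unit pieces, and restart on each piece using time-independence of the coefficients and the uniform fixed-time bounds from Proposition \ref{prop:roughfixed}. Then undo the dilation using that it is a Fourier integral operator bounded on $\HT^{s,p}_{FIO}(\Rd)$; the paper cites \cite[Proposition 3.3]{Rozendaal22b} for this. The finite speed of propagation and spatial localization announced in your first sentence are never actually used.

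One small fix concerns large $\alpha$, where you must stay in the range $s<r-s(p)$ of Proposition \ref{prop:roughfixed}. Do not ``commute with $\lb D\rb$'', which is unavailable for rough coefficients. Instead use the embedding $\HT^{\alpha,p}_{FIO}(\Rd)\subseteq\HT^{\alpha_{0},p}_{FIO}(\Rd)$ for some $\alpha_{0}\leq\alpha$ with $\alpha(p,q)-s(p)\leq\alpha_{0}<r-s(p)$, with strict lower inequality in case \eqref{it:mainrough1}. Such an $\alpha_{0}$ exists because $r>\alpha(p,q)$.
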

\begin{proof}
For $\ka>0$ sufficiently small, we can rescale $t \to t/\kappa$, $x \to x/\kappa$, which leads us to consider a wave equation satisfying \eqref{eq:AbsenceCaustics} on the large time interval $[0,\kappa^{-1}]$:
\begin{equation*}
    \partial_t^2 u^{\kappa}(t,x)= \partial_i g^{ij}_{\kappa} \partial_j u^{\kappa}(t,x) + F^{\kappa}(t,x), \quad (t,x) \in [0,\kappa^{-1}] \times \R^d.
\end{equation*}
We can partition $[0,\kappa^{-1}]$ into $\lceil \kappa^{-1} \rceil$ intervals $I \in \mathcal{I}$ 
of length at most $1$. 

By assumption,
\[
\begin{split}
    &\| u^{\kappa} \|_{L^q([0,\kappa^{-1}] \times \R^d)}=\Big( \sum_{I \in \mathcal{I}} \| u^{\kappa} \|^q_{L^q(I \times \R^d)} \Big)^{1/q} \\
    &\lesssim\Big(\sum_{I \in \mathcal{I}} \| u^{\kappa}(a_I) \|^q_{\mathcal{H}^{\alpha,p}_{\text{FIO}}} + \| \partial_t u^{\kappa}(a_I) \|^q_{\mathcal{H}^{\alpha-1,p}_{\text{FIO}}(\R^d)} + \| F^{\kappa} \|^q_{L^1(I;\mathcal{H}^{\alpha-1,p}_{\text{FIO}}(\R^d)} \big)^{1/q}.
\end{split}
\]
We use the invariance of the $\mathcal{H}^{\alpha,p}_{FIO}(\Rd)$ norm under time evolution, as contained in Proposition \ref{prop:roughfixed}, to sum over $I \in \mathcal{I}$:
\begin{equation*}
\Big( \sum_{I \in \mathcal{I}} \| u^{\kappa} \|^q_{L^q(I \times \R^d)} \Big)^{1/q}\lesssim_{\ka} 
\| u^{\kappa}_0 \|_{\mathcal{H}^{\alpha,p}_{FIO}} + \| u_1^{\kappa} \|_{\mathcal{H}^{\alpha-1,p}_{FIO}} + \| F^{\kappa} \|_{L^1([0,\kappa^{-1}],\mathcal{H}^{\alpha-1,p}_{FIO})}.
\end{equation*}
For the inhomogeneity we relied on the embedding $\ell^1 \hookrightarrow \ell^q$. 
Finally, the operation of dilation is a Fourier integral operator as in \cite[Proposition 3.3]{Rozendaal22b}, so it acts boundedly on $\mathcal{H}^p_{FIO}(\Rd)$. This suffices.
\end{proof}

In the remainder we will assume that \eqref{eq:AbsenceCaustics} is satisfied.
For $N\in 2^{\N}$, $x\in\Rd$ and suitable $f$, set 
\[
L_{N}f(x):=\sum_{i,j=1}^{d}D_{i}(\psi(N^{-1/2}D)g_{ij}(x)D_{j}f)(x).
\]
Note that the coefficients $(\psi(N^{-1/2}D)g_{ij})_{i,j=1}^{d}\subseteq C^{\infty}(\Rd)$ are real-valued, have uniformly bounded (in $N$) derivatives up to order $r$, and are uniformly bounded in $C^{r}_{*}(\Rd)$. Moreover, the coefficients are \emph{uniformly elliptic}, in the sense that 
there exist $c>0$ and $N_{0}\in 2^{\N}$ such that 
\[
\Big|\sum_{i,j=1}^{d}\psi(N^{-1/2}D)g_{ij}(x)\xi_{i}\xi_{j}\Big|\geq c|\xi|^{2}
\]
for all $N\geq N_{0}$ and $(x,\xi)\in\R^{2d}$. This follows from the ellipticity of $(g_{ij})_{i,j=1}^{d}$, since by Young's inequality and dyadic summation, 
\begin{align*}
&|(1-\psi(N^{-1/2}D))g_{ij}(x)|
\leq \sum_{M\geq N^{1/2}/4}\|\psi_{M}(D)(1-\psi(N^{-1/2}D))g_{ij}\|_{L^{\infty}(\Rd)}\\
&\leq C N^{-r/2}\|(1-\psi(N^{-1/2}D))g_{ij}\|_{C^{r}_{*}(\Rd)}\lesssim N^{-r/2}\|g_{ij}\|_{C^{r}_{*}(\Rd)}
\end{align*}
for all 
$1\leq i,j\leq d$. 
Hence, for $N\in 2^{\N}$ sufficiently large, there exist unique collections 
as in Proposition \ref{prop:roughfixed}, associated with $L_{N}$. Given that the coefficients $(\psi(N^{-1/2}D)g_{ij})_{i,j=1}^{d}$ are smooth, these collections have the properties in Proposition \ref{prop:roughfixed} for all $p$ and $s$, but the bounds in \eqref{it:roughfixed1} will not be independent of $N$, unless $2s(p)+1<r$ and $-r+s(p)+1<s<r-s(p)$. 

We record that the smallness assumption \eqref{eq:AbsenceCaustics} is propagated too. That is,
\begin{equation*}
\max_{1\leq i,j\leq d}    \| \partial_x \psi(N^{-1/2}(D)) g^{ij} \|_{L^\infty(\Rd)} + \| \partial^2_x \psi(N^{-1/2}(D)) g^{ij} \|_{L^\infty(\Rd)} \leq \epsilon_0
\end{equation*}
since $\| \psi_N\|_{L^1(\Rd)} \leq 4$.

The operators $L_{N}$ arise when dealing with frequency-localized initial data. However, it will be convenient to work with symbols and operators that are actually uniformly smooth. Hence we decompose each $L_{N}$ as $L_{N}=L_{1,N}+L_{2,N}$, where
\[
L_{1,N}:=\sum_{i,j=1}^{d}D_{i}(\psi(N^{-1/2}D)g_{ij})^{\sharp}(x,D)D_{j},
\]
and 
\[
L_{2,N}:=\sum_{i,j=1}^{d}D_{i}(\psi(N^{-1/2}D)g_{ij})^{\flat}(x,D)D_{j}.
\]
This decomposition has the following properties.

\begin{lemma}\label{lem:Ldecomp}
There exist $N_{0}\in 2^{\N}$ and $c\in\R$ such that the following holds for all $N\geq N_{0}$, $p\in[1,\infty]$ and $s\in\R$:
\begin{enumerate}
\item\label{it:Ldecomp1} There exist a real-valued elliptic $b_{N}\in\A^{2}S^{1}_{1,1/2}$ and an $e_{N}\in \A^{1}S^{1}_{1,1/2}$ such that $L_{1,N}=(b_{N}(x,D)+ic)^{2}+e_{N}(x,D)$. One may choose $b_{N}$ such that
\begin{equation}\label{eq:defsymbolb}
b_{N}(x,\xi)=\Big(\sum_{i,j=1}^{d}(\psi(N^{-1/2}D)g_{ij})^{\sharp}(x,\xi)\xi_{i}\xi_{j}\Big)^{1/2}
\end{equation}
for all $(x,\xi)\in\R^{2d}$ with $|\xi|\geq N_{0}/2$, and such that $b_{N}$ is asymptotically homogeneous of degree $1$ with limit given by $(\sum_{i,j=1}^{d}\psi(N^{-1/2}D)g_{ij}(x)\xi_{i}\xi_{j})^{1/2}$ for all $(x,\xi)\in \R^{2d}$ with $|\xi|\geq 1$; 
\item\label{it:Ldecomp2}
$b_{N}(x,D)+ic:\HT^{s,p}_{FIO}(\Rn)\to \HT^{s-1,p}_{FIO}(\Rd)$ is an isomorphism;
\item\label{it:Ldecomp3} If $r>2s(p)+1$ and $0\leq s< r-s(p)$, then $L_{2,N}:\HT^{s,p}_{FIO}(\Rd)\to \HT^{s-1,p}_{FIO}(\Rd)$ is bounded.
\end{enumerate}
\end{lemma}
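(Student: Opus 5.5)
We prove the three items in order, relying throughout on the symbol calculus for the classes $\A^{r}S^{m}_{1,1/2}$ and on Lemmas \ref{lem:smoothing} and \ref{lem:pseudoHpFIO}.

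\emph{Item \eqref{it:Ldecomp1}.} Set $a_{N}(x,\xi):=\sum_{i,j}\psi(N^{-1/2}D)g_{ij}(x)\xi_{i}\xi_{j}$. The coefficients $\psi(N^{-1/2}D)g_{ij}$ are bounded in $C^{1,1}(\Rd)$ uniformly in $N$, so $a_{N}\in C^{1,1}S^{2}_{1,0}$ uniformly. It is homogeneous of degree $2$ and, for $N\geq N_{0}$, elliptic by the estimate preceding the lemma. By Lemma \ref{lem:smoothing}, $a_{N}^{\sharp}=\sum_{i,j}(\psi(N^{-1/2}D)g_{ij})^{\sharp}(x,\xi)\xi_{i}\xi_{j}$ lies in $\A^{2}S^{2}_{1,1/2}$, is elliptic, and is asymptotically homogeneous with limit $a_{N}$. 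We choose a cutoff $\chi$ that equals $1$ for $|\xi|\geq N_{0}/2$ and vanishes near $0$, and define
\[
b_{N}:=\chi(\xi)(a_{N}^{\sharp})^{1/2}+(1-\chi(\xi))\lb\xi\rb.
\]
Then $b_{N}$ is real and elliptic. Since the square root is applied to a symbol bounded below by $c|\xi|^{2}$, Fa\`a di Bruno shows that $b_{N}\in \A^{2}S^{1}_{1,1/2}$. The same computation gives asymptotic homogeneity with limit $a_{N}^{1/2}$: one checks that $(\xi\cdot\partial_{\xi}-1)b_{N}$ gains an order, and that $a_{N}^{1/2}-b_{N}$ is controlled via $a_{N}-a_{N}^{\sharp}\in C^{1,1}S^{1}_{1,1/2}$.

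Next we write
\[
L_{1,N}=\sum_{i,j}D_{i}\,(\psi(N^{-1/2}D)g_{ij})^{\sharp}(x,D)\,D_{j}.
\]
Its principal symbol is $a_{N}^{\sharp}$, and the remainder is $D_{i}$ acting on an $x$-derivative of the coefficient, which is an operator of order $1$. Meanwhile $b_{N}(x,D)^{2}=(b_{N}^{2})(x,D)+\mathrm{rem}$, where the composition remainder involves $\partial_{\xi}b_{N}\,\partial_{x}b_{N}$. For $\A^{2}$ symbols the first $x$-derivative costs no power of $\lb\xi\rb$, so this remainder is in $\A^{1}S^{1}_{1,1/2}$. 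The constant $c$ and the cross term $2icb_{N}$ are included to secure invertibility in item \eqref{it:Ldecomp2}. The resulting first-order error $-2icb_{N}+c^{2}$ is absorbed into $e_{N}$, and the low-frequency discrepancy from $\chi$ is smoothing. Collecting all these terms gives $e_{N}\in\A^{1}S^{1}_{1,1/2}$.

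\emph{Item \eqref{it:Ldecomp2}.} Boundedness of $b_{N}(x,D)+ic$ from $\HT^{s,p}_{FIO}(\Rd)$ to $\HT^{s-1,p}_{FIO}(\Rd)$ for all $s$ follows from Lemma \ref{lem:pseudoHpFIO} applied with every $r$, which is legitimate since $\A^{2}$ symbols lie in $C^{r}_{*}S^{1}_{1,1/2}$ for all $r$ with seminorms growing only like $\lb\xi\rb^{(r-2)/2}$. For invertibility we argue as follows.
\begin{itemize}
\item Build a parametrix $q_{N}$ of order $-1$ from $(b_{N}+ic)^{-1}$. Since $b_{N}$ is real, $|b_{N}+ic|\geq|c|>0$ holds globally.
\item Write $(b_{N}(x,D)+ic)q_{N}(x,D)=I+R_{N}$ with $R_{N}$ of order $-1/2$.
\item Choosing $|c|$ large, or equivalently $N_{0}$ large, makes $R_{N}$ small in operator norm on $\HT^{s,p}_{FIO}(\Rd)$, so a Neumann series yields a right inverse. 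A left inverse is obtained similarly.
\end{itemize}
This is the step I expect to be the main obstacle. The gain in the $\A^{2}S_{1,1/2}$ calculus is only a half-derivative, so one must verify that the seminorms of $R_{N}$ become small as $|c|\to\infty$ uniformly in $N$. I would first attempt this by noting that $R_{N}$ involves $\partial_{x}b_{N}\,\partial_{\xi}(b_{N}+ic)^{-1}$, which is bounded by $\lb\xi\rb^{0}/(|b_{N}|+|c|)^{2}\cdot\lb\xi\rb$ and hence small once $|c|$ is large. If that fails, the fallback is to use $L^{2}$ self-adjointness of $b_{N}$ together with FIO invariance to conjugate to $L^{2}$.

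\emph{Item \eqref{it:Ldecomp3}.} By Lemma \ref{lem:smoothing}, the symbol $(\psi(N^{-1/2}D)g_{ij})^{\flat}$ lies in $C^{r}_{*}S^{-r/2}_{1,1/2}$ uniformly in $N$. We apply Lemma \ref{lem:pseudoHpFIO} with $m=-r/2$. When $r>4s(p)$ we have $\sigma=0$. Otherwise $\sigma=2s(p)-r/2+\veps$, so $\sigma+m=2s(p)-r+\veps<-1$ exactly because $r>2s(p)+1$. In either case the operator maps $\HT^{s-1,p}_{FIO}(\Rd)$ into itself for $s-1$ in the range allowed by that lemma. Composing with $D_{j}$ and $D_{i}$, which act boundedly between the corresponding Sobolev levels, gives the claim on the range $0\leq s<r-s(p)$.
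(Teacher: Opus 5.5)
Your item (1) reasonably reconstructs what the paper takes from \cite[Proposition 5.1]{HassellPortalRozendaal2020} and the proof of \cite[Proposition 4.9]{HassellPortalRozendaal2020}. Your item (3) follows the paper's route, Lemma \ref{lem:smoothing} plus Lemma \ref{lem:pseudoHpFIO}. For item (2) the paper just cites \cite[Lemma 4.10]{HassellRozendaal2023}. Two steps need repair.

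\textbf{Item (2): the quantifiers on $c$.} The Neumann series does not give the statement as written. A single $c$ (and $N_{0}$) must work for \emph{all} $s\in\R$ and $p\in[1,\infty]$, and the same $c$ is built into $e_{N}$ in (1).

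Smallness of $R_{N}$ on $\HT^{s,p}_{FIO}(\Rd)$ means smallness of finitely many $S^{0}_{1,1/2}$ seminorms of $\lb D\rb^{s}R_{N}\lb D\rb^{-s}$. You can only bound these by $C_{s}$ times those of $R_{N}$, with $C_{s}$ growing in $|s|$, so your argument yields $c=c(s)$.

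The smallness itself is fine and uniform in $N$. Since $ic$ disappears under $\partial_{\xi}$, every correction term of $(b_{N}+ic)\#q_{N}$, including the remainder, contains $x$-derivatives of $q_{N}=(b_{N}+ic)^{-1}$. For $\A^{2}$ symbols, $|\partial_{x}q_{N}|\lesssim\lb\xi\rb(\lb\xi\rb+|c|)^{-2}$, so $R_{N}$ has $S^{0}_{1,1/2}$ seminorms $O(|c|^{-1})$.

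To close the gap:
\begin{enumerate}
\item Run the Neumann series at a single level $s_{0}$, uniformly in $p$.
\item Set $T:=b_{N}(x,D)+ic$ and take an elliptic parametrix $Q_{N}\in\mathrm{Op}S^{-1}_{1,1/2}$ with $TQ_{N}=I-R_{1}$ and $Q_{N}T=I-R_{2}$, where $R_{1},R_{2}\in\mathrm{Op}S^{-\infty}$.
\item Injectivity: if $Tu=0$ with $u\in\HT^{s,p}_{FIO}(\Rd)$, then $u=R_{2}u\in\HT^{s_{0},p}_{FIO}(\Rd)$, so $u=0$.
\item Surjectivity: for $f\in\HT^{s-1,p}_{FIO}(\Rd)$, the function $u:=Q_{N}f+T_{s_{0}}^{-1}R_{1}f$ solves $Tu=f$. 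By the same elliptic regularity it lies in $\HT^{s,p}_{FIO}(\Rd)$ with the right bound.
\end{enumerate}
Your $L^{2}$ fallback would not work as stated. No Fourier integral operator conjugates $\Hp$ to $L^{2}(\Rd)$, and transferring $L^{2}$ invertibility would need something like spectral invariance of $\mathrm{Op}S^{0}_{1,1/2}$.

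\textbf{Item (3): one derivative short.} Since $D_{i}$ and $D_{j}$ each lose one derivative, you need
\[
(\psi(N^{-1/2}D)g_{ij})^{\flat}(x,D):\HT^{s-1,p}_{FIO}(\Rd)\to\HT^{s,p}_{FIO}(\Rd).
\]
If this operator only mapped $\HT^{s-1,p}_{FIO}(\Rd)$ into itself, you would get only $L_{2,N}:\HT^{s,p}_{FIO}(\Rd)\to\HT^{s-2,p}_{FIO}(\Rd)$.

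The gain comes from Lemma \ref{lem:pseudoHpFIO} applied with target index $s$, not $s-1$. This requires $\sigma+m\leq -1$, which means $r\geq 2$ when $\sigma=0$, and $r\geq 2s(p)+1+\veps$ otherwise. The admissible range $-\frac{r}{2}+s(p)-\sigma<s<r-s(p)$ then contains $[0,r-s(p))$, because its lower endpoint is either $s(p)-\frac{r}{2}<0$ or $-s(p)-\veps<0$.

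Your computation $\sigma+m<-1$ suggests this is what you intended, but as written the step does not yield the claim.
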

The symbol estimates and operator norms in Lemma \ref{lem:Ldecomp} are uniform in $N$.
\begin{proof}
\eqref{it:Ldecomp1}: 
This is essentially contained in \cite[Proposition 5.1]{HassellPortalRozendaal2020}, and uses Lemma \ref{lem:smoothing}. In fact, the statement there merely implies that $e_{N}\in S^{1}_{1,1/2}$, but the additional regularity follows from the proof. 
See also the proof of \cite[Proposition 4.9]{HassellPortalRozendaal2020} for \eqref{eq:defsymbolb}. 

\eqref{it:Ldecomp2}: This is \cite[Lemma 4.10]{HassellRozendaal2023}. 
\eqref{it:Ldecomp3}:  Combine the first statement of 
Lemma \ref{lem:smoothing} with Lemma \ref{lem:pseudoHpFIO}. 
\end{proof}

Next, we record the fixed-time regularity properties of the solution operators to the half-wave equation associated with the $b_{N}(x,D)$.

\begin{lemma}\label{lem:fixedtimesmooth}

Let $N_{0}\in 2^{\N}$ be as in Lemma \ref{lem:Ldecomp}. Then, for each $N\geq N_{0}$, there exists a unique collection $(S_{N}(t))_{t\in\R}$ of operators on $\Sw'(\Rd)$ such that the following holds for all $p\in[1,\infty)$ and $s\in\R$:
\begin{enumerate}
\item\label{it:firstorder0} $S_{N}(t):\Hps\to\Hps$ is bounded for all $t\in\R$ and $N\geq N_{0}$, and for each $t_{0}>0$ one has $\sup\{\|S_{N}(t)\|_{\Hps}\mid t\in[-t_{0},t_{0}], N\geq N_{0}\}<\infty$;
\item\label{it:firstorder1} $[t\mapsto S_{N}(t)f]\in C^{k}(\R;\HT^{s-k,p}_{FIO}(\R^d))$ for all $f\in\Hps$ and $N\geq N_{0}$; 
\item\label{it:firstorder3} $D_{t}S_{N}(t)f=b_{N}(x,D)f
$ for all $t\in\R$, $f\in\Hps$ and $N\geq N_{0}$.
\end{enumerate}
For $p=\infty$, under the above assumptions on $s$ and $r$, \eqref{it:firstorder0} remains true, and \eqref{it:firstorder1} and \eqref{it:firstorder3} hold in the weak-star sense.
\end{lemma}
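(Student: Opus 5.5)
The plan is to realize $S_{N}(t)$ as $e^{itb_{N}(x,D)}$, the half-wave group generated by the first-order operator $b_{N}(x,D)$, and to construct it by the standard Fourier integral operator parametrix argument on the spaces $\Hps$. The key input is Lemma \ref{lem:Ldecomp}: $b_{N}\in\A^{2}S^{1}_{1,1/2}$ is real-valued, elliptic, and asymptotically homogeneous of degree $1$. Its limit $a_{N}(x,\xi)=(\sum_{i,j}\psi(N^{-1/2}D)g_{ij}(x)\xi_{i}\xi_{j})^{1/2}$ belongs to $C^{1,1}S^{1}_{1,0}$, and its seminorms are bounded uniformly in $N$. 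This is precisely the setting of \cite{HassellRozendaal2023,LiRoSo25a}, where such evolutions were treated for exactly this purpose, so I would follow that template.

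\textbf{Step 1: parametrix.} Using a wave packet transform adapted to $b_{N}$, I would write the approximate solution operator as $W^{*}\Phi_{t}W$. Here $W$ is the wave packet transform, and $\Phi_{t}$ is transport along the Hamiltonian flow $\chi_{t}$ of $b_{N}$ on $\Sp$, lifted to phase space. The flow is generated by a $C^{1,1}$-type Hamiltonian with bounded second derivatives on the relevant scale, so $\chi_{t}$ is bi-Lipschitz, uniformly for $|t|\leq t_{0}$ and $N\geq N_{0}$. Invariance of $\Hps$ then comes from its characterization through tent spaces (or $L^{p}$ spaces) over $\Sp$: composition with a bi-Lipschitz canonical map preserves these spaces. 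This gives a family $T_{N}(t)$, bounded on $\Hps$ uniformly in $|t|\leq t_{0}$ and $N$, such that
\[
R_{N}(t):=(D_{t}-b_{N}(x,D))T_{N}(t)
\]
is bounded from $\Hps$ to $\Hps$, that is, of order $0$ rather than $1$. The gain comes from asymptotic homogeneity together with the $\A^{2}$ regularity of $b_{N}$. The case $p=\infty$ follows by duality, since $(\HT^{s,1}_{FIO})^{*}=\HT^{-s,\infty}_{FIO}$.

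\textbf{Step 2: Duhamel iteration.} I would define $S_{N}(t)$ as the solution of the Volterra equation
\[
S_{N}(t)=T_{N}(t)-i\int_{0}^{t}T_{N}(t-\tau)R_{N}(\tau)S_{N}(\tau)\,\ud\tau .
\]
Because $R_{N}$ has order $0$, the Neumann series converges in $\La(\Hps)$ uniformly on $[-t_{0},t_{0}]$, which yields \eqref{it:firstorder0}. The time regularity \eqref{it:firstorder1} follows from the equation $D_{t}S_{N}f=b_{N}(x,D)S_{N}f$ combined with the mapping property $b_{N}(x,D):\Hps\to\HT^{s-1,p}_{FIO}(\Rd)$ from Lemma \ref{lem:Ldecomp}, iterating for higher $k$.

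\textbf{Step 3: uniqueness.} Uniqueness in $\Sw'$ follows from an energy argument: since $b_{N}$ is real, $b_{N}(x,D)-b_{N}(x,D)^{*}$ has order $0$, which gives $L^{2}$-based Sobolev energy estimates. Any two solutions agree on each $H^{s}$, and hence on their common domain. I read \eqref{it:firstorder3} as $D_{t}S_{N}(t)f=b_{N}(x,D)S_{N}(t)f$.

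The main obstacle is Step 1: proving that the parametrix remainder really gains a full derivative with constants uniform in $N$, given only $\A^{2}S^{1}_{1,1/2}$ regularity. I would first try to cite the corresponding construction in \cite[Section 4]{HassellRozendaal2023} and \cite{LiRoSo25a} directly. They build exactly this group for asymptotically homogeneous symbols of this class, with bounds depending only on finitely many seminorms, which are uniform in $N$ by Lemma \ref{lem:Ldecomp}.
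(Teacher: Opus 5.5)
Your route is the paper's: its proof just combines Lemma \ref{lem:Ldecomp} with \cite[Theorem 4.3]{LiRoSo25a} for $p<\infty$. That theorem is exactly the wave packet parametrix plus Duhamel iteration you sketch, with bounds depending on finitely many seminorms and hence uniform in $N$. Two steps of your sketch, however, do not work as written.

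\textbf{The Volterra equation in Step 2 is wrong.} Since $R_{N}(t)=(D_{t}-b_{N}(x,D))T_{N}(t)$ acts on the initial data, it cannot be composed with $S_{N}(\tau)$. A direct computation, using $T_{N}(0)=I$, gives
\[
(D_{t}-b_{N}(x,D))S_{N}(t)=R_{N}(t)-R_{N}(t)S_{N}(t)-i\int_{0}^{t}R_{N}(t-\tau)R_{N}(\tau)S_{N}(\tau)\,\ud\tau,
\]
which does not vanish in general. The correct equation is $S_{N}(t)=T_{N}(t)-i\int_{0}^{t}S_{N}(t-\tau)R_{N}(\tau)\,\ud\tau$. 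With it, $(D_{t}-b_{N}(x,D))S_{N}$ satisfies a homogeneous Volterra equation and therefore vanishes. Equivalently, write $S_{N}(t)=T_{N}(t)+\int_{0}^{t}T_{N}(t-\tau)G(\tau)\,\ud\tau$, where $G(t)=-iR_{N}(t)-i\int_{0}^{t}R_{N}(t-\tau)G(\tau)\,\ud\tau$. The Neumann series argument then goes through as you describe.

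\textbf{The case $p=\infty$ is where your sketch is thinnest.} This is the one substantive observation in the paper's proof, and it does not follow from $(\HT^{-s,1}_{FIO}(\Rd))^{*}=\HT^{s,\infty}_{FIO}(\Rd)$ alone. The adjoint of $S_{N}(t)$ is the group generated by $b_{N}(x,D)^{*}$, not by $b_{N}(x,D)$. Moreover, $b_{N}(x,D)^{*}=b_{N}(x,D)+\tilde{b}_{N}(x,D)$ with $\tilde{b}_{N}\in S^{0}_{1,1/2}$. So to dualize you need the $p=1$ result for generators $a_{1}+a_{2}$, with $a_{1}$ as before and $a_{2}\in S^{0}_{1,1/2}$; the paper notes that \cite[Theorem 4.3]{LiRoSo25a} covers this. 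In your from-scratch scheme the perturbation is harmless, since a zero-order term in the generator can be absorbed into $R_{N}$. But it must be said, because the same issue appears as soon as you dualize the bound for $R_{N}(t)$. Also, \eqref{it:firstorder1} and \eqref{it:firstorder3} then hold only in the weak-star sense, since the group is not strongly continuous on $\HT^{s,\infty}_{FIO}(\Rd)$.

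Two smaller points. First, $b_{N}$ is not homogeneous, so its flow does not descend to $\Sp$; the transport should be along the flow of the homogeneous limit, as in Appendix \ref{sec:kernel}. Second, your uniqueness argument passes from $H^{s}$ to $\Hps$ only via density of $\Sw$ in $\Hps$ for $p<\infty$, which you should state.
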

\begin{proof}
For $p<\infty$, combine Proposition \ref{lem:Ldecomp} and \cite[Theorem 4.3]{LiRoSo25a}. 

For $p=\infty$, observe that \cite[Theorem 4.3]{LiRoSo25a} in fact applies with $b_{N}$ replaced by $a_{1}+a_{2}$ for any $a_{1}\in \A^{2}S^{1}_{1,1/2}$ real-valued, elliptic and asymptotically homogeneous, and for any $a_{2}\in S^{0}_{1,1/2}$. Keeping in mind that $b_{N}(x,D)^{*}=b_{N}(x,D)+\tilde{b}_{N}(x,D)$ for some $\tilde{b}_{N}\in S^{0}_{1,1/2}$,  one can thus rely on duality here.  
\end{proof}

We can now state and prove the main result of this subsection. Throughout, we write $S_{N}f(t,x):=S_{N}(t)f(x)$ for $f\in\Sw(\Rd)$ and $(t,x)\in\R^{1+d}$.

\begin{proposition}\label{prop:reduction}
Let $1\leq p\leq q\leq \infty$ and $0\leq \alpha<r-s(p)$ be such that $2s(p)+1<r$, $p<\infty$ and $\alpha>s(p)+d(\frac{1}{p}-\frac{1}{q})-1$. Suppose that there exist $N_{0}\in 2^{\N}$ 
and $C_{0}\geq0$ such that the following holds. For all $N\in 2^{\N}$ with $N\geq N_{0}$, 
and for all $f\in\HT^{\alpha,p}_{FIO}(\Rd)$ satisfying $\supp(\wh{f}\,)\subseteq \{\xi\in\Rn\mid N/4\leq |\xi|\leq 4N\}$, 
one has 
\begin{equation}\label{eq:reductionbound}
\|S_{N} f \|_{L^{q}([0,1]\times\Rd)}
\leq C_{0}\|f\|_{\HT^{\alpha,p}_{FIO}(\Rd)}.
\end{equation}
Then there exists a $C\geq0$ such that, for all $u_{0}\in \HT^{\alpha,p}_{FIO}(\R^d)$, $u_{1}\in \HT^{\alpha-1,p}_{FIO}(\R^d)$ and $F\in L^{1}_{\loc}(\R;\HT^{\alpha-1,p}_{FIO}(\R^d))$, and for all $N\in 2^{\N_{0}}$, the solution $u$ from \eqref{eq:defsol} satisfies 
\[
\|\psi_{N}(D)u\|_{L^{q}([0,1]\times\R^d)}\leq C\big(\|u_{0}\|_{\HT^{\alpha,p}_{FIO}(\R^d)}+\|u_{1}\|_{\HT^{\alpha-1,p}_{FIO}(\R^d)}+\|F\|_{L^{1}([0,1];\HT^{\alpha-1,p}_{FIO}(\R^d))}\big).
\]
\end{proposition}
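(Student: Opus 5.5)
Our plan is to write $\psi_N(D)u$ as a solution of the first-order smooth equation driven by $b_N(x,D)$, with the rough parts treated as a forcing term, and then apply Duhamel's formula with $S_N$ together with the hypothesis \eqref{eq:reductionbound}.

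First we dispose of small frequencies. For $N<N_{0}$ we use Proposition \ref{prop:roughfixed} with $s=\alpha$, which gives $u\in C([0,1];\HT^{\alpha,p}_{FIO}(\Rd))$ with the right bounds. Then \eqref{eq:Sobolev} and Bernstein's inequality for $\psi_{N}(D)$ ($N$ bounded) give the $L^{q}$ bound, since $q\geq p$.

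Now let $N\geq N_{0}$ and set $u_{N}:=\psi_{N}(D)u$.
\begin{itemize}
\item Since $\psi_{N}(D)$ commutes with frequency truncations, the commutator $[\psi_{N}(D),L]$ should be split. Modulo terms gaining regularity, $\psi_N(D)L u=L_N\psi_N(D)u+R_Nu$, where $R_N$ collects $\psi_N(D)D_i((1-\psi(N^{-1/2}D))g_{ij})D_ju$ and the commutator of $\psi_N(D)$ with the low-frequency coefficients. The high-frequency part of $g_{ij}$ is of size $N^{-r/2}$ in $C^r_*$ and loses at most one derivative. The commutator gains one derivative, since $\psi(N^{-1/2}D)g_{ij}$ is smooth at scale $N^{-1/2}$. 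Using Lemma \ref{lem:pseudoHpFIO} and the hypothesis $0\le\alpha<r-s(p)$, $r>2s(p)+1$, we aim to show $R_N:\HT^{\alpha,p}_{FIO}\to\HT^{\alpha-1,p}_{FIO}$ uniformly in $N$.
\item Write $L_N=(b_N(x,D)+ic)^2+e_N(x,D)+L_{2,N}$ by Lemma \ref{lem:Ldecomp}. Then $(D_t^2-(b_N+ic)^2)u_N=G_N$ with $G_N:=\psi_N(D)F+R_Nu+e_Nu_N+L_{2,N}u_N$. By Lemma \ref{lem:Ldecomp} \eqref{it:Ldecomp3}, the $e_N\in\A^1S^1_{1,1/2}$ mapping property and Proposition \ref{prop:roughfixed}, we get $\|G_N\|_{L^1([0,1];\HT^{\alpha-1,p}_{FIO})}\lesssim$ the data norm.
\item Factor $D_t^2-(b_N+ic)^2=(D_t-b_N-ic)(D_t+b_N+ic)$, up to the commutator, which vanishes since $b_N$ is time-independent. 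Solve the two first-order problems via $S_N(\pm t)$, with the factors $e^{\mp ct}$ harmless on $[0,1]$. Duhamel then expresses $u_N(t)$ as $S_N(\pm t)$ applied to $u_N(0)$, to $(b_N+ic)^{-1}\partial_tu_N(0)$, and to $\int_0^t S_N(\pm(t-\tau))(b_N+ic)^{-1}G_N(\tau)\,d\tau$. The inverse $(b_N+ic)^{-1}$ exists by Lemma \ref{lem:Ldecomp} \eqref{it:Ldecomp2}. Uniqueness in Remark \ref{rem:unique} justifies identifying this representation with $u_N$.
\end{itemize}

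To apply \eqref{eq:reductionbound}, the inputs must have Fourier support in $\{N/4\le|\xi|\le4N\}$. The sharp-symbol operators do not preserve Fourier support, so we insert $\tilde\psi_N(D)$ (equal to $1$ on $\supp\psi_N$, supported in $N/4\leq|\xi|\leq 4N$) in front of each input. The remainder $(1-\tilde\psi_N(D))(b_N+ic)^{-1}\psi_N(D)$ and its analogues have symbols in $S^{-\infty}$ localized at frequency $\sim N$, with $\delta=1/2$ symbol calculus. These remainders are treated crudely: by fixed-time bounds (Lemma \ref{lem:fixedtimesmooth}) and the Sobolev embedding $\HT^{\alpha',p}_{FIO}\subseteq L^q$, which costs $s(p)+d(\frac1p-\frac1q)$ derivatives. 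This is compensated by the $O(N^{-K})$ gain, and the condition $\alpha>s(p)+d(\frac1p-\frac1q)-1$ is what allows the $\HT^{\alpha-1,p}_{FIO}$-valued forcing to be embedded.

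The main terms are then handled by \eqref{eq:reductionbound} and Minkowski's inequality in $\tau$. The main obstacle is the uniform-in-$N$ control of $R_N$ and of $L_{2,N}u_N$ on $\HT^{\alpha,p}_{FIO}$, i.e.\ the paradifferential commutator estimates near the endpoint $\alpha<r-s(p)$.
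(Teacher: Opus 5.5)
Your proposal is correct and has the same architecture as the paper's proof. Both use Duhamel's formula for $\psi_N(D)u$ with respect to the flow generated by $B=b_N(x,D)+ic$. Both bound the forcing term in $L^{1}([0,1];\HT^{\alpha-1,p}_{FIO}(\Rd))$ via Proposition \ref{prop:roughfixed}, Lemma \ref{lem:Ldecomp} and Lemma \ref{lem:pseudoHpFIO}. Both then apply \eqref{eq:reductionbound} with Minkowski's inequality in $\tau$. Where you differ is in how you produce inputs with Fourier support in $\{N/4\leq|\xi|\leq 4N\}$.

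\textbf{The paper's route.} It uses that the coefficients of $L_N$ are band-limited to $|\eta|\leq 2N^{1/2}$, so $[\psi_N(D),L_N]u$ and $\psi_N(D)e_N(x,D)u$ are exactly localized. The one non-localized piece, $[e_N(x,D),\psi_N(D)]u$, is treated crudely. This commutator has order $0$ because $e_N\in\A^{1}S^{1}_{1,1/2}$, and $B^{-1}\sn_{t-\tau}$ gains one more derivative. The Sobolev embedding then closes under $\alpha>s(p)+d(\frac1p-\frac1q)-1$, and this is the only place in the proof where that hypothesis is used.

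\textbf{Your route.} You keep $e_N(x,D)\psi_N(D)u$ in the forcing and insert $\tilde\psi_N(D)$. All non-localized contributions then become remainders $(1-\tilde\psi_N(D))B^{-1}(\cdots)$ acting on functions frequency-localized near $\supp\psi_N$, and these gain arbitrarily many derivatives. This works; take $\tilde\psi_N\equiv 1$ on, say, $\{N/3\leq|\xi|\leq 3N\}$, which covers the $O(N^{1/2})$ spreading of the Fourier supports of $R_Nu$ and $L_{2,N}u_N$.

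With this choice, however, the hypothesis on $\alpha$ plays no role in your argument. Your claim that it lets the $\HT^{\alpha-1,p}_{FIO}$-valued forcing be embedded is inaccurate. That hypothesis only gives $\HT^{\alpha+1,p}_{FIO}(\Rd)\subseteq L^{q}(\Rd)$, which is what an order-$0$ forcing hit by $B^{-1}$ requires.

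\textbf{What your route buys.}
\begin{enumerate}
\item Your cutoff makes explicit a step the paper asserts without detail: that \eqref{eq:reductionbound} transfers to $B^{-1}\sn_t$ even though $B^{-1}$ does not preserve Fourier supports.
\item Your forcing term is complete. The $G$ displayed in the paper does not account for $\psi_N(D)(L-L_N)u$, $L_{2,N}\psi_N(D)u$ or $\psi_N(D)F$.
\end{enumerate}
The first of these missing terms is the one you flag as the main obstacle. Note that $(1-\psi(N^{-1/2}D))g_{ij}$ is $O(N^{-r/2})$ in $L^{\infty}(\Rd)$ but merely bounded, not small, in $C^{r}_{*}(\Rd)$. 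The term is handled as in Lemma \ref{lem:Ldecomp} \eqref{it:Ldecomp3}, through Lemma \ref{lem:smoothing} and Lemma \ref{lem:pseudoHpFIO}, and this is where $r>2s(p)+1$ and $\alpha<r-s(p)$ enter.
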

Note that,  for $p=\infty$,  Proposition \ref{prop:roughfixed} already contains the relevant sharp bounds. Moreover, in Theorem \ref{thm:mainrough} one automatically has $\alpha>s(p)+d(\frac{1}{p}-\frac{1}{q})-1$.

\begin{proof}
Throughout, fix $N\in 2^{\N_{0}}$ and 
write $v(t):=\psi_{N}(D)u(t)$ for $t\in\R$. 

\subsubsection*{Preliminary work}

Firstly, by a standard Sobolev embedding and \eqref{eq:Sobolev},  
\begin{align*}
\|v\|_{L^{q}([0,1]\times\Rd)}&\lesssim \|\lb D\rb^{d(\frac{1}{p}-\frac{1}{q})}v\|_{L^{q}([0,1];L^{p}(\Rd))}\\
&\lesssim \|\lb D\rb^{d(\frac{1}{p}-\frac{1}{q})+s(p)-\alpha}v\|_{L^{\infty}([0,1];\HT^{\alpha,p}_{FIO}(\Rd))}.
\end{align*}
This suffices if $N<N_{0}$, due to Proposition \ref{prop:roughfixed} and because $\psi_{N}(D)$ is smoothing. In particular, we may suppose that $N_{0}\geq 16$ is such that Lemma \ref{lem:Ldecomp} applies to all $N\geq N_{0}$. Note that then $N^{1/2}\leq N/4$. 

Next, consider $N\geq N_{0}$. 

The argument above shows that $v\in L^{q}([0,1]\times\Rd)$, but we have to obtain norm bounds independent of $N$. To this end, let $b_{N}$, $e_{N}$ and $c$ be as in Lemma \ref{lem:Ldecomp}, and set $B:=b_{N}(x,D)+ic$. Then 
$D_{t}^{2}v(t)=B^{2}v(t)+G(t)$ for all $t\in\R$, where 
\[
G(t):=\big([\psi_{N}(D),L_{N}]+e_{N}(x,D)\psi_{N}(D)\big)u(t).
\]
Set
\[
\cs_{t}=\tfrac{1}{2}(S_{N}(t)+S_{N}(-t))\ \text{ and }\  \sn_{t}=\tfrac{1}{2i}(S_{N}(t)-S_{N}(-t))
\]
for $t\in\R$. 
Then Lemma \ref{lem:fixedtimesmooth} and Remark \ref{rem:unique} imply that
\begin{equation}\label{eq:uDuhamel}
v(t)=\cs_{t}\psi_{N}(D)u_{0}+B^{-1}\sn_{t}\psi_{N}(D)u_{1}-\int_{0}^{t}B^{-1}\sn_{t-\tau}G(\tau)\ud\tau.
\end{equation}
To bound the first two terms on the right-hand side, we can use \eqref{eq:reductionbound}, which by construction also holds if one replaces $S_{N}(t)$ by $\cs_{t}$ or, with a gain of one derivative due to Lemma \ref{lem:Ldecomp} \eqref{it:Ldecomp2}, by $B^{-1}\sn_{t}$.

\subsubsection*{The Duhamel term}

It remains to deal with the right-most term in \eqref{eq:uDuhamel}. Write
\begin{equation}\label{eq:Duhamelsplit}
G(\tau)=[\psi_{N}(D),L_{N}]u(\tau)+\psi_{N}(D)e_{N}(x,D)u(\tau)+[e_{N}(x,D),\psi_{N}(D)]u(\tau)
\end{equation}
for $\tau\in\R$. We will bound the contribution of each of these terms separately.

Firstly, since $e_{N}\in \A^{1}S^{1}_{1,1/2}$, pseudodifferential calculus yields an $e_{1,N}\in S^{0}_{1,1/2}$ such that $[e_{N}(x,D),\psi_{N}(D)]=e_{1,N}(x,D)$. By Lemma \ref{lem:pseudoHpFIO} and Proposition \ref{prop:roughfixed},
\begin{align*}
&\|[e_{N}(x,D),\psi_{N}(D)]u(\tau)\|_{\HT^{\alpha,p}_{FIO}(\Rd)}\lesssim \|u(\tau)\|_{\HT^{\alpha,p}_{FIO}(\Rd)}\\
&\lesssim \|u_{0}\|_{\HT^{\alpha,p}_{FIO}(\R^d)}+\|u_{1}\|_{\HT^{\alpha-1,p}_{FIO}(\R^d)}+\|F\|_{L^{1}([0,1];\HT^{\alpha-1,p}_{FIO}(\R^d))}
\end{align*}
for all $0\leq \tau\leq 1$. For $\veps>0$ small and for all $\tau\leq t\leq 1$, one can combine this with a classical Sobolev embedding, \eqref{eq:Sobolev}, Lemma \ref{lem:fixedtimesmooth} and Lemma \ref{lem:Ldecomp} \eqref{it:Ldecomp2}, as follows:
\begin{align*}
&\big\|B^{-1}\sn_{t-\tau}[e_{N}(x,D),\psi_{N}(D)]u(\tau)\big\|_{L^{q}([0,1]\times\Rd)}\\
&\lesssim \big\|B^{-1}\sn_{t-\tau}[e_{N}(x,D),\psi_{N}(D)]u(\tau)\big\|_{\HT^{s(p)+d(\frac{1}{p}-\frac{1}{q})+\veps,p}_{FIO}(\Rn)}\\
&\lesssim \big\|[e_{N}(x,D),\psi_{N}(D)]u(\tau)\big\|_{\HT^{s(p)+d(\frac{1}{p}-\frac{1}{q})-1+\veps,p}_{FIO}(\Rn)}\\
&\lesssim \|u_{0}\|_{\HT^{\alpha,p}_{FIO}(\R^d)}+\|u_{1}\|_{\HT^{\alpha-1,p}_{FIO}(\R^d)}+\|F\|_{L^{1}([0,1];\HT^{\alpha-1,p}_{FIO}(\R^d))}.
\end{align*}
For the final inequality, we also used that $\alpha>s(p)+d(\frac{1}{p}-\frac{1}{q})-1$. Upon integrating in $\tau$, this takes care of the contribution of the right-most term in \eqref{eq:Duhamelsplit}.

Next, we decompose the first term on the right-hand side of \eqref{eq:Duhamelsplit} as 
\[
[\psi_{N}(D),L_{N}]u(\tau)=[\psi_{N}(D),L_{1,N}]u(\tau)+[\psi_{N}(D),L_{2,N}]u(\tau).
\]
By Lemma \ref{lem:Ldecomp} \eqref{it:Ldecomp3}, $L_{2,N}:\HT^{\alpha,p}_{FIO}(\Rd)\to \HT^{\alpha-1,p}_{FIO}(\Rd)$, and thus $[\psi_{N}(D),L_{2,N}]:\HT^{\alpha,p}_{FIO}(\Rd)\to \HT^{\alpha-1,p}_{FIO}(\Rd)$ as well. On the other hand, combining Lemma \ref{lem:Ldecomp} \eqref{it:Ldecomp1} with standard pseudodifferential calculus and the embedding $\A^{1}S^{1}_{1,1/2}\subseteq \A^{2}S^{2}_{1,1/2}$, one sees that $L_{1,N}=a(x,D)$ for some $a\in \A^{2}S^{2}_{1,1/2}$.
In turn, pseudodifferential calculus also yields an $\tilde{a}\in S^{1}_{1,1/2}$ such that  $[\psi_{N}(D),L_{1,N}]=\tilde{a}(x,D)$. By Lemma \ref{lem:pseudoHpFIO}, we thus conclude that 
\begin{equation}\label{eq:Duhamelreg}
\begin{aligned}
&\|[\psi_{N}(D),L_{N}]u(\tau)+\psi_{N}(D)e_{N}(x,D)u(\tau)\|_{\HT^{\alpha-1,p}_{FIO}(\Rd)}\lesssim\|u(\tau)\|_{\HT^{\alpha,p}_{FIO}(\Rd)}\\
&\lesssim \|u_{0}\|_{\HT^{\alpha,p}_{FIO}(\R^d)}+\|u_{1}\|_{\HT^{\alpha-1,p}_{FIO}(\R^d)}+\|F\|_{L^{1}([0,1];\HT^{\alpha-1,p}_{FIO}(\R^d))}
\end{aligned}
\end{equation}
for $0\leq \tau\leq 1$, where we used Proposition \ref{prop:roughfixed} for the second inequality.

Clearly, the Fourier transforms of $\psi_{N}(D)L_{N}u(\tau)$ and $\psi_{N}(D)e_{N}(x,D)u(\tau)$ are supported in $\{\xi\in\Rd\mid N/2\leq |\xi|\leq 2N\}$, for each $\tau\in\R$. Moreover, given that $N^{1/2}\leq N/4$, the Fourier transform of $L_{N}\psi_{N}(D)u(\tau)$ is supported in $\{\xi\in\Rd\mid N/4\leq |\xi|\leq 4N\}$. Hence the Fourier transform of 
\[
H(\tau):=[\psi_{N}(D),L_{N}]u(\tau)+\psi_{N}(D)e_{N}(x,D)u(\tau)
\]
is also supported in $\{\xi\in\Rd\mid N/4\leq |\xi|\leq 4N\}$. 

We can now apply \eqref{eq:reductionbound}, which by Lemma \ref{lem:Ldecomp} \eqref{it:Ldecomp2} yields an analogous bound with a gain of one derivative for $t\mapsto B^{-1}\sn_{t-\tau}$. Combining this estimate with \eqref{eq:Duhamelreg} and with Minkowski's inequality, we can bound the contribution to \eqref{eq:uDuhamel} of the remaining term in \eqref{eq:Duhamelsplit}: 
\begin{align*}
&\Big(\int_{0}^{1}\Big\|\int_{0}^{t}B^{-1}\sn_{t-\tau}H(\tau)\ud\tau\Big\|_{L^{q}(\Rn)}^{q}\ud t\Big)^{1/q}\\
&\leq \Big(\int_{0}^{1}\Big(\int_{0}^{t}\|B^{-1}\sn_{t-\tau}H(\tau)\|_{L^{q}(\Rd)}\ud\tau\Big)^{q}\ud t\Big)^{1/q}\\
&\leq \int_{0}^{1}\Big(\int_{\tau}^{1}\|B^{-1}\sn_{t-\tau}H(\tau)\|_{L^{q}(\R^d)}^{q}\ud t\Big)^{1/q}\ud \tau\lesssim \int_{0}^{1}\|H(\tau)\|_{\HT^{\alpha-1,p}_{FIO}(\R^d)}\ud \tau\\
&\lesssim \|u_{0}\|_{\HT^{\alpha,p}_{FIO}(\R^d)}+\|u_{1}\|_{\HT^{\alpha-1,p}_{FIO}(\R^d)}+\|F\|_{L^{1}([0,1];\HT^{\alpha-1,p}_{FIO}(\R^d))},
\end{align*}
with the natural modification for $q=\infty$.
\vanish{
\subsubsection*{Spatial localization}
Finally, we show that \eqref{eq:reductionbound} can\footnote{Jan: okay, this can be made to work. It does still seem to require some effort. In particular, it seems to me that the first displayed bound requires a bit more justification, cause I'm not so sure that one can interpolate that easily in a strict sense. However, one can prove a localization principle for both $L^{q}$ and $H^{p}_{FIO}$ using the $\theta_{N,\ka}$, with bounds that are independent of $N$. This localization principle yields in particular the first displayed inequality, and the proof goes as in \cite[Theorem 3.6]{LiRoSoYa24}, relying on the atomic decomposition. However, a bit more justification is needed in that case, due to the lack of compact support. This seems to be okay though, because of the fast decay and also fast decay of the derivatives (derivatives blow up in $N$ but far away they still decay very fast). This should probably be put in a remark somewhere, and the actual localization principle stated in Section \ref{subsec:spaces} together with this remark, also for $L^{q}$. Should also add something in Appendix \ref{sec:kernel} about the kernel bounds on $H^{p}_{FIO}$ in terms of boundedness and how this also works in terms of spatial decay and essential speed of propagation.} be inferred from \eqref{eq:reductionboundspatial}. 

Firstly, we note that for any $1\leq q\leq\infty$ one has 
\begin{equation*}
\| g \|_{L^q(\R^d)} \lesssim  \Big( \sum_{\ka \in \Z^d} \| \tilde{\theta}_{N,\ka} g \|_{L^q(\R^d)}^q \Big)^{1/q}
\end{equation*}
for all $g \in L^q(\Rd)$, with the obvious modification for $q=\infty$. This follows from 
considering the localization map $Tf = (\theta_k f)_k \in \ell^q L^q$. Then
\begin{equation*}
    \| T f \|_{\ell^q L^q}^q = \sum_k \| \tilde{\theta}_{k} f \|_{L^q}^q = \int |f(x)|^q \sum_k \tilde{\theta}_k^q dx.
\end{equation*}
Clearly,
\begin{equation*}
    \sum_k \tilde{\theta}_k^q \leq \sum_{k} \tilde{\theta}_k = 1.
\end{equation*}
Hence, $\| T f \|_{\ell^q L^q} \leq \| f \|_{L^q}$. Moreover, $c \geq \sum_k \theta_k^q$. Indeed, for $|x-k| \geq 5$ we have
\begin{equation*}
    \sum_{k:|x-k| \geq 5} \theta_k^q (x) \leq C_l N^{-l}.
\end{equation*}
Hence,
\begin{equation*}
    \sum_{k:|x-k| \leq 2} \tilde{\theta}_k(x) = 1 - C_l N^{-l}.
\end{equation*}
Consequently, by H\"older's inequality,
\begin{equation*}
1- C_l N^{-l} \leq C \big( \sum_k \theta_k^q \big)^{\frac{1}{q}}.
\end{equation*}
So, for $N$ large enough, $\sum_k \theta_k^q \geq c > 0$, from which the reverse bound follows:
\begin{equation*}
    \| f \|_{L^q} \lesssim \big( \sum_k \| \tilde{\theta}_k f \|_{L^q}^q \big)^{1/q}.
\end{equation*}
Integrating in time, we find
\begin{equation*}
\| S_N f \|_{L^q([0,1] \times \R^d)} \lesssim \Big( \sum_{k_1 \in \Z^d} \| \tilde{\theta}_{N,k_1} S_N f \|_{L^q([0,1] \times \R^d)}^q \Big)^{1/q}.
\end{equation*}
Next, we obtain 
by kernel estimates, i.e., essentially finite speed of propagation, for $|k_1-k_2| \geq 5$,
\begin{equation*}
\| \tilde{\theta}_{N,k_1} S_N \tilde{P}_N \tilde{\theta}_{N,k_2} f \|_{L^q([0,1] \times \R^d)} \leq C_l |k_1-k_2|^{-l} N^{-l} \| \tilde{\theta}^{1/2}_{N,k_2} f \|_{L^q(\Rd)}.
\end{equation*}
Here we estimate
\begin{equation*}
    \| \tilde{\theta}_{N,k_1} S_N \tilde{P}_N \tilde{\theta}_{N,k_2}^{1/2} g \|_{L^q([0,1] \times \R^d)} \lesssim N^{-m} |k_1-k_2|^{-m} \| g \|_{L^q}
\end{equation*}
by kernel estimates. We remark that $\tilde{\theta}_{N,k_2}^{1/2}$ is still rapidly decaying off $B_d(k_2,1)$, but in general only $\tilde{\theta}_{N,k_2}^{1/2} \in C^{0,1}$. The additional frequency localization with $\tilde{P}_N$ simplifies the kernel estimate.

With this estimate at hand, it is clear how to conclude the estimate for the off-diagonal contribution $|k_1-k_2| \geq 5$ by Young's inequality and embeddings between $L^p$ and $\mathcal{H}^p_{\text{FIO}}$.

We turn to the contribution of $|k_1-k_2| \leq 5$. In this case we use the hypothesis:
\begin{equation*}
\| \tilde{\theta}_{k_1} S_N \tilde{\theta}^2_{k_2} f \|_{L^q([0,1] \times \R^d)} \lesssim N^s \| \tilde{\theta}_{k_2} f \|_{\mathcal{H}^p_{FIO}}.
\end{equation*}
The sums over $|k_1-k_2| \leq 5$ is finite and the $\ell^q$-sum over $k_3$ is carried out by \cite[Theorem~3.6]{LiuRozendaalSongYan2024}. The conclusion is straightforward.}
\end{proof}

\subsection{Wave packet decompositions and flow comparisons}\label{subsec:wavedecomp}

The rest of this section is concerned with proving \eqref{eq:reductionbound}. 

To this end, we require an isotropic wave packet decomposition which played a crucial role in the proof of the sharp bilinear estimates in \cite{SchippaTataru2025}. In this subsection we show that the relevant wave packet decomposition can be used in our present setting, which is not automatic because 
the symbol $b_{N}$ from \eqref{eq:defsymbolb} does not satisfy the required size and regularity estimates for $|\xi| \not\eqsim N$. However, we will argue that for frequency-localized initial data any contribution away from the initial localization is smoothing, cf.~\eqref{eq:ModifiedEnergyEstimate}.
Later we will also need to transfer the argument to prove bilinear estimates from homogeneous flows to asymptotically homogeneous flows. To do so, we will use that the resulting wave packets are indistinguishable, which is the content of Lemma \ref{lem:Comparison} below. 

Throughout, fix $N\in 2^{\N}$ large. 
All the implicit bounds will be independent of $N$.

\subsubsection{Rescaling and Hamiltonian flows}

We will rely on the wave packet decomposition provided by Theorem \ref{thm:WavePacketDecomposition} below. The  relevant 
wave packets are centered along bicharacteristics in phase space, $T^{*}\Rd=\R^{2d}$, and propagate along the Hamiltonian flow associated with a suitable symbol $a:\R^{2d}\to \R$:
\begin{equation}\label{eq:flow}
    \begin{cases}
        \dot{x}(t)\!&= \partial_{\xi} a(x(t),\xi(t)),  \\
        \dot{\xi}(t)\!&= - \partial_x a(x(t),\xi(t)),
    \end{cases}
\end{equation} 
for $t$ in a maximal time interval of existence. 

With frequencies of size approximately $N$ being distinguished, to enter the outset of \cite{SchippaTataru2025}, it is natural to rescale and set $\tilde{a}_N(x,\xi) := N^{-1} b_N(N^{-1} x, N \xi)$ for $(x,\xi)\in\R^{2d}$. 
Denote by $(\tilde{S}_{N}(t))_{t\in\R}$ the solution operators to the abstract Cauchy problem
\begin{equation}\label{eq:halfwaveatilde}
        \begin{cases}
           (D_t+ \tilde{a}_N(x,D))u(t,x)= 0,\\
            u(0,x) = u_{0}(x).
        \end{cases}
    \end{equation}
Write $\tilde{S}_{N}f(t,x):=\tilde{S}_{N}(t)f(x)$ for $f\in\Sw(\Rd)$ and $(t,x)\in\R\times \Rd$, and note that
\begin{equation}\label{eq:rescalingkey}
S_{N}f(t,x)=(\tilde{S}_{N}(Nt)f_{N})(Nx),
\end{equation}
where $f_{N}(y):=f(N^{-1}y)$ for $y\in\Rd$. Hence, to obtain estimates for $S_{N}f$ in the case where $\supp(\wh{f}\,)\subseteq\{\xi\in\Rd\mid |\xi|\eqsim N\}$, we may derive bounds for $\tilde{S}_{N}g$ if $\supp(\wh{g})\subseteq\{\xi\in\Rd\mid |\xi|\eqsim 1\}$. However, we also have to consider $0\leq t \leq N$. This motivates the following arguments.

Fix $c>1$. With $\psi_1 \in C^{\infty}_{c}(\Rd)$ as in Section \ref{subsec:spaces}, set
\begin{equation*}
    \tilde{\psi}_1(\xi):= \psi\big(\tfrac{1}{2c}\xi)-\psi(c\xi)
    \text{ and } a_N(x,\xi) := \tilde{a}_N(x,\xi) \tilde{\psi}_1(\xi)
\end{equation*}
for $(x,\xi)\in\R^{2d}$. Note that $\tilde{\psi}_{1}(\xi)=1$ if $\frac{1}{c}\leq |\xi|\leq c$, and $\tilde{\psi}_{1}(\xi)=0$ unless $\frac{1}{2c}< |\xi|< 2c$. Hence 
\[
a_N(x,\xi)\!=\!\Big(\!\sum_{i,j=1}^{d}\!\sum_{\frac{N}{2c}<M<8Nc}\!\psi(M^{-1/2}D)\psi(N^{-1/2}D)g_{ij}(N^{-1}x)\psi_{M}(N\xi)\xi_{i}\xi_{j}\!\Big)^{1/2}\tilde{\psi}_{1}(\xi).
\]
Keeping in mind that
\begin{equation}\label{eq:BoundsCoefficientsParadifferential}
\sup_{x\in\Rd}\big|\partial_x^{\alpha} \big(\psi(N^{-1/2}D)g_{ij}\big)(N^{-1}x)\big| \lesssim_{\alpha,\beta} N^{-|\alpha| + \frac{(|\alpha|-2)_+}{2}}
\end{equation}
for all $\alpha,\beta\in \N_{0}^{d}$, one obtains the following estimates:
\begin{equation}\label{eq:SizeRegularitypsharp}
\sup_{(x,\xi)\in\R^{2d}}|\partial_x^{\alpha} \partial_{\xi}^{\beta} a_{N}(x,\xi) | \lesssim_{\alpha,\beta} N^{-|\alpha| + \frac{(|\alpha|-2)_+}{2}}.
\end{equation}
These bounds need to be satisfied in order to directly appeal to \cite[Theorem 1.2]{SchippaTataru2025}.

However, simply replacing $\tilde{a}_N$ by $a_{N}$ and estimating the remainder with commutator estimates leads to difficulties outside of the $L^{2}$-based setting. 
Instead, we argue that the wave packet decomposition provided by \cite{SchippaTataru2025} 
is still at our disposal, albeit with a different Hamiltonian. To this end, we will use that 
 the relevant initial data is localized at frequencies $|\xi| \eqsim N$ 
and that the evolution $(S_{N}(t))_{t\in\R}$ generated by $i b_N(x,D)$ essentially maintains the frequency localization, as we now show. 
\begin{proposition}
\label{prop:ModifiedEnergy}
Let $m\geq0$. Then there exists a $C\geq0$ such that
\begin{equation}
\label{eq:ModifiedEnergyEstimate}
\| \psi_{M}(D)S_{N}(t)f \|_{L^{2}(\Rd)} \leq C \big(\max\big(\tfrac{N}{M},\tfrac{M}{N}\big)\big)^{-m} \|f\|_{L^{2}(\Rd)},
\end{equation}
for all $M\in 2^{\N_{0}}$, $0\leq t\leq 1$ and $f\in L^{2}(\Rd)$ such that $\supp(\wh{f}\,)\subseteq\{\xi\in\Rd\mid N/c\leq |\xi|\leq cN\}$. \end{proposition}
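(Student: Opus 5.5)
We will prove \eqref{eq:ModifiedEnergyEstimate} by a modified energy (commutator) argument, in the spirit of propagating frequency localization through the flow generated by $b_N(x,D)$. The key structural input is that $b_N\in\A^{2}S^{1}_{1,1/2}$ is real-valued and that, on frequencies $|\xi|\eqsim M$, its $x$-dependence comes from $\psi(M^{-1/2}D)\psi(N^{-1/2}D)g_{ij}$, so $x$-derivatives of order $k$ cost at most $\min(M,N)^{(k-2)_+/2}$. This means that $b_N(x,D)$ only moves frequencies by $O(\min(M,N)^{1/2})$, which is $\ll \max(M,N)$ when $M/N$ or $N/M$ is large. First, $L^2$ boundedness: since $b_N$ is real, $b_N(x,D)-b_N(x,D)^*$ has symbol in $S^{0}_{1,1/2}$, hence is $L^2$ bounded, and a Gronwall argument gives $\|S_N(t)\|_{L^2\to L^2}\lesssim 1$ for $0\le t\le 1$. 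This already gives the claim when $M\eqsim N$.

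For $M\gg N$ or $M\ll N$, we would run an iterative scheme in $m$. Fix smooth multipliers $\chi_k(\xi)$, $k=0,1,\dots,m$, with $\chi_0=\psi_M$, each $\chi_k$ equal to $1$ on the support of $\chi_{k-1}$, and all supported in a slightly larger annulus $\{|\xi|\eqsim M\}$ still separated from $\{|\xi|\eqsim N\}$. Writing $u(t)=S_N(t)f$, so $\chi_k(D)u(0)=0$, Duhamel gives
\[
\chi_{k}(D)u(t)=i\int_0^t S_N(t-s)\,[b_N(x,D),\chi_k(D)]\,u(s)\,\ud s
\]
(up to sign and with the analogous formula using the $L^2$ bound from the first step). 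The commutator $[b_N(x,D),\chi_k(D)]$ has symbol supported essentially where $|\xi|\eqsim M$ and $x$-Fourier modes $\lesssim \min(M,N)^{1/2}\ll \max(M,N)$. Symbolically it is of order $0$ with gain $\max(M,N)^{-1}\cdot\min(M,N)^{1/2}\cdot(\text{size of }b_N\sim M)$ in the regime relevant here. Moreover, it factors as $\chi_{k+1}(D)\,[\,\cdot\,]\,\chi_{k+1}(D)$ modulo a remainder whose symbol is $O(\max(N,M)^{-\infty})$, because the coefficients are band-limited at scale $M^{1/2}$ (or $N^{1/2}$) while the annuli are separated by $\gtrsim \max(M,N)$. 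Inserting $\chi_{k+1}(D)u(s)$ and iterating $m$ times should yield a factor $\big(\min(M,N)^{1/2}/\max(M,N)\big)^{m}\cdot M$-type gains. Choosing the number of iterations suitably, after sacrificing finitely many powers to absorb the order of $b_N$, this gives $\max(N/M,M/N)^{-m}$.

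The main obstacle is the commutator gain when $M\ll N$. There the operator $b_N$ on the target frequencies $|\xi|\eqsim M$ uses coefficients $\psi(M^{-1/2}D)\psi(N^{-1/2}D)g_{ij}$ band-limited to $M^{1/2}$. Such frequencies cannot reach frequency $N$ at all in a single step, so the relevant interaction is exactly zero modulo the Schwartz tails of $\psi_M$ and $\psi(M^{-1/2}\cdot)$. We would first try to exploit this exact Fourier support: the symbol of $b_N$ restricted to $|\xi|\le 4M$ generates only $x$-frequencies $\lesssim M^{1/2}$. We would handle the square root in \eqref{eq:defsymbolb}, which destroys exact band-limitation, by the $\A^2S^1_{1,1/2}$ bounds, which give arbitrary decay in $x$-frequency beyond $M^{1/2}$. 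We would then conclude via a dyadic chain argument, passing through intermediate annuli $N\to N/2\to\cdots\to M$ with each step contributing a factor $\lesssim$ a power of the ratio. Summing the chain with the rapid off-diagonal decay then yields \eqref{eq:ModifiedEnergyEstimate} uniformly in $N$ and $t\in[0,1]$.
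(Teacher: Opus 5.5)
The step that fails is the claimed gain per Duhamel iteration.

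The commutator $[b_N(x,D),\chi_k(D)]$ is of order $0$, but it is not small. Up to a constant, its principal symbol is $\nabla_\xi\chi_k(\xi)\cdot\nabla_x b_N(x,\xi)$. On the transition region of $\chi_k$ one has $|\nabla_\xi\chi_k|\eqsim M^{-1}$, while $|\nabla_x b_N(x,\xi)|$ is bounded by, and in general comparable to, $|\xi|\eqsim M$.

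In $\A^{2}S^{1}_{1,1/2}$ the first two $x$-derivatives of $b_N$ lose nothing, but they gain nothing either. The band-limitation at scale $\min(M,N)^{1/2}$ only enters from the third derivative on. Your own heuristic factor $\max(M,N)^{-1}\min(M,N)^{1/2}M$ even equals $N^{1/2}$ when $M\gg N$.

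So each commutator is an $O(1)$ operator on $L^{2}(\Rd)$. With nested cutoffs $\chi_0,\ldots,\chi_m$ all living at scale $M$, the iteration only yields $\|\chi_0(D)u(t)\|_{L^2}\lesssim \tfrac{(Ct)^m}{m!}\sup_{s}\|\chi_m(D)u(s)\|_{L^2}$ plus tails, with no decay in $M/N$.

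The observation that one application of $b_N(x,D)$ moves frequencies by only $O(\min(M,N)^{1/2})$ does not help by itself. $S_N(t)$ is not a finite composition of such operators, and classically $|\xi(t)|$ can change by a factor $e^{O(t)}$.

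The dyadic chain in your last paragraph does not close as stated either. Adjacent annuli have ratio $2$, and each crossing again costs an $O(1)$ commutator, so there is no "power of the ratio" per step. In such a chain the decay would have to come from the factor $1/j!$ produced by $j\eqsim\log_{2}\max(\tfrac{M}{N},\tfrac{N}{M})$ iterated time integrals, with careful control of the off-diagonal tails. That mechanism is not in your proposal.

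The missing ingredient is available much more cheaply. $S_N(t)$ is bounded on $H^{s}(\Rd)$ for every $s\in\R$, uniformly in $N\geq N_0$ and $t\in[0,1]$. This is Lemma \ref{lem:fixedtimesmooth} with $p=2$, since $\HT^{s,2}_{FIO}(\Rd)=H^{s}(\Rd)$. Alternatively, run your own Gronwall argument on $\lb D\rb^{s}u$, using that $[\lb D\rb^{s},b_N(x,D)]\lb D\rb^{-s}$ has symbol in $S^{0}_{1,1/2}$ thanks to the $\A^{2}$ structure.

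With this, for $M\geq N$ one gets $\|\psi_M(D)S_N(t)f\|_{L^2}\lesssim M^{-m}\|S_N(t)f\|_{H^{m}}\lesssim M^{-m}\|f\|_{H^{m}}\lesssim (N/M)^{m}\|f\|_{L^2}$. For $M\leq N$, the same argument with $H^{-m}$ gives $(M/N)^{m}$.

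The paper argues differently. It conjugates $\psi_M(D)S_N(t)$ with the wave packet transform $W$ and applies Schur's test to the phase-space kernel bounds of Proposition \ref{prop:kernelbounds}. The factor $\Upsilon(\lb\xi\rb/\lb\eta\rb)^{m}$ in those bounds encodes the decay in the frequency ratio directly.
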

\begin{proof}
We employ the wave packet transform $W$ from \eqref{eq:defW}, which is an isometry from $L^{2}(\Rd)$ into $L^{2}(\R^{2d})$. Due to the latter property, which implies that $W^{*}W$ is the identity, it suffices to show that 
\[
\|W\psi_{M}(D)S_{N}(t)W^{*}F\|_{L^{2}(\R^{2d})} \lesssim_{m} \big(\max\big(\tfrac{N}{M},\tfrac{M}{N}\big)\big)^{-m} \|F\|_{L^{2}(\R^{2d})}
\]
for all $F\in L^{2}(\R^{2d})$. This is in turn a consequence of the kernel bounds from Proposition \ref{prop:kernelbounds}, combined with Schur's inequality. Here one also uses that 
\[
W\psi_{M}(D)S_{N}(t)W^{*}F(x,\xi)=\phi_{\xi}(D)\psi_{M}(D)S_{N}(t)W^{*}F(x)=0
\]
unless $|\xi|\eqsim M$, that $\psi_{M}(D)$ and $W$ commute, and that $\psi_{M}(D)$ also satisfies  suitable kernel bounds.
\end{proof}

When considering the smoothed and rescaled symbol $\tilde{a}_N$, we note that frequencies $|\xi| \eqsim 1 $ do not essentially change their size under the flow governed by \eqref{eq:flow}.

\begin{lemma}\label{lem:conservationclassical}
There exists an $M_{0}\geq1$ such that the following holds. 
Let $(x_{0},\xi_0)\in T^{*}\Rd$ be such that $\xi_{0}\in \supp(\tilde{\psi}_1)$. Then the bicharacteristic curve $t\mapsto (x(t),\xi(t))$ associated with $\tilde{a}_N$, and satisfying $(x(0),\xi(0))=(x_0,\xi_0)$, 
has the property that $M_{0}^{-1}\leq |\xi(t)|\leq M_{0}$ for all $t\in\R$.
\end{lemma}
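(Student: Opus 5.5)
The plan is to use conservation of the Hamiltonian along the flow \eqref{eq:flow}, together with the explicit form \eqref{eq:defsymbolb} of $b_{N}$ and uniform ellipticity. Since $b_{N}$ is real-valued and smooth (it lies in $\A^{2}S^{1}_{1,1/2}$), so is $\tilde{a}_{N}$. Along any bicharacteristic,
\[
\tfrac{\ud}{\ud t}\,\tilde{a}_{N}(x(t),\xi(t))=\partial_{x}\tilde{a}_{N}\cdot\dot{x}+\partial_{\xi}\tilde{a}_{N}\cdot\dot{\xi}=0,
\]
so $E:=\tilde{a}_{N}(x_{0},\xi_{0})=\tilde{a}_{N}(x(t),\xi(t))$ on the maximal interval of existence. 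I will first show two-sided bounds on $|\xi(t)|$ on this interval, and then use them to see that the interval is all of $\R$.

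The first step is to compare $\tilde{a}_{N}$ with $|\xi|$ away from the origin. By \eqref{eq:defsymbolb}, for $|N\xi|\geq N_{0}/2$ we have
\[
\tilde{a}_{N}(x,\xi)=\Big(\sum_{i,j=1}^{d}(\psi(N^{-1/2}D)g_{ij})^{\sharp}(N^{-1}x,N\xi)\,\xi_{i}\xi_{j}\Big)^{1/2}.
\]
The smoothed coefficients $(\psi(N^{-1/2}D)g_{ij})^{\sharp}(y,\eta)$ are uniformly bounded and uniformly elliptic for $N\geq N_{0}$. This follows from the discussion preceding Lemma \ref{lem:Ldecomp}, together with the analogous estimate for the additional $\sharp$-smoothing, which differs by $O(M^{-r/2})$ at frequency $M\gtrsim N_{0}$. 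Hence there are constants $0<c_{1}\leq c_{2}$, independent of $N$, such that $c_{1}|\xi|\leq\tilde{a}_{N}(x,\xi)\leq c_{2}|\xi|$ whenever $|\xi|\geq N_{0}/(2N)$.

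In the complementary region $|\xi|<N_{0}/(2N)$, the symbol estimate $|b_{N}(y,\eta)|\lesssim\lb\eta\rb$ gives $|\tilde{a}_{N}(x,\xi)|\leq CN^{-1}\lb N\xi\rb\leq C'N_{0}/N$. Now take $\xi_{0}\in\supp(\tilde{\psi}_{1})$, so that $\frac{1}{2c}\leq|\xi_{0}|\leq 2c$. For $N$ large, $N|\xi_{0}|\geq N_{0}/2$, and therefore $E\in[c_{1}/(2c),\,2cc_{2}]$. Enlarging $N_{0}$ if necessary, we may assume $C'N_{0}/N<c_{1}/(2c)$. Then $\xi(t)$ can never lie in the region $|\xi|<N_{0}/(2N)$, because there $|\tilde{a}_{N}|<E$. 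Consequently, for every $t$ the comparability bounds apply, and they give
\[
\frac{c_{1}}{2cc_{2}}\leq\frac{E}{c_{2}}\leq|\xi(t)|\leq\frac{E}{c_{1}}\leq\frac{2cc_{2}}{c_{1}}.
\]
So $M_{0}:=\max\big(2cc_{2}/c_{1},\,2cc_{2}/c_{1}\big)$ works, with all constants independent of $N$ and of $(x_{0},\xi_{0})$.

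It remains to check that the maximal interval is $\R$. On the compact frequency shell just obtained, \eqref{eq:BoundsCoefficientsParadifferential} (equivalently, \eqref{eq:SizeRegularitypsharp} for the cut-off symbol $a_{N}$, which agrees with $\tilde{a}_{N}$ near the flow after adjusting $c$) shows that $\partial_{x}\tilde{a}_{N}$, $\partial_{\xi}\tilde{a}_{N}$ and their first derivatives are bounded. The vector field is therefore globally Lipschitz there, and since $\xi(t)$ stays in a compact set, $x(t)$ grows at most linearly, so no blow-up occurs. The main point requiring care is the uniform ellipticity of the $\sharp$-smoothed, rescaled coefficients with constants independent of $N$. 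Everything else follows directly from conservation of $\tilde{a}_{N}$.
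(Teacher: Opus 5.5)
Your proposal is correct and follows the same route as the paper, whose proof is a one-line appeal to conservation of the Hamiltonian, the symbol bounds and the uniform ellipticity of the coefficients; you have filled in the details it omits. One small slip: the condition $C'N_{0}/N<c_{1}/(2c)$ comes from taking $N$ large relative to the fixed frequency threshold $N_{0}$ (the paper's standing assumption that $N$ is large), not from enlarging $N_{0}$, which would push that inequality the wrong way.
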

\begin{proof}
    This is immediate from conservation of the Hamiltonian, the bounds on the symbol and the uniform ellipticity of the coefficients.
\end{proof}

In what follows, solely to ease notation, we will suppose that $c>1$ is sufficiently small that  
the flow in Lemma \ref{lem:conservationclassical} keeps frequencies $\xi_0 \in \text{supp}(\tilde{\psi}_1)$ in the set
\begin{equation*}
\{ \xi \in \R^d \mid \sum_{M=N/2C}^{2CN} \psi_{M}(N\xi)  = 1 \}.
\end{equation*}
For initial data frequency localized with $\tilde{\psi}_1(D)$, to analyze the propagation of singularities along the Hamiltonian flow associated with $\tilde{a}_N$, it suffices to instead consider the Hamiltonian flow associated by $a_N$, given that the phase space region $|\xi| \not\eqsim 1$ where the flows differ does not essentially contribute, by \eqref{eq:ModifiedEnergyEstimate}. This statement will be quantified in terms of the wave packet decomposition below.

\subsubsection{Isotropic wave packet decompositions}

We go over the wave packet construction in \cite{SchippaTataru2025} and explain how it extends to a different outset. For example, the position variable in \cite{SchippaTataru2025} was centered. Fortunately, this amounts to a harmless linear change of variables. A more meaningful difference with respect to \cite{SchippaTataru2025} is that we consider the evolution generated by a real-valued symbol in the standard quantization, as opposed to the Weyl quantization.

We construct isotropic wave packets using the modified FBI transform $T_N:L^2(\R^d) \to L^2(T^* \R^d)$, given by
\begin{equation*}
T_N f(x,\xi) := c_N \int_{\Rd} e^{i \xi (x-y)} e^{-\frac{\rho}{2}|x-y|^2} f(y) \ud y,
\end{equation*}
for $f \in L^2(\R^d)$ and $(x,\xi) \in \Tp$. Here $\rho:= N^{-1}$ and $c_N := N^{-\frac{d}{4}} 2^{-\frac{d}{2}} \pi^{-\frac{3d}{4}}$. Note that $T_{N}$ is an isometry. In terms of the coherent states given by
\begin{equation*}
\ph_{x,\xi}(y):= e^{-i \xi (x-y)} e^{-\frac{\rho}{2} |x-y|^2}
\end{equation*}
for $y\in\Rd$, one clearly has
\begin{equation*}
T_N f(x,\xi) = c_N \int_{\R^d} \overline{\ph_{x,\xi}(y)} f(y) \ud y. 
\end{equation*}
Moreover, 
\[
T_{N}^{*}F(y):=c_{N}\int_{\R^{2d}}\ph_{x,\xi}(y)F(x,\xi)\ud x\ud\xi
\]
for $F\in L^{2}(\R^{2d})$ and $y\in\Rd$, defines an adjoint of $T_{N}$. 

The base points of our wave packets will be placed on the lattice $\Lambda_N:= N^{\frac{1}{2}} \Z^d \times N^{-\frac{1}{2}} \Z^d$ in phase space. Consider a partition of unity $(\psi_{(x_0,\xi_0)})_{(x_0,\xi_0) \in \Lambda_N}\subseteq C^{\infty}(\R^{2d})$ associated with this 
lattice. 
That is, one has $\supp(\psi_{(x_{0},\xi_{0})})\subseteq B_{d}(x_{0},3N^{1/2})\times B_{d}(\xi_{0},3N^{-1/2})$ for all $(x_{0},\xi_{0})\in\Lambda_{N}$, and
$\sum_{(x_0,\xi_0) \in \Lambda_N} \psi_{(x_0,\xi_0)}(x,\xi)= 1$ 
for all $(x,\xi)\in\Tp$, and $\psi_{(x_{0},\xi_{0})}$ and their derivatives satisfy appropriate bounds, independent of $(x_{0},\xi_{0})$ and $N$.

We also recall the bi-Lipschitz property of the Hamiltonian flow, which in fact motivates the shape of the support of the $\psi_{(x_{0},\xi_{0})}$ above. 
Define a rescaled isotropic metric on phase space by
\begin{equation*}
    d_N((x,\xi),(y,\eta)):= N^{-\frac{1}{2}}|x-y| + N^{\frac{1}{2}} |\xi-\eta|,
\end{equation*}
for $(x,\xi),(y,\eta)\in T^{*}\Rd$. The Hamiltonian flow $(\chi_t)_{t \in \R}$ generated by $\tilde{a}_N$ satisfies the bi-Lipschitz bounds for $|t| \leq N$ for points $(x_{1},\xi_{1}),(x_2,\xi_2) \in T^* \R^d$ with
\begin{equation*}
    d_N(\chi_t(x_1,\xi_1),\chi_t(x_2,\xi_2)) \eqsim d_N((x_1,\xi_1),(x_2,\xi_2)).
\end{equation*}
This is proved in \cite[Lemma~2.2]{SchippaTataru2025}, relying on the $C^{1,1}$ regularity of the coefficients. Under the assumption \eqref{eq:AbsenceCaustics}, the implicit constants can be taken as uniform.

Finally, we recall the size and regularity assumptions on regions in phase space which are permissible for phase space concentration.
Let $\nu \in [N^{-\frac{1}{2}},1]$ be an aperture parameter, and 
let $u_0 \in L^2(\R^d)$ and $(x_*,\xi_{*}) \in \Sp$. We define smooth localizing functions: 
\begin{equation*}
    \theta_{x_*}(x):= \theta((\nu N)^{-1}(x-x_*)), \quad \psi_{\xi_*}(\xi):= \psi( \nu^{-1}( \hat{\xi} - \xi_*)) \tilde{\psi}_1(\xi)
\end{equation*} 
for $x\in\Rd$. We require that $\supp(\widehat{\theta}) \subseteq B_d(0,c)$, and $|\theta(x)| \lesssim_{m} (1+|x|)^{-m}$ for all $x\in\Rd$. Choosing a spatial localization with compact Fourier support will be a minor simplification (see Proposition \ref{prop:reduction}). We refer to Subsection \ref{subsection:KernelBoundsPhaseSpace} for more details. Let $\theta_1 \in \mathcal{S}(\R)$ be a version in one dimension.

Then 
\[
u_{x_*,\xi_*}(x):= \theta_{x_*}(x) \psi_{\xi_*}(D) u_0
\]
is concentrated in phase space on the region
\begin{equation*}
    \mathcal{Y}_{x_*,\xi_*}:= \{(x,\xi) \in \R^{2d} \mid x \in B_d(x_*,\nu N), \, \tfrac{1}{2c}\leq |\xi| \leq 2c, \, |\hat{\xi}-\xi_*| \leq \nu \}
\end{equation*}
a statement which will be quantified in Theorem \ref{thm:WavePacketDecomposition} below. 
We thicken $\mathcal{Y}_{x_*,\xi_*}$ 
as follows:
\begin{equation*}
\overline{\mathcal{Y}}_{x_*,\xi_*} = \{(x,\xi) \in \R^{2d} \mid |x| < \ka N \nu, \tfrac{1}{\ka2c} \leq |\xi| \leq \ka 2c, \, |\hat{\xi}-\xi_{*}| < \ka\nu \},
\end{equation*}
using a large constant $\ka\geq 1$.

We can now construct the wave packet decomposition, as in \cite{SchippaTataru2025}. 
Recall that $(\tilde{S}_{N}(t))_{t\in\R}$ solves \eqref{eq:halfwaveatilde}, and write
\begin{equation}\label{eq:packetdecomp}
\begin{split}
&\tilde{S}_N(t) T_N^* T_N u_{x_*,\xi_*} = \tilde{S}_N(t) T_N^*\Big( \sum_{(x_0,\xi_0)\in\Lambda_{N}} \psi_{x_0,\xi_0} (T_N u_{x_*,\xi_*})\Big) \\
&= \sum_{(x_0,\xi_0) \in \Lambda_N} c_N \int_{\Tp} (\tilde{S}_N(t) \ph_{x,\xi}) \psi_{(x_0,\xi_0)}(x,\xi) T_N u_{x_*,\xi_*}(x,\xi) \ud x \ud\xi
\end{split}
\end{equation} 
for $t\in\R$. 
Let
\begin{equation*}
    u_{(x_0,\xi_0)}(t) = c_N \int_{\Tp} (\tilde{S}_N(t) \ph_{x,\xi}) \psi_{(x_0,\xi_0)}(x,\xi) T_N u_{x_*,\xi_*}(x,\xi) \ud x \ud\xi.
\end{equation*}
For $(x_{0},\xi_{0})\in \Lambda_{N}\cap \overline{\mathcal{Y}}_{x_*,\xi_*}$ and $y\in\R^{d}$, set
\begin{align*}
\alpha_{(x_{0},\xi_{0})}&:= \| u_{(x_0,\xi_0)}(0) \|_{L^{2}(\Rd)} ,\\
\phi_{(x_{0},\xi_{0})}(t,y)&:= \| u_{(x_0,\xi_0)}(0) \|_{L^{2}(\Rd)}^{-1}u_{(x_0,\xi_0)}(t,y) .
\end{align*}
Moreover,
\[
g(t,y):=\tilde{S}_N(t) T_N^* T_N u_{x_*,\xi_*}-\sum_{(x_{0},\xi_{0})\in \Lambda_{N}\cap \overline{\mathcal{Y}}_{x_*,\xi_*}}\alpha_{(x_{0},\xi_{0})}\phi_{(x_{0},\xi_{0})}(t,y)
\]
collects the remaining terms in the sum in \eqref{eq:packetdecomp}.

This decomposition has the following properties.

\begin{theorem}
\label{thm:WavePacketDecomposition}
Let 
$\nu \in [N^{-\frac{1}{2}},1]$, $u_0 \in L^2(\R^d)$ and $(x_*,\xi_{*}) \in \Sp$ be given.  
Then there exist sequences $(\alpha_{(x_{0},\xi_{0})})_{(x_{0},\xi_{0})\in \Lambda_N \cap \overline{\mathcal{Y}}_{x_*,\xi_{*}}}\subseteq \C$ and \\ $(\phi_{(x_{0},\xi_{0})})_{(x_{0},\xi_{0})\in\Lambda_N \cap \overline{\mathcal{Y}}_{x_*,\xi_{*}}}\subseteq C(\R;L^{2}(\Rd))$, as well as a $g\in C(\R;L^{p}(\Rd))$, such that the following holds. For all $t\in[-N,N]$ and $x\in\Rd$, one has
\begin{equation*}
\tilde{S}_{N}(t)u_{x_*,\xi_*}(x) = \sum_{(x_0,\xi_0) \in \Lambda_N \cap \overline{\mathcal{Y}}_{\alpha,N}} \alpha_{(x_0,\xi_0)} \phi_{(x_0,\xi_0)}(t,x) + g(t,x).
\end{equation*}
There exists a $C\geq0$, independent of 
$\nu$, $u_{0}$ and $(x_{*},\xi_{*})$, such that  
\[
\sum_{(x_0,\xi_0) \in \Lambda_N \cap \overline{\mathcal{Y}}_{x_*,\xi_{*}}} |\alpha_{x_0,\xi_0}|^2 \leq C \| u_0 \|_{L^2(\Rd)}^2.
\]
For each $m\geq0$ there exists a $C_{m}\geq0$, independent of 
$\nu$, $u_{0}$ and $(x_{*},\xi_{*})$, such that  the following bounds hold:
\begin{align*}
    |\phi_{(x_0,\xi_0)}(t,y)| &\leq\!C_{m} N^{-\frac{d}{4}} (1+N^{-\frac{1}{2}} |x_0(t) - y|)^{-m},\\
    |\F\phi_{(x_0,\xi_0)}(t,\cdot)(\eta)| &\leq\!C_{m} N^{\frac{d}{4}} (1+N^{\frac{1}{2}} |\xi_0(t) - \eta|)^{-m},\\
\big|\mathcal{F}\big(\theta_1(N^{\frac{1}{2}}(\cdot-t_0)) \phi_{(x_0,\xi_0)}(\cdot,y)\big)(\tau)\big| &\leq\!C_{m} N^{\frac{1}{2}-\frac{d}{4}} (1+N^{\frac{1}{2}}|\tau  + \tilde{a}_N(x_{0}(t_0),\xi_{0}(t_0))|)^{-m},\\
\| g(t,\cdot) \|_{L^p(\Rd)} &\leq\!C_m 
(N^{\frac{1}{2}} \nu)^{-m} \nu^{(d-1)(\frac{1}{2}-\frac{1}{p})} \| u_0 \|_{L^2(\Rd)},
\end{align*}
for all $t,t_{0}\in[-N,N]$, $y,\eta\in\Rd$, $\tau\in\R$ and $(x_{0},\xi_{0})\in \Lambda_N \cap \overline{\mathcal{Y}}_{x_*,\xi_{*}}$. Here $t\mapsto (x_{0}(t),\xi_{0}(t))$ is the bicharacteristic curve associated with $\tilde{a}_{N}$ and satisfying $(x_{0}(0),\xi_{0}(0))=(x_{0},\xi_{0})$. 
\end{theorem}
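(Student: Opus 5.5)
The decomposition is already defined explicitly before the statement: $\alpha_{(x_0,\xi_0)}$, $\phi_{(x_0,\xi_0)}$ and $g$ come from \eqref{eq:packetdecomp}. So the identity $\tilde{S}_N(t)u_{x_*,\xi_*}=\sum\alpha\phi+g$ holds by construction. It uses $T_N^*T_N=I$, since $T_N$ is an isometry, together with the partition of unity on $\Lambda_N$. What remains is to prove the four bounds. I will follow the template of \cite{SchippaTataru2025}, after two modifications. First, the position variable is translated, which is a harmless linear change of variables. Second, and more substantively, the Hamiltonian $\tilde{a}_N$ is replaced by the truncated symbol $a_N$, which satisfies \eqref{eq:SizeRegularitypsharp}. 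The discrepancy is controlled by Proposition \ref{prop:ModifiedEnergy}, rescaled via \eqref{eq:rescalingkey}, together with Lemma \ref{lem:conservationclassical}.

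\textbf{Square summability.} The packets are almost orthogonal, since the $\psi_{(x_0,\xi_0)}$ have finite overlap. This gives
\[
|\alpha_{(x_0,\xi_0)}|\lesssim \|\psi_{(x_0,\xi_0)}T_Nu_{x_*,\xi_*}\|_{L^2(\R^{2d})},
\]
after applying the $L^2$ bound for $\tilde S_N(0)=I$ and for $T_N^*$. Summing over the finitely overlapping supports yields $\sum|\alpha|^2\lesssim\|T_Nu_{x_*,\xi_*}\|_{L^2}^2=\|u_{x_*,\xi_*}\|_{L^2}^2$. This is at most $C\|u_0\|_{L^2}^2$, because $\theta_{x_*}$ is bounded and $\psi_{\xi_*}(D)$ is $L^2$-bounded.

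\textbf{Packet bounds.} Each $u_{(x_0,\xi_0)}(t)$ is a superposition of the evolved coherent states $\tilde S_N(t)\ph_{x,\xi}$, over $(x,\xi)$ in a $d_N$-ball of radius $\eqsim1$ around $(x_0,\xi_0)$, with $|\xi|\eqsim1$. I will use the kernel bounds of Proposition \ref{prop:kernelbounds}, which rest on the FBI-side analysis of \cite{SchippaTataru2025}. Combined with the bi-Lipschitz property of $\chi_t$ for $|t|\le N$, these show that $\tilde S_N(t)\ph_{x,\xi}$ is concentrated, with rapid decay, in the $d_N$-ball of radius $\eqsim1$ around $\chi_t(x,\xi)$. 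Outside $|\xi|\eqsim1$ the flows of $\tilde a_N$ and $a_N$ differ, but there \eqref{eq:ModifiedEnergyEstimate} gives an $O(N^{-m})$ contribution. This produces the spatial and Fourier decay bounds, normalized by $\|u_{(x_0,\xi_0)}(0)\|^{-1}$, with amplitudes $N^{-d/4}$ and $N^{d/4}$. The space-time Fourier bound follows from the fact that $(D_t+\tilde a_N(x,D))\phi=0$. On a time window of length $N^{1/2}$ I will freeze the symbol at $(x_0(t_0),\xi_0(t_0))$. The variation of $\tilde a_N$ across the packet is then $O(N^{-1/2})$, by \eqref{eq:SizeRegularitypsharp} and the localization, so repeated integration by parts in $t$ gives decay in $N^{1/2}|\tau+\tilde a_N(x_0(t_0),\xi_0(t_0))|$. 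The change from Weyl quantization to the standard one only alters the symbol by a lower-order term. That term is $O(N^{-1})$ after rescaling and is absorbed into the error.

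\textbf{The remainder $g$.} This is the step I expect to be the main obstacle. The term $g$ collects the lattice points outside $\overline{\mathcal Y}_{x_*,\xi_*}$. For these points, $d_N((x_0,\xi_0),\mathcal Y_{x_*,\xi_*})\gtrsim\kappa N^{1/2}\nu$, measured either in position at scale $\nu N$ or in frequency at scale $\nu$. I will show that $T_Nu_{x_*,\xi_*}$ decays like $(1+d_N/(N^{1/2}\nu))^{-m}$ there. The spatial tails come from the Schwartz decay of $\theta$. In frequency, $\widehat\theta$ has support in $B_d(0,c)$, so multiplication by $\theta_{x_*}$ moves frequencies by at most $c/(\nu N)\ll\nu$; this is why compact Fourier support was chosen. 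Combining this decay with the evolved coherent-state bounds gives an $L^2$ bound of size $(N^{1/2}\nu)^{-m}\|u_0\|_{L^2}$, uniformly in $|t|\le N$. For the $L^p$ bound with the factor $\nu^{(d-1)(\frac12-\frac1p)}$, I will interpolate this $L^2$ estimate with an $L^\infty$ estimate obtained from the pointwise packet bounds. The decay is summed over the lattice, and the $L^\infty$ loss is counted against the angular width $\nu$ of the frequency support, giving $\nu^{(d-1)/2}$ up to the decaying factor. The delicate point is keeping the tails uniform over $|t|\le N$. For this I will rely on the uniform bi-Lipschitz constants, which are guaranteed by \eqref{eq:AbsenceCaustics}.
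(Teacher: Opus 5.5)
You handle the identity, the bound on $\sum|\alpha|^2$ (via $\|T_N^*\|\le 1$ and $\tilde S_N(0)=I$) and the remainder $g$ essentially as the paper does. For $g$, the key input is phase-space decay of $T_Nu_{x_*,\xi_*}$ off $\overline{\mathcal{Y}}_{x_*,\xi_*}$, which the paper gets from support separation and FBI conjugation formulas.

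\textbf{The gap is in the packet bounds.} You claim that Proposition \ref{prop:kernelbounds}, together with the bi-Lipschitz property of $\chi_t$, shows that $\tilde S_N(t)\ph_{x,\xi}$ is concentrated in a $d_N$-ball of radius $\eqsim 1$ around $\chi_t(x,\xi)$. It does not.
\begin{itemize}
\item Those are the Geba--Tataru bounds of \cite{GebaTataru2007}, not the analysis of \cite{SchippaTataru2025}. They concern $WS_tW^*$, whose packets $\phi_\xi$ have radial frequency width $\lb\xi\rb$, and they measure decay by $d_S$ on $\Sp$ at the parabolic scale.
\item The only information they carry about $|\xi|$ is the dyadic factor $\Upsilon(\lb\xi\rb/\lb\eta\rb)^m$. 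After rescaling they can localize position at scale $N^{1/2}$ and the direction $\hat\xi$ at scale $N^{-1/2}$, but $|\xi|$ only to $|\xi|\eqsim 1$. Proposition \ref{prop:ModifiedEnergy} likewise gives only dyadic preservation.
\item The second bound, $|\F\phi_{(x_0,\xi_0)}(t,\cdot)(\eta)|\lesssim N^{d/4}(1+N^{1/2}|\xi_0(t)-\eta|)^{-m}$, needs $|\eta|$ within $N^{-1/2}$ of $|\xi_0(t)|$. It therefore cannot come from these bounds.
\item Your symbol-freezing argument for the third bound presupposes exactly this localization. Since $\tilde a_N$ is essentially $1$-homogeneous, $\hat\xi\cdot\partial_\xi\tilde a_N\eqsim 1$. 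Dyadic radial control thus gives no control of $\tau+\tilde a_N(x_0(t_0),\xi_0(t_0))$ at scale $N^{-1/2}$.
\end{itemize}

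What is missing is a mechanism that turns the classical bi-Lipschitz property in $d_N$ into an operator-level isotropic localization. The paper gets this from weighted $L^2$ estimates for the Green's function:
\begin{itemize}
\item Recentre along the bicharacteristic, so that $G_2$ solves $(D_t+a_2(y,t,D_y))G_2=0$, where $a_2$ is the second-order Taylor remainder of the symbol at $(x_0(t),\xi_0(t))$.
\item Propagate $\|(N^{1/2}\partial_t)^{\gamma}(N^{-1/2}y)^{\alpha}(N^{1/2}\partial_y)^{\beta}v(t)\|_{L^2}$ by induction and Gr\"onwall over $|t|\le N$. This uses the commutator identities \eqref{eq:EnergyCommutatorEstimate} and the bounds \eqref{eq:SizeRegularitypsharp} and \eqref{eq:AbsenceCaustics}.
\item Use that, for the standard quantization, $a_2-a_2^*=O(\epsilon_0N^{-1})$, which is harmless over times of order $N$.
\end{itemize}
The weights in $y$ and $\partial_y$ give localization in position and in all frequency directions, including the radial one. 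The $\partial_t$-weights give the time–frequency bound. Without this, or a genuine substitute such as an argument based on conservation of $\tilde a_N(x,D)$ under $\tilde S_N(t)$ combined with phase-space localization, your derivation of the second and third bounds does not go through.
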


\begin{proof}
The second sum comprises the remainder term $g$, whose bounds we find by conjugating the pseudo-differential operator $\psi_{x_*,\xi_*}(x,D) = \theta_{x_*}(x) \psi_{\xi_{*}}(D)$ with the FBI transform to phase space. The rapid decay is obtained by 
\begin{equation*}
\begin{split}
    &\quad (\bigcup_{(x_0,\xi_0) \in \Lambda_N \cap \overline{\mathcal{Y}}_{x_*,\xi_{*}}^c} \text{supp}(\psi_{(x_0,\xi_0)}) + B(0,N^{\frac{1}{2}+\delta}) \times B(0,N^{-\frac{1}{2}+\delta})) \\
    &\cap \text{supp}(\theta_{x_*}(x) \psi_{\xi_{*}}(\xi))= \emptyset,
    \end{split}
\end{equation*}
and invoking conjugation formulae (cf. \cite[Theorem~5]{Tataru2001}) for pseudo-differential operators with respect to the FBI transform.

We turn to the first term, which comprises the wave packets. The regularity properties are obtained from weighted $L^2$-estimates for the Green's function:
\begin{equation*}
G(t,x,\xi,y) = c_N^2 e^{-i \xi^t (y-x^t)} e^{-i \psi(x,t,\xi)} (\tilde{S}_N(t) \phi_{x,\xi})(y).
\end{equation*}
These estimates depend on:
\begin{enumerate}
\item the regularity properties of the Hamiltonian flow generated by $\tilde{a}_N$, which are like in \cite{SchippaTataru2025}, upon considering the phase space region $|\xi| \eqsim 1$, to which the relevant bicharacteristics are confined,
\item the size and regularity estimates of the symbol $\tilde{a}_N$ \cite[Eq.~(4.44)]{SchippaTataru2025}.
\end{enumerate}
With these at hand, the proof of the weighted $L^2$-estimates of the Green function are in reach; cf. \cite[Eq.~(4.46)]{SchippaTataru2025}:
\[
\begin{aligned}
    &\| (N^{-\frac{1}{2}} (x^t - y))^{\gamma} \partial_t^{\sigma} \partial_x^{\alpha} \partial_{\xi}^{\beta} \partial_y^{\nu} G(t,s,x,\xi,y) \|_{L^2_y} \\
    &\leq C_{\nu,\alpha,\beta,\gamma} N^{- \frac{|\alpha|-|\beta|+|\nu|+|\sigma|}{2}} C_N^2 \| \phi_{x,\xi} \|_2
\end{aligned}
\]
for $|t-s| \leq N$. By the regularity of the Hamiltonian flow, it suffices to consider
\begin{equation*}
    G_1(t,x,\xi,y) = C_N^2 e^{-i \xi_0^t \cdot (y-x^t_0)} e^{-i \psi(x_0,t,\xi_0)} (S(t,0) e^{i \xi_0 (x-x_0)} \phi_{x,\xi})(y).
\end{equation*}
$G_1$ is translated to the origin by
\begin{equation*}
    G_2(t,x,\xi,y) = G_1(t,x_0+x,\xi_0+\xi,x_0^t+y).
\end{equation*}
We have the following (cf. \cite[Lemma~4.13]{SchippaTataru2025}), which is straight-forward:
\begin{lemma}
    $G_2$ solves
    \begin{equation*}
        (D_t + a_2(y,t,D_y)) G_2 = 0, \quad G_2(0) = C_N^2 \phi_{x,\xi},
    \end{equation*}
    where
    \begin{equation*}
        a_2(y,t,\eta) = a(x_0^t +y,t,\xi_0^t + \eta) - a(x_0^t,t,\xi_0^t) - y a_x(x_0^t,t,\xi_0^t) - \eta a_{\xi}(x_0^t,t,\xi_0^t).
    \end{equation*}
\end{lemma}
Indeed, centering the Green function in phase space along the bicharacteristic, does not depend on the quantization. So, we can turn to energy estimates for $L^2$-normalized Schwartz functions
\begin{equation*}
    (D_t + a_2(y,t,D_y))v = 0, \quad v(0) = v_0.
\end{equation*}
The weighted energy estimates are obtained by Gr\o nwall arguments like in \cite{SchippaTataru2025}. 
We prove
\begin{align*}
&\| (N^{\frac{1}{2}} \partial_t )^{\gamma} (N^{-\frac{1}{2}} y)^{\alpha} (N^{\frac{1}{2}} \partial_y)^{\beta} v(t) \|_2\\
&\lesssim \sum_{|\alpha'|+|\beta'| \leq |\alpha| + |\beta| + 2 |\gamma|} \| (N^{-\frac{1}{2}} y)^{\alpha'} (N^{\frac{1}{2}} \partial_y)^{\beta'} v(0) \|_2.
\end{align*}
We start with the estimates for $\gamma = 0$, which are obtained by induction on $|\alpha| + |\beta| $. For $|\alpha| + |\beta| = 0$ we note that
\begin{equation*}
    \frac{d}{dt} \|v(t) \|_2^2 = \langle i (a_2(y,D_y) - a_2^*(y,D_y) ) v, v \rangle.
\end{equation*}
Recall the asymptotic expansion of the symbol of $a_2^*$ (cf. \cite[Theorem~3.13]{Sogge2017}):
\begin{equation*}
    a_2^*(y,t,\xi) = \bar{a}_2 + \sum_{|\alpha| \geq 1} \frac{1}{\alpha!} D_{\xi}^{\alpha} \partial_x^{\alpha} \bar{a}_2(y,t,\xi)
\end{equation*}
From the size and regularity of $a_2$ we find
\begin{equation*}
    \langle (a_2(y,t,D_y) - a_2^*(y,t,D_y))v, v \rangle \lesssim \frac{\epsilon_0}{N} \| v(t) \|_2^2.
\end{equation*}
Consequently, for $|t| \leq N$,
\begin{equation*}
    \| v(t) \|_2^2 \lesssim \| v(0) \|_2^2.
\end{equation*}
For $|\alpha| + |\beta| = 1$ we let $A(t) = \| N^{-\frac{1}{2}} yv(t) \|_2^2 + \| N^{\frac{1}{2}} \partial_y v \|_2^2 + \| v(t) \|_2^2$ and control $A(t)$ by Gr\o nwall's inequality. Precisely, we shall prove
\begin{align}
\label{eq:EnergyEstimateA} \| (D_t + a_2(y,D_y)) (N^{-\frac{1}{2}} y v) \|_2 &\lesssim N^{-1} A(t),  \\
\label{eq:EnergyEstimateB} \| (D_t + a_2(y,D_y)) (N^{\frac{1}{2}} y v) \|_2 &\lesssim N^{-1} A(t).
\end{align}
With these at hand, it is straightforward
\begin{equation*}
    \begin{split}
    \frac{d}{dt} \| N^{-\frac{1}{2}} y v(t) \|_2^2 &= 2 \Re \langle N^{-\frac{1}{2}} y v(t) , \big( \frac{d}{dt} + i a_2^w(y,D)) (N^{-\frac{1}{2}} y v) \rangle \\
    &= 2 \Re ( \langle N^{-\frac{1}{2}} y v(t), \big( \frac{d}{dt} + i a_2(y,D)+ i b_2(y,D) \big) (N^{-\frac{1}{2}} y v) \rangle \big) \\
    &\lesssim C N^{-1} A(t) + N^{-1} \| N^{-\frac{1}{2}} y v(t) \|_2^2.
    \end{split}
\end{equation*}
The second bound follows from comparing standard and Weyl quantization like above. Similarly, the bound follows
\begin{equation*}
    \frac{d}{dt} \| N^{-\frac{1}{2}} \partial_y v(t) \|_2^2 \lesssim N^{-1} A(t).
\end{equation*}
For the proof of \eqref{eq:EnergyEstimateA} and \eqref{eq:EnergyEstimateB} we compute
\begin{equation}
\label{eq:EnergyCommutatorEstimate}
\begin{split}
    (D_t + a_2(y,D_y)) (N^{-\frac{1}{2}} y v) &= - i \partial_{\eta} a_2(y,D_y) N^{-\frac{1}{2}} v, \\
    (D_t + a_2(y,D_y)) (N^{-\frac{1}{2}} \partial_y v) &= i N^{\frac{1}{2}} \partial_y a_2(y,D_y) v.
    \end{split}
\end{equation}
The symbols on the right-hand side are favorably estimated by the symbol $a_2$ being quadratic in $(y,\eta)$. The estimates in \cite{SchippaTataru2025} are already obtained in standard quantization. This concludes the case $|\alpha| + |\beta| = 1$. 

For the induction step, we modify the argument in \cite{SchippaTataru2025} detailed in Weyl quantization like before. The key tools are commutator estimates like in \eqref{eq:EnergyCommutatorEstimate} and comparing Weyl and standard quantization. The terms differening from the ones in Weyl quantization are of lower order and can readily be absorbed into the Gr\o nwall argument.
We finally remark that, although the size and regularity estimates for $\tilde{a}_N$ are only valid for $|\xi| \eqsim 1$, this suffices by the essential preservation of frequencies - which follows again from Proposition \ref{prop:ModifiedEnergy}.
\end{proof}

\subsubsection{Comparing the homogeneous and asymptotically homogeneous flow}
\label{subsection:Comparison}
Next, we will argue that the evolution $(\tilde{S}_{N}(t))_{t\in\R}$ is indistinguishable on the level of wave packet decompositions from the one generated by the simplified symbol 
\begin{equation}\label{eq:aNh}
a_N^{h}(x,\xi) := \Big(\sum_{i,j=1}^{d}\psi(N^{-1/2}D)g_{ij}(N^{-1} x) \xi_i \xi_j\Big)^{1/2} \tilde{\psi}_{1}(\xi),
\end{equation}
for $(x,\xi)\in\R^{2d}$. Let $g'_{ij}:= \psi(N^{-1/2}D) g_{ij}(N^{-1} \cdot)$ for brevity and $g'_{ij, \leq M^{1/2}} = \psi(M^{-1/2} D) g'_{ij}$. This will allow us to use results 
which were proved in \cite{SchippaTataru2025} for $a_N^h$, by passing through wave packet decompositions. 

We start off by proving the following lemma, using Taylor expansion. We use the convention $\text{sgn}(x) = 1$ for $x\in(0,\infty)$ and $\text{sgn}(0) = 0$.

\begin{lemma}
\label{lem:DifferenceEstimates}
Let $\alpha,\beta\in \N_{0}^{d}$. Then there exists a $C_{\alpha,\beta}\geq0$ such that
\begin{equation}
\label{eq:FlowComparison}
|\partial_x^{\alpha} \partial_{\xi}^{\beta}(a_{N}^{h}-a_{N})(x,\xi)| \leq C_{\alpha,\beta} \epsilon_0^{\text{sgn}(|\alpha|)} N^{-1-\frac{|\alpha|}{2}}.
\end{equation}
for all 
$(x,\xi)\in\Rd$ with $1/(2C) \leq |\xi|\leq 2C$. 
\end{lemma}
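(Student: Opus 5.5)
On the region $\tfrac{1}{2C}\leq|\xi|\leq 2C$ both symbols carry the same cutoff $\tilde{\psi}_{1}(\xi)$. By the standing choice of $c$, there $\sum_{M}\psi_{M}(N\xi)=1$ over the relevant dyadic range. Hence
\[
a_N(x,\xi)=\Big(\sum_{i,j}\sum_{M}g'_{ij,\leq M^{1/2}}(x)\psi_M(N\xi)\xi_i\xi_j\Big)^{1/2}\tilde\psi_1(\xi),\qquad a_N^h(x,\xi)=\Big(\sum_{i,j}\sum_M g'_{ij}(x)\psi_M(N\xi)\xi_i\xi_j\Big)^{1/2}\tilde\psi_1(\xi).
\]
Here the frequency cutoff $\psi(M^{-1/2}D)$ acts on the rescaled coefficient $g_{ij}(N^{-1}\cdot)$ at the rescaled level, so effectively it removes frequencies above $(M/N)^{1/2}\cdot N^{-1/2}\eqsim N^{-1/2}$ of $g'_{ij}$ (since $M\eqsim N$). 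I will therefore write $a_N^h-a_N=F(Q^h)-F(Q)$, where $F(s)=s^{1/2}$ (smooth on $[c_0,\infty)$ by uniform ellipticity), $Q^h=\sum g'_{ij}\xi_i\xi_j$, and $Q=\sum_M\sum_{ij}g'_{ij,\leq M^{1/2}}\psi_M(N\xi)\xi_i\xi_j$. Then
\[
Q^h-Q=\sum_M\sum_{ij}(1-\psi(M^{-1/2}D))g'_{ij}\,\psi_M(N\xi)\xi_i\xi_j,
\]
and the $\xi$-dependence of each term is smooth with bounds uniform in $N$, because $\psi_M(N\xi)$ with $M\eqsim N$ is a fixed-scale bump in $\xi$ and there are only $O(1)$ relevant $M$.

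The core estimate is a bound on the high-frequency tails $h_{ij,M}:=(1-\psi(M^{-1/2}D))g'_{ij}$. In unrescaled variables $g'_{ij}(x)=\tilde g_{ij}(N^{-1}x)$ with $\tilde g_{ij}=\psi(N^{-1/2}D)g_{ij}\in C^{1,1}$, and the cutoff removes frequencies of $\tilde g_{ij}$ below $\eqsim N^{1/2}$. The $C^{1,1}$ bound on $\tilde g$ gives $\|(1-\psi(\cdot))\tilde g\|_{L^\infty}\lesssim N^{-1}\|\partial^2\tilde g\|_{L^\infty}$, with $\|\partial^2\tilde g\|_{L^\infty}\leq\epsilon_0$ by \eqref{eq:AbsenceCaustics} as propagated. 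Because $\tilde g$ is also frequency localized to $|\zeta|\lesssim N^{1/2}$, derivatives of order $k$ of this tail (a Littlewood--Paley block at scale $N^{1/2}$) cost $N^{k/2}$. Rescaling by $N^{-1}$ in $x$ gives a factor $N^{-k}$, so
\[
\|\partial_x^\alpha h_{ij,M}\|_{L^\infty}\lesssim \epsilon_0 N^{-1}N^{|\alpha|/2}N^{-|\alpha|}=\epsilon_0N^{-1-|\alpha|/2}.
\]
For $\alpha=0$ I will simply use the $\epsilon_0$-free bound, or keep $\epsilon_0$; the statement only demands $\epsilon_0$ when $|\alpha|\geq1$. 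Keeping $\epsilon_0$ for $\alpha=0$ is harmless, since $\epsilon_0^{0}=1$ and $\epsilon_0\lesssim1$.

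To conclude, I will expand $F(Q^h)-F(Q)=(Q^h-Q)\int_0^1F'(Q+s(Q^h-Q))\,\ud s$. Then $\partial_x^\alpha\partial_\xi^\beta$ is distributed by the Leibniz and Faà di Bruno rules. Every term contains exactly one factor of the form $\partial_x^{\alpha_1}\partial_\xi^{\beta_1}(Q^h-Q)$, bounded by $\epsilon_0N^{-1-|\alpha_1|/2}$. The remaining factors are derivatives of $F'$ composed with $Q$ and $Q^h$, and these obey \eqref{eq:SizeRegularitypsharp}-type bounds, i.e.\ $x$-derivatives of order $k$ cost at most $N^{-k+(k-2)_+/2}\leq N^{-k/2}$. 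Multiplying, the total $x$-order $|\alpha|$ yields at most $N^{-1-|\alpha|/2}$.

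The main obstacle is the $\epsilon_0$ factor when $|\alpha|\geq1$ yet the derivatives fall entirely on the $F'$-factor. Here the difference factor $Q^h-Q$ itself gives $\epsilon_0N^{-1}$, and the derivatives on $F'(\cdots)$ give $N^{-|\alpha|}\lesssim N^{-|\alpha|/2}$ times first and second derivatives of $g$, which are also $\leq\epsilon_0$. So the bound closes with the factor $\epsilon_0$ in every case. I need to check carefully that the tail estimate uses only $\partial^2 g$, not $\|g\|_{C^r_*}$, so that the smallness is genuinely $\epsilon_0$.
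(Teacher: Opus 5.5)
Your proposal is correct and follows essentially the same route as the paper. You write the difference of square roots through the difference of the quadratic forms; your $\int_0^1 F'(Q+s(Q^h-Q))\,\ud s$ is exactly the paper's $1/(\sqrt{Q}+\sqrt{Q^h})$. You then bound the coefficient tails $(1-\psi(M^{-1/2}D))\psi(N^{-1/2}D)g_{ij}$ by $\epsilon_0 N^{-1}$ using only the $C^{1,1}$ smallness \eqref{eq:AbsenceCaustics}, and distribute derivatives by Leibniz. Your Bernstein argument at frequency scale $N^{1/2}$ handles all orders $|\alpha|$ at once, whereas the paper writes out only $|\alpha|\le 2$. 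The ellipticity of $Q$ that you use implicitly follows directly from $|Q^h-Q|\lesssim \epsilon_0 N^{-1}$.
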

\begin{proof}
We can write
\begin{equation*}
a_N^2 = \big( g'_{ij} \xi_i \xi_j + \sum_{M=N/2C}^{2CN} \psi_M(N \xi) \xi_i \xi_j \cdot r_M \big) \tilde{\psi}_c^2(\xi)
\end{equation*}
with $s_M = g'_{ij,\leq M^{1/2}} - g'_{ij}$. Consequently, under the assumptions on $\xi$,
\begin{equation*}
   a_N - (g'_{ij} \xi_i \xi_j)^{\frac{1}{2}} = \frac{\sum_{M=N/2C}^{2CN} \psi_M(N \xi) r_m \xi_i \xi_j}{(a_N^h + (g'_{ij} \xi_i \xi_j)^{1/2}) }.
\end{equation*}
By Young's inequality and \eqref{eq:AbsenceCaustics} we find for $M \sim N$
\begin{equation}
\label{eq:CoefficientEstimate}
\| g'_{ij,M^{1/2}} \|_{\infty} \lesssim \epsilon_0 M^{-1}.
\end{equation}
and similarly $\| s_M \|_{\infty} \lesssim \epsilon_0 M^{-1}$.

We let based on our assumption on $\xi$:
\begin{equation*}
\begin{split}
a_N(x,\xi) = a_N^{h}(x,\xi) + r_N(x,\xi)
\end{split}
\end{equation*}
with $|\partial_{\xi}^{\beta} r_N| \lesssim_{\beta} N^{-1}$ by  \eqref{eq:CoefficientEstimate} and the ellipticity of the denominator for normalized frequencies.

Next, we estimate that 
\begin{equation*}
|\partial_x \partial_{\xi}^{\beta} r_N(x,\xi)| \lesssim_{\beta} \epsilon_0 N^{-\frac{3}{2}}.
\end{equation*}
This is immediate from 
\begin{equation*}
|\partial_x g'_{ij,M^{\frac{1}{2}}}| \lesssim N^{-1} | (\partial_x g_{ij,M^{\frac{1}{2}}})(M^{-1} x)| \lesssim \epsilon_0 N^{-\frac{3}{2}}.
\end{equation*}
For $|\alpha| = 2$, \eqref{eq:FlowComparison} follows from \eqref{eq:BoundsCoefficientsParadifferential},  \eqref{eq:CoefficientEstimate}, and the ellipticity of the denominator for normalized frequencies.
\end{proof}

We turn to comparing the flows of $a_N^h$ and $a_{N}$. The chief result is that on the level of wave packet decompositions the flows are indistinguishable. We note that, upon assuming $\frac{1}{2c} \leq |\xi| \leq 2c$, we find that the frequencies under the Hamiltonian flow (see below) satisfy $\frac{1}{2C} \leq |\xi^t| \leq 2C$.

\begin{lemma}
\label{lem:Comparison}
The flows generated by $a_N^h$ and $a_N$ are indistinguishable for normalized frequencies in the following sense.
Let $t\mapsto (x_{i}(t),\xi_i(t))$ be bicharacteristics governed by $a_N^h$ or $a_N$ with $(x_1^0,\xi_1^0) = (x_2^0,\xi_2^0)$ and
with $\xi_i(t)$ satisfying
\begin{equation*}
\frac{1}{2C} \leq |\xi_i(t)| \leq 2C \text{ for } i \in \{ 1, 2 \}.
\end{equation*}
Then it holds for $0 \leq |t| \leq N$:
\begin{equation}
\label{eq:SmallFlowDifference}
|x_1(t) - x_2(t) | \leq 5  N^{\frac{1}{2}}, \quad |\xi_1(t) - \xi_2(t)| \leq 10 \epsilon_0 N^{-\frac{1}{2}}.
\end{equation}
\end{lemma}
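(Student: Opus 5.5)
The plan is a Gr\"onwall argument for the difference of the two Hamiltonian systems, using Lemma \ref{lem:DifferenceEstimates} to control the difference of the vector fields and the $C^{1,1}$ bounds \eqref{eq:SizeRegularitypsharp} (with smallness \eqref{eq:AbsenceCaustics}) to control the Lipschitz constant of one of them. Let $(x_1,\xi_1)$ be the $a_N^h$-bicharacteristic and $(x_2,\xi_2)$ the $a_N$-bicharacteristic. Set $X(t):=x_1(t)-x_2(t)$ and $\Xi(t):=\xi_1(t)-\xi_2(t)$. We work in the rescaled units
\[
E(t):=N^{-1/2}|X(t)|+N^{1/2}|\Xi(t)|,
\]
matching the metric $d_N$. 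We may assume $0\le t\le N$. Both curves stay in the region $\frac{1}{2C}\le|\xi|\le 2C$, so all symbol estimates apply along them.

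\textbf{Differential inequalities.} Write
\[
\dot X=\partial_\xi a_N^h(x_1,\xi_1)-\partial_\xi a_N^h(x_2,\xi_2)+\partial_\xi(a_N^h-a_N)(x_2,\xi_2).
\]
By Lemma \ref{lem:DifferenceEstimates} with $\alpha=0$, the last term is $O(N^{-1})$. For the first difference, the mean value theorem and the bounds $|\partial_\xi^2a_N^h|\lesssim1$ and $|\partial_x\partial_\xi a_N^h|\lesssim\epsilon_0N^{-1}$ (from \eqref{eq:BoundsCoefficientsParadifferential} together with \eqref{eq:AbsenceCaustics}, after rescaling) give
\[
|\dot X|\lesssim |\Xi|+\epsilon_0N^{-1}|X|+N^{-1}.
\]
Similarly,
\[
\dot\Xi=-\big(\partial_xa_N^h(x_1,\xi_1)-\partial_xa_N^h(x_2,\xi_2)\big)-\partial_x(a_N^h-a_N)(x_2,\xi_2).
\]
Lemma \ref{lem:DifferenceEstimates} with $|\alpha|=1$ bounds the last term by $\epsilon_0N^{-3/2}$. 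The Hessian bounds $|\partial_x^2a_N^h|\lesssim\epsilon_0N^{-2}$ and $|\partial_x\partial_\xi a_N^h|\lesssim\epsilon_0N^{-1}$ then give
\[
|\dot\Xi|\lesssim\epsilon_0N^{-2}|X|+\epsilon_0N^{-1}|\Xi|+\epsilon_0N^{-3/2}.
\]
Multiplying by the weights, we get
\[
N^{-1/2}|\dot X|\lesssim N^{-1}\big(N^{1/2}|\Xi|\big)+\epsilon_0N^{-1}E+N^{-3/2},
\qquad
N^{1/2}|\dot\Xi|\lesssim\epsilon_0N^{-1}E+\epsilon_0N^{-1}.
\]

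\textbf{Bootstrap.} The $\Xi$-coupling term in the $X$-equation carries constant $1$, not $\epsilon_0$, so I will run a two-step bootstrap rather than a naive Gr\"onwall on $E$, which would only give $E\lesssim1$ with non-explicit constants.

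1. Assume on $[0,T]$ the bounds $|X|\le 5N^{1/2}$ and $|\Xi|\le10\epsilon_0N^{-1/2}$.
2. Integrate the $\Xi$-inequality over $[0,T]$, $T\le N$. This gives $N^{1/2}|\Xi|\lesssim\epsilon_0(1+\sup E)$, which with small $\epsilon_0$ improves the second bound to, say, $5\epsilon_0N^{-1/2}$.
3. Integrate the $X$-inequality. The term $|\Xi|\le10\epsilon_0N^{-1/2}$ contributes at most $10\epsilon_0N^{1/2}$ over time $N$. The forcing $N^{-1}$ contributes at most $1$. The $\epsilon_0N^{-1}|X|$ term contributes at most $5\epsilon_0N^{1/2}$.
4. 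Hence $|X|\le 1+15\epsilon_0N^{1/2}<5N^{1/2}$ for $N$ large and $\epsilon_0$ small, which closes the bootstrap.

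Continuity then gives \eqref{eq:SmallFlowDifference} on all of $[0,N]$.

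\textbf{Main obstacle.} The delicate step is obtaining the correct $\epsilon_0$-small Lipschitz constants of $\partial a_N^h$ in the rescaled variables. In particular, $\partial_x^2a_N^h=O(\epsilon_0N^{-2})$ uses the $\dot C^{1,1}$ smallness of $g$ after the $N^{-1}$ rescaling. One must also check that the constants coming from Lemma \ref{lem:DifferenceEstimates}, which are not explicitly small at $\alpha=0$, enter only through the $N^{-1}$ forcing, whose total effect $O(1)$ is harmless against $5N^{1/2}$. If the explicit constants $5$ and $10$ turn out to be too tight, I would track the absolute constants from \eqref{eq:AbsenceCaustics} (the $\frac14\epsilon_0$ there) and, if necessary, shrink $\epsilon_0$ via Lemma \ref{lem:absencecaustics}.
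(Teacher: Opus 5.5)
Your proposal is essentially the paper's proof. Both run a continuity/bootstrap argument on $|t|\le N$ for the integrated difference of the two Hamiltonian systems. Lemma \ref{lem:DifferenceEstimates} supplies the forcing terms, $O(N^{-1})$ in the $x$-equation and $O(\epsilon_0N^{-3/2})$ in the $\xi$-equation. The mean value theorem, with $|\partial_\xi^2a_N^h|\lesssim1$, $|\partial_x\partial_\xi a_N^h|\lesssim\epsilon_0N^{-1}$ and $|\partial_x^2a_N^h|\lesssim\epsilon_0N^{-2}$, handles the rest. The paper only phrases the bootstrap differently: it assumes the weaker bounds $8N^{1/2}$ and $20\epsilon_0N^{-1/2}$ on a slightly longer interval and recovers the stated ones.

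One justification in your step 2 is wrong. Smallness of $\epsilon_0$ cannot improve $N^{1/2}|\xi_1-\xi_2|\le C\epsilon_0(1+\sup E)$ to $5\epsilon_0$, because the bound and the target are both linear in $\epsilon_0$. What decides the matter is the coefficient of $\epsilon_0$. With the implicit constants set to $1$, as the paper tacitly does, your step gives $(6+10\epsilon_0)\epsilon_0$. That beats $10\epsilon_0$ but not $5\epsilon_0$.

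In general the $\xi$-part closes only with a constant $K$ in place of $10$, chosen large in terms of the implicit constants. If you feed back the better bound $|x_1-x_2|\lesssim 1+\epsilon_0N^{1/2}$ that your step 3 actually yields, $K$ is essentially the constant of Lemma \ref{lem:DifferenceEstimates} for $|\alpha|=1$. The $x$-part is fine as you wrote it, since its target $5N^{1/2}$ is not proportional to $\epsilon_0$. So the explicit constants $5$ and $10$ are meaningful only up to this normalization, in your proof and in the paper's alike.
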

\begin{proof}
This will follow from bootstrapping \eqref{eq:SmallFlowDifference} for $0 \leq |t| \leq N$, noting that for small times $|t| \leq d$ \eqref{eq:SmallFlowDifference} follows from continuity.

To close the bootstrap, we consider the difference in integrated form:
\begin{equation*}
\left\{\!\begin{array}{cl}
x_1(t) - x_2(t) &= \int_0^t \partial_{\xi} a_N(x_1(s),\xi_1(s)) - \partial_{\xi} a_N^h(x_2(s),\xi_2(s)) \ud s, \\
\xi_1(t) - \xi_2(t) &= \int_0^t (-\partial_x a_N(x_1(s),\xi_1(s)) + \partial_x a_N^h(x_2(s),\xi_2(s))) \ud s.
\end{array} \right.
\end{equation*}
We compare derivatives of $a_N$ to derivatives of $a_N^h$, invoking Lemma \ref{lem:DifferenceEstimates}:
\begin{equation*}
\left\{\!\begin{array}{cl}
x_1(t) - x_2(t) &= \int_0^t \partial_{\xi} a_N^{h}(x_1(s),\xi_1(s)) - \partial_{\xi} a_N^h(x_2(s),\xi_2(s)) + s_N \ud s, \\
\xi_1(t) - \xi_2(t) &= \int_0^t (-\partial_x a_N^h(x_1(s),\xi_1(s)) + \partial_x a_N^h(x_2(s),\xi_2(s))) + t_N \ud s
\end{array} \right.
\end{equation*}
with bounds $|s_N| \lesssim N^{-1}$ and $|t_N| \lesssim \epsilon_0 N^{-\frac{3}{2}}$, for which reason the integrated error is acceptable. Now we invoke the mean-value theorem to write
\begin{equation*}
\left\{ \begin{array}{cl}
x_1(t) - x_2(t) &= \int_0^t ( \partial^2_{x \xi} a_N^h \cdot (x_1(s) - x_2(s)) + \partial^2_{\xi} a_N^h \cdot (\xi_1(s)-\xi_2(s)) + s_N) \ud s, \\
\xi_1(t) - \xi_2(t) &= \int_0^t (\partial^2_{x} a_N^h \cdot (x_1(s) - x_2(s)) + \partial^2_{x \xi} a_N^h \cdot (\xi_1(s) - \xi_2(s)) + t_N) \ud s.
\end{array} \right.
\end{equation*}
Suppose that the bounds \eqref{eq:SmallFlowDifference} hold for $|t| \leq C^* < N$. By continuity we can suppose that for $c > 0$ we have the bounds for $0 \leq |t| \leq C^* + c \leq N$ 
\begin{equation*}
|x_1(t) - x_2(t) | \leq 8 N^{\frac{1}{2}}, \quad |\xi_1(t) -\xi_2(t) | \leq 20 \epsilon_0 N^{-\frac{1}{2}}.
\end{equation*}
By the bootstrap assumption we find
\begin{equation*}
\begin{split}
|x_1(t) - x_2(t)| &\leq |t| ( 8 \epsilon_0 N^{-\frac{1}{2}} + 20 \epsilon_0 N^{-\frac{1}{2}} + D N^{-1} ) \leq 5 N^{\frac{1}{2}}, \\
|\xi_1(t) - \xi_2(t)| &\leq |t| (\epsilon_0 N^{-2} \cdot 8 N^{\frac{1}{2}} + \epsilon_0^2 N^{-1} \cdot 20 N^{-\frac{1}{2}} + \epsilon_0 N^{-\frac{3}{2}} ) \leq 10 \epsilon_0 N^{-\frac{1}{2}},
\end{split}
\end{equation*}
which finishes the proof.
\end{proof}

\subsection{Bilinear restriction estimates}\label{subsec:bilinear}

In this subsection we prove bilinear restriction estimates involving the Hardy spaces for Fourier integral operators, after first extending the $L^{2}$-based estimates for homogeneous symbols from \cite{SchippaTataru2025} to the present setting. Throughout, fix $N\in 2^{\N}$ large and $c_{0},c_{1},c_{2},c_{3}>0$. Let 
\begin{equation}\label{eq:thetazero}
\Theta_{0}:=\{\w\in S^{n-1}\mid |\w-e_{d}|\leq c_{0}\}
\end{equation}
be a small spherical cap. 
Let $\phi \in C^\infty_c(\R)$ be such that $\phi(\xi) =1$ for $|\xi| \leq 1$, and such that $\supp(\phi) \subseteq [-2,2]$. Set
\begin{equation}
\label{eq:AngularLocalization}
    \psi^{\nu}_{\omega}(\xi) = \phi(\nu^{-1}|\hat{\xi}-\omega|)
\end{equation}
for $\omega \in S^{d-1}$.

The following proposition will be used for the case $q=\infty$ in the main result of this subsection, and for the bilinear terms which involve no angular separation. 

\begin{proposition}\label{prop:bilinearinfty}
Let $1\leq p\leq q\leq \infty$. 
Then there exists a $C\geq0$ such that the following holds. Let $\Theta\subseteq S^{d-1}$ be a spherical cap of angular width $c_{2}N^{-1/2}$. Let $f\in\Sw(\Rd)$ be such that
\begin{equation}\label{eq:bilinearinftyassume}
\supp(\wh{f}\,)\subseteq\{\xi\in\Rd\mid N/c_{3}\leq |\xi|\leq c_{3}N,\hat{\xi}\in \Theta\}.
\end{equation}
Then
\[
\sup_{0\leq t\leq 1}\| S_{N}(t)f\|_{L^{q}(\R^d)} \leq C  N^{\frac{d+1}{2}(\frac{1}{p}-\frac{1}{q})+\frac{d-1}{2}(\frac{1}{p}-\frac{1}{2})}\|f\|_{\HT^{p}_{FIO}(\Rd)}.
\]
\end{proposition}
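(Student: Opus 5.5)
The plan is to reduce the estimate to the case $p=q$ plus a Bernstein-type inequality adapted to the Knapp-shaped frequency support. After that, the Knapp lemma (Lemma \ref{lem:Knapp}) converts $L^{p}$ norms into $\Hp$ norms.

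\textbf{Step 1: approximate preservation of frequency support.} For $0\leq t\leq 1$, the function $S_{N}(t)f$ is, up to a rapidly decaying error, still Fourier supported in a slightly enlarged set. That set is contained in $\{|\xi|\eqsim N,\ |\hat{\xi}-\w_0|\lesssim N^{-1/2}\}$, where $\w_0$ is the center of $\Theta$ transported along the Hamiltonian flow. I would make this precise via the kernel bounds of Appendix \ref{sec:kernel} (as in Proposition \ref{prop:ModifiedEnergy}), or via the wave packet picture of Theorem \ref{thm:WavePacketDecomposition}.

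Concretely, I would insert a smooth Fourier cutoff $\chi_{t}(D)$ adapted to a Knapp box $\{|\xi|\eqsim N, |\hat\xi-\w(t)|\leq CN^{-1/2}\}$ and write
\[
S_{N}(t)f=\chi_{t}(D)S_{N}(t)f+(1-\chi_{t}(D))S_{N}(t)f.
\]
The second term should be bounded by $N^{-m}\|f\|_{L^{p}}$ for any $m$, which is acceptable after Lemma \ref{lem:Knapp}. I expect this step to be the main obstacle. Since $S_{N}(t)$ does not preserve Fourier supports, one needs off-diagonal decay of the kernel of $(1-\chi_{t}(D))S_{N}(t)\tilde\chi(D)$ in the angular variable at scale $N^{-1/2}$, uniformly in $0\leq t\leq1$. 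I would first try to derive this from the kernel bounds (phase-space localization along bicharacteristics, which move angles only by $O(1)$ and are bi-Lipschitz in the metric $d_N$), combined with Schur's test. The $L^{p}$ version then follows because these kernels have integrable, rapidly decaying envelopes.

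\textbf{Step 2: Bernstein for the main term.} The Knapp box has dimensions $N\times (N^{1/2})^{d-1}$, so its volume is $N^{\frac{d+1}{2}}$. Bernstein's inequality for such a box gives
\[
\|\chi_t(D)g\|_{L^{q}}\lesssim N^{\frac{d+1}{2}(\frac1p-\frac1q)}\|g\|_{L^{p}},
\]
because its kernel is an $L^{1}$-normalized bump, so Young's inequality applies.

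\textbf{Step 3: fixed-time $L^p$ bound.} It remains to show $\|S_{N}(t)f\|_{L^{p}}\lesssim\|f\|_{L^{p}}$ for such Knapp-localized $f$. The cleanest route is through $\Hp$. By Lemma \ref{lem:fixedtimesmooth}, $S_{N}(t)$ is bounded on $\Hp$ uniformly in $N$ and $t\in[0,1]$. Applying Lemma \ref{lem:Knapp} twice, once to $\chi_{t}(D)S_N(t)f$ (whose support is of Knapp type) and once to $f$, yields
\[
\|\chi_t(D)S_N(t)f\|_{L^{p}}\lesssim N^{\frac{d-1}{2}(\frac1p-\frac12)}\|\chi_t(D)S_N(t)f\|_{\Hp}\lesssim N^{\frac{d-1}{2}(\frac1p-\frac12)}\|f\|_{\Hp}.
\]
Here I use that $\chi_t(D)$ is bounded on $\Hp$, being a Fourier multiplier that is uniformly a symbol of the type handled in \cite{HassellPortalRozendaal2020}.

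Combining Steps 2 and 3, together with the error term from Step 1, gives the claimed bound. For $p=\infty$, the same argument applies with the $\HT^{\infty}_{FIO}$ statements of Lemma \ref{lem:fixedtimesmooth} and Lemma \ref{lem:Knapp}.
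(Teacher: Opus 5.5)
Your Step 1 is not merely the hardest step. As stated it is false, and Steps 2 and 3 depend on it.

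The bicharacteristic flow transports a direction $\omega_0$ to a direction that depends on the base point: $\hat{\chi}_{t}(y,\omega_0)=(y(t),\omega(t;y))$. The spread of $\omega(t;y)$ over $y\in\Rd$ does not shrink as $N\to\infty$. Since $f$ is localized only in frequency, not in space, there is no single ``transported center'' $\omega(t)$ of $\Theta$.

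For example, take $g_{ij}(x)=c(x_{1})^{2}\delta_{ij}$ with $c(x_{1})=1-\epsilon_{0}\cos x_{1}$. Take $f(x)=e^{iNx_{d}}\phi(x/L)$ with $\wh{\phi}\in C^{\infty}_{c}(B_{d}(0,1))$ and $L\gg1$, so that $\wh{f}$ lies in a Knapp box.
\begin{itemize}
\item The part of $S_{N}(t)f$ emanating from points with first coordinate $y_{1}$ has its frequency direction rotated by an angle of about $t|c'(y_{1})|$.
\item Hence $\F(S_{N}(t)f)$ is spread over an angular range of size $\eqsim\epsilon_{0}t$, which is much larger than $N^{-1/2}$.
\item So for $t\eqsim 1$ and $N$ large, $\|(1-\chi_{t}(D))S_{N}(t)f\|_{L^{2}}\gtrsim\|f\|_{L^{2}}$ for every Knapp cutoff $\chi_{t}$. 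Your claimed $N^{-m}$ bound already fails at $p=q=2$.
\end{itemize}
The kernel bounds of Appendix \ref{sec:kernel} say the same thing: the kernel of $S_{N}(t)\psi^{\nu}_{\omega}(D)$ concentrates near the $y$-dependent point $\hat{\chi}_{t}(y,\omega)$. So neither Bernstein nor Lemma \ref{lem:Knapp} can be applied to $S_{N}(t)f$. For $q>2$ you also cannot fall back on \eqref{eq:Sobolev} at time $t$, since that loses a factor $N^{2s(q)}$ compared with Lemma \ref{lem:Knapp}.

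The paper avoids this by using frequency information only at time $0$, where $\supp(\wh{f}\,)$ is known.
\begin{itemize}
\item For $q\le 2$: \eqref{eq:Sobolev}, Lemma \ref{lem:fixedtimesmooth} and Lemma \ref{lem:Knapp} give $\|S_{N}(t)f\|_{L^{q}}\lesssim\|f\|_{\HT^{s(q),q}_{FIO}(\Rd)}\eqsim\|f\|_{\HT^{q}(\Rd)}$. Bernstein and Lemma \ref{lem:Knapp} are then applied to $f$ itself.
\item For $q>2$: the same step at time $0$ reduces matters to $\|S_{N}(t)f\|_{L^{q}}\lesssim\|f\|_{L^{q}}$ for Knapp-localized $f$. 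By interpolation with $L^{2}$, this follows from the $L^{\infty}$ bound for $S_{N}(t)\psi^{\nu}_{\omega}(D)\psi_{N}(D)$ with $\nu=N^{-1/2}$.
\item That $L^{\infty}$ bound comes from Proposition \ref{prop:KernelEstimatesFrequencyLocalization}. The kernel is dominated by $N^{\frac{d+1}{2}}(1+N\,d_{S}((x,\omega(t;y)),\hat{\chi}_{t}(y,\omega))^{2})^{-m}$. This is uniformly integrable in $x$ and in $y$, so Schur's test applies, and the $y$-dependence of the transported direction does no harm.
\end{itemize}

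If you want to keep your time-$t$ strategy, you would first have to localize $f$ spatially at scale $N^{-1/2}$, so that each piece has a well-defined transported direction. You would then reassemble using the spatial localization of the outputs and $\ell^{p}\subseteq\ell^{q}$. That needs the same kernel bounds plus additional bookkeeping.
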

\begin{proof}
For $q\leq 2$, we can apply \eqref{eq:Sobolev}, Lemma \ref{lem:fixedtimesmooth}, Lemma \ref{lem:Knapp}, Bernstein's inequality, and then Lemma \ref{lem:Knapp} again:
\begin{align*}
\sup_{0\leq t\leq 1}\| S_{N}f\|_{L^{q}(\R^d)}&\lesssim \sup_{0\leq t\leq 1}\| S_{N}(t)f\|_{\HT^{s(q),q}_{FIO}(\Rd)}\lesssim \|f\|_{\HT^{s(q),q}_{FIO}(\Rd)}\eqsim \|f\|_{\HT^{q}(\Rd)}\\
&\lesssim N^{\frac{d+1}{2}(\frac{1}{p}-\frac{1}{q})}\|f\|_{\HT^{p}(\Rd)}\eqsim N^{\frac{d+1}{2}(\frac{1}{p}-\frac{1}{q})+\frac{d-1}{2}(\frac{1}{p}-\frac{1}{2})}\|f\|_{\HT^{p}_{FIO}(\Rd)},
\end{align*}
where in the last step we also used that $p\leq 2$.

For $q>2$, due to the same combination of Bernstein's inequality and Lemma \ref{lem:Knapp} as above, we may
consider the case where $q=p$. In fact, by interpolation and Lemma \ref{lem:Knapp}, it suffices to show that
\begin{equation}\label{eq:bilinearinftyinfty}
\sup_{0\leq t\leq 1}\|S_{N}(t)\psi_{\omega}^{\nu}(D) \psi_N(D) f\|_{L^{\infty}(\Rd)}\lesssim \|f\|_{L^{\infty}(\Rd)}
\end{equation}
for all $f\in\Sw(\Rd)$ and $\w\in S^{d-1}$, where $\nu:=N^{-1/2}$. 

To prove \eqref{eq:bilinearinftyinfty}, we rely on the kernel bounds from Appendix \ref{sec:kernel}, which are in fact implicit in the reasoning for $q\leq 2$ as well.  
For $t\in[0,1]$ fixed, let $\tilde{K}$ the kernel of $S_{N}(t)\psi_{\omega}^{\nu}(D) \psi_N(D)$. By Proposition \ref{prop:KernelEstimatesFrequencyLocalization} and \eqref{eq:metric}, for all $m\geq 0$ and $x,y\in\Rd$,
\begin{align*}
|\tilde{K}(x,y)|&\lesssim_{m} N^{\frac{d+1}{2}}\big(1+N\text{dist}_S^2((x,\omega(t)), \hat{\chi}_t(\hat{\mathcal{Y}}_{y,\omega}))\big)^{-m}\\
&\eqsim N^{\frac{d+1}{2}}\big(1+Nd_S((x,\omega(t)), \hat{\chi}_t(y,\omega))^{2}\big)^{-m}\\
&\eqsim N^{\frac{d+1}{2}}\big(1+N(|x-y|^{2}+|(x-y)\cdot\omega(t)|)\big)^{-m}.
\end{align*}
Hence $\sup_{x\in\Rd}\|\tilde{K}(x,\cdot)\|_{L^{1}(\Rd)}<\infty$ and $\sup_{y\in\Rd}\|\tilde{K}(\cdot,y)\|_{L^{1}(\Rd)}<\infty$. By Young's inequality, this suffices.

\end{proof}

In Theorem \ref{thm:bilinearL2} we give an $L^{2}$-based bilinear restriction estimate for asymptotically $1$-homogeneous symbols. We write $S_{N}f(t,x):=S_{N}(t)f(x)$ for $f\in\Sw(\Rd)$ and $(t,x)\in\R^{1+d}$, where  $(\tilde{S}_N(t))_{t\in\R}$ solves \eqref{eq:halfwaveatilde}. 

It will be useful to observe that bilinear estimates inherit a natural spatial localization by the finite speed of propagation. Localizing functions $\theta$ and rescalings $\tilde{\theta}_{K,k}$ with compact Fourier support are introduced in the Appendix \ref{subsection:KernelBoundsPhaseSpace}.

For $K \in 2^{\N_{0}}$, $\lambda=(\lambda_{1},\ldots,\lambda_{d})\in \Z^{d}$ and $x\in\Rd$, set $\theta_{K,\lambda}(x) := \theta(K(x-\lambda))$, and write 
\begin{equation*}
\tilde{\theta}_{K,k}(x) := \sum_{\substack{\la=(\la_{1},\ldots,\la_{d}) \in K^{-1} \Z^d, \\ \la_i \in (k_i-1/2,k_i+1/2]\text{ for }1\leq i\leq d}} \theta_{K,\lambda}^2(x)
\end{equation*}
for $k \in\Z^{d}$. Then 
$\sum_{k \in \Z^d} \tilde{\theta}_{K,k}(x)=1$ 
for all $x\in\Rd$, and $\supp(\F\tilde{\theta}_{K,k})\subseteq B_{d}(0,K/4)$ for all $k \in \Z^{d}$. Note that $\tilde{\theta}_{K,k} \geq 0$. Let $Q_{k} = k + (-1/2,1/2]^d$.
Moreover, for each $m\geq 0$, one has $|\tilde{\theta}_{N,k}(x)|\lesssim (1+K \text{dist}(x,Q_k))^{-m}$ for all $K \in 2^{\N_{0}}$, $k \in\Z^{d}$ and $x\in\Rd$. Let $\Theta_0$ be like in \eqref{eq:thetazero}.

We have the following:
\begin{proposition}
\label{prop:BilinearSpatialLocalization}
    Let $f,g \in \mathcal{S}(\R^d)$ be two frequency-localized functions with $\hat{f}, \hat{g} \subseteq \{ \xi \in \R^d : \hat{\xi} \in \Theta_0, \, |\xi| \sim N\}$. For $q \geq 2$ it holds
    \begin{equation*}
    	\begin{split}
        &\quad \| S_N(t) f S_N(t) g \|_{L^{q/2}([0,1] \times \R^d)} \\
        &\lesssim \big( \sum_{ \substack{m, k_i \in \Z^d \\ |k_i-m| \leq 5}} \| \tilde{\theta}_{N^{1/2},m} ( S_N(t) \tilde{\theta}_{N^{1/2},k_1} f) (S_N(t) \tilde{\theta}_{N^{1/2},k_2} g) \|_{L^{q/2}([0,1] \times \R^d)}^{q/2} \big)^{2/q} \\
       &\quad + C_m N^{-m} \| f \|_2 \| g \|_2.
       \end{split}
    \end{equation*}
\end{proposition}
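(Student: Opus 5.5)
We would prove Proposition \ref{prop:BilinearSpatialLocalization} by inserting the partition of unity $\sum_{k}\tilde{\theta}_{N^{1/2},k}=1$ three times: on the output, and on each of the two inputs. Write $K:=N^{1/2}$. We split
\[
S_N(t)f\,S_N(t)g=\sum_{m,k_1,k_2\in\Z^d}\tilde{\theta}_{K,m}\,\big(S_N(t)\tilde{\theta}_{K,k_1}f\big)\big(S_N(t)\tilde{\theta}_{K,k_2}g\big).
\]
The cubes $Q_m$ have unit side, and they are essentially disjoint at unit scale: the $\tilde{\theta}_{K,m}$ decay like $(1+K\,\mathrm{dist}(x,Q_m))^{-M}$. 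Hence, for the near-diagonal part ($|k_i-m|\le 5$), the $L^{q/2}$ norm of the sum over $m$ is controlled by the $\ell^{q/2}$ sum over $m$. For $q/2\geq1$, each point $x$ sees $O(1)$ cubes up to a rapidly decaying tail. The finitely many pairs $(k_1,k_2)$ per $m$ are absorbed by the triangle inequality. This yields the main term on the right-hand side.

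The remaining work concerns the off-diagonal terms, where $|k_1-m|>5$ or $|k_2-m|>5$. By symmetry, assume $|k_1-m|>5$. The key input is finite speed of propagation in the form of the kernel bounds from Appendix \ref{sec:kernel} (Proposition \ref{prop:KernelEstimatesFrequencyLocalization}), used together with Proposition \ref{prop:ModifiedEnergy}. Since $\F\tilde{\theta}_{K,k}$ is supported in $B_d(0,K/4)$ and $K=N^{1/2}\ll N$, the function $\tilde{\theta}_{K,k_1}f$ is still frequency-localized to $|\xi|\eqsim N$. For $0\le t\le1$, the kernel of $S_N(t)\tilde\psi_N(D)$ is then bounded by $C_M N^{d}(1+N|x-y|-Nc t)^{-M}$, or at least decays rapidly once $|x-y|\ge 2$; here the wave speed is bounded by ellipticity and we normalize it below $1$. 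Combined with the decay of $\tilde{\theta}_{K,m}$ and $\tilde{\theta}_{K,k_1}$ away from $Q_m$ and $Q_{k_1}$, this gives
\[
\|\tilde{\theta}_{K,m}S_N(t)\tilde{\theta}_{K,k_1}f\|_{L^\infty([0,1]\times\R^d)}\le C_M N^{-M}|m-k_1|^{-M}\|f\|_{2}.
\]
Since the cubes are at distance at least $4$, the kernel decay and the weights each contribute a factor $N^{-M}$. We pair this with a trivial bound for the second factor: $\|S_N(t)\tilde{\theta}_{K,k_2}g\|_{L^{q/2}\cap L^\infty}\lesssim N^{C}\|g\|_2$ by Bernstein's inequality and $L^2$ boundedness (Lemma \ref{lem:fixedtimesmooth}). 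We then sum over $m,k_1,k_2$, using the decay in $|m-k_1|$ and, for $k_2$, either the decay of $\tilde{\theta}_{K,m}\tilde{\theta}_{K,k_2}$-type interactions or a second application of the same kernel bound when $|k_2-m|>5$. When $|k_2-m|\le5$, the sum over $k_2$ is finite. The $m$-sum needs care, because the input norms are $L^2$ and no decay in $m$ is visible from $\|f\|_2$ alone.

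To handle this, we keep the localized norms $\|\tilde{\theta}^{1/2}_{K,k_1}f\|_2$ rather than $\|f\|_2$. Then $\sum_{k_1}\|\tilde{\theta}^{1/2}_{K,k_1}f\|_2^2\lesssim\|f\|_2^2$. Young's inequality in $\ell^1*\ell^2$ and Cauchy--Schwarz give the summed off-diagonal bound $C_MN^{-M}\|f\|_2\|g\|_2$, after multiplying by $N^{C}$ and renaming $M$. We expect the main obstacle to be the kernel estimate with the precise off-diagonal decay for $S_N(t)$, which is only asymptotically homogeneous. The plan is to prove it by conjugating with the wave packet transform as in the proof of Proposition \ref{prop:ModifiedEnergy}, and by using that wave packets travel at bounded speed (Lemma \ref{lem:conservationclassical}). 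Frequencies $|\xi|\not\eqsim N$ generated by the evolution are negligible by \eqref{eq:ModifiedEnergyEstimate}. The $\tilde{\theta}$ tails are not compactly supported, so the disjointness must be quantified with the $(1+K\,\mathrm{dist})^{-M}$ bounds.
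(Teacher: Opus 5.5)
Your proposal is correct and follows essentially the same route as the paper. Both arguments localize the output and both inputs with the $\tilde\theta_{N^{1/2},k}$, keep the near-diagonal terms, and send the off-diagonal terms to the error. That error is controlled by the kernel bounds of Appendix~\ref{sec:kernel} (the paper invokes Proposition~\ref{prop:PhaseSpaceLocalization}), localized $L^2$ norms, and Young's inequality.

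The only organizational difference is the order of the first two steps. The paper applies $\|F\|_{L^{q/2}}\lesssim(\sum_m\|\tilde\theta_{N^{1/2},m}F\|_{L^{q/2}}^{q/2})^{2/q}$ to the full product before expanding the inputs, which works because $\sum_m\tilde\theta_{N^{1/2},m}^{q/2}$ is bounded below. This spares it the extra tail argument you need for the almost-disjointness of the $m$-dependent near-diagonal pieces.
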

\begin{proof}
To lighten the notation, the $N^{1/2}$-subscript of $\hat{\theta}$ indicating its decay will be omitted.
The following estimate is immediate from the argument in Corollary \ref{cor:extrabutweak}:
\begin{equation*}
    \| S_N(t) f S_N(t) g \|_{L^{q/2}} \lesssim \big( \sum_{m \in \Z^d} \| \tilde{\theta}_m (S_N(t) f S_N(t) g) \|_{L^{q/2}}^{q/2} \big)^{2/q}.
\end{equation*}
We decompose $f = \sum_{k \in \Z^d} \tilde{\theta}_k f$, $g = \sum_{k \in \Z^d} \tilde{\theta}_k g$ (decay of order $N^{-1/2}$).
\begin{equation*}
\begin{split}
    \| \tilde{\theta}_m S_N(t) f S_N(t) g \|_{L^{q/2}} &\leq \big( \sum_{|k_i-m| \leq 5} + \sum_{\substack{|k_1-m| \leq 5, \\ |k_2-m| > 5}} + \sum_{\substack{|k_1-m|>5, \\ |k_2-m| \leq 5}} +  \sum_{|k_i-m| >5} \big) \\
    &\quad \times \| \tilde{\theta}_m (S_N(t) \tilde{\theta}_{k_1} f) (S_N(t) \tilde{\theta}_{k_2} g) \|_{L^{q/2}}.
\end{split}
\end{equation*}
We will obtain decay for the off-diagonal expressions. For $|k_1-m| > 5$, $|m-k_2| \leq 5$ we find
\begin{equation}
    \label{eq:FirstOffDiagoanl} \| \tilde{\theta}_m (S_N(t) \tilde{\theta}_{k_1} f) (S_N(t) \tilde{\theta}_{k_2} g) \|_{L^{q/2}} \leq C_l N^{-l} (1+N^{1/2} |k_1-m|)^{-l} \| \tilde{\theta}_{k_1}^{1/4} f \|_{L^q} \| \tilde{\theta}_{k_2} g \|_{L^q}.
\end{equation}
An analogous decay is obtained for $|k_1-m| \leq 5$ and $|m-k_2| > 5$. For $|m-k_i| \geq 5$, $i=1,2$ we find
\begin{equation}
    \label{eq:SecondOffDiagonal}
    \begin{split}
    &\quad \| \tilde{\theta}_m (S_N(t) \tilde{\theta}_{k_1} f) (S_N(t) \tilde{\theta}_{k_2} g) \|_{L^{q/2}} \\
    &\leq C_l N^{-l} (1+N^{1/2} |k_1-m|)^{-l} (1+N^{1/2} |k_2-m|)^{-l} \| \tilde{\theta}_{k_1}^{1/4} f \|_{L^q} \| \tilde{\theta}^{1/4}_{k_2} g \|_{L^q}.
    \end{split}
\end{equation}
We show how the off-diagonal terms can be assembled to the rapidly decaying error term. For instance, \eqref{eq:SecondOffDiagonal} yields by dominating $\tilde{\theta}_k^{1/4}$ with a smooth and decaying function $\theta_k$ off $Q_k$ and Sobolev embedding:
\begin{equation}
\label{eq:offdiagonal}
\begin{split}
    &\quad C_l N^{-l} (1+N^{1/2} |k_1-m|)^{-l} (1+N^{1/2} |k_2-m|)^{-l} \| \tilde{\theta}_{k_1}^{1/4} f \|_{L^q} \| \tilde{\theta}^{1/2}_{k_4} g \|_{L^q} \\
    &\leq C_l N^{-l/2} (1+N^{1/2} |k_1-m|)^{-l} (1+N^{1/2} |k_2-m|)^{-l} \| \langle D \rangle^{\alpha} ( \theta_{k_1} f) \|_{L^2} \|  \langle D \rangle^{\alpha} ( \theta_{k_2} g ) \|_{L^2}.
    \end{split}
\end{equation}
Finally, carrying out the $\ell^{q/2}-$sum over $m$ yields
\begin{equation*}
   \eqref{eq:offdiagonal} \leq  C_l N^{-l/2} (1+N^{1/2} |k_1-k_2|)^{-l} \| \langle D \rangle^{\alpha} ( \theta_{k_1} f) \|_{L^2} \|  \langle D \rangle^{\alpha} ( \theta_{k_2} g ) \|_{L^2}.
\end{equation*}
The claim is concluded by Young's inequality.

To show the claimed off-diagonal estimates, we record that for $|k_1-m| \geq 5$
\begin{equation}
\label{eq:RapidOffDiagonalDecay}
    \| \tilde{\theta}_m^{1/2} S_N(t) \tilde{\theta}_{k_1} f \|_{L^q} \leq C_l N^{-l} (1+ N^{1/2} |k_1-m|)^{-l} \| \tilde{\theta}_{k_1}^{1/4} f \|_{L^q}.
\end{equation}
This is a consequence of Proposition \ref{prop:PhaseSpaceLocalization}.
\end{proof}

\begin{theorem}\label{thm:bilinearL2}
Let $q:= \frac{2(d+3)}{d+1}$ and $\veps>0$. Then there exists a $C\geq0$ such that the following holds. Let $N^{-1/2}\leq \nu\leq 1$ and let $\Theta_{1},\Theta_{2}\subseteq \Theta_{0}$ be spherical caps of angular width $c_{2}\nu$, distance at least $c_{1}\nu$ apart. Let $f_{1},f_{2}\in\Sw(\Rd)$ be such that  
\begin{equation}\label{eq:bilinearL2assume}
\supp(\wh{f_{j}})\subseteq \{ \xi \in \R^d \mid c_{3}^{-1}\leq |\xi|\leq c_{3}, \; \hat{\xi}\in\Theta_{j}\}   
\end{equation}
for $j\in\{1,2\}$. Then 
\begin{equation}
\label{eq:BilinearEstimate}
\| \tilde{S}_N  f_1 \tilde{S}_N f_2 \|_{L^{q/2}([0,N] \times \R^d)} \leq CN^{\varepsilon} \nu^{-\frac{4}{d+3}} \prod_{j=1}^2 \| f_j \|_{L^{2}(\Rd)}.
\end{equation}
\end{theorem}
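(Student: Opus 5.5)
We prove \eqref{eq:BilinearEstimate} in three steps: reduce to transversality $\nu\eqsim 1$ by parabolic rescaling, transfer the estimate from the homogeneous symbol $a_N^{h}$ in \eqref{eq:aNh} to $\tilde{a}_N$ via the wave packet decomposition, and then apply the sharp bilinear estimate of \cite{SchippaTataru2025}.

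\emph{Step 1: transversal case.} Suppose first that $\nu\eqsim 1$. By Proposition~\ref{prop:ModifiedEnergy}, rescaled to $\tilde{S}_N$, the evolutions $\tilde{S}_N f_j$ stay essentially localized at $|\xi|\eqsim 1$ for $0\leq t\leq N$, up to errors $O(N^{-m})\|f_j\|_2$. By Lemma~\ref{lem:conservationclassical}, the bicharacteristics starting in the Fourier supports of the $f_j$ also stay there. Hence we may replace $\tilde{a}_N$ by $a_N$, which satisfies the symbol bounds \eqref{eq:SizeRegularitypsharp}. We then apply Theorem~\ref{thm:WavePacketDecomposition} with $\nu\eqsim 1$. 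This writes each $\tilde{S}_N f_j$ as a superposition of coherent wave packets along the $a_N$-bicharacteristics, with $\ell^{2}$-summable coefficients, plus a remainder $g$ that is rapidly decaying in $N$.

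\emph{Step 2: comparison with the homogeneous flow.} By Lemma~\ref{lem:Comparison}, for $|t|\leq N$ each $a_N$-bicharacteristic stays within $O(N^{1/2})\times O(\epsilon_0 N^{-1/2})$ of the $a_N^{h}$-bicharacteristic with the same initial point. That is, the two curves agree up to one wave packet scale. Consequently the wave packets for $a_N$ satisfy exactly the localization bounds of Theorem~\ref{thm:WavePacketDecomposition}, centered along the $a_N^{h}$-flow. The time-frequency bound there, with $\tau\approx -\tilde{a}_N$, differs from $-a_N^{h}$ by $O(N^{-1})$ thanks to Lemma~\ref{lem:DifferenceEstimates}, which is far below the resolution $N^{-1/2}$.

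The argument of \cite{SchippaTataru2025} uses only such wave packet properties: localization, transversality of the $\xi$-directions, $\ell^{2}$ control of coefficients, and bi-Lipschitz control of the flow. These are the induction on scales, the table construction and the Kakeya-type counting. We therefore rerun that argument verbatim with these packets. This gives \eqref{eq:BilinearEstimate} with $\nu\eqsim1$ and loss $N^{\veps}$. Transversality of the $\xi_j(t)$ is preserved, because $\xi_j(t)$ moves by $O(\epsilon_0)$ under \eqref{eq:AbsenceCaustics}.

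\emph{Step 3: small angles.} For $N^{-1/2}\leq\nu\ll1$ we use the Lorentz/parabolic rescaling adapted to the cap $\Theta_0$, as in \cite{SchippaTataru2025,TaoVargas2000}. We work in coordinates $\xi=(\xi',\xi_d)$ with $\xi'/\xi_d$ of size $\nu$, and rescale $\xi'\mapsto \nu^{-1}\xi'$, $x'\mapsto\nu x'$, $t$ along the light cone. The caps then become separated by angle $\eqsim 1$, and the frequency scale becomes effectively $N\nu^{2}\geq 1$. The rescaled symbol remains in the same class with bounds at least as good: its $C^{1,1}$ norm improves by $\nu^{2}$, so \eqref{eq:AbsenceCaustics} persists. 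The time interval $[0,N]$ becomes $[0,N\nu^{2}]$ in the rescaled units, on which the transversal estimate applies. Jacobian bookkeeping in $L^{q/2}$, with $q=\frac{2(d+3)}{d+1}$, together with the $L^{2}$ normalization produces the factor $\nu^{-\frac{4}{d+3}}$. Concretely, $|\text{Jac}|^{2/q}$ combined with $\nu^{-(d-1)/2}$ from the $L^2$ norms gives $\nu^{(d-1)(\frac{2}{q}-1)}=\nu^{-\frac{4}{d+3}}\cdot\nu^{0}$, up to the time factor.

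The main obstacle is Step 3. The rescaling is exact only for the homogeneous constant-coefficient part. For variable coefficients, after rescaling one must recheck that the wave packet decomposition and the flow comparison of Lemma~\ref{lem:Comparison} hold at the new scale $N\nu^{2}$ with uniform constants. Alternatively, one avoids rescaling by redoing the induction on scales directly with angular separation $\nu$, tracking $\nu$ in the transversality constant. We would first try the latter, following \cite{SchippaTataru2025}, where the $\nu$-dependence is already built in.
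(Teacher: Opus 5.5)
\textbf{Verdict: the case $\nu\eqsim 1$ is fine, but Step~3 has a genuine gap: the uncentred rescaling fails for variable coefficients, and the tiny-angle range is missing.}

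\textbf{The gap in Step 3.} Your treatment of $\nu\eqsim 1$ matches the paper. It uses the wave packets of Theorem~\ref{thm:WavePacketDecomposition}, and Lemma~\ref{lem:Comparison} together with the $O(N^{-1})$ symbol difference to transfer the Kakeya-type counting of \cite{SchippaTataru2025} from $a_N^{h}$ to $a_N$. You should add Proposition~\ref{prop:BilinearSpatialLocalization}, since \cite{SchippaTataru2025} gives the estimate only on a space-time ball of radius $N$, not on $[0,N]\times\R^d$.

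Rescaling $\xi'\mapsto\nu^{-1}\xi'$ in coordinates fixed by $\Theta_0$ does not give a symbol in the right class at scale $N\nu^{2}$. By \eqref{eq:AbsenceCaustics}, $|\partial_x\tilde a_N|\lesssim\epsilon_0N^{-1}$. So over $[0,N]$ the direction of a packet can drift by $O(\epsilon_0)$, which is far more than $\nu$ when $\nu\ll\epsilon_0$. After your rescaling, $\xi'/\nu$ drifts by $O(\epsilon_0/\nu)$. Equivalently, the $x'$-derivatives of the rescaled Hamiltonian $\nu^{-2}a(\nu^{-1}y',x_d,\nu\eta',\eta_d)$ are of size $\epsilon_0\nu^{-3}N^{-1}$. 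That exceeds the size $\epsilon_0(N\nu^2)^{-1}$ admissible at scale $N\nu^2$ by a factor $\nu^{-1}$. Hence your claim that the rescaled symbol stays in the class with improved $C^{1,1}$ norm is false.

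What is missing is the reduction from \cite[Section~5.3.2]{SchippaTataru2025} that the paper uses:
\begin{enumerate}
\item Decompose the $f_j$ into pieces localized to $N\nu\times\cdots\times N\nu\times N\nu^{2}$ rectangles, and discard non-neighbouring pairs via the kernel bounds of Appendix~\ref{sec:kernel}.
\item For a neighbouring pair, all relevant bicharacteristics stay $O(N\nu)\times O(\nu)$-close to one central bicharacteristic (Lemma~\ref{lem:AnisotropicBiLipschitz}).
\item Recentre along that bicharacteristic by a time-dependent linear phase-space map, and only then rescale anisotropically.
\end{enumerate}
The resulting symbol obeys the bounds at scale $N\nu^2$. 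The remainder $a_N-a_N^{h}$ becomes $O((N\nu^2)^{-1-|\alpha|/2})$, so Lemma~\ref{lem:Comparison} still applies. The lost time regularity is repaired by a local change of coordinates. Your fallback, re-running the induction on scales while tracking $\nu$, is not how \cite{SchippaTataru2025} obtains the $\nu$-dependence, and nothing suggests it would give the sharp power $\nu^{-4/(d+3)}$.

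\textbf{The tiny-angle range and the exponent count.} The localization above has errors $O((N\nu^2)^{-m})$, and the wave packet machinery then runs at scale $N\nu^2$. Both are only useful when $N\nu^2$ is a positive power of $N$. The paper therefore rescales only for $\nu>N^{-1/2+\delta}$, with $\delta=\veps^{2}$. It handles $N^{-1/2}\le\nu\le N^{-1/2+\delta}$ directly. Split each $f_j$ into $(\nu N^{1/2})^{d-1}\le N^{(d-1)\delta}$ sectors of aperture $N^{-1/2}$. Then apply H\"older's inequality and the fixed-time bound of Proposition~\ref{prop:bilinearinfty} with $p=2$. This gives $N^{\frac{d+1}{q}-\frac{d-1}{2}+O(\delta)}=N^{\frac{2}{d+3}+O(\delta)}$, which is at most $N^{O(\delta)}\nu^{-\frac{4}{d+3}}$ in this range. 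Your proposal has no such case.

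Your exponent bookkeeping is also wrong: $(d-1)(\frac{2}{q}-1)=-\frac{2(d-1)}{d+3}$, which equals $-\frac{4}{d+3}$ only when $d=3$. The correct count combines two factors:
\begin{itemize}
\item $\nu^{-\frac{2(d+1)}{q}}$ from the space-time Jacobian ($\nu^{-(d-1)}$ from $x'$ and $\nu^{-2}$ from $t$);
\item $\nu^{d-1}$ from the two $L^{2}$ norms.
\end{itemize}
Together these give $\nu^{d-1-\frac{2(d+1)}{q}}=\nu^{-\frac{4}{d+3}}$.
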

\begin{proof}
Throughout, fix $\veps>0$ small and set $\delta:=\veps^{2}$. 

We first consider the case of unit transversality, $\nu\eqsim 1$. We then split the general case $N^{-\frac{1}{2}}\leq \nu\leq 1$ into two intervals: $N^{-\frac{1}{2}}\leq \nu\leq N^{-\frac{1}{2}+\delta}$ and $N^{-\frac{1}{2}+\delta}<\nu\leq 1$. The required estimates in the first interval are a direct consequence of Proposition \ref{prop:bilinearinfty}, whereas the second interval is dealt with by rescaling to $\nu\eqsim 1$.

By Proposition \ref{prop:BilinearSpatialLocalization} it suffices to prove the estimate
\begin{equation*}
    \| \tilde{S}_N (\tilde{\theta}_i f_1) \tilde{S}_N (\tilde{\theta}_j f_2) \|_{L^{q/2}([0,N] \times B_d(0,CN)))} \leq C_\varepsilon N^{\varepsilon} \nu^{-\frac{4}{d+3}} \prod_{j=1}^2 \| f_j \|_{L^2},
\end{equation*}
after translation, for $|i|+|j| \leq 10$. We abuse notation and let $i=j=0$ in the following.

\subsubsection*{The case where $\nu \eqsim 1$} 
We can employ the isotropic wave packet decompositions provided by Theorem \ref{thm:WavePacketDecomposition} for the individual terms to obtain wave packets concentrated in space on scale $N^{\frac{1}{2}}$ and in frequency on scale $N^{-\frac{1}{2}}$ along the bicharacteristics of $a_N$.

Next, we apply the wave packet decomposition provided by Theorem \ref{thm:WavePacketDecomposition}
\begin{equation*}
\tilde{S}_N(t) \big( \tilde{\theta}_0(x) \psi^{\nu}_{\omega_{i}}(D) f_i \big) = \sum_{T \in \T} a_T \phi_T + g
\end{equation*}
with $\T \subseteq ( B_d(0,CN) \times \{ \xi \in \R^d: (2c_{3})^{-1}\leq |\xi|\leq 2c_{3}, \; \hat{\xi}\in 2\Theta_{j} \} ) \cap \Lambda_N^*$ and for $p \geq 2$ we let $\| g \|_{L^p} \lesssim_m N^{-m} \| f \|_{L^2}$.

Also, we remark that in \cite{SchippaTataru2025} the space-time domain was $B_{d+1}(0,N)$. We can readily translate physical space coordinates to the origin, which does not change the symbol bounds, and mildly enlarge the set to $[0,N] \times B_d(0,N)$ once we ensure that the required bounds on the Hamiltonian hold on this slightly larger set.

We explain the statement and the proof of the key Kakeya estimate \eqref{eq:KeyKakeyaBilinear}. In the following $\delta=\varepsilon^2$ will be a constant chosen much smaller than $\varepsilon$. The idea is that by the kernel bounds for the wave packets and their number limited by $N^{10d}$, allowing for a margin of $N^{\delta}$ in phase space, will give inessential contribution of the wave packets away from the margin.

The focusing relation $T \sim Q$ between curved tubes and $N^{1-\delta}$-cubes was introduced in \cite{SchippaTataru2025} (going back to \cite{Wolff2001,Tao2003}). We define the energy difference function:
\begin{equation*}
F^z_{\xi_1,\xi_2'}(\eta) = a_N(z,\xi_1) + a_N(z,\eta + \xi'_2 - \xi_1) - a_N(z,\eta) - a_N(z,\xi_2'),
\end{equation*}
where $z=(x,t)$, $\hat{\xi}_1,\hat{\eta} \in \Theta_1$, $ \hat{\xi_2'} \in \Theta_2$, $\angle(\Theta_1,\Theta_2) \sim 1$ such that $|x_{T_1}(t) - x| \lesssim N^{\frac{1}{2}+\delta}$.

The energy shell is defined as
\begin{equation*}
\mathcal{E}^q_{\xi_1,\xi_2'} = \{ \eta_{T_1}(t) \mid T_1 \in \mathbb{T}_1, |F^{z_q}_{\xi_1,\xi_2'}(\eta_{T_1}(t_q))| \lesssim N^{-\frac{1}{2}+\delta} \},
\end{equation*}
which supports the frequencies almost annihilating the energy difference function.

Dyadic pigeonholings of the cubes and tubes are defined like in \cite{SchippaTataru2025}.
For $q \in \mathfrak{q}[\mu_1,\mu_2]$ let $\mathbb{T}_1^{\not\sim S}(q,\lambda,\mu_1,\mu_2)$ be the tubes $T_1 \in \mathbb{T}_1[\lambda,\mu_1,\mu_2]$ associated to $q$, i.e., with $T_1 \not\sim S$, $N^{\delta} T_1 \cap q \neq \emptyset$. $\mathbb{T}_1^{\not\sim S}(\mathcal{E}^{z_q}_{\xi_1,\xi_2'},\lambda,\mu_1,\mu_2)$ is defined like in \cite{SchippaTataru2025}

Note that the spatial parameter in \cite{SchippaTataru2025} was denoted by $R$. Here we use $N$ to relate it directly to the input frequencies.
We need to prove for $q \in \mathfrak{q}[\mu_1,\mu_2]$, $q \in S$:
\begin{equation}
\label{eq:KeyKakeyaBilinear}
| \mathbb{T}_1^{\not\sim S}(\mathcal{E}^{z_q}_{\xi_1,\xi_2'},\lambda,\mu_1,\mu_2)| \lesssim N^{C \delta} \frac{\# \T_2}{\lambda \mu_2},
\end{equation}
which combinatorial estimate is the key point in the proof of bilinear estimate \cite[Eq.~(3.22)]{SchippaTataru2025}. This is reduced like in \cite{SchippaTataru2025} to the estimate
\begin{equation*}
|\mathcal{Q}_q| \lesssim N^{C \delta},
\end{equation*}
where
\begin{equation*}
\begin{split}
\mathcal{Q}_q &= \{ (q',T_1) \in \mathfrak{q} \times \mathbb{T}_1(\mathcal{E}^q_{\xi_1,\xi_2'}), \text{dist}(q,q') \geq N^{1-\delta}, \\
&\quad N^{\delta} T_2 \cap q' \neq \emptyset, \, N^{\delta} T_1 \cap q \neq \emptyset, \, N^{\delta} T_1 \cap q' \neq \emptyset \}.
\end{split}
\end{equation*}
The proof consists of two steps:
\begin{enumerate}
\item Arguing that for fixed $q'$, which is at large distance $\geq N^{1-\delta}$, $T_1 \in \mathbb{T}_1(\mathcal{E}^q_{\xi_1,\xi_2'})$ intersecting $q$ and $q'$ is essentially uniquely determined.
\item Arguing that tubes emanating from an energy shell at $q$ intersect far away tubes $T_2$ transversely.
\end{enumerate}

By Lemma \ref{lem:Comparison} the wave packet decompositions are indistinguishable, for which reason both claims extend to the asymptotically homogeneous case. 

\subsubsection*{The case where $N^{-\frac{1}{2}+\delta}<\nu\leq 1$}

This was in the $1$-homogeneous case, which was handled in \cite{SchippaTataru2025}, reduced to unit transversality by an anisotropic rescaling. In the following we suppose that the Fourier supports of $f_i$ are constrained to the same sector of aperture $3 \nu$ centered at $e_d$, which is no loss of generality by rotation. We carry out the almost orthogonality on the level of $S_N$ to use notation from Appendix \ref{subsection:KernelBoundsPhaseSpace}. Let
\begin{equation*}
    f_i = \sum_{k \in \Z^d} \tilde{\theta}^{\nu}_{k,e_d} f_i.
\end{equation*}
We argue by the kernel bounds that
\begin{equation*}
\begin{split}
    \| S_N f_1 S_N f_2 \|_{L^{q/2}([0,1] \times B_d(0,C))} &\leq \sum_{|k_1-k_2| \leq 5} \| S_N f_{1 k_1} S_N f_{2 k_2} \|_{L^{q/2}([0,1]\times \R^d)} \\
    &\quad + C_m N^{-m} \| f_1 \|_2 \| f_2 \|_2.
\end{split}
\end{equation*}
Let $y_k = \nu(k_1 e_1 + \ldots + k_{d-1} e_{d-1}) + k_d \nu^2 e_d$ and
\begin{equation*}
    \hat{\mathcal{Y}}_{k,e_d} = \{ (y,\hat{\xi}) : d_S(y,\hat{\xi},y_k,e_d) \leq 3 \nu \}.
\end{equation*}
Let $(y_k^t,\omega^t) = \hat{\chi}_t (y_k,e_d)$. The kernel bounds imply the following pointwise estimate:
\begin{equation*}
    |S_N (\tilde{\theta}^{\nu}_{k,e_d} f_i)(x)| \leq C N^d (1+ N \text{dist}_S^2(x,\omega^t,\hat{\chi}_t(\hat{\mathcal{Y}}_{k,e_d})))^{-m} \| ( \tilde{\theta}_{k,e_d}^{\nu})^{1/2} f_i \|_2.
\end{equation*}

By the bi-Lipschitz property of the flow, the estimate for the off-diagonal term is immediate:
\begin{equation*}
   \sum_{|k_1-k_2| \geq 5} \| S_N f_{1 k_1} S_N f_{2 k_2} \|_{L^{q/2}([0,1]\times B_d(0,C))} \leq C_m N^{-m} \| f_{1} \|_2 \| f_2 \|_2.
\end{equation*}

We carry out the centering in phase space for the diagonal terms $|k_1-k_2| \leq 5$:
\begin{equation*}
    \| \tilde{S}_N f_{1 k_1} \tilde{S}_N f_{2 k_2} \|_{L^{q/2}([0,N]\times B_d(0,CN))}.
\end{equation*}
We abuse notation and denote the rescalings of $f_i$ again by $f_i$. In \cite[Subsection~5.3.2]{SchippaTataru2025} it is detailed how in the $1$-homogeneous case, after changing to angular frequencies, $\omega = \frac{\xi'}{\xi_d}$ the flow is centered around the bicharacteristic ray $\{ (\bar{x}(t),\xi_d \bar{\omega}(t),\xi_d) : \xi_d \in (1/2,3/2) \}$ by the phase space transformation $x \to x+ \bar{x}(t)$ and $\xi' \to \xi' + \bar{\omega}(t) \xi_d$. $(\bar{x}(t),\bar{\omega}(t))$ denote the central bicharacteristic for the $\nu$-sector we are considering. The linear transformation $\mathcal{T}$ of the symbol is detailed in \cite[Subsection~5.3.2]{SchippaTataru2025} and applying $\mathcal{T}$ yields 
\begin{equation*}
\mathcal{T} a_N = \mathcal{T} a_N^h + \mathcal{T} r_N.
\end{equation*}
Then the anisotropic rescaling 
\begin{equation*}
\mathcal{S}: x' \to \nu^{-1} x',\, x_d \to x_d, \, \xi' \to \nu \xi', \, t \to \nu^{-2} t
\end{equation*}
can be carried out which yields functions $\tilde{f}_1$, $\tilde{f}_2$ of unit aperture separation governed by the symbol
\begin{equation*}
\nu^{-2} \mathcal{S} \mathcal{T} a_N = \nu^{-2} \mathcal{S} \mathcal{T} a_N^h + \nu^{-2} \mathcal{S} \mathcal{T} r_N.
\end{equation*}
For $\nu^{-2} \mathcal{S} \mathcal{T} a_N^h$ was argued in \cite{SchippaTataru2025} that wave packet decompositions on the scale $R = \nu^{2} N$ are at disposal by virtue of
\begin{equation*}
|\partial_x^{\alpha} \partial_{\xi}^{\beta} (\mathcal{S} \mathcal{T} a_N^h) | \lesssim_{\alpha,\beta} (\nu^{2} N)^{-|\alpha|+  \frac{(|\alpha|-2)_+}{2}}.
\end{equation*}
The implicit constants only depend on the implicit constants in the symbol bounds for $a_N$.

However, the time regularity is reduced, which is salvaged by a local change of coordinates. This is detailed in \cite[Section~5.3.2]{SchippaTataru2025}. Here we show that the wave packet decompositions are still indistinguishable for which we turn to bounds for $\mathcal{S} \mathcal{T} r_N$. By Lemma \ref{lem:DifferenceEstimates} we find
\begin{equation*}
|\partial_{x}^{\alpha} \partial_{\xi}^{\beta} (\nu^{-2} \mathcal{S} \mathcal{T} r_N) | \lesssim_{\alpha,\beta} (\nu^2 N)^{-1 - \frac{|\alpha|}{2}}.
\end{equation*}
The above derivative bounds follow directly from the estimates for $r_N$.

Hence, by the argument of Lemma \ref{lem:DifferenceEstimates} applied on the scale $\nu^2 N$ the wave packet decompositions of $\nu^{-2} \mathcal{S} \mathcal{T} a_N$ and $\nu^{-2} \mathcal{S} \mathcal{T} a_N^h$ are again indistinguishable. Modifying the case of unit transversality, recalled above, like in \cite{SchippaTataru2025} for reduced time regularity completes the proof of this case.

\subsubsection*{The case where $N^{-\frac{1}{2}}\leq \nu\leq N^{-\frac{1}{2}+\delta}$.}

Using a partition of unity as in e.g.~\cite[p.~11]{Rozendaal22b}, 
we can decompose each $f_{j}$ into a sum $f_{j}=\sum_{\w} f_{j\w}$ of approximately $(\nu N^{1/2})^{d-1}\leq N^{\veps}$ terms $f_{j\w}$, each of which satisfies the support condition in \eqref{eq:bilinearL2assume} with $\nu=N^{-1/2}$,  as well as the estimate $\|f_{j\w}\|_{L^{2}(\Rd)}\lesssim \|f_{j}\|_{L^{2}(\Rd)}$. Writing $(f_{j\w})_{1/N}(y):=f_{j\w}(Ny)$ for $y\in\Rd$, we can thus simply combine \eqref{eq:rescalingkey} and Proposition \ref{prop:bilinearinfty}:
\begin{align*}
&\| \tilde{S}_N  f_1 \tilde{S}_N f_2 \|_{L^{q/2}([0,N] \times \R^d)}\leq \prod_{j=1}^{2}\| \tilde{S}_N  f_j\|_{L^{q}([0,N]\times\Rd)}\\
&\leq \prod_{j=1}^{2}\sum_{\w}\| \tilde{S}_N  f_{j\w}\|_{L^{q}([0,N]\times\Rd)}=N^{\frac{2(d+1)}{q}}\prod_{j=1}^{2}\sum_{\w}\|S_N (f_{j\w})_{1/N}\|_{L^{q}([0,1]\times\Rd)}\\
&\lesssim N^{\frac{2(d+1)}{q}+(d+1)(\frac{1}{2}-\frac{1}{q})}\prod_{j=1}^{2}\sum_{\w}\| (f_{j\w})_{1/N}\|_{L^{2}(\Rd)}=N^{\frac{d+1}{q}-\frac{d-1}{2}+\veps}\prod_{j=1}^{2}\| f_{j}\|_{L^{2}(\Rd)}.
\end{align*}
Since $\frac{d+1}{q}-\frac{d-1}{2}=\frac{2}{d+3}$, this suffices.

\end{proof}

\begin{corollary}\label{cor:bilinearL2main}
Let $\frac{2(d+3)}{d+1}\leq q\leq \infty$ and $\delta,\veps\geq 0$. 
Suppose that one of the following conditions holds:
\begin{enumerate}
    \item\label{it:bilinearL2main1} $q=\frac{2(d+3)}{d+1}$ and $\veps>0$; 
    \item\label{it:bilinearL2main2} $q>\frac{2(d+3)}{d+1}$ and $\delta>0$;
\end{enumerate}
Then there exists a $C\geq0$ such that the following holds. Let $N^{-1/2}\leq \nu\leq 1$ and let $\Theta_{1},\Theta_{2}\subseteq \Theta_{0}$ be spherical caps of angular width $c_{2}\nu$, distance at least $c_{1}\nu$ apart. Let $f_{1},f_{2}\in\Sw(\Rd)$ be such that  
\begin{equation*}
\supp(\widehat{f_{j}})\subseteq \{ \xi \in \R^d \mid N/c_{3}\leq |\xi|\leq c_{3}N, \hat{\xi}\in\Theta_{j}\}    
\end{equation*}
for $j\in\{1,2\}$. 
Then 
\[
\|(S_{N}f_{1})(S_{N}f_{2})\|_{L^{q/2}([0,1] \times \R^d)}\leq C N^{d - \frac{2(d+1)}{q}+\veps} \nu^{d-1 - \frac{2(d+1)}{q}-\delta} \prod_{j=1}^{2}\| f_{j}\|_{L^{2}(\Rd)}. 
\]
\end{corollary}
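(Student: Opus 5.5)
The plan is to rescale Theorem \ref{thm:bilinearL2} at the endpoint $q_b:=\frac{2(d+3)}{d+1}$, and then interpolate the resulting bound with a trivial $q=\infty$ bound for $q>q_b$.

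\textbf{Case \eqref{it:bilinearL2main1}.} Let $q=q_b$. Use \eqref{eq:rescalingkey} with $f_{j,N}(y):=f_j(N^{-1}y)$. This function has Fourier support in $\{c_3^{-1}\le|\xi|\le c_3,\ \hat\xi\in\Theta_j\}$, and $\|f_{j,N}\|_{L^2}=N^{d/2}\|f_j\|_{L^2}$. The change of variables $(t,x)\mapsto(Nt,Nx)$ gives
\[
\|(S_Nf_1)(S_Nf_2)\|_{L^{q/2}([0,1]\times\Rd)}=N^{-\frac{2(d+1)}{q}}\|\tilde S_Nf_{1,N}\,\tilde S_Nf_{2,N}\|_{L^{q/2}([0,N]\times\Rd)}.
\]
Applying \eqref{eq:BilinearEstimate} then yields the bound $N^{d-\frac{2(d+1)}{q}+\veps}\nu^{-\frac{4}{d+3}}\prod_j\|f_j\|_{L^2}$. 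Since $d-1-\frac{2(d+1)}{q_b}=d-1-\frac{(d+1)^2}{d+3}=-\frac{4}{d+3}$, this is exactly the claim with $\delta=0$. Because $\nu\le1$, the claim then holds for any $\delta\ge0$.

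\textbf{The $q=\infty$ bound.} Decompose each $f_j$ with an angular partition of unity as in \cite[p.~11]{Rozendaal22b} into about $(\nu N^{1/2})^{d-1}$ pieces $f_{j\w}$. Each piece has angular support of width $N^{-1/2}$ and satisfies $\|f_{j\w}\|_{L^2}\lesssim\|f_j\|_{L^2}$. Proposition \ref{prop:bilinearinfty} with $p=2$, $q=\infty$ gives $\|S_N(t)f_{j\w}\|_{L^\infty}\lesssim N^{\frac{d+1}{4}}\|f_{j\w}\|_{L^2}$.

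Summing naively over the pieces costs a factor $(\nu N^{1/2})^{d-1}$ per function, which is too much. To avoid it, use almost orthogonality: the pieces have essentially disjoint Fourier supports, so $\sum_\w\|f_{j\w}\|_{L^2}^2\lesssim\|f_j\|_{L^2}^2$. Cauchy--Schwarz then gives
\[
\|S_N(t)f_j\|_{L^\infty}\lesssim(\nu N^{1/2})^{\frac{d-1}{2}}N^{\frac{d+1}{4}}\|f_j\|_{L^2}=\nu^{\frac{d-1}{2}}N^{\frac d2}\|f_j\|_{L^2}.
\]
Alternatively, one can bound the kernel of $S_N(t)$ restricted to a $\nu$-sector directly via Appendix \ref{sec:kernel}.

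Taking the product over $j=1,2$ gives $\|(S_Nf_1)(S_Nf_2)\|_{L^\infty}\lesssim N^d\nu^{d-1}\prod_j\|f_j\|_{L^2}$. This matches the claimed exponents at $q=\infty$, with no $\veps$ or $\delta$ loss.

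\textbf{Case \eqref{it:bilinearL2main2}.} Interpolate by Hölder between $L^{q_b/2}$ and $L^\infty$:
\[
\|F\|_{L^{q/2}}\le\|F\|_{L^{q_b/2}}^{q_b/q}\|F\|_{L^\infty}^{1-q_b/q},\qquad F:=(S_Nf_1)(S_Nf_2).
\]
The exponents in the claimed bound are affine in $1/q$, so this produces $N^{d-\frac{2(d+1)}{q}+\veps q_b/q}\nu^{d-1-\frac{2(d+1)}{q}}$.

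This still carries an $N^{\veps'}$ loss. Removing it is the main obstacle, and it is the purpose of the $\veps$-removal argument of Appendix \ref{sec:noeps}. The first approach I would try is to split the range of $\nu$:

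\begin{itemize}
\item For $\nu\ge N^{-\frac12+\eta}$, with $\eta$ chosen small depending on $\delta$, trade $N^{\veps'}\le \nu^{-\delta}$. This works because $\nu^{-1}\gtrsim N^{\veps'/\delta}$ fails in general; so instead I would follow Tao's $\veps$-removal. Spatial localization (Proposition \ref{prop:BilinearSpatialLocalization}) decomposes into sparse collections of balls, and the $N^\veps$ loss is replaced by a loss $\nu^{-\delta}$, at the price of restricting to $q>q_b$.
\item For $\nu\le N^{-\frac12+\eta}$, use the direct $N^{-1/2}$-sector bound from Proposition \ref{prop:bilinearinfty} as in the last case of Theorem \ref{thm:bilinearL2}. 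Since $q>q_b$, this has a strictly better power, which absorbs the loss.
\end{itemize}
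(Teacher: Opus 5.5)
Your case \eqref{it:bilinearL2main1} is correct and matches the paper's proof: rescale Theorem \ref{thm:bilinearL2} via \eqref{eq:rescalingkey}. Your $L^{\infty}$ bound $N^{d}\nu^{d-1}\prod_{j}\|f_{j}\|_{L^{2}(\Rd)}$ is also fine.

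Case \eqref{it:bilinearL2main2}, however, has a genuine gap. There the bound must hold with $\veps=0$; only the loss $\nu^{-\delta}$ is allowed. H\"older interpolation between an endpoint estimate carrying $N^{\veps}$ and a loss-free $L^{\infty}$ estimate can only give $N^{\veps q_{b}/q}$. For $\nu\eqsim 1$ no factor $\nu^{-\delta}$ can absorb this, as your own first bullet concedes. So the interpolation contributes nothing toward the claim, and everything rests on an $\veps$-removal you only gesture at. The missing ingredient is Proposition \ref{prop:SharpBilinearEstimate}. For $q>q_{b}$ it gives $\|\tilde{S}_{N}f_{1}\,\tilde{S}_{N}f_{2}\|_{L^{q/2}([0,N]\times\Rd)}\lesssim \nu^{d-1-\frac{2(d+1)}{q}-\delta}\prod_{j}\|f_{j}\|_{L^{2}(\Rd)}$, with no power of $N$ lost. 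Rescaling it exactly as you did in case \eqref{it:bilinearL2main1} yields case \eqref{it:bilinearL2main2}, and that is the paper's entire proof of this case.

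Your sketch of the removal also misdescribes the mechanism, so it could not stand in for that proposition.
\begin{itemize}
\item In Appendix \ref{sec:noeps} the sparse families of balls do not come from Proposition \ref{prop:BilinearSpatialLocalization}. They come from Tao's covering lemma applied to the level sets $E_{k}$ of the dual function $g$. This is combined with Proposition \ref{prop:SparseImprovement}: local $R^{\veps}$ bounds on sparse balls, plus $TT^{*}$ almost orthogonality via dispersion and finite speed of propagation.
\item That argument is run at unit transversality, after recentering along a central bicharacteristic and rescaling anisotropically to scale $N\nu^{2}$. There it removes the loss completely for $q>q_{b}$.
\item The factor $\nu^{-\delta}$ is not what replaces $N^{\veps}$. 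It arises only from the tiny-transversality range and the anisotropic spatial localization.
\item The justification in your second bullet is wrong. At $\nu=N^{-1/2}$ the sector bound from Proposition \ref{prop:bilinearinfty} gives exactly the claimed power, not a strictly better one. So for $N^{-1/2}\leq\nu\leq N^{-1/2+\eta}$ the $N^{O(\eta)}$ losses from splitting into $N^{-1/2}$-sectors must be absorbed by $\nu^{-\delta}\geq N^{\delta(\frac{1}{2}-\eta)}$, which forces $\eta\ll\delta$.
\end{itemize}
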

\begin{proof}
For $q=\frac{2(d+3)}{d+1}$, the statement follows from Theorem \ref{thm:bilinearL2} after rescaling, cf.~\eqref{eq:rescalingkey}. It is shown in Appendix \ref{sec:noeps} that one can remove the $\veps$ loss away from the endpoint exponent. That is, \eqref{it:bilinearL2main2} follows from Proposition \ref{prop:SharpBilinearEstimate} after rescaling. 
\end{proof}

We can now prove the main result of this subsection.

\begin{theorem}\label{thm:BilinearEstimates}
Let $2\leq p\leq \infty$, $\frac{d+3}{d+1}p\leq q\leq \infty$ and $\delta,\veps\geq0$. 
Suppose that one of the following conditions holds:
\begin{enumerate}
\item\label{it:bilinearmain1}  $q=\frac{(d+3)}{d+1}p$ and $\veps>0$;
\item\label{it:bilinearmain2} $q>\frac{(d+3)}{d+1}p$ and $\delta>0$;
\item\label{it:bilinearmain3} $q=\infty$.
\end{enumerate}
Then there exists a $C\geq0$ such that the following holds. Let $N^{-1/2}\leq \nu\leq 1$ and let $\Theta_{1},\Theta_{2}\subseteq \Theta_{0}$ be spherical caps of angular width $c_{2}\nu$, distance at least $c_{1}\nu$ apart if $\nu\geq 2N^{-1/2}$. 
Let
$f_{1},f_{2}\in\Sw(\Rd)$ be such that 
\begin{equation}\label{eq:bilinearassume}
\supp(\wh{f_{j}})\subseteq \{ \xi \in \R^d \mid N/c_{3}\leq |\xi|\leq c_{3}N, \hat{\xi}\in\Theta_{j}\}    
\end{equation}
for $j\in\{1,2\}$. Then 
\begin{equation}\label{eq:bilinearprove}
\begin{split}
&\|(S_{N}f_{1})(S_{N}f_{2})\|_{L^{q/2}([0,1] \times \R^d)}\\
&\leq C N^{d-1-\frac{2(d+1)}{q}+\frac{2}{p}-2s(p)+\veps} \nu^{2(d-1-\frac{d+1}{q}-\frac{d-1}{p})-\delta} \prod_{j=1}^2 \| f_{j} \|_{\mathcal{H}^{p}_{FIO}(\Rd)}.
\end{split}
\end{equation}
\end{theorem}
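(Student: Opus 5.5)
The plan is to interpolate between two endpoint bilinear estimates: the $L^{2}$-based bound of Corollary \ref{cor:bilinearL2main} at $p=2$, and an $\HT^{\infty}_{FIO}$-based bound at $p=q=\infty$ coming from Proposition \ref{prop:bilinearinfty}. The line through $(\frac12,\frac{1}{q_b})$ and $(0,0)$ is exactly $q=\frac{d+3}{d+1}p$, which is why condition \eqref{it:bilinearmain1} appears. The cases \eqref{it:bilinearmain2} and \eqref{it:bilinearmain3} are handled by interpolating the $\veps$-free version \eqref{it:bilinearL2main2} of Corollary \ref{cor:bilinearL2main} at some $q_0>q_b$ with the $q=\infty$ endpoint, or by using Bernstein/H\"older in $q$ directly.

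First I establish the case $p=q=\infty$. Decompose each $f_j=\sum_\w f_{j\w}$ into about $(\nu N^{1/2})^{d-1}$ pieces with Fourier support in caps of width $N^{-1/2}$, using the $\ph_\w$-type partition as in the last case of the proof of Theorem \ref{thm:bilinearL2}. Proposition \ref{prop:bilinearinfty} with $p=q=\infty$ gives $\sup_t\|S_N(t)f_{j\w}\|_{L^\infty}\lesssim N^{-\frac{d-1}{4}}\|f_{j\w}\|_{\HT^\infty_{FIO}}$. Since the pieces are localized Knapp-type functions, their $\HT^\infty_{FIO}$ norms are controlled by $\|f_j\|_{\HT^{\infty}_{FIO}}$ via Lemma \ref{lem:Knapp} and the boundedness of the angular localizations on $\HT^\infty_{FIO}$. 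Summing over the $(\nu N^{1/2})^{d-1}$ pieces yields
\[
\|S_Nf_j\|_{L^\infty([0,1]\times\Rd)}\lesssim N^{\frac{d-1}{4}}\nu^{d-1}\|f_j\|_{\HT^\infty_{FIO}(\Rd)}.
\]
Taking the product gives $N^{\frac{d-1}{2}}\nu^{2(d-1)}$, which matches \eqref{eq:bilinearprove} at $p=q=\infty$ since $2s(\infty)=\frac{d-1}{2}$. This is the summable-in-$\nu$ but weak-in-$N$ ingredient. At $p=2$, where $\HT^2_{FIO}=L^2$ and $s(2)=0$, the exponents of \eqref{eq:bilinearprove} agree with Corollary \ref{cor:bilinearL2main}.

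Next I interpolate. The bilinear map $(f_1,f_2)\mapsto (S_Nf_1)(S_Nf_2)$, restricted to functions with the Fourier supports \eqref{eq:bilinearassume}, is interpolated by complex bilinear interpolation between $L^2\times L^2\to L^{q_0/2}$ and $\HT^\infty_{FIO}\times\HT^\infty_{FIO}\to L^\infty$. To make the support restriction compatible with interpolation, I precompose with a fixed smooth Fourier multiplier $\chi_j(D)$ that equals $1$ on the support set and is bounded on all $\HT^p_{FIO}$, using Lemma \ref{lem:pseudoHpFIO} or the standard multiplier results. The complex interpolation scale $[\HT^2_{FIO},\HT^\infty_{FIO}]_\theta=\HT^p_{FIO}$ with $\frac1p=\frac{1-\theta}{2}$ then gives $\frac1q=\frac{1-\theta}{q_0}$. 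A short computation shows that the exponents of $N$ and $\nu$ interpolate linearly to those in \eqref{eq:bilinearprove}, because both sides are affine in $(\frac1p,\frac1q)$. With $q_0=q_b$ this produces case \eqref{it:bilinearmain1} with an $N^{\veps}$ loss. With $q_0>q_b$ and $\delta$ loss it produces the region strictly above the line, and Hölder in time together with Bernstein gives the remaining points with $q$ larger. For case \eqref{it:bilinearmain3}, $q=\infty$ and general $p$, I interpolate the $L^\infty$ endpoint with the trivial $p=2$, $q=\infty$ estimate (Proposition \ref{prop:bilinearinfty} summed over caps), with no loss.

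The main obstacle I expect is the $p=\infty$ endpoint. The step that needs care is bounding $\sum_\w\|f_{j\w}\|_{\HT^\infty_{FIO}}$ by $(\nu N^{1/2})^{d-1}\|f_j\|_{\HT^\infty_{FIO}}$, uniformly in the decomposition. This uses that angular cutoffs at scale $N^{-1/2}$ are uniformly bounded on $\HT^\infty_{FIO}$, which I would justify by duality from $\HT^1_{FIO}$. A second point needing care is justifying bilinear complex interpolation with the Fourier-support restriction; the multipliers $\chi_j(D)$ handle this. When $\nu<2N^{-1/2}$ there is no separation, but then only the $L^\infty$ and Proposition \ref{prop:bilinearinfty} bounds are needed, consistent with the statement.
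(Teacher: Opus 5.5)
Your proposal is correct and follows essentially the same route as the paper: you insert smooth multipliers $\chi_j(D)$ with uniformly $L^1$-bounded kernels and interpolate bilinearly (complex method) between two endpoints. The first is the $p=2$ bound from Corollary \ref{cor:bilinearL2main}, or H\"older plus Proposition \ref{prop:bilinearinfty} when $\nu<2N^{-1/2}$; the second is a $q=\infty$ bound obtained by splitting into $N^{-1/2}$-caps and applying Proposition \ref{prop:bilinearinfty}. The differences are minor: the paper proves the $q=\infty$ bound for every $p$ at once via H\"older and \cite[Proposition 7.1]{HaPoRoYu26}, whose $p=\infty$ case is your uniform boundedness of the cap multipliers on $\HT^{\infty}_{FIO}(\Rd)$, and your extra H\"older/Bernstein step is unnecessary since Corollary \ref{cor:bilinearL2main} already covers all $q_0\in(\frac{2(d+3)}{d+1},\infty]$.
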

\begin{proof}
Let $\chi_{1},\chi_{2}\in C^{\infty}_{c}(\Rd)$ equal $1$ on the region on the right-hand side of \eqref{eq:bilinearassume}, with support in a slightly bigger region, and such that $\|\F^{-1}(\chi_{j})\|_{L^{1}(\Rd)}$ is bounded uniformly in $N$. From \eqref{eq:bilinearprove} 
one then obtains a norm bound for the bilinear operator $(f_{1},f_{2})\mapsto (S_{N}\chi_{1}(D)f_{1})(S_{N}\chi_{2}(D)f_{2})$. 
By \cite[Theorem 4.4.1]{Bergh-Lofstrom76}, such a norm bound can be interpolated. Hence we only need to consider the case where $p=2$ and $q<\infty$, and the case where $q=\infty$. 

For $p=2$ and $q<\infty$, the statement is contained in Corollary \ref{cor:bilinearL2main} for  $\nu\geq 2N^{-1/2}$. 
For $N^{-1/2}\leq \nu<N^{-1/2}$, Cauchy--Schwarz and Proposition \ref{prop:bilinearinfty} yield
\begin{align*}
\|(S_{N}f_{1})(S_{N}f_{2})\|_{L^{q/2}([0,1] \times \R^d)}&\leq \|S_{N}f_{1}\|_{L^{q}([0,1] \times \R^d)}\|S_{N}f_{2}\|_{L^{q}([0,1] \times \R^d)}\\
&\lesssim N^{(d+1)(\frac{1}{2}-\frac{1}{q})}\| f_{1} \|_{L^{2}(\Rd)}\| f_{2} \|_{L^{2}(\Rd)}.
\end{align*}
This suffices, because $(d+1) \big( \frac{1}{2} - \frac{1}{q} \big) = d-1-\frac{2(d+1)}{q}+1-(d-1-\frac{d+1}{q}-\frac{d-1}{2})$.

Next, consider the case where $q=\infty$. 
Here we do not need the assumption of angular separation, and one may let $\delta=\veps=0$. 
As in the proof of Theorem \ref{thm:bilinearL2}, 
we can decompose each $f_{j}$ into a sum $f_{j}=\sum_{\w} f_{j\w}$ of approximately $(\nu N^{1/2})^{d-1}$ terms $f_{j\w}$, each of which satisfies the support condition in \eqref{eq:bilinearinftyassume}, with finitely overlapping Fourier supports. 
Now the required statement is a consequence of Proposition \ref{prop:bilinearinfty}, H\"{o}lder's inequality and \cite[Proposition 7.1]{HaPoRoYu26}, for each $j\in\{1,2\}$ separately:
\begin{align*}
&
\|(S_{N}f_{j}) \|_{L^\infty([0,1]\times\Rd)}
\leq 
\sum_{\w}\|S_{N}f_{j\w}\|_{L^\infty([0,1]\times\Rd)}
\lesssim N^{\frac{d+1}{2p}-s(p)}
\sum_{\w}\|f_{j\w}\|_{\HT^{p}_{FIO}(\Rd)}\\
&\lesssim N^{\frac{d+1}{2p}-s(p)}(\nu N^{1/2})^{\frac{d-1}{p'}}
\Big(\sum_{\w}\|f_{j\w}\|_{\HT^{p}_{FIO}(\Rd)}^{p}\Big)^{1/p}\\
&\lesssim N^{\frac{d-1}{2}+\frac{1}{p}-s(p)}\nu^{d-1-\frac{d-1}{p}}\|f_{j}\|_{\Hp}.\qedhere
\end{align*}. 
\end{proof}

\subsection{Angular decomposition and almost orthogonality}\label{subsec:orthogonal}

In this subsection, we first consider an angular Whitney decomposition in frequency. We then prove an almost orthogonality statement which links this decomposition to the bilinear estimates from the previous subsection. 

Fix 
$N\in 2^{\N}$ large, $c_{0}>0$ small, $0<c_{1}\leq 1$ and $c_{2}>c_{1}$. 
Let $\Theta_{0}$ be as in \eqref{eq:thetazero}. 
Using an angular Whitney decomposition as in \cite{TaoVargas2000}, 
write 
\begin{equation}\label{eq:Whitney}
\Theta_{0}\times\Theta_{0}\subseteq \bigcup_{N^{-1/2}\leq \nu\leq c_{1}}\bigcup_{\w_{1}\sim_{\nu} \w_{2}}\Theta^{\nu}_{\w_{1}}\times \Theta^{\nu}_{\w_{2}}.
\end{equation}
Here the first union is over all $\nu\in 2^{-\N}$ such that $N^{-1/2}\leq \nu\leq c_{1}$. 
Moreover, for each $N^{-1/2}\leq \nu\leq c_{1}$, the $\Theta_{\w_{1}}^{\nu}$ and $\Theta_{\w_{2}}^{\nu}$ are spherical caps of angular width $c_{2}\nu$. 
For $\nu\geq 2N^{-1/2}$, the notation $\w_{1}\sim_{\nu}\w_{2}$ means that the distance between $\Theta_{\w_{1}}^{\nu}$ and $\Theta_{\w_{2}}^{\nu}$ is bounded from below by $c_{1}\nu$ and from above by $c_{2}\nu$, 
whereas for $N^{-1/2}\leq \nu<2N^{-1/2}$ 
it merely means that the distance between $\Theta_{k}^{\nu}$ and $\Theta_{k'}^{\nu}$ is bounded from above by $c_{2}\nu$. 
A priori, one may assume that the $\Theta_{\w_{1}}^{\nu}\times \Theta_{\w_{2}}^{\nu}$ have non-overlapping interiors.

Then, by slightly enlarging the caps and by modifying $c_{1}$ and $c_{2}$, we may suppose that the $\Theta_{\w_{1}}^{\nu}\times \Theta_{\w_{2}}^{\nu}$ have only fixed finite overlap, and we may choose a smooth partition of unity $\{\psi^{\nu}_{\w_{1}}\otimes \psi^{\nu}_{\w_{2}}\mid N^{-1/2}\leq \nu\leq c_{1}, \, \w_{1}\sim_{\nu}\w_{2}\}$ subordinate to the cover \eqref{eq:Whitney}. That is, each $\psi^{\nu}_{\w_{j}}\in C^{\infty}(\Rn\setminus \{0\}$ is positively homogeneous of degree $0$ and satisfies $0\leq \psi^{\nu}_{\w_{j}}\leq 1$ and $\supp(\psi^{\nu}_{\w_{j}})\subseteq \{\xi\in\Rd\setminus\{0\}\mid \hat{\xi}\in \Theta_{\w_{j}}^{\nu}\}$, and $\ind_{\Theta_{0}}=\ind_{\Theta_{0}}\sum_{N^{-1/2}\leq \nu\leq c_{1}}\sum_{\w_{1}\sim_{\nu} \w_{2}}\psi^{\nu}_{\w_{1}}\otimes \psi^{\nu}_{\w_{2}}$. Moreover, the $\F^{-1}(\sum_{M=N/8}^{8N}\psi_{M}\psi_{\w_{j}}^{\nu})$ are uniformly bounded in $L^{1}(\Rd)$.  We remark that we overload the notation as the $\psi_{\omega}^{\nu}$ introduced in \eqref{eq:AngularLocalization} are defined slightly differently, but they have the same relevant properties. Write 
\[
f^{\nu}_{\w_{j}}:=\sum_{M=N/8}^{8N}\psi_{M}(D)\psi^{\nu}_{\w_{j}}(D)f
\]
for $f\in\Sw(\Rd)$.

Next, for $N^{-1/2}\leq \nu\leq c_{1}$, $f,g\in\Sw(\Rd)$ and $(t,x)\in\R^{1+d}$, set
\begin{equation}\label{eq:Bnu}
B_{\nu}(f,g)(t,x):=\sum_{\w_{1}\sim_{\nu}\w_{2}}S_{N}(t)f^{\nu}_{\w_{1}}(x)\cdot S_{N}(t)g^{\nu}_{\w_{2}}(x).
\end{equation}
This decomposition is motivated by the fact that 
\begin{equation}\label{eq:bilinear}
S_{N}f\cdot S_{N}g=\sum_{N^{-1/2}\leq \nu\leq c_{1}}B_{\nu}(f,g)
\end{equation}
if 
$\supp(\wh{f}\,),\supp(\wh{g})\subseteq \{\xi\in\Rn\mid N/8\leq |\xi|\leq 8N, |\hat{\xi}-e_{d}|\leq c_{0}\}$.

To estimate the right-hand side of \eqref{eq:bilinear}, we will rely on the bilinear estimates from the previous subsection and on the following almost orthogonality statement.

\begin{proposition}
\label{prop:AlmostOrthogonalityPhaseSpace}
Let $1\leq q\leq p \leq\infty$, $\delta>0$ and $m\geq0$, 
and set $q^{*}:=\min(q,q')$. Then there exists a $C\geq0$ such that
\begin{align*}
&\|B_{\nu}(f,g)\|_{L^{q}([0,1]\times \Rd)}\\
&\leq C\nu^{-\delta}\Big(\sum_{\w_{1}\sim_{\nu}\w_{2}}\|(S_{N} f^{\nu}_{\w_{1}})(S_{N} g^{\nu}_{\w_{2}})\|_{L^{q}([0,1]\times\Rd)}^{q^{*}}\Big)^{1/q^{*}}
+CN^{-m}\|f\|_{\mathcal{H}^{p}_{FIO}} \|g\|_{{\mathcal{H}^p_{FIO}}}
\end{align*}
for all $N^{-1/2}\leq \nu\leq c_{1}$ and $f,g\in\Sw(\Rd)$. 
\end{proposition}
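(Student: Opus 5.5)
The plan is to mimic the Tao--Vargas almost orthogonality argument. The only new difficulty is that $S_{N}(t)$ does not preserve Fourier support, so we replace Fourier-support orthogonality by a phase-space localization, which the kernel bounds of Appendix \ref{sec:kernel} control. Write $F_{\w_{1}\w_{2}}:=(S_{N}f^{\nu}_{\w_{1}})(S_{N}g^{\nu}_{\w_{2}})$. The claimed bound interpolates between two endpoints:
\begin{itemize}
\item an $\ell^{1}$-type bound at $q=1$ and $q=\infty$, which is just the triangle inequality and needs no orthogonality, but requires counting the overlap of the supports of the pieces;
\item an $\ell^{2}$ bound at $q=2$.
\end{itemize}
Interpolating these (via a localized version, see below) gives $q^{*}=\min(q,q')$ for $1\le q\le\infty$.

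\textbf{Step 1: localization.} Decompose space at scale $\nu^{-1}$ (after rescaling, cf.~\eqref{eq:rescalingkey}, this is scale $\nu N$) using the functions $\tilde{\theta}_{K,k}$ with $K\eqsim\nu$. Write $f^{\nu}_{\w_{1}}=\sum_{k}\tilde{\theta}_{K,k}f^{\nu}_{\w_{1}}$, and similarly for $g$.
\begin{itemize}
\item By Proposition \ref{prop:PhaseSpaceLocalization} and the bi-Lipschitz property of the Hamiltonian flow, the contribution of $S_{N}(t)(\tilde{\theta}_{K,k}f^{\nu}_{\w_{1}})$ on $[0,1]$ is concentrated, up to $O(N^{-m})$ errors, near the image under $\chi_{t}$ of the region $B_{d}(k,C\nu^{-1})\times\{|\hat\xi-\w_{1}|\lesssim\nu, |\xi|\eqsim N\}$.
\item The frequency $\xi(t)$ moves by at most $O(\epsilon_{0})$ in angle, with uniform constants under \eqref{eq:AbsenceCaustics}.
\item Off-diagonal terms, where the spatial cubes of the two factors and of the output cut-off are far apart, are handled exactly as in Proposition \ref{prop:BilinearSpatialLocalization}. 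They contribute $C_{m}N^{-m}\|f\|_{L^{2}}\|g\|_{L^{2}}$, which is bounded by the $\HT^{p}_{FIO}$ norms using \eqref{eq:Sobolev} and the frequency localization at $|\xi|\eqsim N$.
\end{itemize}

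\textbf{Step 2: the $q=2$ estimate.} Fix a cube $Q$ of side $\nu^{-1}$. On $[0,1]\times Q$, apply a space-time Fourier localization to $F_{\w_{1}\w_{2}}$, using the time-frequency kernel bound from Theorem \ref{thm:WavePacketDecomposition} (or Appendix \ref{sec:kernel}).
\begin{itemize}
\item Each factor $\tilde{\theta}_{Q}S_{N}f^{\nu}_{\w_{1}}$ has space-time Fourier transform essentially supported, up to rapidly decaying tails, in a $\nu N$-neighbourhood of a frozen cone sector around $(-a(x_{Q},\xi),\xi)$ with $\hat\xi$ near $\w_{1}(t)$.
\item The uncertainty principle at scale $\nu^{-1}$ is what allows freezing $x_{Q}$; the error is $O(\epsilon_{0}\nu)$ in angle.
\item The Minkowski sums of two such sectors corresponding to $\w_{1}\sim_{\nu}\w_{2}$ overlap boundedly in $(\tau,\xi)$, as in \cite{TaoVargas2000}.
\end{itemize}
Plancherel in $(t,x)$ then gives $\|\sum F_{\w_{1}\w_{2}}\|_{L^{2}}\lesssim(\sum\|F_{\w_{1}\w_{2}}\|_{L^{2}}^{2})^{1/2}$ locally, and we sum over $Q$.

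\textbf{Step 3: $\ell^{1}$ bounds and interpolation.} At $q=1$ and $q=\infty$, use the triangle inequality directly. For $q=\infty$ this only requires that each point sees $O(\nu^{-\delta})$ pairs, a consequence of the same localization.

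Interpolating the estimate for the vector-valued map $(F_{\w_{1}\w_{2}})\mapsto\sum F_{\w_{1}\w_{2}}$, restricted to families with the localized Fourier supports, produces the $\ell^{q^{*}}$ bound. The $\nu^{-\delta}$ loss absorbs the enlargement by $N^{\delta}$-type margins, or rather $\nu^{-\delta}$ margins, needed to make the supports genuinely finitely overlapping.

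\textbf{Main obstacle.} The hardest step is making the interpolation rigorous. The natural approach is to interpolate the linear map $T:(F_{\w_{1}\w_{2}})\mapsto\sum_{\w_{1}\sim_{\nu}\w_{2}}P_{\w_{1}\w_{2}}F_{\w_{1}\w_{2}}$, where the $P_{\w_{1}\w_{2}}$ are smooth space-time Fourier projections onto the thickened sectors (localized on each cube $Q$), and then to show $F_{\w_{1}\w_{2}}=P_{\w_{1}\w_{2}}F_{\w_{1}\w_{2}}+O(N^{-m})$. Controlling this error, which depends on the variable coefficients through freezing on cubes of size $\nu^{-1}$ and on the Lemma \ref{lem:Comparison}-type flow comparison, is where I expect the real work to lie.
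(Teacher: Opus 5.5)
Your skeleton matches the paper's: localize in space, insert smooth angular projections $P_{\w_1\w_2}$, and run the Tao--Vargas interpolation between the $\ell^1$ bounds at $q\in\{1,\infty\}$ and Plancherel at $q=2$. However, Step 2 as you formulate it fails.

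\textbf{Space-time localization on $[0,1]\times Q$ does not give a frozen sector.} You claim that the space-time Fourier transform of $\tilde\theta_Q S_N f^\nu_{\w_1}$ is concentrated near a single frozen sector of aperture $\nu$. But, as you note yourself in Step 1, the direction $\hat\xi(t)$ drifts by $O(\epsilon_0)$ over $t\in[0,1]$, and $\epsilon_0\gg\nu$ once $\nu$ is small. A space-time Fourier transform cannot follow the time-dependent direction $\w_1(t)$. What you actually get is a union of sectors of angular width about $\epsilon_0$. The Minkowski sums then overlap up to $(\epsilon_0/\nu)^{d-1}$ times, which at $\nu\eqsim N^{-1/2}$ is polynomially large in $N$ rather than $O(\nu^{-\delta})$.

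\textbf{The cube scale is also wrong.} In the normalization where $S_N$ acts at frequency $N$ on $[0,1]$, the transported direction varies by $O(\epsilon_0 s)$ across a cube of side $s$. So you need $s\lesssim\nu$, not $s=\nu^{-1}$. Your remark about scale $\nu N$ after rescaling suggests you had $\nu$ in mind.

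\textbf{The missing step is to freeze time.} Since $q\ge q^*$, Minkowski's inequality in $L^{q/q^*}_t$ reduces the claim to a fixed-$t$ bound in $L^q(\Rd)$ with $\ell^{q^*}$ on the right, uniform in $t\in[0,1]$. At fixed $t$ only spatial frequencies matter, and no $\tau$-information is needed.

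The paper then proceeds as follows.
\begin{enumerate}
\item It localizes the output to cubes of side $\nu$ around $\ka\in\nu\Z^d$.
\item It defines the transported direction $\w_{1\ka}$ by $\hat\chi_t(y_{\w_1\ka},\w_1)=(\ka,\w_{1\ka})$ and inserts an angular projection $\Psi_{\w_{1\ka}}(D)$ of aperture $\nu^{1-\delta}$.
\item The bi-Lipschitz property of $\hat\chi_t$ on $\Sp$ shows that, for fixed $\ka$, at most about $\nu^{-\delta(d-1)}$ pairs share a projection. This is exactly where the $\nu^{-\delta}$ loss comes from.
\item The Tao--Vargas interpolation is done cube by cube, and the cubes are then summed by Minkowski, again using $q/q^*\ge1$.
\end{enumerate}
The identity $F=PF+O(N^{-m})$ that you single out as the main obstacle is proved there at fixed time from the phase-space kernel bounds of Proposition \ref{prop:PhaseSpaceLocalization}: the complementary projection $1-\Psi_{\w_{1\ka}}$ only sees rapidly decaying tails. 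Low output frequencies are handled separately by an impossible-interaction argument.

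\textbf{Your error-term justification is wrong for $p>2$.} Frequency localization gives $\|f\|_{L^p}\lesssim N^{d(1/2-1/p)}\|f\|_{L^2}$, not the reverse. So $N^{-m}\|f\|_{L^2}\|g\|_{L^2}$ is not controlled by $N^{-m}\|f\|_{\HT^{p}_{FIO}(\Rd)}\|g\|_{\HT^{p}_{FIO}(\Rd)}$ for spatially spread-out data. The off-diagonal kernel bounds should be applied in $L^p$, using pointwise rapid decay and Young's inequality. Then \eqref{eq:Sobolev} costs only a polynomial factor in $N$, which $N^{-m}$ absorbs.
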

\begin{proof}
For constant coefficients, this follows for $q=2$ from the Fourier support. In the variable coefficient case, an additional spatial localization comes to rescue. To make this effective, we rely on the kernel bounds from Appendix \ref{sec:kernel} instead of integration by parts in oscillatory integrals as in \cite{Lee2006}. A modification leads to a $\nu^{-\delta}$ loss instead of an $N^{\veps}$ loss. 

\subsubsection*{Preliminary work}
Fix $0<\delta<1$. It suffices to prove the statement for fixed $t\in[0,1]$. That is, we may show that
\begin{align*}
&\Big\|\sum_{\w_{1}\sim_{\nu}\w_{2}}(S_{N}(t)f^{\nu}_{\w_{1}})( S_{N}(t)g^{\nu}_{\w_{2}})\Big\|_{L^{q}(\Rd)}\\
&\lesssim \nu^{-\delta}\Big(\sum_{\w_{1}\sim_{\nu}\w_{2}}\|(S_{N}(t) f^{\nu}_{\w_{1}})(S_{N}(t)g^{\nu}_{\w_{2}})\|_{L^{q}(\Rd)}^{q^{*}}\Big)^{1/q^{*}}
+N^{-m}\|f\|_{\mathcal{H}^{p}_{FIO}} \|g\|_{{\mathcal{H}^p_{FIO}}},
\end{align*}
with an implicit constant independent of $t$.

As in Appendix \ref{subsection:KernelBoundsPhaseSpace}, let $\theta \in \mathcal{S}(\Rd)$ be such that $\widehat{\theta}\in C^{\infty}_{c}(\Rd)$ and 
$\sum_{\la \in \Z^{d}} \theta(x-\la)^2 = 1$ for all $x \in \Rd$. Set 
$\theta_{\ka}(x):=\theta(\nu^{-1}(x-\ka))$ for $\ka\in \nu^{1+\delta} \Z^{d}$ and $x\in\Rd$. Moreover, $\text{supp}(\hat{\theta}_k) \subseteq [-C N^{1/2},CN^{1/2}]^d$. We have the following analog of \eqref{eq:localization}:
\begin{equation}\label{eq:localizationtwo}
\|g\|_{L^{q}(\Rd)}\eqsim \Big(\sum_{\kappa\in \nu\Z^{d}}\|\theta^2_{\kappa}g\|_{L^{q}(\Rd)}^{q}\Big)^{1/q}.
\end{equation}
The implicit constant is independent of $N$, $\nu$ and $g\in L^{q}(\Rd)$. 

Throughout, for given $\ka\in\nu\Z^{d}$ and $\w\in S^{d-1}$, let $y_{\w\ka}\in\Rd$ and $\w_{\ka}\in S^{d-1}$ be such that $\hat{\chi}_{t}(y_{\w\ka},\w)=(\ka,\w_{\ka})$, where $\hat{\chi}_{t}$ is the projection of the bicharacteristic flow map of $a_{h}$ at time $t$ (see Appendix \ref{sec:kernel}).

Let $\psi\in C^{\infty}_{c}(\Rd)$ be like in Section \ref{subsec:spaces}, and fix $c>0$ small. For $\w_{0}\in S^{d-1}$ and $\xi\in\Rd\setminus\{0\}$, set $\tilde{\psi}_{\w_{0}}(\xi):=\psi\big(c\nu^{\delta-1}(\hat{\xi}-\w_{0})\big)$. Then $|\partial_{\xi}^{\alpha}\tilde{\psi}_{\w_{0}}(\xi)|\lesssim_{\alpha} \nu^{(\delta-1)|\alpha|}|\xi|^{-|\alpha|}$ 
for all $\alpha\in\N_{0}^{d}$ and $\xi\in\Rd\setminus\{0\}$. Integration by parts with $|\alpha|=d+1$ shows that
\begin{equation}\label{eq:L1norms}
\|\F^{-1}(\tilde{\psi}_{\w_{0}}\psi_{M})\|_{L^{1}(\Rd)}
\lesssim \nu^{-2(1-\delta)}M^{-1} 
\end{equation}
for all $M\in 2^{\N}$, where $(\psi_{M})_{M\in 2^{\N}}$ is as in \eqref{eq:LittlePaley}. Let $M_{0}\in 2^{\N}$ be such that $M_{0} \sim \max(\nu^{-2(1-\delta)},N^{1/2+\delta})$, and for $\xi\in\Rd$ set
\[
\Psi_{\w_{0}}(\xi):=\sum_{M\geq M_{0}}\tilde{\psi}_{\w_{0}}(\xi)\psi_{M}(\xi),
\]
where the sum is taken over $M\in 2^{\N_{0}}$. 
It then follows from \eqref{eq:L1norms} that 
\begin{equation}\label{eq:L1norm}
\|\F^{-1}(\Psi_{\w_{0}})\|_{L^{1}(\Rd)}\lesssim 1.
\end{equation}
Let $\psi^c(M_0^{-1}D) = 1 - \psi(M_0^{-1} D)$.

\subsubsection*{The decomposition}

We give an overview of the decomposition into main and error term. First, we localize the output frequencies into low and high part:
\begin{equation*}
    \sum_{\w_{1}\sim_{\nu}\w_{2}} \big(S_{N}(t)f^{\nu}_{\w_{1}}S_{N}(t)g^{\nu}_{\w_{2}}\big) = (\psi(M_0^{-1} D) + \psi^c(M_0^{-1} D)) \sum_{\w_{1}\sim_{\nu}\w_{2}} \big(S_{N}(t)f^{\nu}_{\w_{1}}S_{N}(t)g^{\nu}_{\w_{2}}\big).
\end{equation*}
The low part can be estimated by kernel estimates as the frequencies essentially retain their dyadic size.

The high part is further decomposed spatially to $\nu$-cubes:
\begin{equation*}
\begin{split}
    &\quad \psi^c(M_0^{-1} D) \sum_{\w_{1}\sim_{\nu}\w_{2}} \big(S_{N}(t)f^{\nu}_{\w_{1}}S_{N}(t)g^{\nu}_{\w_{2}}\big) \\ &= \sum_{\kappa \in \nu \Z^d} \theta_{\kappa}^2 \psi^c(M_0^{-1} D) \sum_{\w_{1}\sim_{\nu}\w_{2}} \big(S_{N}(t)f^{\nu}_{\w_{1}}S_{N}(t)g^{\nu}_{\w_{2}}\big).
\end{split}
\end{equation*}
Now we can use an almost orthogonality similar to the constant-coefficient case, observing that the spatial output localization and the frequency input localization yields a frequency output localization by the bi-Lipschitz property of the Hamiltonian flow, to be detailed below:
\begin{equation*}
    \theta_{\kappa}^2 \psi^c(M_0^{-1} D) \sum_{\w_{1}\sim_{\nu}\w_{2}} \big( \ldots \big) = (\Psi_{\omega_{1 \kappa}}(D) + \Psi_{\omega_{1 \kappa}}^c(D)) \theta_{\kappa}^2 \psi^c(M_0^{-1} D) \sum_{\w_{1}\sim_{\nu}\w_{2}} \big( \ldots \big).
\end{equation*}
Here the angular localization via $\Psi_{\omega_{1 \kappa}}$ is to an aperture of size $\nu^{1-\delta}$: this gives rise to the main term.
The $\delta$ will give us the necessary leeway to treat the complementary angular localization as a smoothing error.

\subsubsection*{The main term}

We apply \eqref{eq:localizationtwo} to write
\begin{equation}\label{eq:decomposekappa}
\begin{aligned}
&\Big\|\sum_{\w_{1}\sim_{\nu}\w_{2}} \psi^c(M_{0}^{-1}D) \big(S_{N}(t)f^{\nu}_{\w_{1}}S_{N}(t)g^{\nu}_{\w_{2}}\big)\Big\|_{L^{q}(\Rd)}\\
&\eqsim \Big(\sum_{\kappa\in\nu\Z^{d}}\Big\|\sum_{\w_{1}\sim_{\nu}\w_{2}}\theta^2_{\ka} \psi^c(M_{0}^{-1}D) \big(S_{N}(t)f^{\nu}_{\w_{1}}S_{N}(t)g^{\nu}_{\w_{2}}\big)\Big\|_{L^{q}(\Rd)}^{q}\Big)^{1/q}.
\end{aligned}
\end{equation}
We bound the terms on the second line of \eqref{eq:decomposekappa}, for $\ka\in\nu\Z^{d}$ momentarily fixed. 

To this end, note that for a given pair $\w_{1}\sim_{\nu}\w_{2}$, there are no more than a fixed finite number of pairs $\w_{1}'\sim_{\nu}\w_{2}'$ such that $|\w_{1\ka}-\w_{1\ka}'|\leq \nu$. Indeed, for 
a given $c>0$, there are at most a fixed finite number of pairs $\w_{1}'\sim_{\nu}\w_{2}'$ such that $|\w_{1}-\w_{1}'|\leq c\nu$. On the other hand, for $c$ large enough, if $|\w_{1}-\w_{1}'|>c\nu$ then 
\[
|\w_{1\ka}-\w_{1\ka}'|=|(\ka,\w_{1\ka})-(\ka,\w_{1\ka}')|\eqsim |(y_{\w_{1}\ka},\w_{1})-(y_{\w_{1}'\ka},\w_{1}')|>c\nu,
\]
due to the fact that $\hat{\chi}_{t}$ is bi-Lipschitz with respect to the standard metric on $\Sp$. 

The previous observation implies that, for a given pair $\w_{1}\sim_{\nu}\w_{2}$, there are at most approximately $\nu^{-\delta(d-1)}$ pairs $\w_{1}'\sim_{\nu}\w_{2}'$ such that $|\w_{1\ka}-\w_{1\ka}'|\leq \nu^{1-\delta}$. As in \cite[Lemma 7.1]{TaoVargas2000}, an argument similar to that used for \eqref{eq:localizationtwo}, involving suitable interpolation between the trivial cases $q=\infty$ and $q=1$, and the case $q=2$ where one can use orthogonality, thus implies that 
\begin{align*}
&\Big\|\sum_{\w_{1}\sim_{\nu}\w_{2}}\Psi_{\w_{1\ka}}(D)\big(\theta^2_{\ka}\psi^c(M_{0}^{-1}D)\big(S_{N}(t)f^{\nu}_{\w_{1}}S_{N}(t)g^{\nu}_{\w_{2}}\big)\big)\Big\|_{L^{q}(\Rd)}\\
&\lesssim \nu^{-\delta(d-1)}\Big(\sum_{\w_{1}\sim_{\nu}\w_{2}}\big\|\Psi_{\w_{1\ka}}(D)\big(\theta^2_{\ka}\psi^c(M_{0}^{-1}D)\big(S_{N}(t)f^{\nu}_{\w_{1}}S_{N}(t)g^{\nu}_{\w_{2}}\big)\big)\big\|_{L^{q}(\Rd)}^{q^{*}}\Big)^{1/q^{*}}\\
&\lesssim \nu^{-\delta(d-1)}\Big(\sum_{\w_{1}\sim_{\nu}\w_{2}}\|\theta^2_{\ka}\psi^c(M_{0}^{-1}D)\big(S_{N}(t)f^{\nu}_{\w_{1}}S_{N}(t)g^{\nu}_{\w_{2}}\big)\|_{L^{q}(\Rd)}^{q^{*}}\Big)^{1/q^{*}}.
\end{align*}
For the last inequality we used \eqref{eq:L1norm}.

Since $q/q^{*}\geq 1$, we can use Minkowski's inequality and again \eqref{eq:localizationtwo}:
\begin{align*}
&\Big(\sum_{\ka\in\nu\Z^{d}}\Big\|\sum_{\w_{1}\sim_{\nu}\w_{2}}\Psi_{\w_{1\ka}}(D)\big(\theta_{\ka}^2 \psi^c(M_{0}^{-1}D)\big(S_{N}(t)f^{\nu}_{\w_{1}}S_{N}(t)g^{\nu}_{\w_{2}}\big)\big)\Big\|_{L^{q}(\Rd)}^{q}\Big)^{\frac{1}{q}}\\
&\lesssim \nu^{-\delta(d-1)}\Big(\sum_{\ka\in\nu\Z^{d}}\Big(\sum_{\w_{1}\sim_{\nu}\w_{2}}\|\theta_{\ka}^2 \psi^c(M_{0}^{-1}D)\big(S_{N}(t)f^{\nu}_{\w_{1}}S_{N}(t)g^{\nu}_{\w_{2}}\big)\|_{L^{q}(\Rd)}^{q^{*}}\Big)^{\frac{q}{q^{*}}}\Big)^{\frac{1}{q}}\\
&\leq \nu^{-\delta(d-1)}\Big(\sum_{\w_{1}\sim_{\nu}\w_{2}}\Big(\sum_{\ka\in\nu\Z^{d}}\|\theta_{\ka}^2 \psi^c(M_{0}^{-1}D) \big(S_{N}(t)f^{\nu}_{\w_{1}}S_{N}(t)g^{\nu}_{\w_{2}}\big)\|_{L^{q}(\Rd)}^{q}\Big)^{\frac{q^{*}}{q}}\Big)^{\frac{1}{q^{*}}}\\
&\eqsim \nu^{-\delta(d-1)}\Big(\sum_{\w_{1}\sim_{\nu}\w_{2}}\|\psi^c(M_{0}^{-1}D) \big(S_{N}(t)f^{\nu}_{\w_{1}}S_{N}(t)g^{\nu}_{\w_{2}}\big)\|_{L^{q}(\Rd)}^{q^{*}}\Big)^{\frac{1}{q^{*}}}\\
&\lesssim \nu^{-\delta(d-1)}\Big(\sum_{\w_{1}\sim_{\nu}\w_{2}}\|S_{N}(t)f^{\nu}_{\w_{1}}S_{N}(t)g^{\nu}_{\w_{2}}\|_{L^{q}(\Rd)}^{q^{*}}\Big)^{\frac{1}{q^{*}}}.
\end{align*}

\subsubsection*{The error term}

Let $\Psi_{\omega_{1 \kappa}}^c = 1-\Psi_{\omega_{1 \kappa}}$. It suffices to prove
\begin{equation}
\label{eq:ErrorTermEstimate}
   \big( \sum_{\kappa \in \nu \Z^d} \| \Psi_{\omega_{1 \kappa}}^{c}(D) (\theta_{\kappa}^2 \psi^c(M_0^{-1}D) (S_N(t) f_{\omega_1}^{\nu} S_N(t) g_{\omega_2}^{\nu} )) \|^q_{L^q} \big)^{1/q} \lesssim N^{-m} \| f_{\omega_1}^{\nu} \|_2 \| g_{\omega_2}^{\nu} \|_2.
\end{equation}
The important observation is that the spatial output and frequency input yields a pointwise localization. This is basically a reprise of Proposition \ref{prop:BilinearSpatialLocalization}. We decompose spatially with functions $\theta_{k_i}^2$ (cf. Appendix \ref{subsection:KernelBoundsPhaseSpace}), which are essentially supported on $\nu^{1-\delta}$-cubes, and have a Fourier support in $[-CN^{1/2},CN^{1/2}]^d$, which does not essentially disrupt the angular frequency localization, see below:
\begin{equation*}
    S_N(t) f_{\omega_1}^{\nu} = \sum_{k_1 \in \nu^{1-\delta} \Z^d} S_N(t) \theta_{k_1}^2 f_{\omega_1}^{\nu}, \quad S_N(t) g_{\omega_2}^{\nu} = \sum_{k_2 \in \nu^{1-\delta} \Z^{d}} S_N(t) \theta_{k_2}^2 g_{\omega_2}^{\nu}.
\end{equation*}
We obtain a corresponding splitting of \eqref{eq:ErrorTermEstimate}:
\begin{equation*}
\begin{split}
   &\quad \big( \sum_{\kappa \in \nu \Z^d} \| \Psi_{\omega_{1 \kappa}}^{c}(D) (\theta_{\kappa}^2 \psi^c(M_0^{-1}D) S_N(t) f_{\omega_1}^{\nu} S_N(t) g_{\omega_2}^{\nu} ) \|^q_{L^q} \big)^{1/q} \\
&\leq \big( \sum_{\kappa \in \nu \Z^d} \| \Psi_{\omega_{1 \kappa}}^{c}(D) \theta_{\kappa}^2 \psi^c(M_0^{-1}D) \sum_{|k_1-k_2| \leq 5} ( S_N(t) f_{k_1,\omega_1}^{\nu} S_N(t) g_{k_2,\omega_2}^{\nu} ) \|^q_{L^q} \big)^{1/q} \\
&\quad + \big( \sum_{\kappa \in \nu \Z^d} \| \Psi_{\omega_{1 \kappa}}^{c}(D) \theta_{\kappa}^2 \psi^c(M_0^{-1}D) \sum_{|k_1-k_2| \geq 5} ( S_N(t) f_{k_1,\omega_1}^{\nu} S_N(t) g_{k_2,\omega_2}^{\nu} ) \|^q_{L^q} \big)^{1/q}.
\end{split}
\end{equation*}
For the second term we reassemble
\begin{equation*}
\begin{split}
    &\quad \big( \sum_{\kappa \in \nu \Z^d} \| \Psi_{\omega_{1 \kappa}}^c(D) \theta_{\kappa}^2 \psi^c(M_0^{-1}D) \sum_{|k_1-k_2| \geq 5} S_N(t) \theta_{k_1}^2 f_{\omega_1}^{\nu} S_N(t) \theta_{k_2}^2 g_{\omega_2}^{\nu} \|_{L^q}^q \big)^{\frac{1}{q}} \\
    &\lesssim \sum_{|k_1-k_2| \geq 5} \| S_N(t) \theta_{k_1} f_{k_1,\omega_1}^{\nu} S_N(t) \theta_{k_2} g_{k_2,\omega_2}^{\nu} \|_{L^q}.
    \end{split}
\end{equation*}
Here we use that the output frequencies are still supported away from the origin.

At this point, crude pointwise estimates suffice (cf. the proof of Proposition \ref{prop:BilinearSpatialLocalization}):
\begin{equation*}
\begin{split}
    &\quad |S_N(t) \psi_{\omega_1}^{\nu^{1-\delta}}(D) \theta_{k_1} \theta_{k_1} f_{\omega_1}^{\nu}(x)| \\
    &\leq C_m N^{2d} (1+ N^{1/2} \text{dist}(x,\pi_x(\hat{\chi}_t(Q_{k_1}^{\nu^{-(1-\delta)}},\Theta_{\omega_1}^{\nu^{1-\delta}}))))^{-m} \| \theta_{k_1} f_{\omega_1}^{\nu} \|_2.
\end{split}
\end{equation*}
For $|k_1 - k_2| \geq 5$ the sum is readily carried out by the bi-Lipschitz property, which proves that the outputs are separted by $\nu^{-(1-\delta)}$. The contribution is smoothing.

\smallskip

We turn to $|k_1-k_2| \leq 5$, i.e. $f_{k_1,\omega_1}^{\nu}$, $g_{k_2,\omega_2}^{\nu}$ essentially localized to the same $\nu^{1-\delta}$-cube. There are finitely many $k^*(\kappa)$ such that $\text{dist}(\pi_x(\hat{\chi}_t(Q_{k^*(\kappa)}^{\nu^{1-\delta}},\Theta_{\omega_1}^{\nu^{1-\delta}})), Q_{\kappa}^{\nu^{1-\delta}}) \lesssim \nu^{1-\delta}$. We pick one of them. For $|k_i-k^*(\kappa)| \geq C$, we find by kernel estimates that the contribution is smoothing. 
Indeed, estimating $\Psi_{\omega_{1 \kappa}}^c$ in $L^q$, using the high frequency constraint, and using again the pointwise kernel estimates, we find this contribution to be smoothing.

In the following for $\kappa$ fixed, we denote by $k_1 \sim_{\kappa} k_2$ the $k_i$ with $|k_i-k^*(\kappa)| \leq C$. Finally, we turn to estimating
\begin{equation}
\label{eq:FullyLocalized}
    \big( \sum_{\kappa \in \nu \Z^d} \| \Psi^c_{\omega_{1\kappa}}(D) \theta_{\kappa}^2 \psi^c(M_0^{-1}D) \sum_{k_1 \sim_{\kappa} k_2} S_N(t) f_{k_1,\omega_1}^{\nu} S_N(t) g_{k_2,\omega_2}^{\nu} \|_{L^q}^q \big)^{1/q}.
\end{equation}
$S_N(t) f_{k_1^*,\omega_1}^{\nu}$ is localized in phase space. In addition to the pointwise spatial estimate, we have by the localization of the Fourier transform and the definition of $\Psi^c_{\omega_{1 \kappa}}$:
\begin{equation*}
|\tilde{\Psi}^c_{\omega_{1 \kappa}}(D) \psi^c(M_0^{-1} D) S_N(t) f_{k_1^*,\omega_1}^{\nu}(x)| \leq C_m N^{-m} \| \theta_{k_1^*}^{1/2} f^{\nu}_{\omega_1} \|_2.
\end{equation*}
Above $\tilde{\Psi}^c_{\omega_{1 \kappa}}$ denotes a mild enlargement of the original support. The kernel estimates show that $S_N(t) f_{k_1^*,\omega_1}^{\nu}$ is localized to the $\nu$-sector around $\omega_{1 \kappa}$ with aperture $N^{-1/2}$:
\begin{equation*}
    | \mathcal{F}[S_N(t) f_{k_1^*,\omega_1}](\xi)| \leq C_m N^{2d} (1 + N^{\frac{1}{2}} |\hat{\xi} - \omega_{1 \kappa}|)^{-m} \| \theta_{k_1} f_{\omega_1} \|_2.
\end{equation*}
Consequently, the only frequency contribution to \eqref{eq:FullyLocalized} is inessential.
We have by Cauchy-Schwarz and $k_1^*(\kappa)$ essentially disjoint as $\kappa \in \nu \Z^d$
\begin{equation*}
\begin{split}
    &\leq C_m N^{-m} \sum_{\kappa} \sum_{k_1^* \sim k_2^*} \| \theta_{k_1^*} f_{\omega_1}^{\nu} \|_2 \| \theta_{k_2^*} g_{\omega_2}^{\nu} \|_2 \\
    &\lesssim C_m N^{-m} \| f_{\omega_1}^{\nu} \|_2 \| g_{\omega_2}^{\nu} \|_2.
\end{split}
\end{equation*}
This shows that this error term is indeed rapidly decaying.

Next, we prove
\begin{equation*}
    \| \psi(M_0^{-1} D) (S_N(t) f_{\omega_1}^{\nu} S_N(t) g_{\omega_2}^{\nu}) \|_{L^q(\R^d)} \leq C_m N^{-m} \| f \|_{\mathcal{H}^p_{FIO}} \| g \|_{\mathcal{H}^p_{FIO}}.
\end{equation*}
Again, we introduce a spatial localization
\begin{equation*}
\begin{split}
    \psi(M_0^{-1} D) (S_N(t) f_{\omega_1}^{\nu} S_N(t) g_{\omega_2}^{\nu}) &= \psi(M_0^{-1} D) \sum_{|k_1-k_2| \leq 5} (S_N(t) f_{k_1,\omega_1}^{\nu} S_N(t) g_{k_2,\omega_2}^{\nu}) \\
    &\quad + \psi(M_0^{-1} D) \sum_{|k_1-k_2| > 5} (S_N(t) f_{k_1,\omega_1}^{\nu} S_N(t) g_{k_2,\omega_2}^{\nu}).
\end{split}
\end{equation*}
The second term can be estimated like above. For the first, we let for a smooth frequency localization
\begin{equation*}
    S_N(t) f_{k_1,\omega_1}^{\nu} = \psi_{\omega_1^t}^N(D) S_N(t) f_{k_1,\omega_1}^{\nu} + (1-\psi^N_{\omega_1^t})(D) S_N(t) f_{k_1,\omega_1}^{\nu}
\end{equation*}
with $\omega_1^t = \pi_{\xi}(\hat{\chi}^t(x_{k_1},\omega_1))$ and
\begin{equation*}
    \text{supp}(\psi^N_{\omega_1^t}(\cdot)) \subseteq \{ \xi \in \R^d : |\xi| \in [N^{1-\delta},N^{1+\delta}], \, \angle(\xi,\omega_1^t) \leq \nu^{1-\delta} \}
\end{equation*}
and the kernel estimates show for any $q \in [1,\infty]$
\begin{equation*}
    \| (1-\psi_{\omega_1^t}^N)(D) S_N(t) f_{k_1,\omega_1}^{\nu} \|_{L^q} \leq C_m N^{-m} \| \theta_{k_1} f_{\omega_1}^{\nu} \|_{L^q}.
\end{equation*}
But the ``main" contribution is vanishing by impossible frequency interaction:
\begin{equation*}
    \psi(M_0^{-1} D) [\psi_{\omega_1^t}^N(D) S_N(t) f_{k_1,\omega_1}^{\nu} \psi^N_{\omega_2^t}(D) S_N(t) f_{k_2,\omega_2}^{\nu}] = 0.
\end{equation*}
The proof is complete.
\end{proof}

\subsection{Conclusion of the proof of Theorem \ref{thm:mainrough}}\label{subsec:conclude}

\begin{figure}[ht!]
\label{figure:InterpolationRestriction}
\begin{tikzpicture}[scale=4]
	
	\def\d{4}
	
	\coordinate (A) at (0,0);
	\coordinate (B) at (0.5,0.5);
	\coordinate (C) at (1,0);
	
	\coordinate (P) at ({(\d-1)/(2*\d)},{(\d-1)/(2*\d)});
	\coordinate (R) at ({(\d-1)*(\d+3)/(2*(\d*\d+2*\d-1))},
	                  {(\d-1)*(\d+1)/(2*(\d*\d+2*\d-1))});
	\coordinate (E) at ({1/2},{(\d+1)/(2*(\d+3))});

	\coordinate (L0) at ({1/2 - 1/(\d-1)},0);
	\coordinate (L1) at ({1/2 - 1/(\d-1)},{1/2 - 1/(\d-1)});
	\coordinate (M0) at ({1/2 + 1/(\d-1)},0);
	\coordinate (M1) at ({1/2 + 1/(\d-1)},{1/2 - 1/(\d-1)});
	
	\coordinate (calP) at ({1/2 - 1/(\d-1)},
	                       {((\d+1)*(\d-3))/(2*(\d-1)*(\d+3))});
	\coordinate (calR) at (M1);

	\coordinate (D0) at (0.5,0);

	\coordinate (N0) at ({(5-\d)/2},0);
	\coordinate (N1) at ({(\d+7)/(2*(\d+2))},{(\d-3)/(2*(\d+2))});

	\coordinate (S0) at ({4/(\d+1)},0);
	\coordinate (S1) at ({(\d+5)/(2*(\d+1))},{(\d-3)/(2*(\d+1))});

	\path[name path=diagRM] (D0) -- (M1);
	\path[name path=RCseg] (R) -- (C);
	\path[name intersections={of=diagRM and RCseg, by=W}];

	\fill[blue!15] (R) -- (B) -- (calR) -- (W) -- cycle;
	\fill[orange!20] (L0) -- (calP) -- (R) -- (W) -- (D0) -- cycle;
	
	\draw[->] (-0.1,0) -- (1.15,0) node[right] {$1/p$};
	\draw[->] (0,-0.1) -- (0,0.65) node[above] {$1/q$};
	
	\draw[thick] (A) -- (B) -- (C) -- cycle;
	\draw[gray,dashed,thick] (1/2,1/2) -- (1/2,0); 
	\draw[red!45, dashed, thick] (A) -- (E);
	
	\draw[green!60!black, dashed, thick] (L0) -- (L1);
	\draw[green!60!black, dashed, thick] (M0) -- (M1);

	\draw[green!60!black, dashed, thick] (N0) -- (N1);
	\draw[green!60!black, dashed, thick] (S0) -- (S1);

	\draw[green!60!black, dashed, thick] (D0) -- (calR);

	\draw[gray, dashed, thick] (P) -- (C);

	\node at (barycentric cs:R=1,B=1,calR=1,W=1) {$\mathcal{T}_u$};
	\node at (barycentric cs:L0=1,calP=1,R=1,W=1,D0=1) {$\mathcal{T}_l$};

	\fill (A) circle (0.4pt) node[below left] {$(0,0)$};
	\fill (B) circle (0.4pt) node[above] {$\left(\frac12,\frac12\right)$};
	
	\fill[gray] (P) circle (0.4pt)
		node[above left] {$\left(\frac{1}{p^*},\frac{1}{p^*}\right)$};
	
	\fill[red!45] (R) circle (0.4pt)
		node[above left] {$\mathcal{Q}$};

    \fill[gray] (E) circle (0.4pt)
        node[above right] {$\mathcal{E}$};

\node[green!60!black, below]
	at (0.6,0) {$\alpha_u$};

\node[green!60!black, below right, xshift=2pt, yshift=-1pt]
	at (N1) {$\alpha_l$};
    
	\fill[green!60!black] (L0) circle (0.4pt)
		node[below] {$\frac12-\frac{r-1}{d-1}$};

    \fill[green!60!black] (M0) circle (0.4pt)
		node[below] {$\frac12+\frac{r+1}{d-1}$};
    
	\fill[green!60!black] (calP) circle (0.4pt)
		node[above left] {$\mathcal{P}$};
	
	\fill[green!60!black] (calR) circle (0.4pt)
		node[above right] {$\mathcal{R}$};

	\draw (0.5,0.015) -- (0.5,-0.015) node[below] {$\frac12$};
	\draw (1,0.015) -- (1,-0.015) node[below] {$1$};
	\draw (0.015,0.5) -- (-0.015,0.5) node[left] {$\frac12$};
	
\end{tikzpicture}
\caption{We illustrate the conclusion of Theorem \ref{thm:mainrough} for $C^{1,1}$-coefficients when $d=4$. First, for $p \geq 2$ in the polygon $(0,0)-\mathfrak{Q}-(1/2,1/2)-(1/2,0)$ we obtain $\mathcal{H}^p_{\text{FIO}}$-$L^q$ estimates for the truncated evolution. Depending on the regularity of the coefficients, these yield $\HT^p_{FIO}$-$L^q$ smoothing estimates for rough wave equations by Proposition \ref{prop:reduction}. The estimates are sharp up to endpoints.}
\end{figure}

\subsubsection*{Reduction using interpolation}

We will show that, for all $(\frac{1}{p},\frac{1}{q})\in \mathcal{T}_{l}$ close to $\mathfrak{Q}$, and for all $\alpha>\alpha(p,q)-s(p)$ and some $c_{0}>0$, there exists a $C>0$ such that 
\begin{equation}\label{eq:reductionbound2}
\|S_{N}f\|_{L^{q}([0,1]\times\Rd)}\leq CN^{\alpha}\|f\|_{\Hp}
\end{equation}
for $N\in 2^{\N}$ large and for all $f\in\Sw(\Rd)$ satisfying $\supp(\wh{f}\,)\subseteq \{\xi\in\Rn\mid N/8\leq |\xi|\leq 8N, |\hat{\xi}-e_{d}|\leq c_{0}\}$.   Moreover, one may let $\alpha=\alpha(p,q)-s(p)$ if $(\frac{1}{p},\frac{1}{q})$ lies in the interior of $\mathcal{T}_{l}$.

To see that it suffices to prove this claim, first note that \eqref{eq:reductionbound2} immediately extends to all $f\in\Hp$ satisfying $\supp(\wh{f}\,)\subseteq \{\xi\in\Rn\mid N/8\leq |\xi|\leq 8N\}$, by finite decomposition, rotation and density. In particular, one obtains \eqref{eq:reductionbound2} with $S_{N}$ replaced by $S_{N}\sum_{M=N/4}^{4N}\psi_{M}(D)$, for all $f\in\Hp$. Note also that the same bounds hold for $p=q=2$, with $\alpha=0$, and for $q=\infty$, by Proposition \ref{prop:roughfixed} and a standard Sobolev embedding. In fact, in the latter case one may let $\alpha=\alpha(p,\infty)-s(p)=s(p)+\frac{d}{p}$ upon replacing $L^{q}([0,1]\times\Rd)$ by $L^{q}([0,1];\HT^{\infty}(\Rd))$. Finally, for $p=2$ and $q=\frac{2(d+1)}{d-1}$, due to the fact that $\HT^{2}_{FIO}(\Rd)=L^{2}(\Rd)$, \eqref{eq:reductionbound2} is a standard Strichartz estimate, obtained in \cite{Tataru2001} (see also \cite[Appendix A]{SchippaTataru2025}) with $\alpha=\alpha(p,q)-s(p)=\frac{1}{2}$.

 By interpolating these bounds and using that $f=\sum_{M=N/4}^{4N}\psi_{M}(D)f$ for all $f\in\Hp$ with $\supp(\wh{f}\,)\subseteq \{\xi\in\Rd\mid N/4\leq |\xi|\leq 4N\}$, one extends \eqref{eq:reductionbound2} to all $(\frac{1}{p},\frac{1}{q})\in \mathcal{T}_{l}\cup \mathcal{T}_{u}$ with $p\geq 2$, under this slightly stronger assumption on $f$, and one may let $\alpha=\alpha(p,q)-s(p)$ if $(\frac{1}{p},\frac{1}{q})$ lies in the interior of $\mathcal{T}_{l}$.

Next, if $r>\max(2s(p)+1,\alpha(p,q))$, then we can apply Proposition \ref{prop:reduction} to obtain the required bounds but with the solution $u$ from \eqref{eq:defsol} replaced by $\psi_{N}(D)u$, for any $N\in 2^{\N_{0}}$. By summing over $N$, cf.~\eqref{eq:LittlePaley}, this already suffices to prove Theorem \ref{thm:mainrough} with $\alpha>\alpha(p,q)-s(p)$.

To reach the limiting value $\alpha=\alpha(p,q)-s(p)$ for suitable $p$ and $q$, note that the relevant estimate concerns mapping properties of the solution operators $U_{j}=(U_{j}(t))_{t\in\R}$ from Proposition \ref{prop:roughfixed}, for $j\in\{0,1\}$. As such, we can again rely on interpolation. In particular, to reach the endpoint exponent for $p=2$ one can interpolate as before, between the case $q=\frac{2(d+1)}{d-1}$, obtained in \cite{Tataru2001}, and the case $q=\infty$, which follows from Proposition \ref{prop:roughfixed} and a standard Sobolev embedding. 

For $(\frac{1}{p},\frac{1}{q})$ in the interior of $\mathcal{T}_{l}$ and such that $p>2$, from \eqref{eq:reductionbound2} we have already derived the estimate
\begin{equation}\label{eq:dyadicsolution}
\|\psi_{N}(D)U_{j}f\|_{L^{q}([0,1]\times\Rd)}\lesssim \|f\|_{\HT^{\alpha(p,q)-s(p)-j,p}_{FIO}(\Rd)},
\end{equation}
for all $N\in 2^{\N_{0}}$, $f\in\Hp$ and $j\in\{0,1\}$. Now, $(\frac{1}{p},\frac{1}{q})$ lies on a line segment in $\mathcal{T}_{l}$ such that the Sobolev exponent in \eqref{eq:dyadicsolution} is not constant along this line. 
One can then check that the conditions of Lemma \ref{lem:interabstract} are satisfied, and as such
\begin{equation}\label{eq:restrictedweak}
U_{j}\lb D\rb^{-(\alpha(p,q)-s(p)-j)}:(\HT^{p_{0}}_{FIO}(\Rd),\HT^{p_{1}}_{FIO}(\Rd))_{\theta,1}\to L^{q,\infty}([0,1]\times\Rd)
\end{equation}
is bounded for suitable $p_{0},p_{1}\in[1,\infty]$ and $\theta\in(0,1)$ with $\frac{1}{p}=\frac{1-\theta}{p_{0}}+\frac{\theta}{p_{1}}$.

Finally, 
$(\frac{1}{p},\frac{1}{q})$ 
also lies on a line segment along which the Sobolev exponent in \eqref{eq:restrictedweak} is constant. As a result, one can interpolate again, using \eqref{eq:restrictedweak}, the reiteration theorem (see \cite[Theorem 3.5.3]{Bergh-Lofstrom76}) and Lemma \ref{lem:interHpFIO}, to obtain that
\[
U_{j}:\HT^{\alpha(p,q)-s(p)-j,p}_{FIO}(\Rd)\to L^{q,p}([0,1]\times\Rd)
\]
is bounded. Given that $L^{q,p}([0,1]\times\Rd)\subseteq L^{q}([0,1]\times\Rd)$, this suffices.

\subsubsection*{Proof of \eqref{eq:reductionbound2}}

It follows from \eqref{eq:bilinear} that 
\[
\|S_{N}f\|_{L^{q}([0,1]\times\Rd)}^{2}\leq 
\sum_{N^{-1/2}\leq \nu\leq c_{1}}\|B_{\nu}(f,f)\|_{L^{q/2}([0,1]\times\Rd)}
\]
for all $f\in\Sw(\Rd)$ satisfying $\supp(\wh{f}\,)\subseteq \{\xi\in\Rn\mid N/8\leq |\xi|\leq 8N, |\hat{\xi}-e_{d}|\leq c_{0}\}$. Keeping in mind that $q\geq \frac{d+3}{d+1}p$ and $\frac{d+1}{q}\leq (d-1)(1-\frac{1}{p})$ for all $(\frac{1}{p},\frac{1}{q})\in\mathcal{T}_{l}$, with strict inequalities holding in the interior of $\mathcal{T}_{l}$, and that there are approximately $\log(N)$ values $N^{-1/2}\leq \nu\leq c_{1}$, it thus suffices to show the following. 

For all $(\frac{1}{p},\frac{1}{q})$ close to $\mathfrak{Q}$ satisfying $q\geq \frac{d+3}{d+1}p$, and for all $\delta,\veps>0$, one has
\begin{equation}\label{eq:bilinearbound}
\begin{aligned} 
&\|B_{\nu}(f,g)\|_{L^{q/2}([0,1]\times\Rd)}\\
&\lesssim N^{d-1-\frac{2(d+1)}{q}+\frac{2}{p}-2s(p)+\veps}\nu^{2(d-1-\frac{d+1}{q}-\frac{d-1}{p}-\delta)}\|f\|_{\Hp}\|g\|_{\Hp}
\end{aligned}
\end{equation}
for all $f,g\in\Sw(\Rd)$ and $N^{-1/2}\leq \nu\leq c_{1}$. Moreover, one may let $\veps=0$ if $q> \frac{d+3}{d+1}p$. 

To prove \eqref{eq:bilinearbound}, first use Proposition \ref{prop:AlmostOrthogonalityPhaseSpace} to bound $\|B_{\nu}(f,g)\|_{L^{q/2}([0,1]\times\Rd)}$ by $\nu^{-\delta}(\sum_{\w_{1}\sim_{\nu}\w_{2}}\|(S_{N}f^{\nu}_{\w_{1}})(S_{N}g^{\nu}_{\w_{2}})\|_{L^{q/2}([0,1]\times\Rd)}^{(q/2)^{*}})^{1/(q/2)^{*}}$, modulo an error term which decays rapidly in $N$. Given that $\nu\geq 2N^{-1/2}$, this error term is negligible. 

Next, 
Theorem \ref{thm:BilinearEstimates} bounds $\nu^{\delta}(\sum_{\w_{1}\sim_{\nu}\w_{2}}\|(S_{N}f^{\nu}_{\w_{1}})(S_{N}g^{\nu}_{\w_{2}})\|_{L^{q/2}([0,1]\times\Rd)}^{(q/2)^{*}})^{1/(q/2)^{*}}$ by a multiple of
\begin{align*}
N^{d-1-\frac{2(d+1)}{q}+\frac{2}{p}-2s(p)+\veps}\nu^{2(d-1-\frac{d+1}{q}-\frac{d-1}{p})}\!\Big(\!\sum_{\w_{1}\sim_{\nu}\w_{2}}\!\|f^{\nu}_{\w_{1}}\|_{\Hp}^{(q/2)^{*}}\|g^{\nu}_{\w_{2}}\|_{\Hp}^{(q/2)^{*}}\Big)^{\frac{1}{(q/2)^{*}}},
\end{align*}
and one may let $\veps=0$ if $q>\frac{d+3}{d+1}p$. Now Cauchy--Schwarz yields
\begin{align*}
&\Big(\sum_{\w_{1}\sim_{\nu}\w_{2}}\|f^{\nu}_{\w_{1}}\|_{\Hp}^{(q/2)^{*}}\|g^{\nu}_{\w_{2}}\|_{\Hp}^{(q/2)^{*}}\Big)^{1/(q/2)^{*}}\\
&\leq \Big(\sum_{\w_{1}}\|f^{\nu}_{\w_{1}}\|_{\Hp}^{2(q/2)^{*}}\Big)^{1/(2(q/2)^{*})}\Big(\sum_{\w_{2}}\|g^{\nu}_{\w_{2}}\|_{\Hp}^{2(q/2)^{*}}\Big)^{1/(2(q/2)^{*})}\\
&\leq \Big(\sum_{\w_{1}}\|f^{\nu}_{\w_{1}}\|_{\Hp}^{p}\Big)^{1/p}\Big(\sum_{\w_{2}}\|g^{\nu}_{\w_{2}}\|_{\Hp}^{p}\Big)^{1/p}.
\end{align*}
For the last line we used that $2(q/2)^{*}=2\min(q/2,(q/2)')>p$ if $(\frac{1}{p},\frac{1}{q})$ is close to $\mathfrak{Q}$, as follows by noting that this inequality is valid for $(\frac{1}{p},\frac{1}{q})=\mathfrak{Q}$.

Finally, as in the proof of Proposition \ref{prop:bilinearinfty}, one can decompose $f$ into a sum of approximately $N^{(d-1)/2}$ terms, each of which has Fourier support in a cone of angular width $N^{-1/2}$, with fixed finite overlap. This also decomposes each $f^{\nu}_{\w_{1}}$, and one can then apply \cite[Proposition 7.1]{HaPoRoYu26} to obtain $(\sum_{\w_{1}}\|f^{\nu}_{\w_{1}}\|_{\Hp}^{p})^{1/p}\lesssim \|f\|_{\Hp}$. The same statement holds for $g$, which concludes the proof.

\appendix

\section{Kernel bounds}\label{sec:kernel}

In this appendix we collect kernel bounds from \cite{GebaTataru2007} and adapt them to our setting. 

We consider the half-wave equation 
\begin{equation}
    \label{eq:EvolutionEquationGT}
        (D_t + b(x,D)) u= 0, \quad u(0) = u_0,
\end{equation}
for a suitable symbol $b:\R^{2d}\to \R$.  

\subsection{Kernel bounds for operators on phase space}

Geba--Tataru in \cite{GebaTataru2007} showed rapid decay for the kernels of the solution operators to \eqref{eq:EvolutionEquationGT}, after conjugation with wave packet transforms that map to and from phase space, $T^{*}\Rd=\R^{2d}$. Moreover, the decay is measured with respect to a suitable metric on phase space, and it occurs after following the associated Hamiltonian flow. 

It should be noted that more or less the same kernel bounds previously appeared in \cite{Smith98a,Smith1998}, and that they were connected to the Hardy spaces for Fourier integral operators in \cite{Smith98a,HassellPortalRozendaal2020}. We will formulate the bounds as in \cite{HassellPortalRozendaal2020}, given that the Hardy spaces for Fourier integral operators play a key role for us.

As in \cite[Section 2.1]{HassellPortalRozendaal2020} (see also \cite[Section 7]{GebaTataru2007}), let $d_{S}$ be the pseudo-Riemannian metric on the cosphere bundle $\Sp:$ associated with the canonical $1$-form $\hat{\xi}\cdot dx$ on $\Sp$. In fact, the latter notion from contact geometry will play no role for us; 
we will only need the 
following convenient equivalent expression for $d_{S}$:
\begin{equation}\label{eq:metric}
d_{S}((x,\w),(y,\nu))\eqsim \big(|x-y|^{2}+|\w\cdot(x-y)|+|\w-\nu|^{2}\big)^{1/2}
\end{equation}
for all $(x,\w),(y,\nu)\in\Sp$. 

Next, let $h\in C^{\infty}_{c}(\R)$ be real-valued, supported in $B_{1}(0,1/16)$ and such that $\|h\|_{L^{2}(\Rd)}=1$. Set
\[
\phi_{\xi}(\eta) := \langle \eta \rangle^{-\frac{d+1}{4}} h\big(\lb\eta\rb^{-2}|\hat{\eta} \cdot (\xi-\eta)|^2+\lb\eta\rb^{-1}|\Pi_{\eta}^{\perp}(\xi-\eta)|^{2}\big)
\]
for $\xi,\eta\in\Rd$ with $\eta\neq 0$, and $g_{\xi}(0):=h(|\xi|^{2})$. Here $\Pi_{\eta}^{\perp}$ is the orthogonal projection onto the complement of the line $\R\eta$. Then, for every $M\geq0$, one has 
\begin{equation}\label{eq:kernelboundspacket}
|\F^{-1}(\phi_{\xi})(x-z)|\lesssim \lb \xi\rb^{\frac{d+1}{4}}\big(1+\lb\xi\rb d_{S}((x,\hat{\xi}),(z,\hat{\xi}))^{2}\big)^{-M}
\end{equation}
for all $x,z,\xi\in\Rd$, cf.~\cite[p.~ 1074]{GebaTataru2007} and \eqref{eq:metric}. 

For $f\in \Sw'(\Rd)$ and $(x,\xi)$, write
\begin{equation}\label{eq:defW}
Wf(x,\xi):=\phi_{\xi}(D)f(x).
\end{equation}
This is well defined because $\phi_{\xi}\in C^{\infty}_{c}(\Rd)$. Then $W:L^{2}(\Rd)\to L^{2}(\R^{2d})$ is an isometry, and an adjoint of $W$ is given by 
\[
W^{*}F(x):=\int_{\R^{2}}\phi_{\xi}(D)F(\cdot,\xi)(x)\ud \xi,
\]
for $F\in L^{2}(\R^{2d})$ and $x\in\Rd$.

Throughout, fix $b\in \A^{2}S^{1}_{1,1/2}$ real-valued and asymptotically homogeneous, with real-valued limit $a\in C^{1,1}S^{1}_{1,0}$ which is homogeneous of degree $1$ for $|\xi|\geq 1$. Set 
\[
a_{h}(x,\xi):=\lim_{\la\to\infty}\la^{-1}a(x,\la\xi)
\]
for $(x,\xi)\in\Rd$ with $\xi\neq 0$. Then $a_{h}$ is homogeneous of degree $1$ on $T^{*}\Rd$ minus the zero section. Note that $a_{h}$ will typically not lie in $C^{1,1}S^{1}_{1,0}$, but that it does have this regularity away from the zero section.  Under these conditions, the Hamiltonian equations \eqref{eq:flow} are well-posed if one replaces $a$ by $a_{h}$, and they generate a bicharacteristic flow $(\chi_{t})_{t\in\R}$ on $T^{*}\Rd$ minus the zero section, homogeneous of degree $1$. In particular, each $\chi_{t}$ is a Lipschitz symplectomorphism, and it projects down naturally to a map $\hat{\chi}_{t}:\Sp\to \Sp$. Then $\hat{\chi}_{t}$ is bi-Lipschitz both with respect to the standard metric on $\Sp$ and with respect to $d_{S}$, by \cite[Proposition 2.4]{HassellPortalRozendaal2020}.

The following result concerns the group $(S_{t})_{t\in\R}$ of solution operators to \eqref{eq:EvolutionEquationGT}, which are well-defined under the conditions on $b$, as in Lemma \ref{lem:fixedtimesmooth}. For $\sigma>0$, set
\begin{equation*}
    \Upsilon(\sigma) := \min(\sigma,\sigma^{-1}).
\end{equation*}

\begin{proposition}\label{prop:kernelbounds}
Let $t_{0},m>0$. Then there exists a $C_{m}\geq0$ such that $WS_{t}W^{*}$ has an integral kernel $K_{t}:\R^{4d}\to\C$ satisfying 
\[
|K_{t}(x,\xi,y,\eta)|\leq C_{m} \Upsilon \big( \tfrac{\lb\xi\rb}{\lb\eta\rb} \big)^{m}\big(1+\max(| \xi |,|\eta|)d_{S}((x,\hat{\xi}),\hat{\chi}_{t}(y,\hat{\eta}))^{2}\big)^{-m},
\]
for all $t\in[-t_{0},t_{0}]$ and $(x,\xi,y,\eta)\in \R^{4d}$ with $\xi\neq 0\neq \eta$.
\end{proposition}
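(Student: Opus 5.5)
Since the statement is essentially the Geba--Tataru kernel bound, adapted to the wave packet transform $W$ and to asymptotically homogeneous symbols, the plan is to reduce to their result rather than redo the parametrix construction. Fix $t\in[-t_0,t_0]$ and write the kernel as
\[
K_{t}(x,\xi,y,\eta)=\big\langle S_{t}\,\F^{-1}(\phi_{\eta})(\cdot-y),\,\F^{-1}(\phi_{\xi})(\cdot-x)\big\rangle,
\]
that is, a matrix coefficient of $S_t$ between two packets. Each packet is localized near $(y,\hat\eta)$ at frequency $\lb\eta\rb$, respectively near $(x,\hat\xi)$ at frequency $\lb\xi\rb$, in the sense of \eqref{eq:kernelboundspacket}.

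First I would treat the frequency factor $\Upsilon(\lb\xi\rb/\lb\eta\rb)^m$. For this, I would show that $S_t$ almost preserves dyadic frequency scales. One route is a commutator/Gr\o nwall argument for $\psi_M(D)S_t\psi_{M'}(D)$: since $b\in\A^{2}S^{1}_{1,1/2}$, the commutators $[b(x,D),\psi_M(D)]$ gain powers of $M^{-1/2}$, and iterating gives $O(\max(M/M',M'/M)^{-m})$ on $L^2$. An alternative is to use directly that $b$ has uniformly bounded $x$-derivatives up to order two, which is the content of Geba--Tataru's estimates. Combined with the $L^2$ normalisations of the packets, this yields the decay when $\lb\xi\rb\not\eqsim\lb\eta\rb$. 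So it suffices to consider $\lb\xi\rb\eqsim\lb\eta\rb\eqsim\la$.

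Second, in the regime $|\xi|\eqsim|\eta|\eqsim\la$, I would localize $b$ to frequencies $\eqsim\la$ and rescale by $\la$, exactly as in the passage from $b_N$ to $a_N$ in Section \ref{subsec:wavedecomp}. The rescaled symbol then satisfies the size and regularity bounds of the form \eqref{eq:SizeRegularitypsharp} required in \cite{GebaTataru2007}. On the rescaled scale their theorem gives decay of the form $(1+\la\, d_S((x,\hat\xi),\hat\chi^{b}_t(y,\hat\eta))^2)^{-m}$. Here $\hat\chi^b_t$ is the flow of $b$, projected to $\Sp$, and the metric $d_S$ arises precisely because the packets $\phi_\xi$ have the second dyadic (parabolic) shape encoded in \eqref{eq:metric}.

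The remaining step, which I expect to be the main obstacle, is to replace the flow of $b$ by the homogeneous flow $\hat\chi_t$ of $a_h$. Here asymptotic homogeneity enters. The bound $a-b\in C^{1,1}S^{0}_{1,1/2}$ and the bound on $(\xi\cdot\partial_\xi-1)b$ give derivative bounds analogous to Lemma \ref{lem:DifferenceEstimates}. A bootstrap as in Lemma \ref{lem:Comparison}, over the bounded time interval $|t|\leq t_0$ (in contrast to $|t|\leq N$ there), should show that the two flows starting from $(y,\eta)$ stay within $d_S$-distance $O(\la^{-1/2})$. This is the uncertainty scale of the packets. By the triangle inequality and the bi-Lipschitz property of $\hat\chi_t$ (\cite[Proposition 2.4]{HassellPortalRozendaal2020}), the kernel decay then transfers to $\hat\chi_t$ at the cost of constants only. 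The delicate point is tracking the non-uniform $C^{1,1}$ regularity near the zero section. This is harmless once $\la\gtrsim1$; the case $\la\lesssim1$ is trivial, since both packets are then essentially Schwartz bumps and $S_t$ is bounded.
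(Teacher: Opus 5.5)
Your overall strategy, applying Geba--Tataru to $b$ and then replacing the $b$-flow by $\hat\chi_t$, is the same as the paper's. Two steps fail as written.

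\textbf{The reduction to comparable frequencies.} The reduction to $\lb\xi\rb\eqsim\lb\eta\rb$ is not valid. The claimed bound is a \emph{product} of $\Upsilon(\lb\xi\rb/\lb\eta\rb)^m$ and $(1+\max(|\xi|,|\eta|)\,d_S^2)^{-m}$. $L^2$ frequency-leakage bounds for $\psi_M(D)S_t\psi_{M'}(D)$ only give $|K_t|\lesssim\Upsilon^m$. They give no decay as $x$ moves away from the spatial projection of $\hat\chi_t(y,\hat\eta)$. So in the non-comparable regime you still owe the $d_S$-decay. The same holds for $\la\lesssim 1$: $L^2$-boundedness of $S_t$ gives no spatial decay, so that case is not trivial either. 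You could repair this by proving $d_S$-decay separately for all frequency pairs and taking geometric means. However, that cross-frequency spatial decay is exactly what needs the phase-space machinery.

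The paper avoids the issue. Since $b\in\A^2S^1_{1,1/2}$, it satisfies the hypotheses of \cite[Theorem 6.2]{GebaTataru2007} globally in frequency. The phase-space distance there, compared with $d_S$ via \cite[Theorem 2.5]{GebaTataru2007}, already contains both the frequency-ratio term and the $d_S$ term. Your dyadic localization and rescaling is therefore an unnecessary detour. It would also require justifying that $S_t$ agrees with the evolution of the frequency-localized symbol on packets at frequency $\la$. Moreover, \eqref{eq:SizeRegularitypsharp} is the Schippa--Tataru normalization for isotropic packets over times $\sim\la$, not the Geba--Tataru hypothesis.

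\textbf{The flow comparison.} For $a-b\in C^{1,1}S^0_{1,1/2}$ one only has $|\partial_x(a-b)|\lesssim\la^{1/2}$. A Gr\"onwall bootstrap in the style of Lemma \ref{lem:Comparison} then yields $|x_1(t)-x_2(t)|\lesssim\la^{-1/2}$ and $|\xi_1(t)-\xi_2(t)|\lesssim\la^{1/2}$, and this is sharp in the directions transverse to $\hat\xi_1$. But $d_S^2$ contains $|\hat\xi\cdot(x_1-x_2)|$ to the \emph{first} power. Hence this only gives $d_S\lesssim\la^{-1/4}$, which loses a factor $\la^{1/2}$ inside $1+\la d_S^2$.

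The missing ingredient is separate control of the contact component. Differentiate $\xi_1\cdot(x_1-x_2)$, where $(x_1,\xi_1)$ follows $a_h$ and $(x_2,\xi_2)$ follows $b$. Use Euler's identity $\xi\cdot\partial_\xi a_h=a_h$, the condition $(\xi\cdot\partial_\xi-1)b\in S^0_{1,1/2}$, and $|a_h-b|\lesssim 1$ to get
\[
\tfrac{d}{dt}\big(\xi_1\cdot(x_1-x_2)\big)=a_h(x_1,\xi_1)-a_h(x_2,\xi_2)-\partial_xa_h(x_1,\xi_1)\cdot(x_1-x_2)-\partial_\xi b(x_2,\xi_2)\cdot(\xi_1-\xi_2)+O(1).
\]
A second-order Taylor expansion, combined with the crude bounds above, shows the right-hand side is $O(1)$. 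Hence $|\hat\xi_1\cdot(x_1-x_2)|\lesssim\la^{-1}$ for $|t|\leq t_0$, and so $d_S\lesssim\la^{-1/2}$.

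This is the content of the arguments from \cite[Proposition 7.5]{GebaTataru2007} and \cite[Theorem 8.1]{HassellRozendaal2023} that the paper invokes. You list the right hypotheses, but you attribute the conclusion to a Lipschitz bootstrap that cannot produce it.
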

\begin{proof}
Using that $b\in \A^{2}S^{1}_{1,1/2}$, one can check that $b$ satisfies the conditions of \cite[Theorem 6.2]{GebaTataru2007}. The statement now follows by comparing the distance which appears in that result with $d_{S}$, using \cite[Theorem 2.5]{GebaTataru2007}. Note also that the latter result involves the flow generated by $b$, not the flow generated by $a_{h}$. This makes no difference for the conclusion, as follows by comparing the flows using the arguments in the proofs of \cite[Proposition 7.5]{GebaTataru2007} and \cite[Theorem 8.1]{HassellRozendaal2023}. 
\end{proof}

\begin{remark}\label{rem:kernelboundsconstant}
For fixed $t_{0},M>0$ the constant $C$ in Proposition \ref{prop:kernelbounds} only depends on finitely many of the $\A^{2}S^{1}_{1,1/2}$ seminorms of $b$, and on finitely many of the seminorms implicit in the definition of asymptotic homogeneity, cf.~Definition \ref{def:asymphom}. 
\end{remark}

\begin{remark}\label{rem:otherproof}
Proposition \ref{prop:kernelbounds} also follows from \cite{HassellRozendaal2023}. Indeed, it is shown there (see \cite[Remark 7.2 and Sections 8 and 9]{HassellRozendaal2023}) that one can write $S_{t}f=E_{t}f+\int_{0}^{t}E_{t-\tau}Vf(\tau)\ud \tau$ for all $t\in\R$ and $f\in\Sw(\Rd)$, where $E_{t}$ is a parametrix which satisfies kernel bounds as in Proposition \ref{prop:kernelbounds}, and the error $Vf$ is a convergent series of terms, the kernels of which also satisfy such bounds. 
\end{remark}

\subsection{Kernel estimates for frequency-localized functions}

Next, we apply the bounds from Proposition \ref{prop:kernelbounds} to obtain kernel estimates for frequency-localized functions on $\Rd$.

For $t\in\R$, $N^{-1/2}\leq \nu\leq 1$ and $\w\in S^{d-1}$, we consider the kernel $\tilde{K}^{\nu}_{t,\w}:\R^{2d}\to\C$ of the operator $S_N(t) \psi^{\nu}_{\omega}(D)$, where $\psi^{\nu}_{\w}$ is as in \eqref{eq:AngularLocalization}. That is, 
\begin{equation*}
S_N(t) \psi^{\nu}_{\omega_1}(D) f(x) = \int_{\Rd} \tilde{K}_{t,\w}^{\nu}(x,y) f(y) \ud y
\end{equation*}
for all $f\in\Sw(\Rd)$ and $x\in\Rd$. 
For $\xi\in\Rd$, we also consider the kernel $\tilde{K}^{\nu}_{t,\w,\xi}:\R^{2d}\to\C$ of the operator $\phi_{\xi}(D)WS_{N}(t)\psi^{\nu}_{\w}(D)$. 

For $y\in\Rd$, set $\hat{\mathcal{Y}}_{y,\omega} := \{ (y,\mu)\in\Sp \mid  |\mu- \w| \leq \nu + 2 N^{-1/2} \} $ and $(y(t),\omega(t)) := \hat{\chi}_{t} (y,\omega)$.

\begin{proposition}\label{prop:KernelEstimatesFrequencyLocalization}
Let $t_{0},m>0$. Then there exists a $C\geq0$ such that
\begin{equation*}
\begin{split}
    |\tilde{K}^{\nu}_{t,\w}(x,y)| &\leq CN^{d} \nu^{d-1} \big(1+N \text{dist}_S^2((x,\omega(t)),\hat{\chi}_t(\hat{\mathcal{Y}}_{y,\omega})\big)^{-m},\\
    |\tilde{K}^{\nu}_{t,\w,\xi}(x,y)| &\leq C\Upsilon \big( \tfrac{\lb\xi\rb}{N} \big)^{m}\big(1+ N \text{dist}_S^2((x,\hat{\xi}),\hat{\chi}_t(\hat{\mathcal{Y}}_{y,\omega})\big)^{-m},
\end{split}
\end{equation*}
for all $t\in[-t_{0},t_{0}]$, $N^{-1/2}\leq \nu\leq 1$, $\w\in S^{d-1}$, $\xi\in\Rd\setminus\{0\}$ and $x,y\in\Rd$.
\end{proposition}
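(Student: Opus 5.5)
The plan is to derive both estimates from Proposition \ref{prop:kernelbounds} by inserting the wave packet transform. Since $W$ is an isometry, $W^{*}W=I$, and we write
\[
S_{N}(t)\psi^{\nu}_{\w}(D)=W^{*}\big(WS_{N}(t)W^{*}\big)\big(W\psi^{\nu}_{\w}(D)\big).
\]
The kernel of $\tilde{K}^{\nu}_{t,\w}$ is then the integral over $(z,\zeta,y',\eta)\in\R^{4d}$ of three factors. The first is $\F^{-1}(\phi_{\zeta})(x-z)$, controlled by \eqref{eq:kernelboundspacket}. The second is $K_{t}(z,\zeta,y',\eta)$, controlled by Proposition \ref{prop:kernelbounds}. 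The third is $\F^{-1}(\phi_{\eta}\psi^{\nu}_{\w})(y'-y)$. For the second estimate, the kernel of $\phi_{\xi}(D)WS_{N}(t)\psi^{\nu}_{\w}(D)$, the outer $W^{*}$ is simply replaced by the fixed multiplier $\phi_{\xi}(D)$. Equivalently, one reads off the kernel of $WS_{N}(t)W^{*}$ at fixed output frequency $\xi$ and composes it with $W\psi^{\nu}_{\w}(D)$.

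\textbf{Step 1: frequency localization.} Since $\phi_{\eta}$ is supported where $|\xi-\eta|\lesssim\lb\eta\rb^{1/2}$ in the appropriate anisotropic sense, $\phi_{\eta}\psi^{\nu}_{\w}$ vanishes unless $|\hat\eta-\w|\leq\nu+2N^{-1/2}$, that is, unless $(y',\hat\eta)$ lies in $\hat{\mathcal{Y}}_{y',\w}$. In the application, the input is also localized to $|\eta|\eqsim N$ by an implicit $\psi_{N}$ factor, or else the claim is understood on that support. The factor $\Upsilon(\lb\zeta\rb/\lb\eta\rb)^{m}$ from Proposition \ref{prop:kernelbounds} then gives the factor $\Upsilon(\lb\xi\rb/N)^{m}$ in the second bound, and in the first bound it confines the output to $|\zeta|\eqsim N$.

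\textbf{Step 2: bound for the third factor.} For $\phi_{\eta}\psi^{\nu}_{\w}$ I would prove, by integration by parts exactly as for \eqref{eq:kernelboundspacket} (the symbol $\psi^{\nu}_{\w}$ is smooth on the packet scale since $\nu\geq N^{-1/2}$), the estimate
\[
|\F^{-1}(\phi_{\eta}\psi^{\nu}_{\w})(y'-y)|\lesssim N^{\frac{d+1}{4}}\big(1+Nd_{S}((y',\hat\eta),(y,\hat\eta))^{2}\big)^{-M}.
\]

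\textbf{Step 3: integration.} Integrate out $z$, $y'$ and the off-diagonal frequencies. The triangle inequality is used for $d_{S}$, which holds up to constants since $d_{S}^{2}$ is quasi-metric by \eqref{eq:metric}, together with the bi-Lipschitz property of $\hat\chi_{t}$ with respect to $d_{S}$ (\cite[Proposition 2.4]{HassellPortalRozendaal2020}). The Schwartz tails in the $(z,y')$-integrals lie on $d_{S}$-balls of volume $N^{-(d+1)/2}$, and these cancel the prefactors $N^{(d+1)/4}\cdot N^{(d+1)/4}$. What remains is a $\zeta$- and $\eta$-integral. The $\eta$-integral runs over the region $|\eta|\eqsim N$, $|\hat\eta-\w|\lesssim\nu$, which has volume $N^{d}\nu^{d-1}$. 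The $\zeta$-integral is essentially localized by the decay, and for the first estimate it contributes a bounded factor after pairing with $\phi_{\zeta}$; for the second estimate there is no $\zeta$-integral, since $\xi$ is fixed. The decay in $d_{S}((x,\hat\zeta),\hat\chi_{t}(y',\hat\eta))$, combined with the Step 2 localization and $\hat\eta\in\hat{\mathcal{Y}}_{y,\w}$, yields the factor $(1+N\,\mathrm{dist}_{S}^{2}(\cdot,\hat\chi_{t}(\hat{\mathcal{Y}}_{y,\w})))^{-m}$. In the first estimate, $\hat\zeta$ is replaced by $\w(t)$: the output direction is forced to lie within $O(\nu)$ of $\w(t)$, and the definition of $\mathrm{dist}_{S}$ to the set absorbs this. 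Together this gives $N^{d}\nu^{d-1}$, and nothing more, in the first bound.

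\textbf{Main obstacle.} The hardest step is the bookkeeping that keeps the $\eta$-integral from producing a worse power than $N^{d}\nu^{d-1}$ while still keeping the decay with respect to the set $\hat\chi_{t}(\hat{\mathcal{Y}}_{y,\w})$ rather than to a single point. To handle this, I would first take the infimum of $d_{S}$ over $\mu$ with $|\mu-\w|\leq\nu+2N^{-1/2}$ inside the integrand, using the quasi-triangle inequality, and only then integrate, sacrificing half of the powers $m$.
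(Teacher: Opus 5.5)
Your route is the paper's: factor $S_N(t)\psi^\nu_\w(D)$ through $W^{*}(WS_N(t)W^{*})W$, bound the three kernels by \eqref{eq:kernelboundspacket} and Proposition \ref{prop:kernelbounds}, and integrate using the quasi-triangle inequality for $d_S$ and the bi-Lipschitz property of $\hat\chi_t$. Your Step 2 uses $\phi_\eta\psi^\nu_\w$ rather than $\phi_\eta$. This is legitimate because $\nu\geq N^{-1/2}$, and it is more careful than the paper, which only imposes the support condition on $\eta$.

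The volume count in Step 3 is wrong in two places that happen to cancel:
\begin{itemize}
\item The $z$- and $y'$-integrals each give $N^{-(d+1)/4}$. Together they leave $N^{-(d+1)/2}$ rather than cancelling the prefactors.
\item The $\zeta$-integral is confined by $K_t$ to $|\zeta|\eqsim N$, $|\hat\zeta-\hat\eta_t|\lesssim N^{-1/2}$, where $\hat\eta_t$ is the direction of $\hat\chi_t(y',\hat\eta)$. It therefore contributes $N^{(d+1)/2}$, not $O(1)$.
\end{itemize}
Relatedly, for fixed $\zeta$ the $\eta$-integral has effective volume only $N^{(d+1)/2}$. The factor $N^d\nu^{d-1}$ appears only after both frequency integrals are done, and this is why the second estimate, with $\xi$ fixed, is $O(1)$. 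The paper instead integrates $(y',\eta)$ first for fixed $(z,\xi)$, obtaining $N^{(d+1)/4}$, and lets the output $\xi$-integral carry $N^d\nu^{d-1}$.

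The genuine gap is the step replacing $\hat\zeta$ by $\w(t)$. Your integration yields the weight $(1+N\,\text{dist}_S^2((x,\hat\zeta),\hat\chi_t(\hat{\mathcal{Y}}_{y,\w})))^{-m}$, with $\hat\zeta$ ranging over a $C\nu$-cap around $\w(t)$. Taking the distance to a set absorbs the spread of directions in the \emph{target}, but not a change of the \emph{base} direction. By \eqref{eq:metric}, $d_S$ charges $|\hat\zeta-\mu|^2$ in full, so $\text{dist}_S((x,\w(t)),\cdot)^2$ can exceed $\text{dist}_S((x,\hat\zeta),\cdot)^2$ by $\eqsim\nu^2$, and $N\nu^2$ is unbounded when $\nu\gg N^{-1/2}$.

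This step cannot be repaired, because the first inequality is false as stated in that regime. Take $g_{ij}=\delta_{ij}$, so that $\hat\chi_t(y,\mu)=(y+t\mu,\mu)$. Let $|t|\eqsim 1$ and $x=y+t\theta_0$ with $|\theta_0-\w|=\nu/2$. Stationary phase at $\theta=\theta_0$, where $\psi^\nu_\w\equiv 1$, shows that the kernel of $S_N(t)\psi_N(D)\psi^\nu_\w(D)$ has size $\eqsim N^{(d+1)/2}$ at $(x,y)$. On the other hand, $\text{dist}_S^2((x,\w),\hat\chi_t(\hat{\mathcal{Y}}_{y,\w}))\gtrsim \inf_\mu\big(t^2|\theta_0-\mu|^2+|\w-\mu|^2\big)\gtrsim \nu^2$. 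So the claimed right-hand side is $O(N^d\nu^{d-1}(N\nu^2)^{-m})$, which is far smaller for, say, $\nu=N^{-1/4}$ and $m$ large.

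The paper's proof makes exactly the same replacement in its final dyadic decomposition in $N\,\text{dist}_S^2((x,\w_1^t),\cdot)$, after integrating $\hat\xi$ over the $\nu$-cap. So this is a defect of the statement, not something you missed relative to the paper. What both arguments actually prove is
\[
|\tilde{K}^{\nu}_{t,\w}(x,y)|\lesssim N^{d}\nu^{d-1}\sup_{|\mu-\w(t)|\leq C\nu}\big(1+N\,\text{dist}_S^2((x,\mu),\hat{\chi}_t(\hat{\mathcal{Y}}_{y,\w}))\big)^{-m}.
\]
This agrees with the stated bound when $\nu\eqsim N^{-1/2}$, which is the case used in Proposition \ref{prop:bilinearinfty}. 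The second estimate, whose base direction is the fixed $\hat\xi$, is unaffected.
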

\begin{proof}
We define $\hat{\mathcal{Y}}_{y,\omega_1} = \{ (y,\hat{\xi}) : |\hat{\xi} - \omega_1| \leq \nu + 2 N^{-1} \} $. It suffices to estimate
\begin{equation*}
\begin{split}
    |\tilde{K}(x,y)| &\leq \bar{K}(x,y) \leq \int_{\R^{2d}} d\xi dz |\check{\phi}_{\xi}(x,z)| \\
    &\quad \times \int_{\substack{\R^{2d}: |\eta| \sim N, \\ |\hat{\eta} - \omega_1| \leq \nu + 2 N^{-1}}} du d\eta |K(z,\xi,u,\eta)| |\check{\phi}_{\eta}(u-y)|.
\end{split}
\end{equation*}
We first estimate the inner integral by
\begin{equation}
\label{eq:FirstAuxEstimate}
\begin{split}
    &\quad \int_{\substack{\R^{2d}: |\eta| \sim N, \\ |\hat{\eta} - \omega_1| \leq \nu + 2 N^{-1}}} du d\eta |K(z,\xi,u,\eta)| |\check{\phi}_{\eta}(u-y)| \\
    &\lesssim N^{\frac{d+1}{4}} (1 + N \text{dist}_S^2(\hat{\chi}_t^{-1}(z,\hat{\xi}),\hat{\mathcal{Y}}_{y,\omega_1}))^{-m}.
\end{split}
\end{equation}
This is proved by inhomogeneous dyadic decomposition in $N \text{dist}_S^2(\hat{\chi}_t^{-1}(z,\hat{\xi}),\hat{\mathcal{Y}}_{y,\omega_1}))$. In \eqref{eq:FirstAuxEstimate} we use the estimates for $\check{\psi}$ and for the kernel of $W S_N(t) W^*$ to find
\begin{equation*}
\begin{split}
    &\quad \int_{\substack{\R^{2d}: |\eta| \sim N, \\ |\hat{\eta} - \omega_1| \leq \nu + 2 N^{-1}}} du d\eta |K(z,\xi,u,\eta)| |\check{\phi}_{\eta}(u-y)| \\
    &\lesssim \int_{\substack{\R^{2d}: |\eta| \sim N, \\ |\hat{\eta} - \omega_1| \leq \nu + 2 N^{-1}}} du d\eta \Upsilon(|\eta|/|\xi|)^m (1+ \max(N,|\xi|) d_S^2(\hat{\chi}_t^{-1}(z,\hat{\xi}),u,\hat{\eta}))^{-m} \\
    &\quad \times N^{\frac{d+1}{4}} (1+ N \text{dist}_S^2(u,\hat{\eta},\hat{\mathcal{Y}}_{y,\omega_1}))^{-m}.
\end{split}
\end{equation*}
We shall see that the above is dominated by
\begin{equation*}
    N^{\frac{d+1}{4}} \Upsilon(|\xi|/|\eta|)^m (1+\max(|\xi|,N) \text{dist}_S^2(\hat{\chi}_t^{-1}(z,\hat{\xi}), \hat{\mathcal{Y}}_{y,\omega_1}))^{-m},
\end{equation*}
which is seen from dyadic decompositions in $|\xi|$, the bulk of the contribution coming from $|\xi| \sim N$, and the triangle inequality. Indeed, for $|\xi| \sim N$, we consider the cases
\begin{equation*}
    N \text{dist}_S^2(\hat{\chi}_t^{-1}(z,\hat{\xi}), \hat{\mathcal{Y}}_{y,\omega_1})) \leq C, \text{ and } N \text{dist}_S^2(\hat{\chi}_t^{-1}(z,\hat{\xi}), \hat{\mathcal{Y}}_{y,\omega_1})) \sim K, K \geq C.
\end{equation*}
For $N \text{dist}_S^2(\hat{\chi}_t^{-1}(z,\hat{\xi}), \hat{\mathcal{Y}}_{y,\omega_1})) \leq C$ we cannot expect decay from the factors in the integral. Consequently, integrating in $u$ over a rectangle $N^{-\frac{1}{2}} \times \ldots N^{-\frac{1}{2}} \times N^{-1}$ and $\eta$ over a rectangle of size $N^{\frac{1}{2}} \times \ldots \times N^{\frac{1}{2}} \times N$-rectangle yields the claim.
For $N \text{dist}_S^2 (\hat{\chi}_t^{-1}(z,\hat{\xi}), \hat{\mathcal{Y}}^{\nu}_{y,\omega_1}) \sim K \geq C$, one of the factors is rapidly decaying by the triangle inequality.

Next, we estimate
\begin{equation*}
\begin{split}
    &\quad \int_{\R^{2d}} d\xi dz |\check{\phi}_{\xi}(x-z)| N^{\frac{d+1}{4}} \Upsilon(|\xi|/N)^m (1+ (|\xi| \vee N) \text{dist}_S^2(\hat{\chi}_t^{-1}(z,\hat{\xi}),\hat{\mathcal{Y}}_{y,\omega_1}))^{-m} \\
    &\lesssim N^{\frac{d+1}{2}} \int_{\R^{2d}} (1+|\xi| d_S^2(x,\hat{\xi},z,\hat{\xi}))^{-m} \Upsilon(|\xi|/N)^m \\
    &\quad \times (1+ \max(|\xi|,N) \text{dist}_S^2((z,\hat{\xi}),\hat{\chi}_t(\hat{\mathcal{Y}}_{y,\omega_1}))^{-m} d\xi dz.
\end{split}
\end{equation*}
Note that $\hat{\mathcal{Y}}_{y,\omega_1} \subseteq B_{\nu}^{d_S}(y,\omega_1)$, for which reason by the bi-Lipschitz continuity of $\hat{\chi}_t$ we have $\hat{\chi}_t(\hat{\mathcal{Y}}_{y,\omega_1}) \subseteq B_{C \nu}^{d_S}(\hat{\chi}_t(y,\omega_1))$. 

We carry out a dyadic decomposition in $|\xi|$ - the bulk of the contribution coming from $|\xi| \sim N$. And second a dyadic decomposition in $\hat{\xi}$: we consider the cases $|\hat{\xi} - \omega_1^t| \leq C \nu$ and $|\hat{\xi} - \omega_1^t| \sim K$ for $K \geq C \nu$. Finally, we consider a dyadic decomposition for $N \text{dist}_S^2(x,\omega_1^t,\hat{\chi}_t(\hat{\mathcal{Y}}_{y,\omega_1}))$. For $N \text{dist}_S^2(x,\omega_1^t,\hat{\chi}_t(\hat{\mathcal{Y}}_{y,\omega_1})) \leq K$ we cannot expect decay of the factors in the integral. Consequently, integrating in $z$ essentially on a region of size $N^{-\frac{1}{2}} \times \ldots \times N^{-\frac{1}{2}} \times N^{-1}$ and a region in $\xi$ of size $N \times N \nu \times \ldots \times N \nu$ we find the claimed estimate for $|\tilde{K}(x,y)|$ to hold.

Next, we turn to the estimate for $|\tilde{K}_{\xi}(x,y)|$, which is a corollary of the above argument. Following along the above lines, we find
\begin{equation*}
    |\tilde{K}_{\xi}(x,y)| \leq \int_{\R^d} dz |\check{\phi}_{\xi}(x-z)| N^{\frac{d+1}{4}} \Upsilon(|\xi|/N)^m (1+ (|\xi| \vee N) \text{dist}_S^2(\hat{\chi}_t^{-1}(z,\hat{\xi}),\hat{\mathcal{Y}}_{y,\omega_1}))^{-m}.
\end{equation*}
By the pointwise estimate for $|\check{\phi}_{\xi}|$, after a dyadic decomposition in $|\xi|$ it suffices to integrate in $z$. For $|\xi| \sim N$, this gives the main contribution and the estimate
\begin{equation*}
    |\tilde{K}_{\xi}(x,y)| \lesssim (1+ N \text{dist}_S^2((z,\hat{\xi}),\hat{\chi}_t(\hat{\mathcal{Y}}_{y,\omega_1}))^{-m}.\qedhere
\end{equation*}
\end{proof}

\subsection{Kernel estimates for functions localized in phase space}
\label{subsection:KernelBoundsPhaseSpace}
We take the next step and obtain kernel estimates for functions localized in phase space. These will be a consequence of kernel estimates for frequency-localized functions.

We introduce spatially localizing functions, which are positive and have a compactly supported Fourier transform. Let $\theta\in \Sw(\Rd)$ be such that $\supp(\widehat{\theta}\,) \subseteq B_{d}(0,1/2)$ and $\sum_{k \in \Z^d} |\theta(x-k)|^2=1$ for all $x\in\Rd$. The existence of such a function is ensured by the Poisson summation formula.
Indeed, let $\eta \in C^\infty_c(B_{d}(0,1/2))$ be such that $\| \eta \|_{L^2(\Rd)} = 1$, and let $\theta:=\widecheck\eta$. Consider the $1$-periodic function
\begin{equation*}
x\mapsto P(x):=\sum_{k \in \Z^d} |\theta(x-k)|^2
\end{equation*}
on $\R^{d}$. One has $\widehat{P}(k)=0$ for $k\neq0$, and $\widehat{P}(0)=1$. By Poisson summation, $P \equiv 1$. Choosing $\eta$ even, $\theta$ becomes a real-valued function.

For each $m\geq0$, there exists a $C_{m}\geq0$ such that $|\theta(x)| \leq C_m (1+|x|)^{-m}$ for all $x\in\Rd$.  
For $K \in 2^{\N_{0}}$, $\lambda=(\lambda_{1},\ldots,\lambda_{d})\in K^{-1} \Z^{d}$ and $x\in\Rd$, set $\theta_{K,\lambda}(x) := \theta(K(x-\lambda))$. Next, we shall see how an anisotropic decomposition
\begin{equation*}
f = \sum_{k \in \Z^d} \tilde{\theta}^{\nu}_{k,\omega} f
\end{equation*}
transported in phase space. The $\tilde{\theta}_{k,\omega}^{\nu}$ will be positive functions, which are compactly Fourier supported on rectangles of size $N \nu \times \ldots \times N \nu \times N $ with long side pointing into $\omega$-direction. Correspondingly, they are essentially supported on rectangles of size $\nu \times \ldots \times \nu \times \nu^2$ with short side pointing into $\omega$-direction.

We carry out the construction for $\omega = e_d$, upon which the general case is recovered by rotation. Let $\theta \in \mathcal{S}(\R)$ be like above with compact Fourier support in $(-1/8,1/8)$. For $k_i \in (N \nu)^{-1} \Z$, $i=1,\ldots,d-1$, $k_d \in N^{-1} \Z$ we consider
\begin{equation*}
\begin{split}
&\quad \theta^2_{N \nu}(x_1-k_1) \ldots \theta^2_{N \nu}(x_{d-1} - k_{d-1}) \theta^2_N(x_d-k_d) \\
&= \theta^2(N \nu(x_1-k_1)) \ldots \theta^2(N \nu(x_{d-1} - k_{d-1})) \theta^2(N(x_d - k_d)).
\end{split}
\end{equation*}
Consequently, by Fubini we find
\begin{equation*}
\sum_{\substack{ \lambda_1,\ldots, \lambda_{d-1} \in (N \nu)^{-1} \Z, \\ \lambda_d \in N^{-1} \Z}} \theta^2_{N \nu}(x_1-\lambda_1) \ldots \theta^2_{N \nu}(x_{d-1} - \lambda_{d-1}) \theta^2_N(x_d - \lambda_d) \equiv 1.
\end{equation*}

We construct the phase space localization with claimed properties for $k \in \Z^d$ by
\begin{equation*}
\sum_{\substack{ \lambda_1,\ldots,\lambda_{d-1} \in (N \nu)^{-1} \Z, \\ \lambda_i \in (\nu k_i - \nu/2, \nu k_i + \nu/2], \\ \lambda_d \in N^{-1} \Z, \lambda_d \in (\nu^2 k_d - \nu^2/2,\nu^2 k_d + \nu^2/2]}} \theta^2_{N \nu}(x_1 - \lambda_1) \ldots \theta^2_{N \nu}(x_{d-1} - \lambda_{d-1}) \theta_N^2(x_d- \lambda_d) = \tilde{\theta}_{k,e_d}^{\nu}.
\end{equation*}
$\tilde{\theta}_{k,e_d}^{\nu}$ has the claimed properties, i.e., localization to a $\nu \times \ldots \times \nu \times \nu^2$-rectangle centered at $k_{e_d} = (k'\nu, k_d \nu^2) \in \R^d$ with rapid decay off the rectangles on scales $N \nu$ in directions $e_1,\ldots,e_{d-1}$ and $N$ into the direction $e_d$. For arbitrary $\omega \in \mathbb{S}^{d-1}$ we consider a rotation $R \in \R^{d\times d}$, $R^t R = I_d$ such that $R \omega= e_d$ and let 
\begin{equation*}
\tilde{\theta}^{\nu}_{k,\omega}(x) =  \tilde{\theta}^{\nu}_{k,e_d}(R x).
\end{equation*}

In the following proposition we compute the essential phase space localization of $S_N(t) \tilde{\theta}_{k,\omega}^{\nu} f$. First, note that $\tilde{\theta}_k f = \tilde{\psi}^{\nu}_{\omega}(D) \tilde{\theta}_k f$.
We let $\tilde{\psi}^{\nu}_{\omega}(D) \tilde{\theta}_k f = \tilde{\psi}^{\nu}_{\omega}(D) \tilde{\theta}_k^{1/2} g$ for $g \in L^2$. By $\mathcal{Y}_{k,\omega}$ we denote the phase space region $R^{-1} (c(k_\omega + ((-\nu,\nu)^{d-1} \times (-\nu^2,\nu^2)))) \times \Theta_\omega^{c \nu}$ and $\hat{\mathcal{Y}}_{k,\omega}$ the projection to $S^* \R^d$. Denote by $(\bar{x},\bar{\omega})$ the center of $\hat{\mathcal{Y}}_{k,\omega}$ and $\hat{\chi}^t(\bar{x},\bar{\omega}) = (\bar{x}^t,\bar{\omega}^t)$ its evolution under the Hamiltonian flow projected to $S^* \R^d$. 
We consider the kernel 
\begin{equation*}
S_N(t) \tilde{\psi}^{\nu}_{\omega}(D) \tilde{\theta}_k^{1/2} g = \int \tilde{K}(x,y) g(y) dy
\end{equation*}
and show the following:
\begin{proposition}
\label{prop:PhaseSpaceLocalization}
With notations like above, we have
\begin{equation*}
| \tilde{K}(x,y) | \leq C_m N^{d} \nu^{d-1} (1+ N \text{dist}_S^2(x,\omega^t,\hat{\chi}_t(\mathcal{Y}_{k,\omega}) )^{-m}.
\end{equation*}
\end{proposition}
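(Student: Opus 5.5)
The plan is to write the kernel as a composition and reduce to the frequency-localized bound of Proposition \ref{prop:KernelEstimatesFrequencyLocalization}. Explicitly,
\[
\tilde{K}(x,y)=\int_{\Rd}\tilde{K}^{c\nu}_{t,\w}(x,z)\,\F^{-1}(\tilde{\psi}^{\nu}_{\w})(z-y)\,\tilde{\theta}_{k}^{1/2}(y)\ud z,
\]
where $\tilde{K}^{c\nu}_{t,\w}$ is the kernel of $S_{N}(t)\psi^{c\nu}_{\w}(D)$. Here we use that $\tilde{\psi}^{\nu}_{\w}=\psi^{c\nu}_{\w}\tilde{\psi}^{\nu}_{\w}$ once $\psi^{c\nu}_{\w}$ is chosen slightly larger. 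Rather than convolving, the cleanest route is to apply Proposition \ref{prop:KernelEstimatesFrequencyLocalization} directly with $y$ replaced by $z$. This gives
\[
|\tilde{K}^{c\nu}_{t,\w}(x,z)|\lesssim N^{d}\nu^{d-1}\big(1+N\,\mathrm{dist}_{S}^{2}((x,\w(t)),\hat{\chi}_{t}(\hat{\mathcal{Y}}_{z,\w}))\big)^{-m}.
\]
The remaining task is to transfer the localization from $z$ to the anisotropic box $\mathcal{Y}_{k,\w}$.

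Two facts drive this transfer.

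\textbf{(i) The kernel $\F^{-1}(\tilde{\psi}^{\nu}_{\w})$ is concentrated at the origin.} Since $\tilde{\psi}^{\nu}_{\w}$ is a frequency cutoff to a sector of aperture $\nu$ at scale $N$, its inverse Fourier transform is concentrated on a $(N\nu)^{-1}\times\cdots\times(N\nu)^{-1}\times N^{-1}$ plate around the origin, with rapid decay. Its $L^{1}$ norm is uniformly bounded, but pointwise it is of size $N^{d}\nu^{d-1}$.

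\textbf{(ii) The weight $\tilde{\theta}_{k}^{1/2}$ decays rapidly off the box.} It decays rapidly off the $\nu\times\cdots\times\nu\times\nu^{2}$ box centered at $k_{\w}$, at scales $(N\nu)^{-1}$ transversally and $N^{-1}$ along $\w$. Since $\theta\geq0$, one can dominate $\tilde{\theta}_{k}^{1/2}$ by such a decaying bump.

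With these in hand, $z$ is effectively constrained to lie within $(N\nu)^{-1}$ (resp.\ $N^{-1}$) of the box. By the quasi-metric form \eqref{eq:metric} of $d_{S}$, this means $\mathrm{dist}_{S}((z,\mu),\hat{\mathcal{Y}}_{k,\w})\lesssim N^{-1/2}$ for $|\mu-\w|\lesssim\nu$. The $|\w\cdot(x-y)|$ term contributes at most $N^{-1}$ along $\w$, and the squared transverse term contributes at most $(N\nu)^{-2}\leq N^{-1}$.

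Bi-Lipschitz continuity of $\hat{\chi}_{t}$ with respect to $d_{S}$ (\cite[Proposition 2.4]{HassellPortalRozendaal2020}) then gives
\[
\mathrm{dist}_{S}\big((x,\w(t)),\hat{\chi}_{t}(\mathcal{Y}_{k,\w})\big)\lesssim \mathrm{dist}_{S}\big((x,\w(t)),\hat{\chi}_{t}(\hat{\mathcal{Y}}_{z,\w})\big)+N^{-1/2}+\rho(z),
\]
where $\rho(z)$ is the $d_{S}$-distance from $z$ to the box. Peetre's inequality $(1+a)^{-m}\lesssim(1+b)^{-m}(1+|a-b|)^{m}$ converts this into a factor $(1+N\rho(z)^{2})^{m}$. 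That factor is absorbed by the rapid decay in (i) and (ii), losing $m$ in exchange for a larger decay exponent.

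\textbf{Integrating in $z$.} To avoid picking up a second factor $N^{d}\nu^{d-1}$, I would keep the pointwise bound only on $\tilde{K}^{c\nu}_{t,\w}$ and use the uniform $L^{1}$ bound of $\F^{-1}(\tilde{\psi}^{\nu}_{\w})$ from (i). Concretely, I would first bound
\[
\sup_{z}\big(1+N\rho(z)^{2}\big)^{-m'}|\tilde{K}^{c\nu}_{t,\w}(x,z)|\,(\ldots),
\]
and then integrate the convolution factor in $L^{1}$. The factor $\tilde{\theta}^{1/2}_{k}(y)\leq1$ only helps.

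\textbf{Main obstacle.} The expected difficulty is the metric comparison. One must check carefully that the anisotropic box, whose short side $\nu^{2}$ points along $\w$, sits inside a $d_{S}$-ball of radius $\lesssim\nu$ that is compatible with $\hat{\mathcal{Y}}_{z,\w}$. One must also check that the mixed term $|\w\cdot(x-y)|$ behaves correctly under the flow, which rotates $\w$ to $\w(t)$. If a direct Peetre argument fails, my fallback is to dyadically decompose in $N\,\mathrm{dist}_{S}^{2}$, as in the proof of Proposition \ref{prop:KernelEstimatesFrequencyLocalization}, and sum.
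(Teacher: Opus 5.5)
Your argument is correct, but it is organized differently from the paper's.

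\textbf{The paper's route.} The paper does not use Proposition~\ref{prop:KernelEstimatesFrequencyLocalization} as a black box. It writes the operator through $W^{*}(WS_N(t)W^{*})W$ and splits $\tilde\theta_k^{1/2}=\tilde\theta_k^{1/4}\cdot\tilde\theta_k^{1/4}$. One factor goes inside the inner phase-space integral, using $\tilde\theta_k^{1/4}(y)\lesssim(1+N\,\mathrm{dist}_S^2((y,\hat\eta),\hat{\mathcal{Y}}_{k,\omega}))^{-m}$ for every $\hat\eta$ in the $\nu$-cap. The triangle inequality, together with the decay of $\check\phi_\eta(u-y)$ and of the kernel of $WS_N(t)W^{*}$, then gives decay in $\mathrm{dist}_S(\hat\chi_t^{-1}(z,\hat\xi),\hat{\mathcal{Y}}_{k,\omega})$ at each phase-space point. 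The outer integration of the earlier proof is then repeated, and the spare $\tilde\theta_k^{1/4}(y)$ leaves an extra decay factor in $y$.

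\textbf{Your route.} You take the output of that proposition at a single base point $z$ and transfer the localization to the box afterwards. This uses $\sup_{p\in\hat\chi_t(\hat{\mathcal{Y}}_{z,\omega})}\mathrm{dist}_S(p,\hat\chi_t(\hat{\mathcal{Y}}_{k,\omega}))\lesssim N^{-1/2}+\rho(z)$, which follows from the bi-Lipschitz property and the triangle inequality. It then uses a Peetre-type inequality, and finally a weighted $L^{1}$ bound for $\F^{-1}(\tilde\psi^\nu_\omega)$ together with the decay of $\tilde\theta_k^{1/2}$. It is that decay, not merely $\tilde\theta_k^{1/2}\leq 1$, which absorbs $(1+N\rho(z)^2)^{m}$.

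\textbf{Common input and trade-offs.} Both versions rest on the same metric fact: decay at scale $(N\nu)^{-1}$ transversally and $N^{-1}$ along $\omega$ dominates $N\,d_S^2$ to the box for directions in the $\nu$-cap, because $N\nu^2\geq 1$. Your route is more modular and avoids reopening the phase-space integrals. The $z$-convolution is even dispensable: $\tilde K(x,y)$ is the kernel of $S_N(t)\tilde\psi^\nu_\omega(D)$ times $\tilde\theta_k^{1/2}(y)$, and the earlier proof applies verbatim to that cutoff, so the transfer can be done at $z=y$. In either case the cutoff must include the radial localization $|\xi|\eqsim N$ to get the factor $N^{d}\nu^{d-1}$; \eqref{eq:AngularLocalization} alone does not provide it.

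\textbf{A point to make explicit.} The direction $\omega(t)$ produced by Proposition~\ref{prop:KernelEstimatesFrequencyLocalization} is that of $\hat\chi_t(z,\omega)$. It therefore moves by $O(\nu)$ as $z$ ranges over the box, and $d_S$ charges a change of direction at full size. The distance should be read with the direction ranging over a $\nu$-cap around $\omega^t$. This is effectively what the paper's argument does when it integrates $\hat\xi$ over such a cap.
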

\begin{proof}
We start noting that
\begin{equation*}
\tilde{\theta}_k^{1/4} (y) \leq C_m (1+ N \text{dist}_S^2(y,\omega,\hat{\mathcal{Y}}_{k,\omega}))^{-m}.
\end{equation*}
It appears that there is slack in the directions $\omega_1^{\perp},\ldots,\omega_{d-1}^{\perp}$, but the above expression is indeed invariant under $\nu$-rotations $R$ around $e_d$. Indeed, $\tilde{\theta}_k(R y)$ is again essentially supported in an $\nu \times \ldots \times \nu \times \nu^2$-rectangle with the same decay properties as the Fourier support is essentially preserved a $\nu$-sector around $\omega$ with size of the frequencies comparable to $N$. 

For this reason, we have for any $\hat{\eta} \in \Theta^{\nu}_{\omega}$
\begin{equation*}
\tilde{\theta}_k^{1/4}(y) \leq C_m (1+ N \text{dist}_S^2(y,\hat{\eta},\hat{\mathcal{Y}}_{k,\omega}))^{-m}.
\end{equation*}
We dominate the kernel by
\begin{equation*}
\begin{split}
    |\tilde{K}(x,y)| &\leq \Big( \int_{\R^{2d}} d\xi dz |\check{\phi}_{\xi}(x-z)| \\
    &\quad \times \int_{\substack{\R^{2d}: |\eta| \sim N, \\ |\hat{\eta} - \omega_1| \leq \nu + 2 N^{-1}}} |K(z,\xi,u,\eta)| |\check{\phi}_{\eta}(u-y)| \tilde{\theta}_k^{1/4}(y) \Big) \tilde{\theta}_k^{1/4}(y).
\end{split}
\end{equation*}
We first estimate the inner integral by the pointwise kernel estimate, a pointwise estimate for $\check{\phi}_{\eta}$, and the estimate for $\tilde{\theta}_k$ as:
\begin{equation*}
\begin{split}
    &\quad \int_{\substack{\R^{2d}: |\eta| \sim N, \\ |\hat{\eta} - \omega_1| \leq \nu + 2 N^{-1}}} \Upsilon(|\xi|/N)^m (1+ \max(|\xi|,N) d_S^2(z,\hat{\xi},\hat{\chi}_t(u,\hat{\eta})))^{-m} \\
    &\quad \times N^{\frac{d+1}{4}} (1+ N d_S^2(u,\hat{\eta},y,\hat{\eta}))^{-m} (1+ N \text{dist}_S^2(y,\hat{\eta},\hat{\mathcal{Y}}_{k,\omega}))^{-m} du d\eta \\
    &\lesssim \Upsilon(|\xi|/N)^m N^{\frac{d+1}{4}} (1+ N \text{dist}_S^2(\hat{\chi}_t^{-1}(z,\hat{\xi}),\hat{\mathcal{Y}}_{k,\omega}))^{-m}.
    \end{split}
\end{equation*}
Following along the proof of Proposition \ref{prop:KernelEstimatesFrequencyLocalization} we estimate the second integral together with the estimate for $\tilde{\theta}_k$ as
\begin{equation*}
    |\tilde{K}(x,y)| \leq C N^{d} \nu^{d-1} (1+ N \text{dist}_S^2(x,\omega_1^t,\hat{\chi}_t(\hat{\mathcal{Y}}_{k,\omega_1}))^{-m} (1+ N \text{dist}_S^2(y,\omega_1, \hat{\mathcal{Y}}_{k,\omega_1}))^{-m}.
\end{equation*}
\end{proof}

\section{Interpolation results}\label{sec:inter}

This appendix contains two interpolation lemmas. We use notation and terminology from the theory of real interpolation spaces, as can be found in \cite{Bergh-Lofstrom76}. 

The first lemma is a multilinear extension of an abstract interpolation result from \cite[Section 6.2]{CaSeWaWr99}. In a more concrete setting, which will not suffice for our purposes, the multilinear statement can already be found in \cite[Lemma 2.6]{Lee03}, and the linear case goes back to \cite{Bourgain85}. 

\begin{lemma}\label{lem:interabstract}
Let $n\in\N$ and $\gamma_{0}<0<\gamma_{1}$. Let 
$(Y_{0},Y_{1})$ and 
$(X^{j}_{0},X^{j}_{1})$, for $j\in\{1,\ldots, n\}$, be compatible Banach couples. 
Then there exists a $C\geq0$ such that the following holds.  Let $M_{0},M_{1}>0$ and, for each $k\in \Z$, let $T_{k}:\prod_{j=1}^{n}(X_{0}^{j}\cap X_{1}^{j})\to Y_{0}+Y_{1}$ be $n$-linear and such that 
\begin{equation}\label{eq:interassump}
\|T_{k}x\|_{Y_{l}}\leq M_{l}2^{k\gamma_{l}}\prod_{j=1}^{n}\|x_{j}\|_{X_{l}^{j}},
\end{equation}
for all $x=(x_{1},\ldots,x_{n})\in \prod_{j=1}^{n}(X_{0}^{j}\cap X_{1}^{j})$ and $l\in\{0,1\}$. Set $\theta:=\frac{\gamma_{0}}{\gamma_{0}-\gamma_{1}}\in(0,1)$. Then $T:=\sum_{k\in\Z}T_{k}$ extends uniquely to an $n$-linear map $T:\prod_{j=1}^{n}(X_{0}^{j},X_{1}^{j})_{\theta,1}\to (Y_{0},Y_{1})_{\theta,\infty}$ satisfying
\begin{equation}\label{eq:interconc}
\|Tx\|_{(Y_{0},Y_{1})_{\theta,\infty}}\leq CM_{0}^{1-\theta}M_{1}^{\theta}\prod_{j=1}^{n}\|x_{j}\|_{(X_{0}^{j},X_{1}^{j})_{\theta,1}},
\end{equation}
for all $x=(x_{1},\ldots,x_{n})\in \prod_{j=1}^{n}(X_{0}^{j},X_{1}^{j})_{\theta,1}$.
\end{lemma}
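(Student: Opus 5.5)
We follow the Bourgain summation trick in its abstract form from \cite[Section 6.2]{CaSeWaWr99}, with the multilinear modification of \cite[Lemma 2.6]{Lee03}. By homogeneity we may assume $\|x_{j}\|_{(X_{0}^{j},X_{1}^{j})_{\theta,1}}\leq 1$. The target is the $K$-functional bound $K(s,Tx;Y_{0},Y_{1})\lesssim M_{0}^{1-\theta}M_{1}^{\theta}s^{\theta}$ for all $s>0$. After rescaling $T_{k}$ by $M_{0}^{-1}$ and $s$ by $M_{1}/M_{0}$, this reduces to the case $M_{0}=M_{1}=1$.

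The first step is to use the equivalence theorem for the $J$-method. For $x_{j}\in(X_{0}^{j},X_{1}^{j})_{\theta,1}$ we write $x_{j}=\sum_{m\in\Z}u^{j}_{m}$, where $u^{j}_{m}\in X_{0}^{j}\cap X_{1}^{j}$ and
\[
\sum_{m}2^{-m\theta}J(2^{m},u^{j}_{m})\lesssim 1.
\]
In other words, $\|u_{m}^{j}\|_{X_{0}^{j}}\leq a^{j}_{m}2^{m\theta}$ and $\|u_{m}^{j}\|_{X_{1}^{j}}\leq a^{j}_{m}2^{m(\theta-1)}$ with $\sum_{m}a^{j}_{m}\lesssim 1$. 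It is cleaner to reparametrize by $2^{m}\mapsto 2^{m(\gamma_{1}-\gamma_{0})}$, which is harmless up to constants. By multilinearity,
\[
T_{k}x=\sum_{m_{1},\ldots,m_{n}}T_{k}(u^{1}_{m_{1}},\ldots,u^{n}_{m_{n}}).
\]
Applying \eqref{eq:interassump} to each term gives
\[
\|T_{k}(u_{m})\|_{Y_{l}}\leq 2^{k\gamma_{l}}\prod_{j}a^{j}_{m_{j}}2^{m_{j}\beta_{l}},
\]
where $\beta_{0}=\theta$ and $\beta_{1}=\theta-1$ in the original scaling.

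The second step fixes $s=2^{L}$ and splits $Tx=\sum_{k}\sum_{m}T_{k}(u_{m})$ into $A_{0}+A_{1}$. Here $A_{0}$ collects the pairs $(k,m)$ for which the $Y_{0}$ estimate is the smaller one after weighting by $s$, and $A_{1}$ collects the rest. Concretely, the splitting is along the hyperplane where $k\gamma_{0}+\theta\sum_{j}m_{j}$ equals $L+k\gamma_{1}+(\theta-1)\sum_{j}m_{j}$ up to a constant. Summing over $k$ first, for fixed $m$, is where the hypotheses $\gamma_{0}<0<\gamma_{1}$ enter. The sum $\sum_{k}2^{k\gamma_{0}}$ over $k$ above the threshold is geometric, and so is $s\sum_{k}2^{k\gamma_{1}}$ over $k$ below it. Both are dominated by the value at the threshold $k^{*}(m,L)$. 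Choosing $\theta=\gamma_{0}/(\gamma_{0}-\gamma_{1})$ makes that threshold value equal to $s^{\theta}$ times a product of factors depending only on the individual $m_{j}$.

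The main obstacle is this last point. In the multilinear case the threshold depends on $\sum_{j}m_{j}$, and one must verify that the exponents collapse to exactly $s^{\theta}\prod_{j}a^{j}_{m_{j}}$, with no leftover factor $2^{c\sum_{j}m_{j}}$. I would check this by direct computation of $k^{*}$, arranging the normalization of the $J$-decomposition so that each $u^{j}_{m}$ carries the weight $2^{m\theta}$ in $X_{0}$ and $2^{m(\theta-1)}$ in $X_{1}$ simultaneously. The powers of $\sum_{j}m_{j}$ then cancel precisely because $\theta$ is the same for all factors. Granting this, we obtain
\[
\|A_{0}\|_{Y_{0}}+s\|A_{1}\|_{Y_{1}}\lesssim s^{\theta}\prod_{j}\sum_{m}a^{j}_{m}\lesssim s^{\theta}.
\]
This gives $K(s,Tx)\lesssim s^{\theta}$, hence \eqref{eq:interconc}.

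For uniqueness and well-definedness of the extension, the above computation shows that the series defining $T$ converges absolutely in $Y_{0}+Y_{1}$ on $\prod_{j}(X_{0}^{j}\cap X_{1}^{j})$. Density of $X_{0}^{j}\cap X_{1}^{j}$ in $(X_{0}^{j},X_{1}^{j})_{\theta,1}$, since $\theta\in(0,1)$ and the second parameter is finite, together with the bound \eqref{eq:interconc} then yields the unique bounded $n$-linear extension.
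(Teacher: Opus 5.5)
Your proposal is correct and is essentially the paper's argument: the paper first bounds $t^{-\theta}K(t,T(x_{1m_1},\ldots,x_{nm_n}))$ by cutting the $k$-sum at an optimal $N$ for each elementary input, then sums over $(m_1,\ldots,m_n)$ using the triangle inequality in $(Y_0,Y_1)_{\theta,\infty}$, and unpacking that triangle inequality gives exactly your $m$-dependent threshold $k^{*}(m,L)$ with the two steps merged. The exponent collapse you flagged does hold, since $k^{*}\gamma_0+\theta\sum_j m_j=\theta L$ exactly; this is the paper's observation that $\|x_{jm}\|_{X_0^j}^{1-\theta}\|x_{jm}\|_{X_1^j}^{\theta}\le\lambda_{jm}$.
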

\begin{proof}

Since $X_{0}^{j}\cap X_{1}^{j}$ is dense in $(X_{0}^{j},X_{1}^{j})_{\theta,1}$ for all $j\in\{1,\ldots,n\}$, it suffices to obtain \eqref{eq:interconc} for $x\in \prod_{j=1}^{n}(X_{0}^{j}\cap X_{1}^{j})$. For $t>0$ and for a given $N\in\Z$, we can write $Tx=\sum_{k=N+1}^{\infty}T_{k}x+\sum_{k=-\infty}^{N}T_{k}x$ and use \eqref{eq:interassump} to bound the $K$-functional 
\[
K(t,Tx):=\inf\{\|y_{0}\|_{Y_{0}}+t\|y_{1}\|_{Y_{1}}\mid y_{0}\in Y_{0},y_{1}\in Y_{1}, Tx=y_{0}+y_{1}\}
\]
from above by a multiple of $M_{0}2^{N\gamma_{0}}\prod_{j=1}^{n}\|x_{j}\|_{X^{j}_{0}}+tM_{1}2^{N\gamma_{1}}\prod_{j=1}^{n}\|x_{j}\|_{X^{j}_{1}}$. Choosing $N$ which minimizes this quantity yields
\begin{equation}\label{eq:interineq1}
K(t,Tx)\lesssim M_{0}^{1-\theta}M_{1}^{\theta}\prod_{j=1}^{n}\|x_{j}\|_{X^{j}_{0}}^{1-\theta}\|x_{j}\|_{X^{j}_{1}}^{\theta},
\end{equation}
for an implicit constant which depends only on $\gamma_{0}$ and $\gamma_{1}$.

Now, by \cite[Lemma 3.2.3 and Theorem 3.3.1]{Bergh-Lofstrom76}, for each $j\in\{1,\ldots,n\}$ there exists a sequence $(x_{jm})_{m\in\Z}\subseteq X_{0}^{j}\cap X^{j}_{1}$ such that $x_{j}=\sum_{m\in\Z}x_{jm}$ and $\sum_{m\in\Z}\la_{jm}\lesssim \|x_{j}\|_{(X_{0}^{j},X_{1}^{j})_{\theta,1}}$, where $\lambda_{jm}:=2^{-\theta m}\max(\|x_{jm}\|_{X^{j}_{0}},2^{m}\|x_{jm}\|_{X_{1}^{j}})$ for $m\in\Z$. Then 
\begin{equation}\label{eq:interineq2}
\sum_{m\in\Z}\|x_{jm}\|_{X^{j}_{0}}^{1-\theta}\|x_{jm}\|_{X^{j}_{1}}^{\theta}\leq \sum_{m\in\Z}\la_{jm}\lesssim \|x_{j}\|_{(X_{0}^{j},X_{1}^{j})_{\theta,1}}.
\end{equation}
Finally, use the $n$-linearity of $T$ to write
\begin{align*}
\|Tx\|_{(Y_{0},Y_{1})_{\theta,\infty}}&\leq \sum_{m_{1},\ldots,m_{n}\in\Z}\|T(x_{1m_{1}},\ldots,x_{nm_{n}})\|_{\overline{Y}_{\theta,\infty}}\\
&=\sum_{m_{1},\ldots,m_{n}\in\Z}\sup_{t>0}t^{-\theta}K(t,T(x_{1m_{1}},\ldots,x_{nm_{n}}))\\
&\lesssim M_{0}^{1-\theta}M_{1}^{\theta}\sum_{m_{1},\ldots,m_{n}\in\Z}\prod_{j=1}^{n}\|x_{jm_{j}}\|_{X^{j}_{0}}^{1-\theta}\|x_{jm_{j}}\|_{X^{j}_{1}}^{\theta}\\
&= M_{0}^{1-\theta}M_{1}^{\theta}\prod_{j=1}^{n}\sum_{m_{j}\in\Z}\|x_{jm_{j}}\|_{X^{j}_{0}}^{1-\theta}\|x_{jm_{j}}\|_{X^{j}_{1}}^{\theta}\\
&\lesssim M_{0}^{1-\theta}M_{1}^{\theta}\prod_{j=1}^{n}\|x_{j}\|_{(X_{0}^{j},X_{1}^{j})_{\theta,1}},
\end{align*}
where we also applied \eqref{eq:interineq1} with $x$ replaced by $(x_{1m_{1}},\ldots,x_{nm_{n}})$, and \eqref{eq:interineq2}. 
\end{proof}

\begin{remark}\label{rem:sublinear}
If $Y_{0}=L^{q_{0}}(\Omega)$ and $Y_{1}=L^{q_{1}}(\Omega)$ for some measure space $\Omega$ and $q_{0},q_{1}\in[1,\infty]$, then the conclusion of Lemma \ref{lem:interabstract} holds under the weaker assumption that $T$ is sublinear in every coordinate, as follows from the proof. 
\end{remark}

Our next lemma shows that the Hardy spaces for Fourier integral operators also behave well under real interpolation.

\begin{lemma}\label{lem:interHpFIO}
Let $p,p_{0},p_{1}\in(1,\infty)$, $p_0 \neq p_1$ and $\theta\in(0,1)$ be such that $\frac{1}{p}=\frac{1-\theta}{p_{0}}+\frac{\theta}{p_{1}}$, and let $s\in\R$. Then $(\HT^{s,p_{0}}_{FIO}(\Rd),\HT^{s,p_{1}}_{FIO}(\Rd))_{\theta,p}=\Hps$.
\end{lemma}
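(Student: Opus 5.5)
We prove Lemma \ref{lem:interHpFIO} by realizing $\Hp$ as a retract of a vector-valued $L^{p}$-type space, and then importing the known real interpolation of such spaces. Since $\lb D\rb^{s}$ is an isomorphism from $\Hps$ onto $\Hp$ for every $p$, and it commutes with interpolation functors, we may take $s=0$.

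\emph{Step 1: the retract.} Definition \ref{def:HpFIO} suggests using
\[
\iota f:=\big(\psi(D/2)f,(\ph_{\w}(D)f)_{\w\in S^{d-1}}\big)
\]
as a map into $X^{p}:=L^{p}(\Rd)\oplus L^{p}(S^{d-1};\HT^{p}(\Rd))$. For $1<p<\infty$ we have $\HT^{p}(\Rd)=L^{p}(\Rd)$. Hence $X^{p}$ is, up to the first summand, $L^{p}(S^{d-1}\times\Rd)$, and $\iota$ is an isometric embedding by definition.

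For a left inverse $\pi$ we use the reproducing formula from \cite{HassellPortalRozendaal2020}. With $\tilde\ph_{\w}$ a slightly enlarged version of $\ph_{\w}$ and $\chi$ a suitable low-frequency cutoff, we set
\[
\pi(g_{0},(g_{\w})_{\w}):=\chi(D)g_{0}+\int_{S^{d-1}}\tilde\ph_{\w}(D)g_{\w}\ud\w,
\]
arranged so that $\pi\iota=\mathrm{id}$ via $\int\ph_{\w}^{2}\ud\w=1$ for $|\xi|\geq1$. The key fact is that $\pi:X^{p}\to\Hp$ is bounded for all $1<p<\infty$. This is essentially the boundedness of the wave packet synthesis operator shown in \cite[Section 6]{HassellPortalRozendaal2020}; it is also the ingredient used there to prove the complex interpolation statement \cite[Proposition 6.7]{HassellPortalRozendaal2020}. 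It follows from the $L^{p}(S^{d-1}\times\Rd)$ boundedness of $\iota\pi$, which is a singular-integral-type operator on $S^{d-1}\times\Rd$ equipped with the doubling metric $d_{S}$ (cf.~\eqref{eq:metric}), with kernel bounds of the type in \eqref{eq:kernelboundspacket}. Importantly, the operators $\iota$ and $\pi$ do not depend on $p$.

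\emph{Step 2: interpolate.} Since $\Hp$ is a retract of $X^{p}$ with $p$-independent maps, the standard retract theorem \cite[Theorem 6.4.2]{Bergh-Lofstrom76} gives
\[
(\HT^{p_{0}}_{FIO},\HT^{p_{1}}_{FIO})_{\theta,p}=\pi\big((X^{p_{0}},X^{p_{1}})_{\theta,p}\big).
\]
Moreover,
\[
(X^{p_{0}},X^{p_{1}})_{\theta,p}=L^{p}\oplus L^{p}(S^{d-1}\times\Rd)=X^{p},
\]
since real interpolation of $L^{p_{0}}$ and $L^{p_{1}}$ with second index equal to $p$ (where $\frac1p=\frac{1-\theta}{p_0}+\frac{\theta}{p_1}$) gives the Lorentz space $L^{p,p}=L^{p}$ on any measure space \cite[Theorem 5.3.1]{Bergh-Lofstrom76}. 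Combining these, $\pi(X^{p})=\Hp$ with equivalent norms, because $\iota$ and $\pi$ are bounded at level $p$ and $\pi\iota=\mathrm{id}$.

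\emph{Main obstacle.} The delicate point is the $p$-uniform, simultaneous boundedness of $\pi$ on $X^{p_{0}}$ and $X^{p_{1}}$, together with the compatibility of the couple: all spaces must embed into $\Sw'$ consistently, so that $\pi$ is well defined on the sum $X^{p_{0}}+X^{p_{1}}$. Since $p_{0},p_{1}\in(1,\infty)$, no Hardy-space endpoint issue arises, and I would cite the synthesis bounds from \cite{HassellPortalRozendaal2020} rather than reprove them. If only the complex interpolation result were directly available there, an alternative is to extract the retract structure from its proof.
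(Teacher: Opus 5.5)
Your architecture matches the paper's: realize $\Hp$ as a complemented subspace (retract) of a Lebesgue-type space via $p$-independent maps, then interpolate that space. The difference is the ambient space, and that is where your argument has a gap.

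The paper embeds $\Hp$ into $L^{p}(\Rd\times S^{d-1};L^{2}(0,\infty))$. There the complementation is available off the shelf from \cite[Theorem 3.4 and Proposition 4.3]{HaPoRoYu26}, and one then interpolates $L^{p}$ with values in a fixed Hilbert space. You embed into the scalar space $L^{p}(S^{d-1}\times\Rd)$. This makes $\iota$ trivially bounded and the interpolation identity elementary, but it puts all the content of the lemma into the boundedness of $\pi:L^{p}(S^{d-1}\times\Rd)\to\Hp$.

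That statement is not in \cite{HassellPortalRozendaal2020}. There the ambient space is the tent space $T^{p}(\Sp)$, built from conical square functions over $d_{S}$-balls with an extra scale variable. Both the synthesis bound and Proposition 6.7 are proved through that space, and neither yields a bounded map out of scalar $L^{p}(\Sp)$.

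Your Calder\'on--Zygmund sketch does not close the step as stated either. For fixed $\nu,\w$, the supremum of $\ph_{\nu}\tilde\ph_{\w}$, and hence the $L^{2}$ operator norm of $\ph_{\nu}(D)\tilde\ph_{\w}(D)$, is of order $|\nu-\w|^{-(d-1)}$. This is not integrable over $\w\in S^{d-1}$, so fibrewise bounds fail. You would genuinely need size and H\"ormander-type smoothness estimates, with respect to $d_{S}$, for the kernel of $\iota\pi$ summed over all dyadic scales. The single-packet bound \eqref{eq:kernelboundspacket} does not supply these. Your fallback of extracting the retract from the proof of Proposition 6.7 would put you in tent spaces. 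You would then additionally need real interpolation of tent spaces over $(\Sp,d_{S})$, which is another nontrivial input.

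The claim itself is true, and you can repair it by factoring through the paper's ambient space.
\begin{itemize}
\item Take a real-valued radial $\Psi\in C^{\infty}_{c}(\Rd\setminus\{0\})$ with $\int_{0}^{\infty}\Psi(\sigma\xi)^{2}\frac{\ud\sigma}{\sigma}=1$ for $\xi\neq0$.
\item Set $Jg(x,\w,\sigma):=\Psi(\sigma D)g(\cdot,\w)(x)$, which is Littlewood--Paley in $x$ applied for each fixed $\w$.
\item By the scalar square function theorem and Fubini, $J$ maps $L^{p}(S^{d-1}\times\Rd)$ boundedly into $L^{p}(\Rd\times S^{d-1};L^{2}(0,\infty))$ (with measure $\ud\sigma/\sigma$) for $1<p<\infty$, and $J^{*}J=I$.
\item Hence the $\w$-component of $\pi$ equals $(\pi J^{*})J$. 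Up to harmless changes of the symbols, $\pi J^{*}$ is the synthesis operator of the square-function embedding of $\Hp$. Its boundedness into $\Hp$ is exactly the complementation result from \cite{HaPoRoYu26} that the paper uses.
\end{itemize}

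With this, your Step 2 goes through verbatim. At that point, however, your route gains nothing over the paper's, since interpolating Lebesgue spaces with values in $L^{2}(0,\infty)$ is just as standard as the scalar case.
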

\begin{proof}
It suffices to consider the case where $s=0$. By \cite[Theorem 3.4 and Proposition 4.3]{HaPoRoYu26}, $\Hp$ is isomorphic to a complemented subspace of $L^{p}(\R^{d}\times S^{d-1};L^{2}(0,\infty))$, and similarly for $\HT^{p_{0}}_{FIO}(\Rd)$ and $\HT^{p_{1}}_{FIO}(\Rd)$. Moreover, the relevant projection is the same in each case. Hence the required statement follows from the standard result on interpolation of complemented subspaces (see \cite[Theorem 1.17.1.1]{Triebel78}), and from the identity
\[
(L^{p_{0}}(\R^{d}\times S^{d-1};L^{2}(0,\infty)),L^{p_{1}}(\R^{d}\times S^{d-1};L^{2}(0,\infty)))_{\theta,p}=L^{p}(\R^{d}\times S^{d-1};L^{2}(0,\infty)),
\]
contained in e.g.~\cite[Theorem 5.2.1]{Bergh-Lofstrom76}.
\end{proof}
\section{An $\varepsilon$-removal lemma}\label{sec:noeps}

In this appendix we prove an $L^{2}$-based bilinear estimate with sharp loss of derivatives away from the endpoint, in effect already stated as part of Corollary \ref{cor:bilinearL2main}. We use localization arguments as in \cite{SchippaTataru2025}, and then we remove the $\varepsilon$ loss by an interpolation argument, originating in the work \cite{Tao1999} of Tao.

Throughout, $N\in 2^{\N}$ is large and fixed, 
and notation is as in Section \ref{subsec:bilinear}.

\begin{proposition}\label{prop:SharpBilinearEstimate}
Let $\frac{2(d+3)}{d+1}<q\leq \infty$ and $\delta>0$. 
Then there exists a $C\geq0$ such that the following holds. Let $N^{-1/2}\leq \nu\leq 1$ and let $\Theta_{1},\Theta_{2}\subseteq \Theta_{0}$ be spherical caps of angular width $c_{2}\nu$, distance at least $c_{1}\nu$ apart. Let $f_{1},f_{2}\in\Sw(\Rd)$ be such that  
\[
\supp(\wh{f_{j}})\subseteq \{ \xi \in \R^d \mid c_{3}^{-1}\leq |\xi|\leq c_{3}, \hat{\xi}\in\Theta_{j}\}   
\]
for $j\in\{1,2\}$. Then 
\[
\| \tilde{S}_N  f_1 \tilde{S}_N f_2 \|_{L^{q/2}([0,N] \times \R^d)} \leq C\nu^{d-1-\frac{2(d+1)}{q}-\delta} \prod_{j=1}^2 \| f_j \|_{L^{2}(\Rd)}.
\]
\end{proposition}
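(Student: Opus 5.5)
Let me first fix the exponents. Write $q_b:=\frac{2(d+3)}{d+1}$ and $\beta(q):=d-1-\frac{2(d+1)}{q}$. Then $\beta(q_b)=-\frac{4}{d+3}$, which is the exponent in Theorem \ref{thm:bilinearL2}, and $\beta$ is increasing in $q$.

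The plan is to follow Tao's $\varepsilon$-removal strategy from \cite{Tao1999}, adapted to the curved wave packet setting. The first step reduces to unit transversality. As in the proof of Theorem \ref{thm:bilinearL2}, one centers along the central bicharacteristic and applies the anisotropic rescaling $\mathcal{S}$ at scale $R=\nu^{2}N$. This turns the claim into a bound
\[
\|\tilde S f_1\,\tilde S f_2\|_{L^{q/2}([0,R]\times\Rd)}\lesssim \|f_1\|_{L^2}\|f_2\|_{L^2}
\]
for unit-separated caps, with no loss in $R$. The Jacobians of the rescaling produce exactly the factor $\nu^{\beta(q)}$. The $\nu^{-\delta}$ slack absorbs the region $N^{-1/2}\le\nu\le N^{-1/2+\delta'}$, which is treated via Proposition \ref{prop:bilinearinfty} as in the last case of Theorem \ref{thm:bilinearL2}, together with any $\log$-losses from localization.

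In the unit-separated setting, Theorem \ref{thm:bilinearL2} gives the local estimate on cubes $Q_r$ of side $r\le R$ with loss $r^{\varepsilon}$ at $q_b$. By Proposition \ref{prop:BilinearSpatialLocalization} and finite speed of propagation, this loss depends only on the size of the space-time region, not on $R$.

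The core step is the Tao-type globalization. Take $f_1,f_2$ with $\|f_j\|_{L^2}=1$ and a level set $E=\{|\tilde S f_1\tilde S f_2|>\lambda\}$. I would use the wave packet decomposition of Theorem \ref{thm:WavePacketDecomposition}, in which packets are tubes of width $R^{1/2}$ along bicharacteristics. By Lemma \ref{lem:Comparison} these are indistinguishable from the homogeneous flow. The packets satisfy the $L^2$ orthogonality and the rapid decay stated there.

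Next I would prove a sparse-collection lemma: for any $N_0$ cubes of side $r$ with centers $r^{\gamma}$-separated, the bilinear operator restricted to their union obeys the local bound with loss $r^{\varepsilon}$ and no dependence on $N_0$. The argument is the following.
\begin{itemize}
\item Each cube sees only the wave packets whose tubes pass through its dilate.
\item Tubes passing through two cubes at distance $\gg r^{\gamma}$ separate in phase space by at least $r^{\gamma/2}$, by the bi-Lipschitz property of the flow in the $d_N$ metric.
\item The associated pieces $\tilde S\bigl(\sum_{T\ni Q}a_T\phi_T\bigr)$ are then almost orthogonal in $L^2$, up to the kernel-bound tails from Appendix \ref{sec:kernel}.
\end{itemize}
Then a covering lemma, which is purely combinatorial, lets one cover $E$ by $O(|E|^{1/K})$ sparse collections of $|E|^{1/K}$-cubes. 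This yields a restricted weak-type bound at $q_b$ with loss $|E|^{C\varepsilon}$ only. Interpolating this against the trivial $q=\infty$ bound (or against the level-set bound) via Lemma \ref{lem:interabstract} with sublinearity, as in Remark \ref{rem:sublinear}, removes the $\varepsilon$ for every $q>q_b$.

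I expect the main obstacle to be the sparse-collection lemma. In the constant-coefficient case, translating a plane wave across separated cubes is exact. Here the separation must come from the curved Hamiltonian flow, controlled only through $C^{1,1}$ coefficients, the bi-Lipschitz estimates, and kernel tails whose decay is measured at scale $R^{1/2+\delta}$. I would first try to reproduce the orthogonality using only the phase-space separation given by Proposition \ref{prop:PhaseSpaceLocalization}, tolerating polynomial losses in $r^{\gamma}$ that are beaten by the rapid decay.
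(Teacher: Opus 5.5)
The overall architecture matches the paper's: the tiny-transversality case via the fixed-time bound, localization plus recentring and anisotropic rescaling to unit transversality at scale $R=\nu^{2}N$, then Tao's sparse-set and covering-lemma $\varepsilon$-removal with a level-set argument. The gap is in the sparse-collection lemma, which is the one genuinely new step, and the mechanism you propose for it fails.

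You claim that tubes through two far-apart cubes are separated in phase space. This is false. Two cubes $Q_1,Q_2$ far apart in space-time can lie on a common bicharacteristic. The flow being bi-Lipschitz in $d_N$ preserves distances, so it cannot separate phase-space points that coincide. Tubes through both cubes belong to both of your families.

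With global packets of length $R$, a single packet can run through many cubes of a sparse family, if they are placed along its tube. Then
\[
\sum_k\Big\|\sum_{T\ni Q_k}a_T\phi_T\Big\|_{L^2}^2\approx M\|f\|_{L^2}^2 .
\]
So your pieces are not almost orthogonal. For the same reason, the rapid-decay tails in Proposition \ref{prop:PhaseSpaceLocalization} give nothing for such pairs, since they only measure $d_S$-distance to the flowed region. Polynomial losses in $r^{\gamma}$ are therefore not "beaten by rapid decay": there is no rapid decay to use.

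What is actually needed is cancellation, not separation: the overlap of the phase-space regions is below the uncertainty scale. This is captured by the dispersive estimate
\[
\|\tilde{S}(t)h\|_{L^\infty}\lesssim |t|^{-(d-1)/2}\|h\|_{L^1}
\]
for unit-frequency data, available up to the final time thanks to the no-caustics assumption \eqref{eq:AbsenceCaustics}. The paper uses it as follows:
\begin{enumerate}
\item The pieces are the data restarted at the central time of each ball, $f\mapsto(\chi_{B^k_{x,r}}\tilde{S}(t_k)f)_k$, not subsets of global tubes. This is combined with the local estimate \eqref{eq:SparseEstimateLocalized}.
\item A $TT^{*}$ argument on this map uses dispersion for time-separated balls and finite speed of propagation for balls at comparable times but far apart in space. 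This yields \eqref{eq:AlmostOrthogonalityPhaseSpace}.
\end{enumerate}

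Dispersion only gives a polynomial bound per pair, of the form $r^{d}|t_i-t_j|^{-(d-1)/2}$, and there are $M^2$ pairs. Hence the separation must be $(rM)^{\bar C}$, depending on the number $M$ of cubes. Your version, with $r^{\gamma}$-separated centres and no dependence on $N_0$, cannot be proved this way. This dependence on $M$ is exactly why Tao's sparseness notion and the covering lemma (radii $R_j=R_{j-1}^{\bar C}|E|^{\bar C}$, with $O(|E|^{1/M})$ collections) take the form they do.

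With the sparse lemma proved in this form, the rest of your outline goes through as in the paper. That is: a restricted bound on sets with loss $|E|^{C\varepsilon}$, combined with the trivial $L^\infty$ bound, removes the $\varepsilon$ for $q>\frac{2(d+3)}{d+1}$.
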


The rest of this appendix is dedicated to the proof of Proposition \ref{prop:SharpBilinearEstimate}.

By mild dilation and rotation, we suppose in the following
\begin{equation*}
\text{supp}(\widehat{f}_{i}) \subseteq \{ \xi \in \R^d \mid \xi_d \in (1/2,3/2), \, \big| \frac{\xi'}{\xi_d} - \xi_{\alpha_i} \big| \leq \nu/32 \}
\end{equation*}
and $|\xi_{\alpha_1} - \xi_{\alpha_2}| \in [ \nu/2,2\nu] $.

\textbf{(0) Tiny transversality $\nu \in [N^{-\frac{1}{2}}, N^{-\frac{1}{2}+\delta/C}]$.}
In this case we decompose $f_i$ in frequency into sectors $\theta_i$ of aperture $N^{-\frac{1}{2}}$, which only incurs a loss of $\nu^{-\frac{\delta}{2}}$ choosing $C$ large enough. Then it suffices to prove
\begin{equation*}
\| \tilde{S}_N(t) f_{1 \theta_1} \tilde{S}_N(t) f_{2 \theta_2} \|_{L^{q/2}_{t,x}([0,N] \times \R^d)} \lesssim N^{\frac{d+1}{2q}-\frac{d-1}{2}+\delta^2} \prod_{i=1}^2 \| f_i \|_2.
\end{equation*}
This is immediate from the fixed-time estimate proved in Proposition \ref{prop:roughfixed}, after rescaling:
\begin{equation*}
\sup_{t \in [0,1]} \| S_N(t) f'_{i \theta_i} \|_{L^{q}_{x}(\R^d)} \lesssim N^{\frac{d+1}{2} \big( \frac{1}{2} -\frac{1}{q}\big)} \| f_i \|_2.
\end{equation*}
In the following we suppose that $\nu \geq N^{-\frac{1}{2}+\delta}$.

\smallskip

\textbf{(1) Almost orthogonal decomposition in space:}
First, by Proposition \ref{prop:BilinearSpatialLocalization}, it suffices to show the spatially localized estimate
\begin{equation*}
    \| \tilde{S}_N(t) f_1 \tilde{S}_N(t) f_2 \|_{L^{q/2}_{t,x}([0,N] \times B_d(0,CN))} \leq C \nu^{d-1-\frac{2(d+1)}{q}} \prod_{j=1}^2 \| f_j \|_{L^2},
\end{equation*}
where we suppress the spatial localization of $f_i$ to lighten the notation.

Next, we use the kernel estimates from Section \ref{subsection:KernelBoundsPhaseSpace} to localize  $f_i$ to the same (or neighborhing) rectangle of size $N \nu \times N \nu \times \ldots \times N \nu^2$, with short side pointing into $e_d$-direction. We write
\begin{equation*}
    f_i = \sum_{k \in \Z^d} \tilde{\theta}_k^{\nu} f_i
\end{equation*}
with $f_{i k} = \tilde{\theta}_k^{\nu} f_i$ smoothly localized to an $N \nu \times \ldots N \nu \times N \nu^2$ neighborhood of $N \nu (k_1 e_1 + \ldots + k_{d-1} e_{d-1}) + N \nu^2 k_d e_d$.

We have the decomposition
\begin{align*}
&\big\| \tilde{S}_N(t) f_1 \tilde{S}_N(t) f_2 \big\|_{L^{q/2}_{t,x}([0,N] \times B_d(0,CN))} \\
&\lesssim \sum_{|k_1 - k_2| \leq 5} \big\| \tilde{S}_N(t) f_{1 k_1} \tilde{S}_N(t) f_{2 k_2} \big\|_{L^{q/2}_{t,x}([0,N] \times B_d(0,CN))} + C_m N^{-m} \| f_1 \|_2 \| f_2 \|_2.
\end{align*}
\begin{remark}
In principle, this decomposition is still possible for $\nu \to N^{-\frac{1}{2}}$, but the kernel bounds have to be refined to prove its veracity without $\nu^{-\delta}$-factor.
\end{remark}

\textbf{(2) Recentering and rescaling:}

Next, we carry out the recentering and rescaling argument from \cite[Section~5.3]{SchippaTataru2025}. The idea is to reduce to the unit transversality case via an anisotropic rescaling. In the variable-coefficient case, we first have to center around a central bicharacteristic ray. Here the Hamiltonian is not $1$-homogeneous, but asymptotically homogeneous. This does not cause a serious issue though.

For the $1$-homogeneous flow we changed to angular variables, which is recalled presently. Let $\omega(t)$ be the ratio $\omega(t) = \xi'(t) / \xi_d(t)$. Here $\bar{\omega}(t) = \frac{\bar{\xi}'(t)}{\bar{\xi}_d(t)}$ denotes the ratio of the $(\xi',\xi_d)$-component of the central bicharacteristic emanating from $(\bar{x}(0),0,1)$. $\bar{x}(0)$ is determined by the spatial localization to an anisotropic rectangle.
Recall the decomposition
\begin{equation*}
a_N(x,\xi) = a_N^h(x,\xi) + r_N(x,\xi)
\end{equation*}
with $a_N^h$ $1$-homogeneous for $|\xi| \sim 1$ and favorable estimates for $r_N$.

We have the following connection between the Hamiltonian evolution of $b_N^h$ in Cartesian coordinates $(x(t),\xi(t))$ and angular frequencies $(x(t),\omega(t),\xi_d(t))$:
\begin{lemma}
\label{lem:AnisotropicBiLipschitz}
The evolution equations for $(x^t, \omega^t, \xi_d^t)$ read:
\begin{equation}
\label{eq:ModifiedHamiltonian}
\left\{ \begin{array}{cl}
\dot{x}^t &= \partial_{\xi} a_N^h(x^t, \omega^t, 1), \\
\dot{\omega}^t &= - \partial_{x'} a_N^h(x^t,\omega^t,1) + \partial_{x_d} a_N^h(x^t,\omega^t,1) \omega^t, \\
\partial_t \log(\xi_d^t) &= - \partial_{x_d} a_N^h(x^t,\omega^t,1).
\end{array} \right.
\end{equation}
In particular $(x^t,\omega^t)$ are governed by an ODE:
\begin{equation*}
\frac{d}{dt} (x^t, \omega^t) = F(x^t,t,\omega^t)
\end{equation*}
with $F \in C_t C^{0,1}_{x,\omega}$, and we have the bi-Lipschitz dependence: For $(x^0,\omega^0)$, $(\bar{x}^0,\bar{\omega}^0)$ with 
\begin{equation*}
\{ |x^0 - \bar{x}^0| \leq \nu N, \quad |\omega^0 - \bar{\omega}^0| \leq \nu \},
\end{equation*}
it holds
\begin{equation*}
\{ |x^t - \bar{x}^t| \lesssim \nu N, \quad |\omega^t - \bar{\omega}^t| \lesssim \nu \}.
\end{equation*}
\end{lemma}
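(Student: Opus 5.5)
The plan is to derive \eqref{eq:ModifiedHamiltonian} directly from the Hamilton equations \eqref{eq:flow} for $a_N^h$, using the degree-one homogeneity of $a_N^h$ in $\xi$ on the region $|\xi|\eqsim 1$. By Lemma \ref{lem:conservationclassical}, together with the localization $\xi_d\in(1/2,3/2)$ and the small angular aperture, the bicharacteristics stay in that region for $|t|\leq N$.

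Write $\xi=\xi_d(\omega,1)$. Homogeneity gives $\partial_\xi a_N^h(x,\xi)=\partial_\xi a_N^h(x,\omega,1)$ and $\partial_x a_N^h(x,\xi)=\xi_d\,\partial_x a_N^h(x,\omega,1)$. The first equation is then simply $\dot x=\partial_\xi a_N^h$. For the last one, $\dot\xi_d=-\xi_d\partial_{x_d}a_N^h(x,\omega,1)$, and dividing by $\xi_d$ gives the equation for $\log\xi_d$. For $\omega=\xi'/\xi_d$, the quotient rule gives
\[
\dot\omega=\frac{\dot\xi'}{\xi_d}-\frac{\xi'\dot\xi_d}{\xi_d^2}=-\partial_{x'}a_N^h(x,\omega,1)+\omega\,\partial_{x_d}a_N^h(x,\omega,1).
\]
The pair $(x,\omega)$ therefore satisfies a closed ODE with right-hand side $F(x,\omega)=\big(\partial_\xi a_N^h,\,-\partial_{x'}a_N^h+\omega\partial_{x_d}a_N^h\big)(x,\omega,1)$. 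The $t$-dependence is inert here, since $a_N^h$ is time-independent; it becomes relevant only after recentering.

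For the Lipschitz regularity of $F$, I would use \eqref{eq:SizeRegularitypsharp}, which transfers to $a_N^h$ by Lemma \ref{lem:DifferenceEstimates}. These bounds give $|\partial_x^\alpha\partial_\xi^\beta a_N^h|\lesssim N^{-|\alpha|}$ for $|\alpha|\le 2$, with the smallness $\epsilon_0$ coming from \eqref{eq:AbsenceCaustics}. Consequently $F\in C^{0,1}$ with the anisotropic Jacobian bounds $|\partial_x\dot x|\lesssim\epsilon_0N^{-1}$, $|\partial_\omega\dot x|\lesssim 1$, $|\partial_x\dot\omega|\lesssim\epsilon_0N^{-2}$ and $|\partial_\omega\dot\omega|\lesssim\epsilon_0N^{-1}$.

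The bi-Lipschitz claim is a Gronwall estimate in the weighted distance $D(t):=(\nu N)^{-1}|x^t-\bar x^t|+\nu^{-1}|\omega^t-\bar\omega^t|$. By the mean value theorem and the Jacobian bounds,
\[
\dot D\lesssim\Big(\epsilon_0N^{-1}+\frac{\nu}{\nu N}+\frac{\nu N\,\epsilon_0N^{-2}}{\nu}+\epsilon_0N^{-1}\Big)D\lesssim N^{-1}D .
\]
Hence $D(t)\lesssim e^{C|t|/N}D(0)\lesssim D(0)$ for $|t|\le N$. Running the same argument backwards in time gives the reverse inequality.

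The main obstacle is the cross term $\partial_\omega\dot x=O(1)$. Over a time of length $N$ it converts an angular error $\nu$ into a spatial error $\nu N$, so the spatial scale $\nu N$ is forced. The bound closes only because the weights are exactly balanced: the term contributes $\nu\cdot(\nu N)^{-1}=N^{-1}$. This balance must be checked carefully, together with uniformity in $\nu$. I would also make sure the homogeneity of $a_N^h$ holds on the entire region visited by the flow, which the cutoff $\tilde\psi_1$ and Lemma \ref{lem:conservationclassical} guarantee.
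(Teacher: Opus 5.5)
Your proof is correct. The paper states this lemma without proof and recalls it from \cite[Section~5.3]{SchippaTataru2025}. Your route is the standard argument and in the same spirit as the bootstrap in Lemma~\ref{lem:Comparison}: use degree-one homogeneity to derive the angular equations, then apply Gronwall to $(\nu N)^{-1}|x^t-\bar x^t|+\nu^{-1}|\omega^t-\bar\omega^t|$, with the weights exactly absorbing the $O(1)$ cross term $\partial_\omega\dot x$.

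One minor adjustment. Lemma~\ref{lem:conservationclassical} concerns $\tilde a_N$ and only yields $|\xi(t)|\eqsim 1$, which is not enough to keep the flow where $\tilde\psi_1\equiv 1$, or to keep $\omega$ in a small cap, so that homogeneity and $\xi_d>0$ are available. Use instead the global bound $|\partial_x a_N^h|\lesssim\epsilon_0 N^{-1}$, which gives $|\xi(t)-\xi(0)|+|\omega(t)-\omega(0)|\lesssim\epsilon_0$ for $|t|\le N$.
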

The evolution of the central bicharacteristic of the essential part of the Hamiltonian $a_N^h$ determines the ray centered at $\{ (\bar{x}(t),\bar{\omega}(t) \xi_d, \xi_d ) : \xi_d \in (1/2,3/2) \}$ around which we are centering in phase space, like in \cite[Section~5.3.2]{SchippaTataru2025}. By a linear transformation in phase space $\mathcal{S}$ the solutions $v_i(t) = \tilde{S}_N(t) f_{i k_i}$ are transformed to solutions $v_i(t)$ with governing Hamiltonian $\mathcal{S} a_N^h + \mathcal{S} r_N$ (cf. \cite[Lemma~5.12,~Lemma~5.13]{SchippaTataru2025}). Now we can carry out the anisotropic rescaling $\mathcal{T}: x' \to \nu^{-1} x', \, x_d \to x_d, \; \xi' \to \nu \xi', \, \xi_d \to \xi_d$ and let $u_i(t) = w_i(\nu^{-1} x', x_d, \nu^{-2} t)$ such that $u_i(t)$ become solutions to the evolution equation governed by the Hamiltonian
\begin{equation*}
\tilde{a}_{N \nu^2} = \nu^{-2}( \mathcal{T} \mathcal{S} a_N^h + \mathcal{T} \mathcal{S} r_N),
\end{equation*}
for which the precise form of $\mathcal{T} \mathcal{S} a_N^h$ was recorded in \cite[Proposition~5.15]{SchippaTataru2025}.
The precise form is not important for the following analysis. It matters that the functions are now at unit transversality and the size and regularity of the transformed Hamiltonian $\nu^{-2} \mathcal{T} \mathcal{S} a_N$. In \cite{SchippaTataru2025} was proved that the estimate
\begin{equation*}
\| w_1 w_2 \|_{L^{\bar{q}/2}_{t,x}([0,N \nu^2] \times B_d(0,N \nu^2))} \lesssim_\varepsilon (N \nu^2)^{\varepsilon} \prod_{i=1}^2 \| w_i \|_2
\end{equation*}
remains valid for $\bar{q} = \frac{2(d+3)}{d+1}$. More precisely, this was proved for solutions governed by $\nu^{-2} \mathcal{T} \mathcal{S} a^h_N$ at unit transversality. The bounds for the remainder term $r_N$ suffice to argue that $\nu^{-2} \mathcal{T} \mathcal{S} r_N$ can still be regarded as a perturbation to $\nu^{-2} \mathcal{T} \mathcal{S} b^h_N$, as pointed out in Section 3.5. We abuse notation and denote the evolution governed by $\tilde{a}_{N \nu^2}$ with $\tilde{S}_{N \nu^2}$.

\smallskip

\textbf{(3) An $\varepsilon$-removed estimate:} It will suffice to show
\begin{equation}
\label{eq:epsRemoved}
\| w_1 w_2 \|_{L^{q/2}_{t,x}([0,N \nu^2] \times B_d(0,N \nu^2))} \lesssim \prod_{i=1}^2 \| w_i \|_2
\end{equation}
for $q > \bar{q}$, as then the claim will follow from reversing the scaling and the almost orthogonality proved in \textbf{(1)}. The following notion of sparse sets introduced in \cite{Tao1998,Tao1999} will be crucial for the argument.

\begin{definition}[\cite{Tao1998,Tao1999}]
Let $R \geq 1$. A family of $R$-balls $\{B(x_j,R)\}_{j=1}^M$ in $\R^n$ is sparse if the centres are $(RM)^{\bar{C}}$-separated. Here $\bar{C} \geq 1$ is a fixed constant, which can be chosen as $\bar{C} = 12$.
\end{definition}

To make the interpolation argument more transparent, we pause to elaborate on the constants in use. We will choose
\begin{equation*}
\varepsilon = \varepsilon(q), \quad M = \bar{C}^{-1} \log(1/\varepsilon), \quad \bar{C} = 12
\end{equation*}
such that
\begin{equation*}
\sum_{k \geq 1} 2^{-k (1-\bar{C} q' / \log(1/\varepsilon) - q'/\bar{q}')} \lesssim 1.
\end{equation*}

In the proof we use an improved estimate for bilinear expressions on sparse sets. Crucially, we improve the constant from $(N \nu^2) ^\varepsilon$ to $R^{\varepsilon}$. In the following we let $f_i$ be two functions at unit frequencies with angularly separated Fourier support. 
\begin{proposition}
\label{prop:SparseImprovement}
Let $E \subseteq B_{d+1}(0,N) \subseteq \R^{d+1}$ be a sparse set on scale $R \geq 1$. It holds for $M \leq R^M$:
\begin{equation}
\label{eq:SparseEstimateI}
\| \tilde{S}_{N \nu^2} (t) f_1 \tilde{S}_{N \nu^2}(t) f_2 \|_{L^{\bar{q}}_{t,x}(E)} \lesssim_{\varepsilon} R^\varepsilon  \prod_{i=1}^2 \| f_i \|_2.
\end{equation}
For $1 \leq R \leq C$ we have
\begin{equation}
\label{eq:SparseEstimateII}
    \| \tilde{S}_{N \nu^2}(t) f_1 \tilde{S}_{N \nu^2}(t) f_2 \|_{L^{\bar{q}}_{t,x}(E)} \lesssim_C M \prod_{i=1}^2 \| f_i \|_2.
\end{equation}
\end{proposition}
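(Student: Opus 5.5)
Both estimates rest on the \emph{local} version of the $L^2$-based bilinear estimate at unit transversality: for any space-time ball $B$ of radius $R$ in $B_{d+1}(0,N\nu^2)$, the argument of Theorem~\ref{thm:bilinearL2} (the Kakeya/energy-shell induction on scales from \cite{SchippaTataru2025}, which is not tied to the full domain) gives
\[
\|\tilde S_{N\nu^2}f_1\,\tilde S_{N\nu^2}f_2\|_{L^{\bar q/2}(B)}\lesssim_\varepsilon R^{\varepsilon}\prod_{i=1}^{2}\|f_i\|_2 .
\]
Here wave packets live at scale $R^{1/2}$ in space and $R^{-1/2}$ in frequency. Given this local bound, \eqref{eq:SparseEstimateII} is immediate: for $R\le C$, each ball $B(x_j,R)$ is handled by the local bound with constant $O_C(1)$, or simply by the fixed-time $L^\infty$ estimate (Proposition~\ref{prop:bilinearinfty} after rescaling) on a ball of bounded size. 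Summing the at most $M$ balls by the triangle inequality in $L^{\bar q/2}$ then costs a factor at most $M$.

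For \eqref{eq:SparseEstimateI} I will follow Tao's sparse-set argument \cite{Tao1999}. Write $E=\bigcup_{j=1}^M B_j$ with $B_j=B(x_j,R)$ and centres $(RM)^{\bar C}$-separated. For each $j$, apply the wave packet decomposition of Theorem~\ref{thm:WavePacketDecomposition} at scale $R$ to obtain a piece $f_{i,j}$ consisting of the packets of $f_i$ whose bicharacteristics pass through the enlarged ball $R^{\delta}B_j$. The remaining packets miss $B_j$ by at least $R^{1/2+\delta}$, so the kernel decay in Theorem~\ref{thm:WavePacketDecomposition} (and Proposition~\ref{prop:PhaseSpaceLocalization}) makes their contribution on $B_j$ bounded by $O(R^{-100d})\prod_i\|f_i\|_2$. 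On each $B_j$ the local estimate then gives
\[
\|\tilde S f_1\tilde S f_2\|_{L^{\bar q/2}(B_j)}\lesssim R^{\varepsilon}\|f_{1,j}\|_2\|f_{2,j}\|_2 .
\]
Summing over $j$ in $\ell^{\bar q/2}\subseteq\ell^1$ and applying Cauchy--Schwarz reduces the estimate to the almost-orthogonality bound
\[
\sum_j\|f_{i,j}\|_2^2\lesssim\|f_i\|_2^2 .
\]

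This orthogonality is the main obstacle. I plan to obtain it via a $TT^*$ argument. Letting $P_j$ be the phase space cutoff onto the tubes through $B_j$, the claim amounts to showing that
\[
\|P_jP_k^*\|\lesssim (RM)^{-10d}\qquad\text{for } j\ne k.
\]
A tube of width $R^{1/2+\delta}$ and length at most $N\nu^2$ that meets two balls separated by $(RM)^{\bar C}$ is pinned down by its two intersection points. By the bi-Lipschitz property of the flow in Lemma~\ref{lem:AnisotropicBiLipschitz}, transferred to the rescaled Hamiltonian through Lemma~\ref{lem:Comparison}, the set of initial phase space data whose rays hit both $R^\delta B_j$ and $R^\delta B_k$ has measure of order $R^{O(1)}(RM)^{-\bar C(d-1)}$ in the $R$-normalised metric. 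Combining this with the packet overlap bounds, a Schur test yields the required decay of $\|P_jP_k^*\|$ once $\bar C=12$. Schur's lemma then gives
\[
\Big\|\sum_j P_j^*P_j\Big\|\lesssim 1+M^2(RM)^{-10d}\lesssim 1,
\]
which is the orthogonality estimate, and completes the proof.

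The hypothesis $M\le R^M$ is used only to absorb logarithmic losses in $M$ into $R^{\varepsilon}$; the case of small $R$ is covered by \eqref{eq:SparseEstimateII}. The delicate point is keeping the flow-geometry constants uniform after the recentering and rescaling of step~(2), which is exactly what Lemma~\ref{lem:DifferenceEstimates} at scale $N\nu^2$ provides.
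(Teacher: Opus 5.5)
Your architecture matches the paper's: an $R^{\varepsilon}$ localized bilinear bound on each sparse ball, an almost-orthogonality estimate proved by $TT^{*}$, and Cauchy--Schwarz to sum over the balls. (The inclusion you need in that last step is $\ell^{1}\subseteq\ell^{\bar q/2}$, not the reverse.) The gap is in your proof of the almost-orthogonality, which is the heart of the matter.

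\emph{Your tubes do not exist.} You argue with tubes of width $R^{1/2+\delta}$ and length up to $N\nu^{2}$ that are pinned down by two balls $(RM)^{\bar C}$ apart. A packet of width $R^{1/2}$ has angular uncertainty $R^{-1/2}$ and is coherent only for times $\lesssim R$; the scale-$R$ analogue of Theorem~\ref{thm:WavePacketDecomposition} controls it only for $|t|\lesssim R$. By the time it reaches a ball at distance $(RM)^{\bar C}\gg R$, its transverse width is about $(RM)^{\bar C}R^{-1/2}\gg R$.

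\emph{Packets coherent on the whole interval do not work either.} If you use packets at scale $N\nu^{2}$, your claim $\sum_{j}\|f_{i,j}\|_{2}^{2}\lesssim\|f_{i}\|_{2}^{2}$ is false. When the sparse balls sit along a single null bicharacteristic (allowed if $N\nu^{2}\geq M(RM)^{\bar C}$), one packet meets all of them. Then $f_{i,j}=f_{i}$ for every $j$, and $\|P_{j}P_{k}^{*}\|\approx 1$.

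\emph{Ray counting does not give operator-norm decay.} A small measure of the set of classical rays joining $R^{\delta}B_{j}$ and $R^{\delta}B_{k}$ does not bound the operator norm of $P_{j}P_{k}^{*}$. The bi-Lipschitz property of the flow (Lemma~\ref{lem:AnisotropicBiLipschitz}, Lemma~\ref{lem:Comparison}) is a statement about rays and cannot produce the decay you need.

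The missing ingredient is the dispersive estimate. The paper localizes in physical space at the central time: the pieces are $\chi_{B^{k}_{x,R}}\tilde{S}_{N\nu^{2}}(t_{k})f_{i}$, which is also what the local estimate \eqref{eq:SparseEstimateLocalized} produces via the kernel bounds. Up to the harmless non-unitarity of $\tilde{S}_{N\nu^{2}}$, the off-diagonal terms of $TT^{*}$ are then pairings of $\chi_{B^{i}_{x,R}}g_{i}$ with $\tilde{S}_{N\nu^{2}}(t_{j}-t_{i})(\chi_{B^{j}_{x,R}}g_{j})$. There are two cases.
\begin{itemize}
\item If $|t_{i}-t_{j}|\gtrsim (RM)^{\bar C}$, use the $L^{1}\to L^{\infty}$ bound $|t|^{-(d-1)/2}$ for unit-frequency data, valid on the whole time interval because of the absence of caustics (cf.~\eqref{eq:AbsenceCaustics}). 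Together with H\"older's inequality on the $R$-balls, this gives $R^{O(1)}(RM)^{-\bar C(d-1)/2}\|g_{i}\|_{2}\|g_{j}\|_{2}$.
\item If $|t_{i}-t_{j}|\leq c(RM)^{\bar C}$, sparseness forces $|x_{i}-x_{j}|\gtrsim (RM)^{\bar C}\gg |t_{i}-t_{j}|$. Finite speed of propagation (rapid kernel decay outside the light cone) then gives an even better bound.
\end{itemize}
There are at most $M^{2}$ pairs, so choosing $\bar C$ large absorbs the factors of $M$ and $R$. Your ray count is a heuristic shadow of this dispersive decay, but it has to be replaced by the dispersive estimate to give a proof.
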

The proof hinges on almost orthogonality of phase space projections associated with the sparse family of balls. This in turn is verified using the dispersive estimate and finite speed of propagation. Before we turn to the proof, we argue how to conclude the argument by invoking a covering lemma to reduce general sets to a controlled union of sparse sets. The following goes back to Tao \cite{Tao1998,Tao1999}:
\begin{lemma}
Let $E \subseteq \R^n$ be a finite union of $1$-cubes and $M \geq 1$. Define the radii $R_j$ inductively by
\begin{equation*}
R_0 := 1, \quad R_j := R_{j-1}^{\bar{C}} |E|^{\bar{C}} \text{ for } 1 \leq j \leq M-1.
\end{equation*}
Then for each $0 \leq j \leq M-1$ there exists a family of sparse collections $(\mathcal{B}_{j,\alpha})_{\alpha \in A_j}$ of balls of radius $R_j$ such that the index sets $A_k$ have cardinality $\mathcal{O}(|E|^{\frac{1}{M}})$ and
\begin{equation*}
E \subseteq \bigcup_{j=0}^{M-1} \bigcup_{\alpha \in A_j} S_{j,\alpha},
\end{equation*}
where $S_{j,\alpha}$ is the union of all the balls belonging to the family $\mathcal{B}_{j,\alpha}$.
\end{lemma}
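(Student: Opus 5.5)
The plan is to run Tao's sparse-set covering argument from \cite{Tao1998,Tao1999}: a greedy selection, organized by scale, driven by a counting quantity that decreases strictly at each scale. Cover $E$ by its $|E|$ unit cubes and let $Z_0$ be the finite set of their centres, so $\# Z_0\lesssim |E|$. We build the families one scale at a time for $j=0,1,\ldots,M-1$. Each stage is handed a set $Z_j\subseteq Z_0$ of points not yet covered. It removes from $Z_j$ every point that has too few neighbours at the next scale, and covers those removed points with sparse families of $R_j$-balls.

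Fix $j$ and $Z_j$. Call $z\in Z_j$ \emph{$j$-isolated} if the ball $B(z,R_{j+1})$ contains fewer than $|E|^{(M-j-1)/M}$ points of $Z_j$. Let $I_j$ be the set of isolated points and set $Z_{j+1}:=Z_j\setminus I_j$.

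To cover $I_j$, form a graph on $I_j$ by joining $z$ and $z'$ whenever $|z-z'|\leq R_{j+1}$. Maximum degrees are bounded by the counting condition, up to the volume-type bookkeeping discussed below. A greedy colouring therefore splits $I_j$ into $O(|E|^{1/M})$ classes in which distinct points are $R_{j+1}$-separated. Since $R_{j+1}=R_j^{\bar C}|E|^{\bar C}\geq (R_j\cdot\#\text{class})^{\bar C}$, the $R_j$-balls around the points of each class form a sparse family. This gives $(\mathcal{B}_{j,\alpha})_{\alpha\in A_j}$ with $\# A_j=O(|E|^{1/M})$. Every cube centred at a point of $I_j$ lies within distance $\sqrt n$ of its centre, so after harmlessly enlarging radii by a constant factor these balls cover the corresponding cubes.

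For the induction, we show that every $z\in Z_{j+1}$ has at least $|E|^{(M-j-1)/M}$ points of $Z_j$ within distance $R_{j+1}$. Iterating, a point surviving all $M$ stages would require $\#Z\geq|E|^{M/M}\cdot(\text{nontrivial})>|E|$, a contradiction. Hence $Z_M=\emptyset$, every centre is isolated at some stage, and $E$ is covered by $\bigcup_j\bigcup_{\alpha}S_{j,\alpha}$.

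I expect the main obstacle to be calibrating the thresholds so that two requirements hold together: isolated sets must admit colourings with $O(|E|^{1/M})$ colours, while non-isolated points must still be forced to disappear by stage $M$. The point is that the degree bound for the colouring and the decrease of the counting quantity are two sides of one threshold. My first attempt will use thresholds $|E|^{(j+1)/M}$ on neighbourhood counts with an increasing exponent, and check both sides via $\# Z_0\leq|E|$. If the colouring count does not come out right, I will fall back on Tao's original formulation, which counts the number of $R_j$-balls needed to cover the neighbourhood instead of points.
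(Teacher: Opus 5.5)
The paper gives no proof of this lemma; it only attributes it to Tao. So the question is whether your sketch is a working argument. As set up, it is not: the decreasing threshold $|E|^{(M-j-1)/M}$ breaks both the colouring count and the exhaustion step.

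\emph{Colouring.} A $j$-isolated point can have almost $|E|^{(M-j-1)/M}$ neighbours in your graph. Take $E$ to be widely separated, tightly packed clusters of about $\tfrac12|E|^{(M-1)/M}$ unit cubes each. Then every point is $0$-isolated and each cluster is a clique, so you need that many colours, far more than $O(|E|^{1/M})$ once $M\geq 3$. No other method helps either: a sparse family of $O(1)$-balls meets a tight cluster in very few balls, so such a cluster cannot be covered by $O(|E|^{1/M})$ sparse families at radius $R_0$. These points must be deferred to a larger scale, which means your threshold runs in the wrong direction.

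\emph{Exhaustion.} At $j=M-1$ the threshold is $|E|^{0}=1$, while $z\in B(z,R_M)$, so no point is ever isolated at the last stage. Worse, if $E$ is a single tight cluster, then $B(z,R_{j+1})\supseteq Z_0$ for every $j$, nothing is ever removed, and $Z_M=Z_0$. The ``iterating'' step is where the logic fails. Lower bounds on raw point counts in the nested balls $B(z,R_1)\subseteq B(z,R_2)\subseteq\cdots$ do not multiply, because these balls contain one another; you only get the largest of the bounds.

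The missing idea is the bookkeeping you postpone and never supply: you must count separated clusters, not points. One repair that keeps your structure is as follows.
\begin{itemize}
\item Fix an integer $K$ comparable to $|E|^{1/M}$ with $K^M>\#Z_0$. Call $z\in Z_j$ $j$-isolated if $Z_j\cap B(z,R_{j+1})$ contains no $K$ points that are pairwise more than $4R_j$ apart.
\item By induction, each $z\in Z_j$ has at least $K^j$ points of $Z_0$ in $B(z,2R_j)$. Indeed, the $K$ separated witnesses $y_i\in Z_j$ for a point of $Z_{j+1}$ carry disjoint balls $B(y_i,2R_j)\subseteq B(z,2R_{j+1})$. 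This is the multiplication you wanted, now valid because the branches are disjoint, and it forces $Z_M=\emptyset$.
\item To cover $I_j$, take a maximal $4R_j$-separated subset $X\subseteq I_j$, so the balls $B(x,4R_j)$, $x\in X$, cover $I_j$. For $x\in X$, its neighbours in $X$ within distance $R_{j+1}$, together with $x$ itself, form a $4R_j$-separated subset of $Z_j\cap B(x,R_{j+1})$. Since $x$ is isolated, there are fewer than $K$ of them.
\item Greedy colouring then gives $O(|E|^{1/M})$ classes that are $R_{j+1}$-separated, hence sparse as you argued.
\end{itemize}
Equivalently, you can use your increasing-exponent idea, but measured against the fixed set $E$ rather than the shrinking $Z_j$. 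Require $|E\cap B(z,R_j)|\geq|E|^{j/M}$. Bound degrees by packing the disjoint balls $B(y,R_j)$ inside $B(x,R_{j+1})$. Cover the final survivors $Z_{M-1}$ directly by at most $|E|^{1/M}$ single balls, each of which is a trivially sparse family.

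Your last paragraph gestures at both options but commits to neither and does not carry out the degree count. That count is the actual content of the lemma.
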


\begin{proof}[Proof~of~(Proposition \ref{prop:SharpBilinearEstimate}~$\Leftarrow$~Proposition \ref{prop:SparseImprovement})]
    
Let $\chi_{k}(x,t)$ denote a smooth localization to a unit ball in space-time. First, we observe that the space frequencies of $\chi_k(x,t) \tilde{S}_{N \nu^2}(t) f_i$ are localized to the unit ball, which follows from the isotropic wave packet decomposition. For this reason the function is essentially constant on the unit scale spatially. And moreover, the first time derivative satisfies
\begin{equation*}
\frac{d}{dt} (\tilde{S}_{N \nu^2}(t) f_i) = i \tilde{a}_{N \nu^2}(x,D) f_i(x)
\end{equation*}
and by the spatial frequency localization, we find that $\chi_k(x,t) S'_{N \nu^2}(t) f_i$ is locally constant. We use duality to write
\begin{equation*}
\| \tilde{S}_{N \nu^2}(t) f_1 \tilde{S}_{N \nu^2}(t) f_2 \|_{L^q_{t,x}(B_{d+1}(0,N))} = \sup_{\| g \|_{L^{q'}_{t,x}} = 1} \iint \tilde{S}_{N \nu^2}(t) f_1 \tilde{S}_{N \nu^2}(t) f_2 \, g dx dt.
\end{equation*}
We note that $g$ is locally constant on the unit scale too. We carry out a level set decomposition $g = \sum_{k \in \Z} g_k$ with $g_k = g \cdot \chi_{E_k}$ for
\begin{equation*}
E_k = \{ x \in \R^{d+1} : 2^{-k} \leq |g(x)| < 2^{-k+1} \}.
\end{equation*}
$\chi_{E_k}$ denotes the indicator function. By the locally constant property, we can suppose $E_k$ to be a union of $1$-cubes. (This can be made more precise by comparing to the scattered modulation sum; see e.g. \cite[Section~2.4]{Lee2018}.) And clearly, $E_k = \emptyset$ for $k < 0$. We obtain
\begin{equation*}
\iint \tilde{S}_{N \nu^2}(t) f_1 \tilde{S}_{N \nu^2}(t) f_2 g dx dt = \sum_{k \in \Z} \iint \tilde{S}_{N \nu^2}(t) f_1 \tilde{S}_{N \nu^2}(t) f_2 g_k dx dt.
\end{equation*}
We estimate
\begin{equation*}
\iint \tilde{S}_{N \nu^2}(t) f_1 \tilde{S}_{N \nu^2}(t) f_2 g_k dx dt \leq \| \tilde{S}_{N \nu^2}(t) f_1 \tilde{S}_{N \nu^2}(t) f_2 \|_{L^{\bar{q}}_{t,x}(E_k)} \| g_k \|_{L^{\bar{q}'}(E_k)}.
\end{equation*}
Suppose that $|E_k| \geq 2$, in which case we invoke the covering lemma for $E_k$ to find
\begin{equation*}
\| \tilde{S}_{N \nu^2}(t) f_1 \tilde{S}_{N \nu^2}(t) f_2 \|_{L^{\bar{q}}_{t,x}(E_k)} \lesssim_\varepsilon M |E_k|^{\frac{1}{M}+\varepsilon \bar{C}^M} \| f_1 \|_2 \| f_2 \|_2.
\end{equation*}
The contributions of the sparse sets with $R \lesssim 1$ and $R \geq |E_k|^{\bar{C}}$ (i.e. $R^M \geq M$) are estimated separately and can be summed up to the above.

Moreover, for $|E_k| < 2$, which implies $|E_k| \in \{ 1, 0 \}$, the above estimate follows by easier means. Choosing $M = \bar{C}^{-1} \log(1/\varepsilon)$ we find
\begin{equation*}
\| \tilde{S}_{N \nu^2}(t) f_1 \tilde{S}_{N \nu^2}(t) f_2 \|_{L^{\bar{q}}_{t,x}(E_k)} \lesssim_\varepsilon |E_k|^{\bar{C}/\log(1/\varepsilon)} \| f_1 \|_2 \| f_2 \|_2.
\end{equation*}
By Tschebychev's inequality we find $\| g_k \|_{L^{\bar{q}'}} \sim 2^{-k} |E_k|^{\frac{1}{\bar{q}'}}$. Furthermore,
\begin{equation*}
|E_k|^{\bar{C}/\log(1/\varepsilon) + 1/ \bar{q}'} 2^{-k} \leq 2^{-k ( 1- \bar{C} q'/\log(1/\varepsilon) - q'/\bar{q}')},
\end{equation*}
which is summable, since $q > \bar{q}$ and choosing $\varepsilon$ small enough.
\end{proof} 

It remains to show Proposition \ref{prop:SparseImprovement}. Then the proof of Proposition \ref{prop:SharpBilinearEstimate} will be complete.

\begin{proof}[Proof~of~Proposition~\ref{prop:SparseImprovement}]
The estimate for $1 \leq R \leq 2$ follows from decomposing $E$ and applying H\"older's and Bernstein's inequality
\begin{equation*}
\| \tilde{S}_{N \nu^2}(t) f_1 \tilde{S}_{N \nu^2}(t) f_2 \|_{L^{\bar{q}}_{t,x}(E)} \leq \sum_{k=1}^M \| \tilde{S}_{N \nu^2}(t) f_1 \tilde{S}_{N \nu^2}(t) f_2 \|_{L^{\bar{q}}_{t,x}(B_R^k)} \lesssim M \prod_{i=1}^2 \| f_i \|_2.
\end{equation*}

Next, we consider the case $R^K \geq M$.
We will estimate the sparse balls separately. The spatial projection of the space-time ball $B_R^k$ is denoted by $B_{x,R}^k$ and the center of the time projection by $t_k$.
In the first step, we argue that for a ball $B_R^k \subseteq \R^{d+1}$ it holds
\begin{equation}
\label{eq:SparseEstimateLocalized}
\begin{split}
    \| \tilde{S}_{N \nu^2}(t) f_1 \tilde{S}_{N \nu^2}(t) f_2 \|_{L^{\bar{q}}_{t,x}(B^k_R)} &\lesssim_{\varepsilon} R^{\varepsilon}  \prod_{i=1}^2 \| \chi_{B_{x,R}^k} \tilde{S}_{N \nu^2}(t_k) f_i \|_{L^2} \\
    &\quad + C_m R^{-m} \prod_{i=1}^2 \| f_i \|_2.
\end{split}
\end{equation}
Above $\chi_{B_{x,R}^k}$ denotes a smooth and compactly supported function, which is identically one on $C B_{x,R}^k$ and vanishing off $2 C B_{x,R}^k$.

The key input is to apply the bilinear estimate \cite{SchippaTataru2025} 
on the essential contribution of $f_i$ in phase space, which, by kernel estimates is like above.

In the second step we show the almost orthogonality of the phase space contributions for sparse balls with notation introduced below:
\begin{equation}
\label{eq:AlmostOrthogonalityPhaseSpace}
    \sum_{k=1}^M \| \chi_{B_{x,R}^k} \tilde{S}_{N \nu^2}(t_k) f \|_{L^2}^2 \lesssim \| f \|_2^2.
\end{equation}
The estimate in the previous display follows from a $TT^*$-argument. We consider
\begin{equation*}
    T: L^2(\R^d) \to \ell^2_M L^2(\R^d), \quad f \mapsto ( \chi_{R_i} \tilde{S}_{N \nu^2}(t_i) f )_{i=1,\ldots,M}
\end{equation*}
with adjoint
\begin{equation*}
    T^*: \ell^2_M L^2(\R^d) \to L^2(\R^d), \quad (g_i)_{i=1,\ldots,M} \mapsto \sum_{i=1}^M \tilde{S}_{N \nu^2}(-t_i) \big( \chi_{R_i} g_i \big).
\end{equation*}
Squaring the adjoint gives
\begin{equation*}
\begin{split}
    \| T^* g \|_{L^2(\R^d)}^2 &= \sum_{i,j=1}^M \langle \tilde{S}_{N \nu^2}(t_i) \big( \chi_{B_{x,R}^i} g_i \big), \tilde{S}_{N \nu^2}(t_j) \big( \chi_{B_{x,R}^j} g_j \big) \rangle \\
    &= \sum_{i=1}^M \| \tilde{S}_{N \nu^2}(t_i) \chi_{B_{x,R}^i} g_i \|_{L^2}^2 + \sum_{i \neq j} \langle \tilde{S}_{N \nu^2}(t_i) \big( \chi_{B_{x,R}^i} g_i \big), \tilde{S}_{N \nu^2}(t_j) \big( \chi_{B_{x,R}^j} g_j \big) \rangle. 
\end{split}
\end{equation*}
We can estimate the diagonal contribution trivially. We need favorable bounds for the off-diagonal part: For $|t_i - t_j| \geq (RM)^{\bar{C}}$ by the dispersive estimate
\begin{equation*}
    \begin{split}
    \langle \tilde{S}_{N \nu^2}(t_i) (\chi_{B_{x,R}^i} g_i), \tilde{S}_{N \nu^2}(t_j) ( \chi_{B_{x,R}^j} g_j) &\leq \| \chi_{B_{x,R}^i} g_i \|_{L^1} \| \tilde{S}_{N \nu^2}(t_j-t_i) \chi_{B_{x,R}^j} g_j \|_{L^{\infty}} \\
    &\lesssim (MR)^{\frac{d+1}{2}} (RM)^{\bar{C}( - \frac{d-1}{2}) } (MR)^{\frac{d+1}{2}} \| g_i \|_2 \| g_j \|_2.
    \end{split}
\end{equation*}
On the other hand, for $|t_i-t_j| \leq c (RM)^{\bar{C}}$ for a small $c \in (0,1)$ and $|x_i-x_j| \gtrsim (RM)^{\bar{C}}$, we have by finite speed of propagation for the off-diagonal terms even more favorable bounds. Notably, the number of terms is restricted to $M^2$. The proof is complete.
\end{proof}

\section{The smooth setting}\label{sec:smooth}

In this appendix we show that the results from Section \ref{sec:BilinearSmoothing} can be improved slightly for the Euclidean wave equation.

To this end, we first obtain a version of Theorem \ref{thm:BilinearEstimates} without any $\veps$ loss. By the Sobolev embeddings in \eqref{eq:Sobolev}, this result strictly improves upon known $L^{p}$-based bilinear restriction estimates (see e.g.~\cite[Theorem 2.2]{ChLeLi26}). 

\begin{proposition}\label{prop:BilinearEstimatesEuclidean}
Let $2\leq p\leq \infty$, $\frac{d+3}{d+1}p\leq q\leq \infty$ and $c_{1},c_{2},c_{3}>0$. 
Then there exists a $C\geq0$ such that the following holds. Let $N\in 2^{\N}$ and $N^{-1/2}\leq \nu\leq 1$, and let $\Theta_{1},\Theta_{2}\subseteq S^{d-1}$ be spherical caps of angular width $c_{2}\nu$, distance at least $c_{1}\nu$ apart if $\nu\geq 2N^{-1/2}$. 
Let
$f_{1},f_{2}\in\Sw(\Rd)$ be such that 
\[
\supp(\wh{f_{j}})\subseteq \{ \xi \in \R^d \mid c_{3}^{-1}N\leq |\xi|\leq c_{3}N, \hat{\xi}\in\Theta_{j}\} 
\]
for $j\in\{1,2\}$. Then 
\[
\begin{aligned}
&\big\|e^{it\sqrt{-\Delta}}f_{1}(x)e^{it\sqrt{-\Delta}}f_{2}(x)\big\|_{L_{tx}^{q/2}([0,1] \times \R^d)}\\
&\leq CN^{d-1-\frac{2(d+1)}{q}+\frac{2}{p}-2s(p)} \nu^{2(d-1-\frac{d+1}{q}-\frac{d-1}{p})} \prod_{j=1}^2 \| f_{j} \|_{\Hp}.
\end{aligned}
\]
\end{proposition}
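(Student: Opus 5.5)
The plan is to follow the proof of Theorem \ref{thm:BilinearEstimates}, replacing the rough-coefficient inputs by their sharp Euclidean counterparts so that neither an $N^{\veps}$ nor a $\nu^{-\delta}$ loss appears. As there, let $\chi_{1},\chi_{2}$ be smooth cutoffs adapted to the Fourier supports, with $\F^{-1}\chi_{j}$ bounded in $L^{1}$ uniformly in $N$ and $\nu$. The bilinear operator $(f_{1},f_{2})\mapsto (e^{it\sqrt{-\Delta}}\chi_{1}(D)f_{1})(e^{it\sqrt{-\Delta}}\chi_{2}(D)f_{2})$ can then be interpolated by \cite[Theorem 4.4.1]{Bergh-Lofstrom76}, using complex interpolation of the $\Hp$ scale. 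The exponents of $N$ and $\nu$ on the right-hand side are affine in $(\frac1p,\frac1q)$, so it suffices to treat three families of endpoints:
\begin{itemize}
\item[(i)] $p=2$ and $q=\frac{2(d+3)}{d+1}$;
\item[(ii)] $p=2$ and $\frac{2(d+3)}{d+1}<q<\infty$;
\item[(iii)] $q=\infty$ with $2\leq p\leq\infty$.
\end{itemize}
The region $\frac{d+3}{d+1}p\leq q$ is the convex hull of these, so interpolating among them covers every admissible pair.

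Case (iii) is taken verbatim from the proof of Theorem \ref{thm:BilinearEstimates}, which already had no loss for $q=\infty$. One decomposes each $f_{j}$ into $(\nu N^{1/2})^{d-1}$ pieces with Fourier support in $N^{-1/2}$-caps. Each piece is estimated by Proposition \ref{prop:bilinearinfty}, whose kernel bound holds trivially for $e^{it\sqrt{-\Delta}}$ via stationary phase. Hölder's inequality over the pieces and \cite[Proposition 7.1]{HaPoRoYu26} then finish this case. For $N^{-1/2}\leq\nu<2N^{-1/2}$, which carries no separation assumption, the argument is the same Cauchy--Schwarz step as before and has no loss.

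For $p=2$, with $\nu\geq 2N^{-1/2}$, we reduce by parabolic rescaling to unit transversality. In the Euclidean case this is exact, since the Lorentz-type anisotropic rescaling from \cite{TaoVargas2000} maps $\nu$-separated caps to $1$-separated caps. The cone is preserved, so no recentering along bicharacteristics is needed. Undoing the rescaling produces the factor $\nu^{d-1-\frac{2(d+1)}{q}}$, and the scaling $t\to Nt$, $x\to Nx$ produces $N^{d-\frac{2(d+1)}{q}}$. Together these match the right-hand side at $p=2$, as in Corollary \ref{cor:bilinearL2main}.

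At unit transversality we invoke Tao's endpoint bilinear cone restriction estimate \cite{Tao01}. This gives $\|e^{it\sqrt{-\Delta}}f_{1}\,e^{it\sqrt{-\Delta}}f_{2}\|_{L^{q_{b}/2}(\R^{1+d})}\lesssim\|f_{1}\|_{2}\|f_{2}\|_{2}$ for unit-frequency, angularly separated data, globally in time and with no $\veps$ loss. This settles (i), and (ii) follows by interpolating (i) with the trivial $q=\infty$ bound at $p=2$, namely Bernstein's inequality for unit-frequency functions. Working on the whole space-time avoids the spatial localization step of Proposition \ref{prop:BilinearSpatialLocalization} and any almost orthogonality loss.

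The main obstacle is making sure the anisotropic rescaling in the Euclidean setting genuinely yields exactly the power $\nu^{d-1-\frac{2(d+1)}{q}}$ at the endpoint $q=q_{b}$. This requires the rescaled caps to satisfy the hypotheses of \cite{Tao01} uniformly, including the frequency-size constraints after the Lorentz transform. A further check is that the time interval, which after rescaling becomes $[0,N\nu^{2}]$, can be replaced by $\R$ without cost. I would verify this with the explicit Lorentz transformation as in \cite{TaoVargas2000,Lee2006}.
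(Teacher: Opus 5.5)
Your proposal is correct and is essentially the paper's argument: bilinear complex interpolation reduces matters to the vertices $(\frac12,\frac{d+1}{2(d+3)})$, $(\frac12,0)$, $(0,0)$; the $p=2$ cases come from Tao's $\veps$-free endpoint estimate \cite{Tao01} after rescaling, with Cauchy--Schwarz and Bernstein for $N^{-1/2}\leq\nu<2N^{-1/2}$; and the $q=\infty$ case comes from decomposing into $N^{-1/2}$-caps together with the Knapp-type behaviour of the $\Hp$ norm. The only cosmetic difference is that at $p=q=\infty$ the paper uses Lemma \ref{lem:Knapp} and the bound $\|f_{j\w}\|_{\HT^{\infty}_{FIO}(\Rd)}\lesssim\|f_{j}\|_{\HT^{\infty}_{FIO}(\Rd)}$ directly, whereas you reuse the general-$p$ computation from Theorem \ref{thm:BilinearEstimates}; also, since Tao's estimate is global in space-time, your concern about the rescaled time interval does not arise.
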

\begin{proof}
By reasoning as in the proof of Theorem \ref{thm:BilinearEstimates}, it suffices to consider $(\frac{1}{p},\frac{1}{q})\in\{(\frac{1}{2},\frac{d+1}{2(d+3)}), (\frac{1}{2},0),(0,0)\}$.  

For $(\frac{1}{p},\frac{1}{q})\in\{(\frac{1}{2},\frac{d+1}{2(d+3)}), (\frac{1}{2},0)\}$, due to the fact that $\HT^{2}_{FIO}(\Rn)=L^{2}(\Rn)$, the result is well known (see e.g.~\cite[Theorem 2.2]{ChLeLi26}). Most relevantly, for $q=\frac{2(d+3)}{d+1}$ the conclusion follows from the endpoint statement without $\veps$ loss from \cite{Tao01} after rescaling, combined with Bernstein's inequality for $N^{-1/2}\leq\nu<2N^{-1/2}$.  

On the other hand, the argument for $p=q=\infty$ is just as in the proof of Theorem \ref{thm:BilinearEstimates}. One decomposes $f_{j}$ into a sum $f_{j}=\sum_{\w} f_{j\w}$ of approximately $(\nu N^{1/2})^{d-1}$ terms $f_{j\w}$, each of which satisfies the support condition in \eqref{eq:bilinearinftyassume} and the estimate $\|f_{j\w}\|_{\HT^{\infty}_{FIO}(\Rd)}\lesssim \|f_{j}\|_{\HT^{\infty}_{FIO}(\Rd)}$. One can then use standard kernel bounds (cf.~also Proposition \ref{prop:bilinearinfty}):
\begin{align*}
\big\|e^{it\sqrt{-\Delta}}f_{1}(x)e^{it\sqrt{-\Delta}}f_{2}(x)\big\|_{L_{tx}^{\infty}([0,1] \times \R^d)}&\leq 
\prod_{j=1}^{2}\sum_{\w}\|e^{it\sqrt{-\Delta}}f_{j\w}(x)\|_{L_{tx}^{\infty}([0,1] \times \R^d)}.\\
&\eqsim \prod_{j=1}^{2}\sum_{\w}\|f_{j\w}\|_{L^{\infty}(\R^d)}.
\end{align*}
Finally, Lemma \ref{lem:Knapp} implies that \begin{equation}\label{eq:Knappbilinear}
\begin{aligned}
\sum_{\w}\|f_{j\w}\|_{L^{\infty}(\R^d)}&\eqsim N^{-s(\infty)}\sum_{\w}\|f_{j\w}\|_{\HT^{\infty}_{FIO}(\Rd)}\\
&\lesssim N^{\frac{d-1}{2}-s(\infty)}\nu^{d-1}\|f_{j}\|_{\HT^{\infty}_{FIO}(\Rd)}
\end{aligned}
\end{equation}
for $j\in\{1,2\}$, as required.
\end{proof}

We can now improve upon Theorem \ref{thm:mainrough} in the case of the Euclidean wave equation, by enlarging the region on which the endpoint exponent is attained.

\begin{theorem}\label{thm:Euclidean}
Let $2\leq p<q\leq \infty$ be such that $(\frac{1}{p},\frac{1}{q})\in \mathcal{T}_{l}\cap \mathcal{T}_{u}$, and let $\alpha\geq \alpha(p,q)-s(p)$. Suppose that $r>\max(2s(p)+1,\alpha+s(p))$, and that one of the following conditions holds:
\begin{enumerate}
\item\label{it:Euclidean1} $\alpha>\alpha(p,q)-s(p)$; 
\item\label{it:Euclidean2} $(\frac{1}{p},\frac{1}{q})$ lies in the interior of $\mathcal{T}_{l}$ or in the interior of $\mathcal{T}_{u}$. 
\end{enumerate} 
Then there exists a $C\geq0$ such that
\begin{equation}\label{eq:Euclideanbound}
\Big(\int_{0}^{1}\|e^{it\sqrt{-\Delta}}f\|_{L^{q}(\Rd)}\ud t\Big)^{1/q}\leq C\|f\|_{\HT^{\alpha,p}_{FIO}(\R^d)}
\end{equation}
for all $f\in \HT^{\alpha,p}_{FIO}(\R^d)$.
\end{theorem}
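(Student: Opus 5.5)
The plan is to rerun the scheme of Section \ref{subsec:conclude}, with two changes: the smooth evolution $S_{N}$ is replaced by $e^{it\sqrt{-\Delta}}$, and the $\veps$-lossy bilinear input of Theorem \ref{thm:BilinearEstimates} is replaced by the lossless Proposition \ref{prop:BilinearEstimatesEuclidean}. No paradifferential reduction is needed, because the equation has constant coefficients; the hypothesis on $r$ plays no role. The first step is the dyadic bound
\[
\|e^{it\sqrt{-\Delta}}f\|_{L^{q}([0,1]\times\Rd)}\lesssim N^{\alpha(p,q)-s(p)}\|f\|_{\Hp}
\]
for $f$ with Fourier support in $\{N/8\leq|\xi|\leq 8N,\ |\hat{\xi}-e_{d}|\leq c_{0}\}$ and $(\frac{1}{p},\frac{1}{q})$ near $\mathfrak{Q}$, proved on the closed region $q\geq\frac{d+3}{d+1}p$. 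To get it, write the square as $\sum_{\nu}B_{\nu}(f,f)$ via the Whitney decomposition \eqref{eq:Whitney}. In the Euclidean case almost orthogonality holds directly, without spatial localization or $\nu^{-\delta}$ loss, since $\supp\F(e^{it\sqrt{-\Delta}}f^{\nu}_{\w_{1}}e^{it\sqrt{-\Delta}}g^{\nu}_{\w_{2}})$ lies in finitely overlapping sectors of aperture $\eqsim\nu$ (the Tao--Vargas Lemma 7.1 argument).

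Next, apply Proposition \ref{prop:BilinearEstimatesEuclidean} to each pair and sum over $\w_{1}\sim_{\nu}\w_{2}$ using Cauchy--Schwarz and \cite[Proposition 7.1]{HaPoRoYu26}, exactly as at the end of Section \ref{subsec:conclude}. This gives $\|B_{\nu}\|\lesssim N^{2(\alpha(p,q)-s(p))}\nu^{2(d-1-\frac{d+1}{q}-\frac{d-1}{p})}\|f\|_{\Hp}^{2}$. In the interior of $\mathcal{T}_{l}$ the exponent of $\nu$ is positive, so the sum over $\nu\leq c_{1}$ converges with no $\log N$. In the interior of $\mathcal{T}_{u}$ it is negative, and the sum is dominated by $\nu\eqsim N^{-1/2}$, giving $N^{2(\alpha-s(p))}N^{-(d-1-\frac{d+1}{q}-\frac{d-1}{p})}$. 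One checks that this equals the $\mathcal{T}_{u}$ value of $2(\alpha(p,q)-s(p))$; this is the point where losslessness in $\nu$ matters. On the segment through $\mathfrak{Q}$ a $\log N$ loss appears, which is covered by case \eqref{it:Euclidean1}.

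The second step is to interpolate these dyadic bounds with the $p=2$ Strichartz, $p=q=2$, and $q=\infty$ bounds, as in Section \ref{subsec:conclude}, so as to fill $\mathcal{T}_{l}\cup\mathcal{T}_{u}$. Summing dyadically gives case \eqref{it:Euclidean1}.

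The third step handles the endpoint $\alpha=\alpha(p,q)-s(p)$ in the interiors. On each open triangle, choose a line segment through $(\frac{1}{p},\frac{1}{q})$ along which the dyadic exponent $\alpha(\cdot,\cdot)-s(\cdot)$ varies with nonzero slope. Lemma \ref{lem:interabstract} (linear case, $T_{k}=e^{it\sqrt{-\Delta}}\psi_{2^{k}}(D)\lb D\rb^{-\alpha}$) then gives restricted weak type into $L^{q,\infty}$. A second interpolation along a transverse segment of constant exponent, using reiteration and Lemma \ref{lem:interHpFIO}, upgrades this to strong type $L^{q,p}\subseteq L^{q}$. The main obstacle is the $\mathcal{T}_{u}$ interior: one must verify that the geometric sum at $\nu\sim N^{-1/2}$ reproduces exactly $\frac{d+1}{2}(\frac{1}{p}-\frac{1}{q})-s(p)$, including for the non-separated block $\nu<2N^{-1/2}$ treated via Bernstein. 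One must also check that both interpolation segments stay inside the open triangle, so that the two endpoint families have different exponents, which Lemma \ref{lem:interabstract} requires.
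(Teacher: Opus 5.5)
Your proposal is correct. It shares the paper's main ingredients: the Whitney decomposition with the lossless Euclidean almost orthogonality from \cite[Lemma 7.1]{TaoVargas2000}, Proposition \ref{prop:BilinearEstimatesEuclidean}, Cauchy--Schwarz with \cite[Proposition 7.1]{HaPoRoYu26}, and the final two-step argument via Lemmas \ref{lem:interabstract} and \ref{lem:interHpFIO}. Where it differs is in how you sum over the Whitney scale $\nu$.

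The paper applies Lemma \ref{lem:interabstract} in its bilinear form to the family $(B_\nu)_\nu$. It interpolates \eqref{eq:BnuboundEuclidean} between a point of $\mathcal{T}_l$, where the power of $\nu$ is positive, and a point of $\mathcal{T}_u$, where it is negative. This gives the restricted weak-type bound \eqref{eq:restrictedatQ} at $\mathfrak{Q}$ with the sharp factor $N^{\alpha(p,q)-s(p)}$ and no $\log N$. Reiteration with the energy, Strichartz and Sobolev bounds then yields sharp strong dyadic bounds on all of $(\mathcal{T}_l\cup\mathcal{T}_u)\setminus\{\mathfrak{Q}\}$, including the Schlag--Sogge segment and the edges through $\mathfrak{Q}$.

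You instead sum the geometric series in $\nu$ off the Schlag--Sogge line, accept a $\log N$ loss on it, and fill the open triangles by complex interpolation. Since the endpoint assertion \eqref{it:Euclidean2} only concerns interior points, this suffices and avoids the bilinear use of Lemma \ref{lem:interabstract}. What you give up is the extra information the paper's route provides: sharp dyadic bounds on the boundary segments, and hence the restricted weak-type endpoint on $\mathcal{T}_l\cap\mathcal{T}_u$ recorded in Remark \ref{rem:dyadicestimatesEuclidean}.

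Your second step needs one point made explicit. For $p\geq 2$ the exponent $\alpha(p,q)-s(p)$ is affine on each closed triangle but only convex across the Schlag--Sogge line. So interpolation preserves the sharp exponent only if all endpoints lie in the same closed triangle.
\begin{itemize}
\item For an interior point of $\mathcal{T}_u$, interpolate among $(\frac12,\frac12)$, $(\frac12,\frac{d-1}{2(d+1)})$, and a point of your wedge on the open segment from $\mathfrak{Q}$ to $(\frac12,\frac{d+1}{2(d+3)})$. That segment lies in the interior of $\mathcal{T}_u$ and on the line $q=\frac{d+3}{d+1}p$.
\item For an interior point of $\mathcal{T}_l$, use $(0,0)$, $(\frac12,0)$, $(\frac12,\frac{d-1}{2(d+1)})$, and a point on the open edge from $\mathfrak{Q}$ to $(0,0)$.
\end{itemize}
In each case, choose the auxiliary point close enough to $\mathfrak{Q}$ that the target lies in the convex hull. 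The constants from your geometric sums blow up as that point approaches $\mathfrak{Q}$. This is harmless, because each fixed interior point needs only one fixed auxiliary point.
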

\begin{proof}
The argument mostly follows \cite{TaoVargas2000}. In fact, the proof is a simpler version of the arguments in Section \ref{sec:BilinearSmoothing}, so we mainly indicate where modifications are required. 

\subsubsection*{Sharp bounds}

First consider $(\frac{1}{p},\frac{1}{q})\in \mathcal{T}_{l}\cup \mathcal{T}_{u}$ close to $\mathfrak{Q}$ and such that $q\geq \frac{2(d+3)}{d+1}p$. Here we will obtain estimates using the Whitney decomposition from Section \ref{subsec:orthogonal}. As such, for a suitable $c_{1}>0$ and for $N\in 2^{\N}$ and $N^{-1/2}\leq \nu\leq c_{1}$ with $N\geq 16$, let $B_{\nu}$ be as in \eqref{eq:Bnu} but with $S_{N}(t)f^{\nu}_{\w_{1}}(x)$ replaced by $e^{it\sqrt{-\Delta}}f^{\nu}_{\w_{1}}(x)$, for $(t,x)\in\R^{1+d}$.  Then, by \cite[Lemma 7.1]{TaoVargas2000} and Proposition \ref{prop:BilinearEstimatesEuclidean}, 
\begin{align*}
&\|B_{\nu}(f,g)\|_{L^{q/2}([0,1]\times\Rd)}\lesssim \Big(\sum_{\w_{1}\sim_{\nu}\w_{2}}\|e^{it\sqrt{-\Delta}}f^{\nu}_{\w_{1}}(x)e^{it\sqrt{-\Delta}}g^{\nu}_{\w_{2}}(x)\|_{L^{q/2}_{tx}([0,1]\times\Rd)}^{q^{*}}\Big)^{1/q^{*}}\\
&\lesssim N^{d-1-\frac{2(d+1)}{q}+\frac{2}{p}-2s(p)} \nu^{2(d-1-\frac{d+1}{q}-\frac{d-1}{p})}\Big(\sum_{\w_{1}\sim_{\nu}\w_{2}}\|f^{\nu}_{\w_{1}}\|_{\Hp}^{q^{*}}\|g^{\nu}_{\w_{2}}\|_{\Hp}^{q^{*}}\Big)^{1/q^{*}}
\end{align*}
for all $N^{-1/2}\leq \nu\leq c_{1}$ and $f,g\in\Sw(\Rd)$, where $q^{*}=\min(q/2,(q/2)')$. Moreover, since $2q^{*}>p$ for $(\frac{1}{p},\frac{1}{q})$ close to $\mathfrak{Q}$, we can use Cauchy--Schwarz and the embedding $\ell^{p}\subseteq\ell^{2q^{*}}$:
\begin{align*}
&\Big(\sum_{\w_{1}\sim_{\nu}\w_{2}}\|f^{\nu}_{\w_{1}}\|_{\Hp}^{q^{*}}\|g^{\nu}_{\w_{2}}\|_{\Hp}^{q^{*}}\Big)^{1/q^{*}}\\
&\leq \Big(\sum_{\w_{1}}\|f^{\nu}_{\w_{1}}\|_{\Hp}^{p}\Big)^{1/p}\Big(\sum_{\w_{2}}\|g^{\nu}_{\w_{2}}\|_{\Hp}^{p}\Big)^{1/p}.
\end{align*}
Finally, as before, one can decompose $f$ into a sum of approximately $N^{(d-1)/2}$ terms, each of which has Fourier support in a cone of angular width $N^{-1/2}$, with finite overlap. This also decomposes each $f^{\nu}_{\w_{1}}$, and then \cite[Proposition 7.1]{HaPoRoYu26} yields $(\sum_{\w_{1}}\|f^{\nu}_{\w_{1}}\|_{\Hp}^{p})^{1/p}\lesssim \|f\|_{\Hp}$. The same holds for $g$, and we obtain
\begin{equation}\label{eq:BnuboundEuclidean}
\begin{aligned}
&\|B_{\nu}(f,g)\|_{L^{q/2}([0,1]\times\Rd)}\\
&\lesssim N^{d-1-\frac{2(d+1)}{q}+\frac{2}{p}-2s(p)} \nu^{2(d-1-\frac{d+1}{q}-\frac{d-1}{p})}\|f\|_{\Hp}\|g\|_{\Hp}.
\end{aligned}
\end{equation}

\subsubsection*{Summation and interpolation}

Next, consider $(\frac{1}{p},\frac{1}{q})$ near $\mathfrak{Q}$ on the closed line between $\mathfrak{Q}$ and $(0,0)$. 
Choose distinct points $(\frac{1}{p_{0}},\frac{1}{q_{0}})\in \mathcal{T}_{l}$ and $(\frac{1}{p_{1}},\frac{1}{q_{1}})\in \mathcal{T}_{u}$, both close to $\mathfrak{Q}$ and such that $q_{j}=\frac{d+3}{d+1}p_{j}$ for $j\in\{0,1\}$, with the property that $\frac{1}{p}=\frac{1-\theta}{p_{0}}+\frac{\theta}{p_{1}}$ and $\frac{1}{q}=\frac{1-\theta}{q_{0}}+\frac{\theta}{q_{1}}$ for some $\theta\in(0,1)$. Write $X_{\theta,1}:=(\HT^{p_{0}}_{FIO}(\Rd),\HT^{p_{0}}_{FIO}(\Rd))_{\theta,1}$. Then \eqref{eq:BnuboundEuclidean} and Lemma \ref{lem:interabstract} imply that the bilinear operator
\begin{equation}\label{eq:mappingBnusum}
\sum_{2N^{-1/2}\leq \nu\leq c_{1}}B_{\nu}:X_{\theta,1}\times X_{\theta,1}\to L^{q/2,\infty}([0,1]\times\Rd)
\end{equation}
is bounded, with norm controlled by $N^{d-1-\frac{2(d+1)}{q}+\frac{2}{p}-2s(p)}=N^{2(\alpha(p,q)-s(p))}$. 

Recall from \eqref{eq:bilinear} that 
$(e^{it\sqrt{-\Delta}}f(x))^{2}=\sum_{N^{-1/2}\leq \nu\leq c_{1}}B_{\nu}(f,f)(t,x)$ 
for all $f\in \Sw(\Rd)$ with $\supp(\wh{f}\,)\subseteq \{\xi\in\Rn\mid N/8\leq |\xi|\leq 8N, |\hat{\xi}-e_{d}|\leq c_{0}\}$, for some $c_{0}>0$. Hence \eqref{eq:mappingBnusum}, finite decomposition, rotation and density imply that
\begin{equation}\label{eq:restrictedatQ}
\|e^{it\sqrt{-\Delta}}\psi_{N}(D)f(x)\|_{L^{q,\infty}_{tx}([0,1]\times\Rd)}\lesssim N^{\alpha(p,q)-s(p)}\|f\|_{X_{\theta,1}}
\end{equation}
for all $f\in X_{\theta,1}$.

Now, using the reiteration theorem (see \cite[Theorem 3.5.3]{Bergh-Lofstrom76}) and Lemma \ref{lem:interHpFIO}, one can interpolate \eqref{eq:restrictedatQ} with the strong bound
\begin{equation}\label{eq:strongbounds}
\|e^{it\sqrt{-\Delta}}\psi_{N}(D)f(x)\|_{L^{q}_{tx}([0,1]\times\Rd)}\lesssim N^{\alpha(p,q)-s(p)}\|f\|_{\Hp},
\end{equation}
for $(\frac{1}{p},\frac{1}{q})\in\{(\frac{1}{2},\frac{1}{2}),(\frac{1}{2},\frac{d-1}{2(d+1)}),(\frac{1}{2},0)\}$. Since $\HT^{2}_{FIO}(\Rd)=L^{2}(\Rd)$, these correspond to a standard energy estimate, a Strichartz estimate and a standard Sobolev embedding. As a result, one extends \eqref{eq:strongbounds} to all of $\mathcal{T}_{u}\setminus \{\mathfrak{Q}\}$, and to much of $\mathcal{T}_{l}\setminus \{\mathfrak{Q}\}$.

It only remains to obtain \eqref{eq:strongbounds} on the line between $\mathfrak{Q}$ and $(0,0)$, after which one can interpolate again to extend \eqref{eq:strongbounds} to all of $\mathcal{T}_{l}\setminus\{\mathfrak{Q}\}$. Given that Lemma \ref{lem:interHpFIO} does not cover the case where one of the parameters equals $\infty$, here we argue slightly differently. Since \eqref{eq:restrictedatQ} holds for all $(\frac{1}{p},\frac{1}{q})$ close to $\mathfrak{Q}$ on the line between $\mathfrak{Q}$ and $(0,0)$, the reiteration theorem and Lemma \ref{lem:interHpFIO} immediately upgrade the estimate to \eqref{eq:strongbounds} for $(\frac{1}{p},\frac{1}{q})\neq \mathfrak{Q}$. One can then use complex interpolation with the case $p=q=\infty$ of \eqref{eq:strongbounds}, which is a consequence of \cite[Corollary 6.11]{HassellPortalRozendaal2020} and \eqref{eq:Sobolev}.

Having obtained \eqref{eq:strongbounds} on all of $(\mathcal{T}_{l}\cup\mathcal{T}_{u})\setminus\{\mathfrak{Q}\}$, we can sum the bounds using \eqref{eq:LittlePaley}. To this end, first note that \eqref{eq:strongbounds} also holds for $1\leq N\leq 8$, by \eqref{eq:Sobolev}. Now, each $(\frac{1}{p},\frac{1}{q})\in (\mathcal{T}_{l}\cup\mathcal{T}_{u})\setminus\{\mathfrak{Q}\}$ lies on a line segment in $(\mathcal{T}_{l}\cup\mathcal{T}_{u})\setminus \{\mathfrak{Q}\}$ along which the Sobolev exponent is not constant. For $(\frac{1}{p},\frac{1}{q})\in \mathcal{T}_{l}\cap \mathcal{T}_{u}$ one chooses the line segment to be part of $\mathcal{T}_{l}\cap \mathcal{T}_{u}$. Then Lemma \ref{lem:interabstract} yields the bound
\begin{equation}\label{eq:restrictedfull}
\|e^{it\sqrt{-\Delta}}\lb D\rb^{-\alpha(p,q)}f(x)\|_{L^{q,\infty}_{tx}([0,1]\times\Rd)}\lesssim \|f\|_{X_{\theta,1}}.
\end{equation}
Finally, if $(\frac{1}{p},\frac{1}{q})$ lies in the interior of $\mathcal{T}_{l}$ or in the interior of $\mathcal{T}_{u}$, then it also lies on a line segment along which the Sobolev exponent is constant. Then one can use reiteration and Lemma \ref{lem:interHpFIO} to upgrade \eqref{eq:restrictedfull} to the required strong estimate. 
\end{proof}

\begin{remark}\label{rem:dyadicestimatesEuclidean}
We note explicitly 
that, for $(\frac{1}{p},\frac{1}{q})$ on the Schlag--Sogge line, i.e.~on $\mathcal{T}_{l}\cap \mathcal{T}_{u}$, on the line between $\mathfrak{Q}$ and $(0,0)$, and on the line between between $\mathfrak{Q}$ and $(\frac{1}{2},\frac{1}{2})$, 
the conclusion of Theorem \ref{thm:mainrough} still holds with $\alpha=\alpha(p,q)$ if one assumes that the initial data has frequency support in a dyadic annulus. 
As a consequence, one obtains a restricted weak-type inequality at these points. 
\end{remark}

\begin{remark}\label{rem:ell2decoupling}
In \eqref{eq:Euclideanbound}, one cannot replace $\HT^{\alpha,p}_{FIO}(\Rd)$ by the space $\mathcal{L}^{2,p}_{W,\alpha}(\Rd)$ from \cite{HaPoRoYu26} or by its Besov analogue from \cite{RozendaalSchippa2023}, unless $q\geq \frac{2(d+1)}{d-1}$. This can be seen by choosing $f$ to be an anisotropic Knapp-type example, in which case one has $\|f\|_{\mathcal{L}^{2,p}_{W,\alpha}(\Rd)}\eqsim \|f\|_{\HT^{\alpha,p}(\Rd)}$, cf.~\cite[Proposition 7.1]{HaPoRoYu26}. In fact, one cannot replace $\HT^{\alpha,p}_{FIO}(\Rd)$ by $\mathcal{L}^{2,u}_{W,\alpha}(\Rd)$ for any $1\leq u<p$, and similarly in the Besov space setting.

It then follows from the proof of Theorem \ref{thm:Euclidean} that one also cannot in general improve the bilinear estimates in Proposition \ref{prop:BilinearEstimatesEuclidean} using the spaces from \cite{HaPoRoYu26} or \cite{RozendaalSchippa2023}. It is precisely the behavior of the Knapp example which is crucial here, cf.~\eqref{eq:Knappbilinear}.

This observation is relevant because the $\mathcal{L}^{u,p}_{W,\alpha}(\Rd)$ norm of a function $f$ is, when restricted to dyadic frequency annuli, equivalent to the $\ell^{u}L^{p}$ decoupling norm of $t\mapsto e^{it\sqrt{-\Delta}}f$. For $q\geq \frac{2(d+1)}{d-1}$ and $\alpha>\alpha(p,q)-s(p)$, \eqref{eq:Euclideanbound} follows from interpolation between the $\ell^{2}$ decoupling theorem in \cite{BourgainDemeter2015} and other known bounds. However, for $2\leq p<q<\frac{2(d+1)}{d-1}$ this does not appear to be the case.
\end{remark}

\begin{remark}\label{rem:FIOestimates}
Theorem \ref{thm:mainrough} extends to Fourier integral operators satisfying the cinematic curvature condition; this class strictly contains the solution operators to smooth wave equations. More precisely, for $m\in\R$, let $T$ be a Fourier integral operator of order $m-1/4$ associated with a canonical relation from $T^{*}\Rd$ to $T^{*}\R^{d+1}$ satisfying the cinematic curvature condition from \cite{Sogge1991}, and suppose that $T$ has a compact Schwartz kernel. Then, for $(\frac{1}{p},\frac{1}{q})\in \mathcal{T}_{l}\cup\mathcal{T}_{u}$ and $\alpha>m+\alpha(p,q)-s(p)$, 
\begin{equation}\label{eq:FIObound}
\|Tf\|_{L^{q}(\R^{d+1})}\lesssim \|f\|_{\HT^{\alpha,p}_{FIO}(\Rd)}
\end{equation}
for all $f\in \HT^{\alpha,p}_{FIO}(\Rd)$. The proof follows along the lines of the corresponding result with initial data in $W^{\alpha+s(p),p}(\Rn)$ from \cite[Corollary 1.5]{Lee2006}, albeit with modifications  as in the present article.

We also note that \eqref{eq:FIObound} is, apart from possibly the endpoint value, sharp for any Fourier integral operator which is non-characteristic somewhere, and in particular for the solutions operator to any smooth wave equation. This follows from the arguments in \cite[Section 7]{LiRoSoYa24}, which also imply that an estimate such as \eqref{eq:FIObound} cannot hold for any smooth wave equation if $\alpha<0=m$, e.g.~not near the point $(\frac{n-1}{2n},\frac{n-1}{2n})$. Moreover, as already indicated in \cite{Lee2006}, it seems likely that the endpoint exponent $\alpha=m+\alpha(p,q)-s(p)$ can be attained in \eqref{eq:FIObound} for suitable $(\frac{1}{p},\frac{1}{q})$, by obtaining a bilinear restriction theorem without $\veps$ loss using arguments as in Appendix \ref{sec:noeps}. Finally, \eqref{eq:FIObound} can be extended to Fourier integral operators between Riemannian manifolds $M$ and $N$ with bounded geometry, using the space $\HT^{\alpha,p}_{FIO}(M)$ from \cite{LiRoSoYa24}.
\end{remark}

\section*{Acknowledgments}
J. Rozendaal is partially supported by the National Science Center, Poland, grants 2021/43/D/ST1/00667 and 2023/49/B/ST1/01961. R. Schippa gratefully acknowledges support by the Humboldt foundation via a Feodor-Lynen return fellowship.

\end{document}